\renewcommand{\setminus}{{\smallsetminus}}
\newtheorem{theorem}{Theorem}[section]
\newtheorem{lemma}[theorem]{Lemma}
\newtheorem{proposition}[theorem]{Proposition}
\newtheorem{definition}[theorem]{Definition}
\newtheorem{corollary}[theorem]{Corollary}
\newtheorem{conjecture}[theorem]{Conjecture}
\theoremstyle{remark}
\newtheorem{remark}[theorem]{Remark}
\theoremstyle{remark}
\numberwithin{equation}{section}
\def\Vol{\operatorname{Vol}}
\def \CC{\mathbb C}
\def \Re{\operatorname{Re}}
\def\SS{{\mathbb S}}
\def\CC{{\mathbb C}}
\def\RR{{\mathbb R}}
\def\ZZ{{\mathbb Z}}
\def\NN{{\mathbb N}}
\def\Vol{\operatorname{Vol}}
\def \Re{\operatorname{Re}}
\def \Li{\operatorname{Li_2}}
\def \Arg{\operatorname{Arg}}
\def \det{\operatorname{det}}
\title{Geometry of fundamental shadow link complements and applications to the 1-loop conjecture}
\author{Tushar Pandey and Ka Ho Wong}
\date{}
\begin{document}

\maketitle

\begin{abstract}
We construct a geometric ideal triangulation for every fundamental shadow link complement and solve the gluing equation explicitly in terms of the logarithmic holonomies of the meridians of the link for any generic character in the distinguished component of the $\mathrm{PSL}(2;\CC)$-character variety of the link complement. As immediate applications, we obtain a new formula for the volume of a hyperideal tetrahedron in terms of its dihedral angles, and a formula for the volume of hyperbolic 3-manifolds obtained by doing Dehn-fillings to some of the boundary components of fundamental shadow link complements. 
Moreover, by using these ideal triangulations, we verify the 1-loop conjecture proposed by Dimofte and Garoufalidis for every fundamental shadow link complement. By using the result of Kalelkar-Schleimer-Segerman \cite{KSS}, we also prove the topological invariance of the 1-loop invariant and show that the 1-loop invariant satisfies a  surgery formula. As a result, we prove the 1-loop conjecture for manifolds obtained by doing sufficiently long Dehn-fillings on boundary components of any fundamental shadow link complement. This verifies the 1-loop conjecture for a large class of hyperbolic 3-manifolds.
\end{abstract}

\tableofcontents

\section{Introduction}

The family of fundamental shadow links was introduced and studied by Costantino and Thurston in \cite{CT}. Each fundamental shadow link is a hyperbolic link in a connected sum of copies of $\SS^2 \times \SS^1$. This family of links is universal in the sense that every compact oriented 3-manifold with toroidal or empty boundary can be obtained from a suitable fundamental shadow link complement by doing an integral Dehn-filling to some of the boundary components \cite{CT}. According to Thurston's hyperbolic Dehn surgery theorem \cite{T}, most Dehn surgeries on a hyperbolic 3-manifold are actually hyperbolic. Thus, studying the hyperbolic geometry of fundamental shadow link complements may help us to understand the geometry of a large class of hyperbolic 3-manifolds. Especially, this provides a possible approach to compute geometric invariants of hyperbolic 3-manifolds, such as complex volume and the adjoint twisted Reidemeister torsion, by first computing the invariants for fundamental shadow link complements and then investigating how the invariants behave under doing hyperbolic Dehn-fillings on fundamental shadow link complements. By using this approach, in \cite{WY3}, T. Yang and the second author discovered a new and explicit formula for the adjoint twisted Reidemeister torsion of a large class of hyperbolic 3-manifolds in terms of the determinant of the associated Gram matrices (see Section \ref{TRT} for more details). This formula also has an immediate application in the study of the asymptotics of quantum invariants \cite{WY4}.

\subsection{Geometry of fundamental shadow link complements}

In the first part of this paper, we study the geometry of fundamental shadow link complements by constructing a geometric ideal triangulation of each fundamental shadow link complement and solving the gluing equations explicitly for any representation of the fundamental group of the link complement into $\mathrm{PSL}(2;\CC)$ near the holonomy representation of the complete hyperbolic structure. This is achieved by solving the gluing equation on each $D$-block explicitly in terms of the holonomies around the six ideal vertices. The problem of solving the gluing equations is then reduced to the problem of solving a single quadratic equation. This allows us to write down the solution explicitly (See Sections \ref{GEDB} and \ref{GEDB2} for more details). By analyticity, the solution of the gluing equations can be extended to any generic character in the distinguished component of the $\mathrm{PSL}(2;\CC)$ character variety of the link complements.

We briefly recall the construction of fundamental shadow link complements, each of which admits as a 3-dimensional analogue of the pants decomposition of surfaces. Recall that to construct a closed surface with genus greater or equal to 2, as shown in Figure \ref{construction1}, we can first take the double of a hexagon along the three (green) sides to obtain a pair of pants with three (red) circle boundaries, and then glue the pair of pants together along the (red) boundary suitably to obtain the surface. By repeating a similar construction in one higher dimensional, we can construct a fundamental shadow link complement as shown in Figure \ref{construction3}. The building block of a fundamental shadow link complement is a truncated tetrahedron with six edges removed. To construct a fundamental shadow link complement, we first take the double of the truncated tetrahedron along the hexagonal faces to obtain a \emph{$D$-block} with boundary consisting of four (red) 3-puncture spheres and six (blue) cylinders around the six removed edges. Then we glue copies of $D$-blocks together along the 3-puncture spheres to obtain a fundamental shadow link complement. Note that the (blue) cylinders around the removed edges will be glued together to form the toroidal boundary of the manifold. Each fundamental shadow link complement is hyperbolic and hyperbolic cone structures along the meridians can be constructed concretely by gluing hyperideal tetrahedra (see Section \ref{DhypD} for more details). Well-known examples of fundamental shadow link complements include the Borromean rings complement and octahedral fully augmented link complements (\cite{JP}, \cite[Proposition 6.2]{WY1}). See also \cite{Ku} for more concrete examples of fundamental shadow links in $\SS^3$.

\begin{figure}[h]
\centering
\includegraphics[scale=0.18]{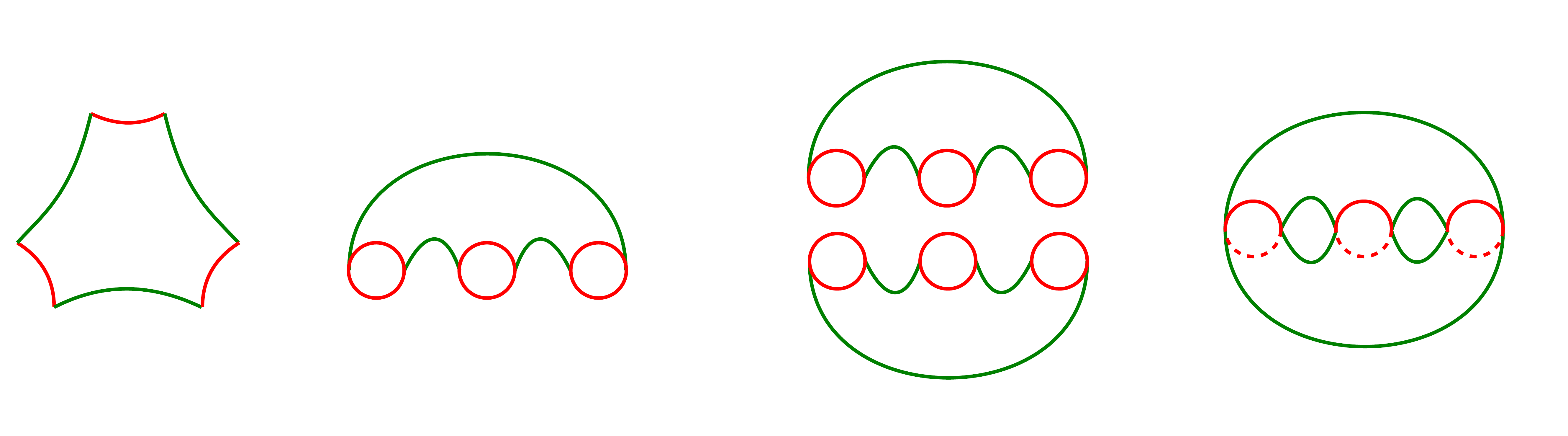}\qquad
\caption{Construction of a closed surface.}\label{construction1}
\end{figure}

\begin{figure}[h]
\centering
\includegraphics[scale=0.18]{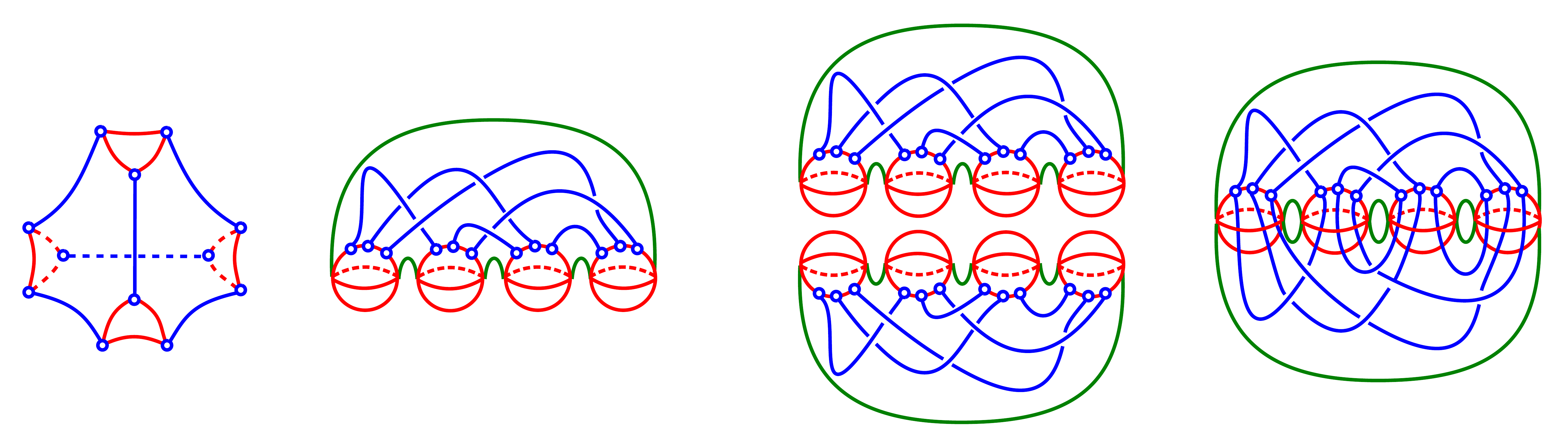}\qquad
\caption{Construction of a fundamental shadow link complement.}\label{construction3}
\end{figure}

\begin{figure}[h]
\centering
\includegraphics[scale=0.145]{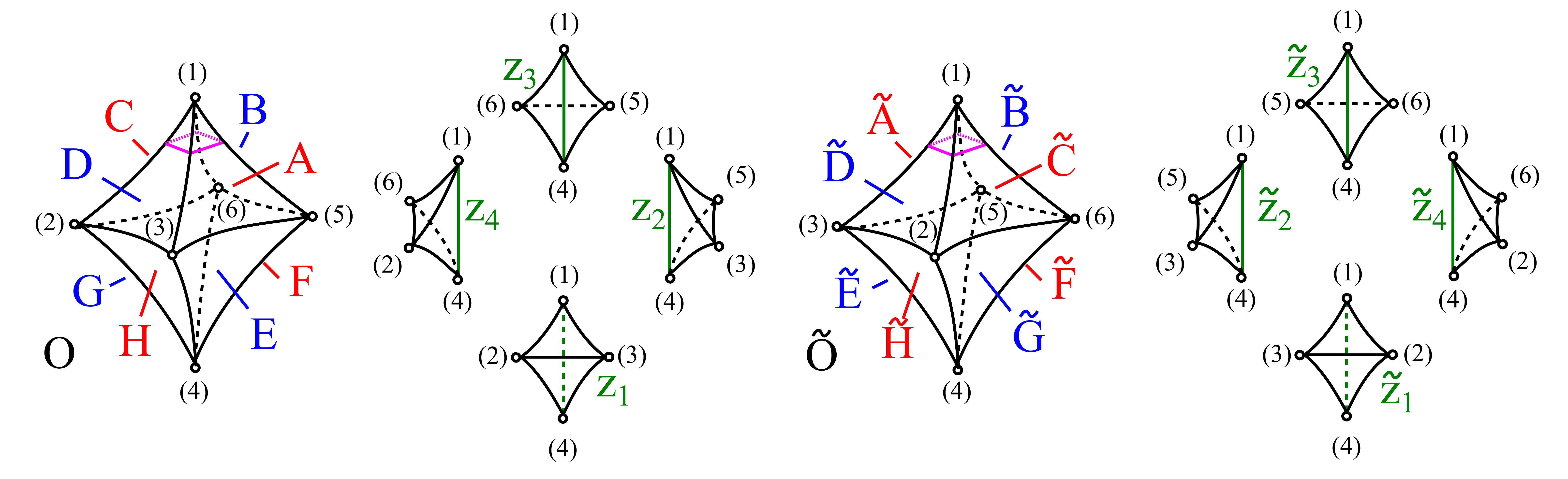}
\caption{Triangulation of each $D$-block into 8 ideal tetrahedra.}\label{idealoctatriintro}
\end{figure}

Each fundamental shadow link complement admits a natural ideal polyhedral decomposition obtained by shrinking the edges of the $D$-blocks into ideal vertices. In this decomposition, each $D$-block $\mathcal{D}$ consists of two ideal octahedra, which can be further decomposed into 8 ideal tetrahedra to give an ideal triangulation $\mathcal{T}_{\mathcal{D}}$ of $\mathcal{D}$ as shown in Figure \ref{idealoctatriintro}. This induces an ideal triangulation $\mathcal{T}=\{\mathcal{T}_{\mathcal{D}}\}$ of the fundamental shadow link complement. Note that the fundamental group of a $D$-block is a rank 3 free group. The $\mathrm{SL}(2;\CC)$-character variety of the free group with three generators is well-studied by Goldman in \cite{G1}. A key step to study the triangulation $\mathcal{T}$ is to understand the ``gluing variety" $\mathcal{Z}(\mathcal{D})$ of $\mathcal{T}_{\mathcal{D}}$ for each $D$-block $\mathcal{D}$ (see Remark \ref{gvDblock}). This leads to our first result about the ideal triangulation $\mathcal{T}$.

\begin{theorem}\label{idealtri}
The ideal triangulation $\mathcal{T}$ of the fundamental shadow link complement satisfies the following properties.
\begin{enumerate}
\item The triangulation is geometric at the complete structure (i.e. the shape parameters have positive imaginary parts) and each $D$-block is a union of 2 regular ideal octahedra which are triangulated into 8 ideal tetrahedra with dihedral angles $(\pi/4, \pi/4, \pi/2)$.
\item Near the complete structure, the 8 shape parameters on each $D$-block are uniquely determined by two edge equations and six holonomy equations and the solution can be explicitly written down in terms of the holonomies around the six ideal vertices. 
\item The solution of the gluing equation of the triangulation of the fundamental shadow link complement is given by a combination of  the solution on each $D$-block.
\end{enumerate}
\end{theorem}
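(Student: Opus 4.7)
For part (1), the plan is to first analyze the complete hyperbolic structure on a single $D$-block and then read off the shape parameters of $\mathcal{T}_{\mathcal{D}}$. The $D$-block $\mathcal{D}$ is by construction the double of a truncated tetrahedron along its four hexagonal faces; the four truncation triangles become four $3$-punctured spheres and the six annular neighborhoods of the removed edges become six torus cusps once we shrink edges to ideal vertices. I would show that the complete hyperbolic structure on $\mathcal{D}$ is realized by doubling a regular right-angled truncated ideal tetrahedron, so that the resulting doubled piece is isometric to two copies of the regular ideal octahedron glued along their four common $3$-punctured spheres. The triangulation in Figure \ref{idealoctatriintro} is then the standard decomposition of each regular ideal octahedron into $4$ isometric ideal tetrahedra whose dihedral angles are $(\pi/4,\pi/4,\pi/2)$ and whose shape parameters are the primitive sixth roots $e^{i\pi/2}$-class values with positive imaginary parts, which gives the geometricity claim.

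For part (2), I would label the $8$ tetrahedra of $\mathcal{T}_{\mathcal{D}}$ and their complex shape parameters $z_1,\dots,z_8$, each carrying its triple $(z,\,1/(1-z),\,(z-1)/z)$, and write down explicitly the system consisting of the $2$ internal edge equations (the two edges of $\mathcal{T}_{\mathcal D}$ lying in the interior of $\mathcal D$) together with the $6$ meridian--holonomy equations around the six ideal vertices. This is $8$ equations in $8$ unknowns. The plan is to use the six meridian equations to express six of the shape parameters rationally in terms of the meridian holonomies $m_1,\dots,m_6$ and a single residual parameter, then substitute into the remaining edge equation(s); by the octahedral symmetry of the block, the two edge equations should reduce to one equation which turns out to be quadratic. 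The explicit roots then yield the desired closed form in terms of $m_1,\dots,m_6$, and uniqueness near the complete structure follows from checking that the discriminant is nonzero at the complete solution (which can be verified from the $(\pi/4,\pi/4,\pi/2)$ data) and selecting the branch with positive imaginary part.

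For part (3), I would assemble the per-block solutions into a global solution of the gluing equations of $\mathcal{T}$. Adjacent $D$-blocks are glued along $3$-punctured sphere boundaries and each link meridian is represented by one of the six cusp curves in exactly one block, so prescribing the logarithmic holonomies of the link meridians determines the meridian data on every $D$-block. Inserting the local solutions from (2), I would verify that (i) all internal edge equations are automatically satisfied by construction, (ii) the edge equations coming from edges on the shared $3$-punctured spheres follow from the fact that the two ideal triangulations induced on the pair of pants from the two sides agree and share the same boundary holonomies, and (iii) the global meridian equations are exactly the local ones and so hold by (2). Analyticity of the resulting explicit expressions in $m_1,\dots,m_6$ provides the extension to any generic character in the distinguished component of the $\mathrm{PSL}(2;\CC)$-character variety.

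The main obstacle I expect is the explicit elimination in part (2): keeping careful track of the edge cycles and meridian cycles on each $D$-block and showing that, after using the six meridian equations to eliminate variables, the two edge equations collapse to a single quadratic in one remaining shape parameter with coefficients rational in the $m_i$. Parts (1) and (3) are conceptually routine once (2) is in hand, with (3) requiring only a combinatorial consistency check that meridian and edge data match across the gluing $3$-punctured spheres.
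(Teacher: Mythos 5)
Your overall strategy for part (1) is the same as the paper's: at the complete structure every shape parameter equals $\sqrt{-1}$, each octahedron is regular, and each of the eight tetrahedra has angles $(\pi/2,\pi/4,\pi/4)$. (Small slip: $e^{i\pi/2}=\sqrt{-1}$ is a primitive \emph{fourth} root of unity, not a sixth root; the sixth root $e^{i\pi/3}$ belongs to the regular ideal tetrahedron, which is not the tetrahedron appearing here.)

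For part (2) your elimination scheme is genuinely different from the paper's and is structurally misattributed. The paper does \emph{not} solve the six meridian equations and then watch the two edge equations merge into one quadratic. Instead, it restricts first to purely imaginary holonomies $\mathrm{H}(m_l)=2\theta_l\sqrt{-1}$ and introduces the substitution $z_k = r_k e^{\sqrt{-1}\phi_k}$, $\tilde z_k = r_k^{-1}e^{\sqrt{-1}\phi_k}$. Under this substitution, \emph{one} of the two edge equations becomes the linear angle condition $\phi_1+\phi_2+\phi_3+\phi_4=2\pi$, all six meridian equations become linear equations in the angles $\phi_i$, $\operatorname{Arg} z_i'$, and the \emph{other} edge equation becomes the single nonlinear constraint $r_1r_2r_3r_4=1$. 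The linear system has a one-parameter family of solutions indexed by $\operatorname{Arg} z_4'$, and expressing the $r_i$ via the Euclidean sine law turns $r_1r_2r_3r_4=1$ into a quadratic in $e^{2\sqrt{-1}\operatorname{Arg} z_4'}$. The general complex case is then recovered by analytic continuation (Lemma \ref{MCV}). So the quadratic comes from one edge equation alone, not from a collapse of both, and the analytic-continuation step is necessary because the modulus--argument decomposition only makes sense for real cone angles. Your route — eliminating six shape parameters rationally from the six meridian equations and substituting into the edge equations — might be made to work, but you would still need to exhibit a substitution that tames the algebra, and you would need to explain where the quadratic structure comes from; as written this is an unresolved obstacle, which you acknowledge.

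For part (3) there is a genuine gap. Your plan (ii) — that the edge equations for edges on the shared $3$-punctured spheres ``follow from the fact that the two ideal triangulations induced on the pair of pants from the two sides agree'' — is not automatic. The local $D$-block system $\mathcal{G}_{\mathcal{D}}$ comprises only the two central edge equations and the six meridian holonomy equations; it says nothing about the remaining edges of the $D$-block, and it is precisely these remaining edges that get identified across the $3$-punctured sphere gluings. A global edge equation around such a shared edge mixes contributions from several $D$-blocks, and nothing in the per-block data forces the total angle to be $2\pi\sqrt{-1}$. The paper avoids your bottom-up difficulty by arguing top-down: near the complete structure, the triangulation is geometric and hence, by \cite[Corollary 15.2.17]{BM}, there is a unique solution $\mathbf z^*(\mathbf{H(m)})$ of the full gluing system $\mathcal{G}=0$ depending holomorphically on $\mathbf{H(m)}$. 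Restricting $\mathbf z^*$ to any $D$-block gives a solution of the local system $\mathcal{G}_{\mathcal{D}_i}=0$, which by the uniqueness in Proposition \ref{solnexist} must equal the explicit local solution. This immediately yields statement (3) without ever having to verify the shared-edge equations from the local data. If you want to keep a bottom-up argument, you would need to supply a separate proof that the assembled local solutions satisfy the edge equations at the glued $3$-punctured spheres, and that argument is not in your proposal.
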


Based on the construction of the layered solid torus studied by Jaco and Rubinstein in \cite{JR}, there are several interesting results about how an ideal triangulation of a manifold behaves under doing Dehn-fillings to some of the boundary components. In some cases, this construction allows us to show that some nice property of ideal triangulations is preserved under doing Dehn-fillings. For example, in \cite{GS}, Guéritaud and Schleimer study how the canonical decompositions of 3-manifolds behave under Dehn-fillings. In \cite{FHH}, Futer, Hamilton and Hoffman show that every hyperbolic 3-manifold has a finite cover that admits infinitely many geometric triangulations, with the additional property that every long Dehn-filling of one cusp in that finite cover also admits infinitely many geometric triangulations. Besides, 
Howie, Mathews and Purcell provide a new method to compute the $A$-polynomials of knots by studying how the Neumann-Zagier datum changes under Dehn-fillings \cite{HMP}. We hope that our result will provide insight to study the above problems from both theoretical and computational points of view.

As an application of Theorem \ref{idealtri}, we obtain a new formula for the volume of a hyperideal tetrahedron which is different from the well-known Murakami-Yano formula \cite{MY}. To present the formula, we define the volume of a $D$-block as follows. First, given $(u_1,\dots,u_6)\in \CC^6$, we let
$$ z^* = \frac{-B - \sqrt{B^2 - 4AC}}{2A} ,$$
where 
\begin{equation*}
\begin{split}
A&= - \frac{u_1}{u_4} - \frac{u_1 u_3}{u_2} - \frac{u_1}{u_2u_3} - \frac{u_1}{u_2^2 u_4} - \frac{u_5}{u_2} - \frac{u_6}{u_2u_4} - \frac{1}{u_2u_4u_6} - \frac{1}{u_2u_5},\\
B&= - u_1u_4 + \frac{u_1}{u_4} + \frac{u_4}{u_1} - \frac{1}{u_1u_4}
 + u_2u_5 + \frac{u_2}{u_5} + \frac{u_5}{u_2} + \frac{1}{u_2u_5}
 - u_3 u_6 - \frac{u_3}{u_6} - \frac{u_6}{u_3} - \frac{1}{u_3u_6}  ,\\
C&= - \frac{u_4}{u_1} - \frac{u_2}{u_1 u_3} - \frac{u_2u_3}{u_1} - \frac{u_2^2 u_4}{u_1} - \frac{u_2}{u_5} - \frac{u_2u_4}{u_6} - u_2u_4u_6 - u_2u_5.
\end{split}
\end{equation*}
Besides, recall that the volume of an ideal tetrahedron with shape parameter $z\in \CC \setminus\{0,1\}$ is given by
$$ D(z) = \mathrm{Im} \Li(z) + \mathrm{Arg}(z) \log |z|,$$
where $\Arg(z) \in (-\pi,\pi)$, $\Li : \CC\setminus (1,\infty) \to \CC$ is the dilogarithm function defined by
$$ \Li(z) = -\int_0^z \frac{\log(1-u)}{u}du$$
and the integral is along any path in $ \CC\setminus (1,\infty)$ going from $0$ to $z$.

We define the volume of a $D$-block $\mathcal{D}$ with logarithmic holonomy $(\mathrm{H}(m_1),\dots,\mathrm{H}(m_6))$ by
$$
\mathrm{Vol}_{\mathcal{D}} (\mathrm{H}(m_1),\dots,\mathrm{H}(m_6))
= \sum_{k=1}^4 \Big(  D(z_k^*) + D(\tilde{z}_k^*) \Big),
$$
where $D(z)$ is the Bloch-Wigner dilogarithm function, $u_l = e^{\frac{\mathrm{H}(m_l)}{2}}$ for $l=1,2,\dots,6$,
\begin{empheq}[left = \empheqlbrace]{equation*}
\begin{split}
z_1^*&=\frac{z^*-u_2^2}{z^*+u_2u_3u_4}, \quad
z_2^*=\frac{z^*u_1u_3u_5 - u_2u_3u_4}{z^* u_1u_3u_5 + u_1u_2u_4u_5},  \\
z_3^*&=\frac{z^*u_1u_6 - u_2u_4u_5u_6}{z^*u_1u_6+u_2}, \quad
z_4^*=\frac{z^* u_1 - u_1}{z^* u_1 + u_2 u_6} \\
\tilde z_1^*&=-\frac{z^*u_2 + u_2^2 u_3 u_4}{z^* u_3 u_4 - u_2^2 u_3 u_4 }, \quad
\tilde z_2^*=-\frac{z^*u_3 + u_2 u_4}{z^* u_1u_5- u_2u_4}, \\
\tilde z_3^*&=-\frac{z^* u_1u_4u_5u_6 + u_2u_4u_5}{z^* u_1 - u_2u_4u_5} ,  \quad
\tilde z_4^* =-\frac{z^* u_1 + u_2u_6}{z^* u_2u_6 - u_2u_6}.
\end{split}
\end{empheq}
\begin{figure}[h]
\centering
\includegraphics[scale=0.2]{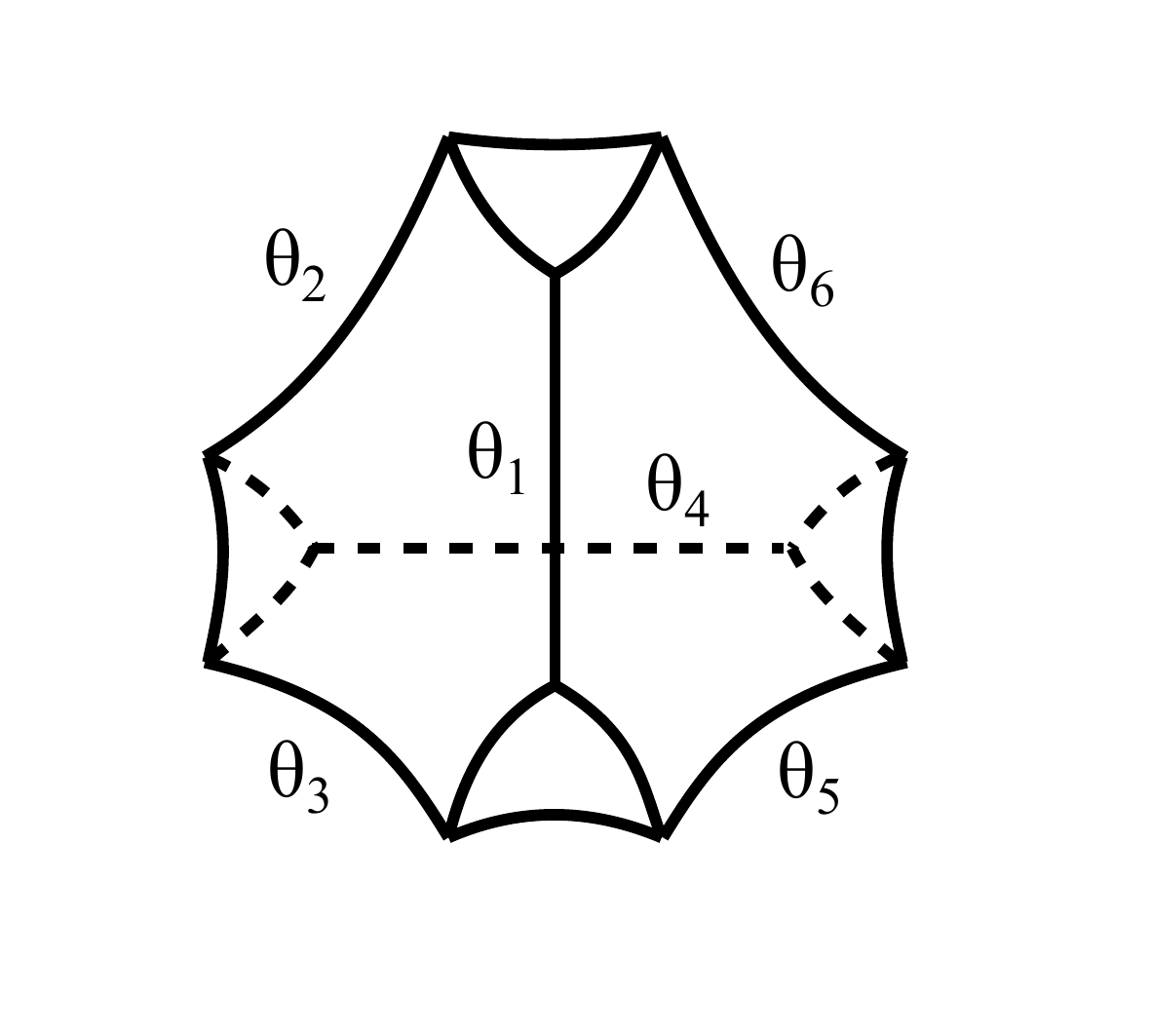}\qquad
\caption{A hyperideal tetrahedron.}\label{hyperideal}
\end{figure}

The volume of a $D$-block is the sum of the volume of the 8 ideal tetrahedra in Figure \ref{idealoctatriintro} with logarithmic holonomy $\mathrm{H}(m_l)$ at the $l$-th vertex for $l=1,2,\dots,6$ (See Section \ref{pfvolhypideal} for more details). 
Given a hyperideal tetrahedron with dihedral angles $(\theta_1,\dots,\theta_6)$ as shown in Figure \ref{hyperideal}. We have the following formula for the volume of the hyperideal tetrahedron.
\begin{theorem} \label{volhypideal} The volume of the hyperideal tetrahedron $\Delta_{(\theta_1,\dots,\theta_6)}$ with dihedral angles $(\theta_1,\dots,\theta_6)$ is given by
\begin{align*}
\mathrm{Vol}(\Delta_{(\theta_1,\dots,\theta_6)}\big)
= &\frac{1}{4}\Big( \mathrm{Vol}_{\mathcal{D}} \Big(2\theta_1\sqrt{-1},\dots,2\theta_6\sqrt{-1}\Big)+ \mathrm{Vol}_{\mathcal{D}} \Big(-2\theta_1\sqrt{-1},\dots,-2\theta_6\sqrt{-1}\Big) \Big).
\end{align*}

\end{theorem}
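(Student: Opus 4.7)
The plan is to realize the $D$-block with purely imaginary meridian holonomies as the metric double of the truncated hyperideal tetrahedron, and then compute its volume via the explicit ideal triangulation supplied by Theorem \ref{idealtri}.

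I begin with a geometric observation: truncating the hyperideal tetrahedron $\Delta_{(\theta_1,\dots,\theta_6)}$ at its four hyperideal vertices produces a compact hyperbolic polyhedron bounded by four right-angled totally geodesic hexagons. Doubling $\Delta$ across these four hexagons yields precisely the $D$-block $\mathcal{D}$, equipped with a hyperbolic cone structure whose cone angle around the meridian $m_l$ is $2\theta_l$ (two dihedral angles $\theta_l$ are glued along each edge). In particular
$$\mathrm{Vol}(\mathcal{D}) \;=\; 2\, \mathrm{Vol}\big(\Delta_{(\theta_1,\dots,\theta_6)}\big),$$
and the logarithmic holonomy of $m_l$ for this cone structure is $\mathrm{H}(m_l) = \pm 2\theta_l \sqrt{-1}$, with the sign depending on the orientation of $m_l$.

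Next I apply Theorem \ref{idealtri}: the $D$-block $\mathcal{D}$ carries an ideal triangulation into $8$ tetrahedra whose shape parameters are the rational functions $z_k^{*}, \tilde{z}_k^{*}$ of the distinguished root $z^{*}$ of the quadratic $Az^2 + Bz + C = 0$. Near the complete structure the triangulation is geometric, so the sum $\mathrm{Vol}_{\mathcal{D}}$ of Bloch-Wigner terms computes the hyperbolic volume of the associated character; analyticity extends this identification to the cone structure specified above. The subtlety is that the specific branch $z^{*} = (-B-\sqrt{B^2-4AC})/(2A)$ fixed in the definition of $\mathrm{Vol}_{\mathcal{D}}$ may fail to be the geometric branch at the purely imaginary locus. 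However, direct inspection of the explicit polynomials $A,B,C$ reveals the symmetry
$$A(u_1^{-1},\dots,u_6^{-1}) = C(u_1,\dots,u_6), \qquad B(u_1^{-1},\dots,u_6^{-1}) = B(u_1,\dots,u_6),$$
so the substitution $\theta_l \mapsto -\theta_l$ interchanges the two roots of the quadratic (up to reciprocal). Consequently, the pair $\{\mathrm{Vol}_{\mathcal{D}}(2\theta_l\sqrt{-1}),\ \mathrm{Vol}_{\mathcal{D}}(-2\theta_l\sqrt{-1})\}$ encodes both branches of the gluing-equation solution simultaneously.

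To conclude, I use the Bloch-Wigner identities $D(1/z) = -D(z)$ and $D(\bar z) = -D(z)$, together with the explicit rational expressions for $z_k^{*}, \tilde{z}_k^{*}$ in terms of $z^{*}$ and the $u_l$, to verify that on the purely imaginary locus each of the two evaluations $\mathrm{Vol}_{\mathcal{D}}(\pm 2\theta_l\sqrt{-1})$ yields the same real number, namely the hyperbolic volume of the cone structure, which equals $2\mathrm{Vol}(\Delta)$. Averaging and dividing by $2$ then recovers $\mathrm{Vol}(\Delta)$ with the stated prefactor $1/4$. The main obstacle is the branch bookkeeping in this last step: converting the polynomial symmetry $A \leftrightarrow C$ into the precise Bloch-Wigner identity that certifies the non-geometric branch contributes the \emph{same} real volume as the geometric one on the purely imaginary locus, rather than its negative. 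I expect this to reduce to a finite check using the six-fold symmetries of the dilogarithm applied term-by-term to the eight shape parameters.
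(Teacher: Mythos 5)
Your proposal takes a genuinely different route from the paper, and it has a real gap that the paper's approach was designed to avoid.

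The paper does not double $\Delta$ once. It doubles twice: first across the four triangles of truncation, then across the remaining (hexagonal) boundary. The result is a closed cone $3$-manifold $M_2 = \#^3(\mathbb{S}^2 \times \mathbb{S}^1)$ (with singular locus the fundamental shadow link) satisfying $\mathrm{Vol}(M_2) = 4\,\mathrm{Vol}(\Delta)$, built out of \emph{two} $D$-blocks rather than one. The orientations of the two $D$-blocks relative to the link are opposite, so the holonomy restricted to them is $+2\theta_l\sqrt{-1}$ on one and $-2\theta_l\sqrt{-1}$ on the other. Summing the Bloch--Wigner terms over the $16$ tetrahedra of the triangulation of $M_2$ immediately gives $\mathrm{Vol}(M_2) = \mathrm{Vol}_{\mathcal{D}}(2\theta\sqrt{-1}) + \mathrm{Vol}_{\mathcal{D}}(-2\theta\sqrt{-1})$, and the theorem follows by dividing by $4$. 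Crucially, this argument never needs to know whether the two $D$-block contributions are individually equal.

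You instead double only across the hexagons to obtain a single cone $D$-block $\mathcal{D}$ of volume $2\,\mathrm{Vol}(\Delta)$, and then you must show that \emph{each} of $\mathrm{Vol}_{\mathcal{D}}(\pm 2\theta_l\sqrt{-1})$ separately equals this geometric volume. That is the step you yourself call ``the main obstacle'' and defer to an expected finite check — but it is exactly the nontrivial branch bookkeeping. The definition of $\mathrm{Vol}_{\mathcal{D}}$ fixes the distinguished quadratic root $z^* = (-B-\sqrt{B^2-4AC})/(2A)$, and under $u_l \mapsto u_l^{-1}$ (which on the purely imaginary locus is $u_l \mapsto \bar{u}_l$) the roles of $A$ and $C$ swap, so $z^*(\bar{u})$ lands on the conjugate of the reciprocal of the \emph{other} root, not of $z^*(u)$. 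Whether the resulting eight shape parameters feed into the Bloch--Wigner sum with the right orientation — given $D(\bar z) = -D(z)$ and $D(1/z) = -D(z)$ — is precisely what needs verification, and your proposal asserts it without carrying it out. Until that check is done, the proof is incomplete. (There is also a smaller unaddressed point: that the triangulated shape parameters $\mathbf{z}^*(2\theta\sqrt{-1})$ realize the cone $D$-block geometrically so that the Bloch--Wigner sum truly computes $\mathrm{Vol}(\mathcal{D})$; the paper gets this for free via the closed manifold $M_2$ and the uniqueness/geometricity near the complete structure.)

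If you would rather not do the finite check, the simplest fix is to adopt the paper's double-doubling: it converts the whole question into one equation about a closed manifold and lets the two branch evaluations be dealt with simultaneously, by addition, rather than one at a time.
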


By using a similar idea, we obtain a formula for the volume of hyperbolic 3-manifolds obtained by doing Dehn-fillings to some of the boundary components of fundamental shadow link complements. Let $M=\#^{c+1}(\SS^2\times \SS^1)\setminus L_{\text{FSL}}$ be the complement of a fundamental shadow link $L_{\text{FSL}}$ with $n$ components $L_1,\dots,L_n$ obtained by gluing $c$ $D$-blocks. For $m$ with $0\leqslant m\leqslant n,$ let $\boldsymbol\mu=(\mu_1,\dots,\mu_m)$ be a system of simple closed curves on $\partial M$ such that $\mu_i\subset T_i$ for $i=1,2,\dots,m$.  Let $M_{\boldsymbol\mu}$ be the  $3$-manifold  obtained from $M$ by doing the Dehn-filling along $\boldsymbol\mu.$ Suppose $M_{\boldsymbol\mu}$ is hyperbolic with a holonomy representation $\rho_{\mu}$ of a possibly incomplete hyperbolic structure. Let $\rho$ be the restriction of $\rho_{\boldsymbol\mu}$ on $M.$ Assume that $\rho$ lies on the distinguished component of the $\mathrm{PSL}(2;\mathbb{C})$-character variety of $M$.
Let $( \mathrm{H}(m_1),\dots, \mathrm{H}(m_n))$ be the logarithmic holonomies of the system of meridians in $[\rho]$. For each $k\in\{1,\dots,c\},$ let $L_{k_1},\dots,L_{k_6}$ be the components of $L_{\text{FSL}}$ intersecting $\Delta_k$ and let $\mathrm{H}(m_{k_1}), \dots, \mathrm{H}(m_{k_6})$ be the logarithmic holonomies of $L_{k_1},\dots,L_{k_6}$.

\begin{theorem}\label{volmfd} The volume of the hyperbolic 3-manifold $M_{\boldsymbol\mu}$ with the holonomy representation $\rho_{\boldsymbol\mu}$ is given by
$$ \mathrm{Vol}(M_{\boldsymbol\mu}) 
= \sum_{k=1}^c  \mathrm{Vol}_{\mathcal{D}} \Big( \mathrm{H}(m_{k_1}), \dots, \mathrm{H}(m_{k_6})\Big).
$$
\end{theorem}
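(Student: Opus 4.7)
The plan is to invoke Theorem \ref{idealtri} to present the ideal triangulation of $M$ with explicit shape parameters depending on the meridian holonomies, then use the classical Bloch--Wigner formula to read off the volume of the resulting (possibly incomplete) hyperbolic structure, and finally apply the principle that passing to the metric completion of a hyperbolic structure preserves volume.

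By Theorem \ref{idealtri}, the fundamental shadow link complement $M$ admits the ideal triangulation $\mathcal{T}$ built from $8c$ ideal tetrahedra, arranged as $c$ sub-triangulations $\mathcal{T}_{\mathcal{D}_k}$, one per $D$-block. For the restriction $\rho$ of $\rho_{\boldsymbol\mu}$ to $\pi_1(M)$, which by hypothesis lies on the distinguished component of the character variety, parts (2) and (3) of Theorem \ref{idealtri} prescribe the shape parameters on the $k$-th $D$-block as $z_1^*,\ldots,z_4^*,\tilde z_1^*,\ldots,\tilde z_4^*$, computed from $(u_1,\ldots,u_6)=(e^{\mathrm{H}(m_{k_1})/2},\ldots,e^{\mathrm{H}(m_{k_6})/2})$ via the quadratic for $z^*$. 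By the definition of $\mathrm{Vol}_{\mathcal{D}}$, the right-hand side of the claimed identity is exactly the signed sum $\sum_{\Delta\in\mathcal{T}} D(z_\Delta(\rho))$.

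At the complete hyperbolic structure on $M$, $\mathcal{T}$ is positively oriented by Theorem \ref{idealtri}(1), and the classical Milnor--Thurston formula gives $\mathrm{Vol}(M)=\sum_{\Delta\in\mathcal{T}} D(z_\Delta)$. As $\rho$ varies along the distinguished component, the shape parameters $z_\Delta(\rho)$ deform analytically, and the signed sum $\sum_\Delta D(z_\Delta(\rho))$ continues to record the hyperbolic volume of the (possibly incomplete) hyperbolic structure on $M$ corresponding to $\rho$. When $\rho$ comes from a hyperbolic Dehn-filling along $\boldsymbol\mu$, Thurston's hyperbolic Dehn surgery theorem identifies the metric completion of this incomplete structure on $M$ with the hyperbolic manifold $M_{\boldsymbol\mu}$; since completion adds only the singular locus, which has zero three-dimensional Hausdorff measure, volume is preserved. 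This yields $\mathrm{Vol}(M_{\boldsymbol\mu})=\sum_{\Delta\in\mathcal{T}} D(z_\Delta(\rho))=\sum_{k=1}^c \mathrm{Vol}_{\mathcal{D}}(\mathrm{H}(m_{k_1}),\ldots,\mathrm{H}(m_{k_6}))$.

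The main obstacle is justifying the identity $\mathrm{Vol}(\rho)=\sum_\Delta D(z_\Delta(\rho))$ for characters $\rho$ on the distinguished component that are not necessarily close to the complete structure. Both sides are real-analytic functions on the distinguished component --- the left side because the hyperbolic volume of a representation is a real-analytic function of the character, and the right side because the shape parameters $z_j^*, \tilde z_j^*$ depend algebraically on the $u_i$ --- and they agree on the open neighborhood of the complete character where the triangulation remains positively oriented. The identity therefore extends by analytic continuation throughout the connected distinguished component, provided the shape parameters stay in $\CC\setminus\{0,1\}$ along the deformation, which is precisely the content of the genericity hypothesis on $[\rho]$.
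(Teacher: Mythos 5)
Your proposal is correct and follows essentially the same route as the paper: read off the volume from the ideal triangulation of Theorem \ref{idealtri} by summing Bloch--Wigner dilogarithms over the eight tetrahedra in each $D$-block, and identify that sum with the right-hand side by the definition of $\mathrm{Vol}_{\mathcal{D}}$. The paper's own proof is essentially a one-sentence assertion that the volume of $M_{\boldsymbol\mu}$ equals the sum of the tetrahedra volumes; you supply the justification it leaves tacit, namely the Milnor--Thurston identity at the complete structure, the real-analytic continuation of the dilogarithm-sum identity along the distinguished component (parallel to the way the paper itself uses Lemma \ref{MCV} for other quantities), and the observation that the metric completion producing $M_{\boldsymbol\mu}$ does not change volume.
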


\begin{remark}\label{rmkvolmfd}
For the case where $m=n$, by the result in \cite{HK}, if we remove at most 114 simple closed curves on each boundary torus, then the Dehn-fillings along any remaining system of simple closed curves of $\partial M$ are sufficiently long and $\rho$ lies on the distinguished component of the $\mathrm{PSL}(2;\mathbb{C})$-character variety of $M$. In particular, Theorem \ref{volmfd} gives a formula for the volume of those closed, oriented hyperbolic $3$-manifolds.
\end{remark}

\subsection{1-loop conjecture of fundamental shadow link complements and their Dehn-fillings}

Given a hyperbolic 3-manifold $M$ with toroidal boundary $\partial M = T_1 \coprod \dots \coprod T_k$, let $\rho_0:\pi_1(M)\to\mathrm{PSL}(2;\CC)$ be the discrete faithful representation coming from the complete hyperbolic structure of $M$. Let $\mathrm X_0(M)$ be the distinguished component of the $\mathrm{PSL}(2;\CC)$-character variety of $M$, i.e. the irreducible component of the $\mathrm{PSL}(2;\CC)$-character variety containing $[\rho_0]$. Let $\mathrm X^{\text{irr}}_0(M)\subset\mathrm X_0(M)$ be the subset consisting of all irreducible characters. Let $\boldsymbol\alpha= (\alpha_1,\dots, \alpha_k)$ be a system of simple closed curves such that $\alpha_i \in \pi_1(T_i)$ for $i=1,2,\dots,k$. The \emph{adjoint twisted Reidemesiter torsion} $\mathbb T_{(M,\boldsymbol\alpha)}$, introduced by Porti in \cite{P}, is a non-zero rational function defined on a non-empty Zariski open subset $\mathcal{Z}_{\boldsymbol\alpha} \subset \mathrm X^{\text{irr}}_0(M)$ consisting of all $\boldsymbol\alpha$-regular representations (see Definition \ref{reg}).  Given any $[\rho]\in \mathcal{Z}_{\boldsymbol\alpha}$, $\mathbb T_{(M,\boldsymbol\alpha)}([\rho])$ is defined as the Reidemeister torsion of the cellular complex of the universal cover of $M$ twisted by the composition of the adjoint action with the representation $\rho$ (see Section \ref{TRT} for a review). This invariant is well-behaved under doing Dehn-fillings and is expected to appear as the ``1-loop" term in the asymptotic expansion formula of different quantum invariants, such as the colored Jones polynomials of knots at roots of unity. However, computing the torsion in general is a hard problem. 

In \cite{DG}, Dimofte and Garoufalidis proposed a conjectural formula to compute the adjoint twisted Reidemeister torsion of any hyperbolic 3-manifold with toroidal boundary in terms of the shape parameters of any ideal triangulation of that manifold. To be precise, given an ideal triangulation $\mathcal{T}$ of $M$, we let $\mathcal{P}_{\mathcal{T}}: \mathcal{V}(\mathcal{T}) \to \mathrm{X}(M)$ be the map form the gluing variety $\mathcal{V}(\mathcal{T})$ of $\mathcal{T}$ to the $\mathrm{PSL}(2;\CC)$-character variety $\mathrm{X}(M)$ of $M$, defined by sending a solution of the gluing equation to the character of the associated pseudo-developing map. For any $[\rho]\in \mathcal{Z}_{\boldsymbol\alpha}$, an ideal triangulation $\mathcal{T}$ of $M$ is called $\rho$-regular if $[\rho]$ is in the image of the pseudo-developing map $\mathcal{P}_{\mathcal{T}}$.  For the discrete faithful representation $\rho_0:\pi_1(M)\to\mathrm{PSL}(2;\CC)$, since each element in $\pi_1(\partial M)$ is mapped to a parabolic element which has only one fixed point on $\partial \mathbb{H}^3$, if $\mathcal{T}$ is $\rho_0$-regular, then there exists a unique $\mathbf{z_0}\in \mathcal{V}(\mathcal{T})$ such that $\mathcal{P}_{\mathcal{T}}(\mathbf{z_0}) = ([\rho_0])$. Thus, for any $\rho_0$-regular ideal triangulation, we can use $[\rho_0]$ and $[\mathbf{z_0}]$ interchangeably. For any $\rho_0$-regular ideal triangulation $\mathcal{T}$ of $M$, Dimofte and Garoufalidis define the \emph{1-loop invariant} $\tau(M,\boldsymbol\alpha, \rho_0, \mathcal{T}) $ of the ideally triangulated 3-manifold $M$ by using the shape parameter $\mathbf{z_0}$ and the Neumann-Zagier datum \cite{NZ}. The 1-loop conjecture proposed by Dimofte and Garoufalidis suggests that the 1-loop invariant coincides with the adjoint twisted Reidemeister torsion of the manifold. 

\begin{conjecture}[Conjecture 1.8, \cite{DG}]\label{1loopconjstatementori}
Let $M$ be a hyperbolic 3-manifold $M$ with toroidal boundary $\partial M = T_1 \coprod \dots \coprod T_k$ and let $\boldsymbol\alpha$ be a system of simple closed curves of $\partial M$. Let $[\rho_0]$ be the character of the discrete faithful representation and let $\mathcal{T}$ be a $\rho_0$-regular ideal triangulation. Then
we have
$$
\tau(M,\boldsymbol\alpha, \rho_0, \mathcal{T}) 
= \pm \mathbb T_{(M,\boldsymbol\alpha)}([\rho_0]).
$$
\end{conjecture}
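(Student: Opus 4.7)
The plan is to prove the 1-loop conjecture for an arbitrary hyperbolic 3-manifold $(M,\partial M)$ by reducing to the case of fundamental shadow link complements, exploiting the Costantino-Thurston universality theorem together with a surgery formula and the topological invariance of the 1-loop invariant. First I would establish that $\tau(M,\boldsymbol\alpha,\rho_0,\mathcal{T})$ depends only on $(M,\boldsymbol\alpha,[\rho_0])$ and not on the particular $\rho_0$-regular triangulation $\mathcal{T}$. By Kalelkar-Schleimer-Segerman \cite{KSS}, any two $\rho_0$-regular ideal triangulations are related by a sequence of 2-3 Pachner moves through $\rho_0$-regular triangulations, so invariance reduces to a local check on the Neumann-Zagier symplectic data attached to a single move. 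With topological invariance in hand, one may choose the most convenient triangulation on each manifold.

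Next, for every fundamental shadow link complement $M_{\mathrm{FSL}}$, Theorem \ref{idealtri} supplies an explicit $\rho_0$-regular ideal triangulation with shape parameters written in closed form. Substituting these into the Dimofte-Garoufalidis combinatorial formula yields a closed expression for $\tau(M_{\mathrm{FSL}},\boldsymbol\alpha,\rho_0,\mathcal{T})$, while the adjoint twisted Reidemeister torsion of an FSL complement is already known in terms of a Gram matrix determinant by work of T.~Yang and the second author \cite{WY3}; matching the two, up to sign, proves the conjecture for $M_{\mathrm{FSL}}$. I would then derive parallel surgery formulas on both sides: for the 1-loop invariant by attaching a layered solid-torus triangulation to a filled cusp and tracking the change in the Neumann-Zagier matrices explicitly, and for $\mathbb T_{(M,\boldsymbol\alpha)}$ via a Mayer-Vietoris computation in the spirit of Porti. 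Combined with the FSL base case and universality, this yields the identity for every manifold obtained by sufficiently long Dehn fillings on some FSL complement.

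The main obstacle is bridging from \emph{sufficiently long} to \emph{arbitrary} Dehn fillings, since Thurston's hyperbolic Dehn surgery theorem and the convergence of solutions of the gluing equations only guarantee the argument in a neighborhood of the complete structure of $M_{\mathrm{FSL}}$. To close this gap I would argue that both $\tau$ and $\pm\mathbb T$ are rational functions on the distinguished component $\mathrm X_0$ of the character variety: the former because it is rational in the shape parameters, which form algebraic coordinates on a finite cover of $\mathrm X_0$; the latter from its algebraic definition via a CW complex. An identity of rational functions on a Zariski-open set then extends to all of $\mathrm X_0$, which after further reduction promotes the sufficiently-long statement to all characters in $\mathrm X_0$. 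The hardest technical point, and the one I expect to grind against, is to show that the Dimofte-Garoufalidis combinatorial invariant extends rationally across the locus where the chosen triangulation ceases to be geometric and where some shape parameter degenerates to $0$, $1$, or $\infty$: without such an extension, one cannot promote the FSL identity from a neighborhood of the complete structure to the characters of short Dehn fillings, and the full conjecture must genuinely go beyond the machinery developed in this paper.
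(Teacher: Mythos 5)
The statement you are trying to prove is Conjecture \ref{1loopconjstatementori} (Conjecture 1.8 of Dimofte--Garoufalidis), which this paper does \emph{not} prove: it remains open, and the paper only verifies it for fundamental shadow link complements (Theorem \ref{mainthm}, Corollary \ref{1loopconjFSLall}) and for manifolds obtained by \emph{sufficiently long} Dehn fillings on such complements (Corollary \ref{corsufflong}). Up to that point your outline matches the paper's actual strategy — topological invariance via Kalelkar--Schleimer--Segerman, the explicit geometric triangulation of $D$-blocks matched against the Gram-matrix torsion formula of \cite{WY3}, and a surgery formula for the 1-loop invariant parallel to Porti's — with one inaccuracy: KSS connectivity uses 0-2/2-0 moves in addition to 2-3/3-2 moves, holds only for characters in a Zariski open set $U_M$ and for \emph{boundary equivalent} pairs (same pseudo-developing map at infinity), and the 0-2 invariance of the 1-loop invariant is not in \cite{DG}; it has to be proved separately (Proposition \ref{inv02move}), and without the boundary-equivalence hypothesis invariance is still unknown (Remark \ref{rmkinde}).

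The genuine gap is the final reduction from ``sufficiently long fillings'' to all manifolds via Costantino--Thurston universality plus rationality. The universality theorem produces a \emph{fixed integral} filling slope for a given target $M$, which in general is not long; the restriction of the holonomy of $M$ to the FSL complement need not lie in the distinguished component $\mathrm X_0$, need not be $\boldsymbol\alpha$-regular, and need not be close to the complete structure, so neither the KSS triangulation of Proposition \ref{niceidealintro} (which requires $\rho$ near the discrete faithful representation) nor the surgery formulas (Theorem \ref{1loopsurg}, Theorem \ref{funT}(iii), Theorem \ref{Rtorthm}) apply. Moreover, the proposed ``identity of rational functions on $\mathrm X_0$'' cannot bridge different fillings: analyticity arguments in this paper take place on the gluing variety of a fixed triangulation of a fixed manifold, whereas the torsion and 1-loop invariant of a filled manifold $M'$ are attached to $M'$ itself and are evaluated at a single character per slope — they do not assemble into one rational function on $\mathrm X_0(M_{\mathrm{FSL}})$ to which Zariski-density can be applied. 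Your closing concern about extending the combinatorial invariant across degenerate shape parameters is real, but the more basic obstruction is that the distinguished-component and regularity hypotheses are exactly what cannot be verified for short integral fillings; this is why the paper stops at Corollary \ref{corsufflong} rather than proving the conjecture.
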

With respect to certain ideal triangulations of the manifolds, Conjecture \ref{1loopconjstatementori} has been verified for the figure eight knot complement \cite{DG}, the sister manifold of the figure eight knot complement \cite{S2}, all hyperbolic once-punctured torus bundle over $\SS^1$ \cite{Yo} and all fibered 3-manifolds with toroidal boundary \cite{GY}. 

As discussed in \cite[Section 4]{DG}, it is natural to generalize Conjecture \ref{1loopconjstatementori} to other characters in $\mathrm X_0(M)$. We apply geometric techniques to study the 1-loop conjecture for characters in the distinguished component of the $\mathrm{PSL}(2;\CC)$-character variety of $M$. Generically, each element in $\pi_1(\partial M)$ is mapped to a loxodromic or elliptic elements with 2 fixed points on $\partial \mathbb{H}^3$, and therefore $\mathcal{P}_{\mathcal{T}}$ is a $2^k$ to $1$ map, where $k$ is the number of boundary component of $M$. This leads us to regard the 1-loop invariant as a complex-valued function defined on the gluing variety $\mathcal{V}_{\mathcal{T}}$ and formulate the 1-loop conjecture for general cases as follows. For every $\rho_0$-regular triangulation with $\mathcal{P}_{\mathcal{T}}(\mathbf{z_0}) = [\rho_0]$, in Proposition \ref{z0smpt} we show that $\mathbf{z_0}$ is a always a smooth point of $\mathcal{V}_{\mathcal{T}}$. We let $\mathcal{V}_0({\mathcal{T}})\subset\mathcal{V}({\mathcal{T}})$ be the irreducible component containing $\mathbf{z_0}$. 

\begin{conjecture}\label{1loopconjstatement}
Let $M$ be a hyperbolic 3-manifold $M$ with toroidal boundary $\partial M = T_1 \coprod \dots \coprod T_k$ and let $\boldsymbol\alpha$ be a system of simple closed curves of $\partial M$. Let $\mathcal{T}$ be a $\rho_0$-regular ideal triangulation of $M$. For any $\mathbf{z} \in \mathcal{V}_0(\mathcal{T})$ with ${\mathcal{P}}_{\mathcal{T}}(\mathbf{z}) = [\rho_{\mathbf{z}}]$, if $[\rho_{\mathbf{z}}]\in \mathcal{Z}_{\boldsymbol\alpha}$, then
$$
\tau(M,\boldsymbol\alpha, \mathbf z, \mathcal{T}) 
= \pm \mathbb T_{(M,\boldsymbol\alpha)}([\rho_{\mathbf{z}}]).
$$
\end{conjecture}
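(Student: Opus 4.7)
The plan is to establish Conjecture \ref{1loopconjstatement} not in full generality but for the class of manifolds relevant to this paper, namely fundamental shadow link complements and their sufficiently long Dehn-fillings. The strategy splits into three phases: a direct verification for fundamental shadow link complements using the explicit triangulation of Theorem \ref{idealtri}, a topological invariance statement for the 1-loop invariant under change of triangulation, and a surgery formula that propagates the conjecture through Dehn-fillings.

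For the first phase, I would work with the geometric ideal triangulation $\mathcal{T}=\{\mathcal{T}_{\mathcal{D}}\}$ produced by Theorem \ref{idealtri}. Since the eight shape parameters on each $D$-block are explicit rational functions of the six meridian holonomies $u_1,\dots,u_6$ and the auxiliary parameter $z^*$, they can be substituted directly into the Neumann-Zagier data defining $\tau(M,\boldsymbol\mu,\mathbf{z},\mathcal{T})$. Because the triangulation is assembled block by block and each 3-punctured sphere glues exactly two blocks, I expect the relevant gluing-equation Jacobian to be close to block diagonal, so that the 1-loop invariant factors (up to an overall cohomological correction) into a product of per-$D$-block contributions. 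On the torsion side, the Wong-Yang Gram-matrix formula recalled in Section \ref{TRT} already expresses $\mathbb{T}_{(M,\boldsymbol\mu)}([\rho])$ as essentially a product of Gram determinants, one per $D$-block. The core identity to verify is therefore the block-local one: the per-block 1-loop contribution, computed from the explicit $z_k^*,\tilde z_k^*$ and $z^*$, coincides up to sign with the per-block Gram determinant.

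For the second phase, I would show that along the irreducible component $\mathcal{V}_0(\mathcal{T})$ the 1-loop invariant depends only on the character $[\rho_{\mathbf{z}}]$ and not on the choice of $\mathcal{T}$. The main input is the Pachner-connectivity theorem of Kalelkar-Schleimer-Segerman \cite{KSS}, which connects any two $\rho_0$-regular ideal triangulations of $M$ by a finite sequence of $2$-$3$ and $0$-$2$ moves through $\rho_0$-regular triangulations; it therefore suffices to check invariance under each individual move. Invariance under the $2$-$3$ move is a derivative version of the five-term dilogarithm identity and reduces to a single determinant identity on $3\times 3$ Neumann-Zagier blocks; invariance under the $0$-$2$ move is a routine pair-cancellation. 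For the third phase, I would derive a surgery formula comparing $\tau(M_{\boldsymbol\mu},\boldsymbol\alpha',\rho_{\boldsymbol\mu},\mathcal{T}')$ with $\tau(M,(\boldsymbol\mu,\boldsymbol\alpha'),\rho,\mathcal{T})$, where $\mathcal{T}'$ is obtained from $\mathcal{T}$ by inserting Jaco-Rubinstein layered solid tori along the filled cusps. The proportionality factor is computed from the linear algebra of exchanging meridian-holonomy rows for filling-slope rows in the NZ matrix, and it should match the well-known surgery behavior of the adjoint twisted Reidemeister torsion from Porti. Composing phase one with this surgery formula, and using the invariance of phase two to pass from $\mathcal{T}'$ back to an arbitrary $\rho_0$-regular triangulation of $M_{\boldsymbol\mu}$, proves the conjecture for every sufficiently long filling landing in the distinguished component.

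The step I expect to be the main obstacle is the first one: matching the block-local 1-loop contribution with the block-local Gram determinant. The shape parameters $z_k^*,\tilde z_k^*$ are intricate rational functions of $z^*$ and $u_1,\dots,u_6$, and both the prefactor $\prod_k z_k^*(1-z_k^*)$ and the logarithmic derivatives of the $z_k^*$ with respect to the meridian logarithms enter nontrivially. Showing that after the cancellations dictated by the two edge equations and six holonomy equations the expression collapses to the clean Gram determinant of the hyperideal tetrahedron will require a carefully chosen ordering of the per-block edges and cusp curves, identification of a specific $3\times 3$ sub-minor with the Gram matrix, and an algebraic identity in the variables $(z^*,u_1,\dots,u_6)$ that can likely be reduced, via the explicit quadratic satisfied by $z^*$, to a polynomial identity checkable by hand or by computer algebra. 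The other two phases follow more standard templates once the block-local identity is in place.
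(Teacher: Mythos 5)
Your three-phase plan — verify a block-local identity on the explicit $D$-block triangulation and multiply, establish triangulation-independence via the KSS Pachner-connectivity result plus 2-3 and 0-2 move invariance, and propagate through Dehn-fillings with a surgery formula matching Porti's — is precisely the structure the paper follows (Theorem \ref{mainthm}, Theorem \ref{1loopreallyinv}, Theorem \ref{1loopsurg}, yielding Corollaries \ref{1loopconjFSLall} and \ref{corsufflong}). The obstacle you flag, matching the per-block 1-loop contribution to the per-block Gram determinant, is indeed where the work is; the paper avoids brute-force differentiation of the rational formulas for $z_k^*,\tilde z_k^*$ by a change of variables $(\phi_k,\phi_k')$ that turns the per-block gluing equations into seven linear equations plus one quadratic, so the Jacobian determinant collapses to a derivative of that quadratic whose discriminant is $16\det\mathbb{G}$ (Proposition \ref{1loopDB}), while the cross-block coupling is isolated in a combinatorial factor $\pm 2^{2c-1}$ (Lemma \ref{detL}).
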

\begin{remark}
Since $[\rho_0] \in \mathcal{Z}_{\boldsymbol\alpha}$ for any $\boldsymbol\alpha$, Conjecture \ref{1loopconjstatement} implies Conjecture \ref{1loopconjstatementori}. 
\end{remark}
\begin{remark}
It is known that $\mathbb T_{(M,\boldsymbol\alpha)}$ can be extended to be a rational function defined on $\mathrm X_0^{\text{irr}}(M)$ by defining $\mathbb T_{(M,\boldsymbol\alpha)}([\rho]) = 0$ for all $[\rho] \in X_0^{\text{irr}}(M) \setminus \mathcal{Z}_{\boldsymbol\alpha} $ \cite[Theorem 4.1]{P}. In this paper, we only focus on the support $\mathcal{Z}_{\boldsymbol\alpha}$ of $\mathbb T_{(M,\boldsymbol\alpha)}$. In particular, all characters considered in this paper are smooth points of $\mathrm X_0^{\text{irr}}(M)$. 
\end{remark}
In the second part of this paper, we first verify the 1-loop conjecture of fundamental shadow link complements with respect to the ideal triangulation $\mathcal{T}$ in Theorem \ref{idealtri}. 

\begin{theorem}\label{mainthm}
Let $M$ be a fundamental shadow link complement and let $\boldsymbol\alpha$ be a system of simple closed curves of $\partial M$. Let $\mathcal{T}$ be the ideal triangulation of $M$ described in Theorem \ref{idealtri}.  Let $\mathbf{z} \in \mathcal{V}_0(\mathcal{T})$ and let ${\mathcal{P}}_{\mathcal{T}}(\mathbf{z}) = [\rho_{\mathbf{z}}]$. If $[\rho_{\mathbf{z}}]\in \mathcal{Z}_{\boldsymbol\alpha}$, then
$$
\tau(M,\boldsymbol\alpha, \mathbf{z}, \mathcal{T}) 
= \pm \mathbb T_{(M,\boldsymbol\alpha)}([\rho_{\mathbf{z}}]).
$$
\end{theorem}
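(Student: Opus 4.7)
The plan is to compute both sides of the identity explicitly using the $D$-block decomposition from Theorem \ref{idealtri}, then match the result against the Gram-matrix formula for the adjoint twisted Reidemeister torsion of fundamental shadow link complements established in \cite{WY3}. Since $\mathbf{z}_0$ is a smooth point of $\mathcal{V}_0(\mathcal{T})$ by Proposition \ref{z0smpt}, and both $\tau(M, \boldsymbol\alpha, \mathbf{z}, \mathcal{T})$ and $\mathbb{T}_{(M, \boldsymbol\alpha)}([\rho_{\mathbf{z}}])$ define rational functions on $\mathcal{V}_0(\mathcal{T})$ (the latter pulled back via $\mathcal{P}_{\mathcal{T}}$), it suffices to verify the identity on a Zariski-dense open subset. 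We therefore restrict to the locus on which Theorem \ref{idealtri}(2) gives the closed-form shape parameters $z_k^\ast, \tilde z_k^\ast$ in terms of the six ideal-vertex holonomies on each $D$-block.

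Next, I would write the 1-loop invariant as a ratio of determinants in the Neumann-Zagier data of $\mathcal{T}$. The relevant matrix has rows indexed by edges together with the chosen slopes in $\boldsymbol\alpha$, and columns indexed by the $8c$ tetrahedra, where $c$ is the number of $D$-blocks. Exploiting the partition of tetrahedra by $D$-block, together with the fact that on each block the two edge equations and the six meridian equations uniquely determine the eight shape parameters, a suitable row-column rearrangement brings the matrix into block-triangular form. Its determinant then factors as a product of $D$-block local $8 \times 8$ determinants, multiplied by an auxiliary factor that encodes how the six ideal-vertex meridians of each $D$-block are expressed in terms of the link meridians and the filling slopes $\boldsymbol\alpha$.

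The core step is then to compute each $D$-block local determinant in closed form from the explicit expressions for $z_k^\ast$ and $\tilde z_k^\ast$ displayed just before Theorem \ref{volhypideal}. The shape parameters depend on the vertex holonomies only through the single root $z^\ast$ of the quadratic $A z^2 + B z + C = 0$, so each partial derivative is a rational function in $z^\ast$ and the discriminant $\sqrt{B^2 - 4AC}$. The natural involution exchanging the two ideal octahedra within a $D$-block (interchanging the $z_k^\ast$ with the $\tilde z_k^\ast$) should pair terms whose sum simplifies, effectively halving the computation. The aim is to show that the resulting local determinant is, up to standard factors, the Gram-matrix determinant attached to the $D$-block in the \cite{WY3} formula for $\mathbb{T}_{(M, \boldsymbol\alpha)}$; assembling over all blocks together with the auxiliary factor from the previous step then produces the global identity up to sign.

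The main obstacle will be this final block-local calculation: one must verify that the derivatives of algebraic functions of the vertex holonomies organize themselves into the combinatorial Gram-matrix determinant of \cite{WY3}, with the discriminant $\sqrt{B^2 - 4AC}$ cancelling cleanly between numerator and denominator. Tracking signs through the cancellations in the $8\times 8$ determinant, and matching the auxiliary $\boldsymbol\alpha$-dependent factor with the precise normalization used by Porti \cite{P} and Wong-Yang \cite{WY3} for the torsion, will require careful bookkeeping, but no conceptual new input beyond Theorem \ref{idealtri} and the explicit formulas already in place.
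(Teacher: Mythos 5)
Your high-level architecture---decompose into $D$-blocks, compute block-locally, and match against the Gram-matrix formula of \cite{WY3}, then extend by analyticity---agrees with the paper's. However, there are three substantive gaps in the way you propose to execute it.

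First, and most seriously, the object you propose to differentiate is the wrong one. The 1-loop invariant is, by Proposition \ref{1loopgluingjacobian}, $\pm\tfrac{1}{2}\det\big(D_{\mathbf z}F(\mathbf z)\big)\big/\prod_i\xi_i^{f_i}\xi_i'^{f_i'}\xi_i''^{f_i''}$, i.e.\ the Jacobian of the gluing map $F$ \emph{with respect to the shape parameters $\mathbf z$}, evaluated at $\mathbf z^*$. Your plan---``The shape parameters depend on the vertex holonomies only through the single root $z^\ast$\,\ldots, so each partial derivative is a rational function in $z^\ast$''---indicates you want to differentiate the explicit solution $\mathbf z^*(\mathbf{H(m)})$ with respect to the holonomies. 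That Jacobian $D_{\mathbf{H(m)}}\mathbf z^*$ is an $8\times 6$ matrix tangent to the gluing variety; it does not determine the full $8\times 8$ transversal Jacobian $D_{\mathbf z}\mathcal{G}_{\mathcal{D}}$ that enters the 1-loop formula, and there is no chain-rule identity that recovers the latter determinant from the former. The paper's Proposition \ref{1loopDB} instead computes $\det\big(D_{\mathbf z}\mathcal{G}_{\mathcal{D}}(\mathbf z^*)\big)$ directly, and the computational key is the change of variables $\psi$ to the angle coordinates $\boldsymbol\phi$ (Lemma \ref{DBcomp1}): under $\psi$, seven of the eight rows of the block Jacobian become \emph{constant integer} rows and the determinant collapses to a one-line quantity; this is a genuinely different device from substituting the eight Möbius-in-$z^*$ formulas into a raw $8\times 8$ determinant, which would be a computation of a different order of magnitude.

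Second, the reduction from the global Jacobian to the product of $D$-block Jacobians is not a ``row-column rearrangement into block-triangular form.'' The edge equations of the full triangulation $\mathcal{T}$ are supported on shape parameters from \emph{several} $D$-blocks at once---each edge of $\mathcal T$ lies on a boundary torus that runs through multiple $D$-blocks---so the global matrix has no block structure to rearrange into. What is true, and is the content of Lemma \ref{detL}, is that the $6c$ vertex-holonomy equations of the $\mathcal{G}_0$ system can be reached from the $k$ meridian equations plus $6c-k$ global edge equations by an explicit change of basis whose determinant is $\pm 2^{2c-1}$; establishing this requires choosing dependent edges via a maximal tree in the graph of $D$-block gluing cycles and tracking a long case analysis. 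Your proposal does not contain this argument, and the needed ``auxiliary factor'' is not an $\boldsymbol\alpha$-dependent Jacobian as you suggest (the $\boldsymbol\alpha$-dependence is handled separately through the change-of-curves formula, Theorem \ref{COC1loop}); it is the purely combinatorial constant $2^{2c-1}$, which must exactly balance the factor $2^{3c}$ in the \cite{WY3} torsion formula.

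Third, you do not address the normalization term $\prod_i\xi_i^{f_i}\xi_i'^{f_i'}\xi_i''^{f_i''}$ coming from the combinatorial flattening. For this triangulation one has to construct a specific (generalized, half-integer) strong combinatorial flattening (Lemmas \ref{fGSCF}, \ref{diffg}, \ref{hatfCF}) and show (Lemma \ref{DBcomp2}) that the resulting normalization has a clean closed form in the angle variables; without this, even a correct determinant computation does not yield the right scalar. In summary, your plan identifies the correct target but omits the main technical devices that make the block-local determinant, the global-to-local reduction, and the flattening normalization tractable.
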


Next, we study the dependence of $\tau(M,\boldsymbol\alpha, \mathbf{z}, \mathcal{T})$ on the $(\mathbf{z}, \mathcal{T})$. Associated to an ideal triangulation $\mathcal{T}$ and a point $ \mathbf{z} \in \mathcal{V}_{\mathcal{T}}$, there is a \emph{pseudo-developing map at infinity} $\mathcal{P}_{\mathcal{T}}^\infty(\mathbf{z})$ that sends the connected components of the universal covering of $M$ to $\partial \mathbb{H}^3$, defined up to conjugation. This labeling tells us how to straigthen out the tetrahedra in the triangulation.  Generically, for each $[\rho]$, there are $2^k$ many labelings subordinated to $[\rho]$, corresponding to the $2^k$ distinct choices of shape parameters. Given two ideal triangulation $\mathcal{T}_1, \mathcal{T}_2$ with $\mathbf{z_1}\in \mathcal{V}_0(\mathcal{T}_1)$ and $\mathbf{z_2}\in \mathcal{V}_0(\mathcal{T}_2)$, we say that two pairs $(\mathcal{T}_1,  \mathbf{z_1})$ and $(\mathcal{T}_2, \mathbf{z_2})$  \emph{boundary equivalent} if ${\mathcal{P}}_{\mathcal{T}_1}(\mathbf{z_1}) = {\mathcal{P}}_{\mathcal{T}_2}(\mathbf{z_2})$ and  $\mathcal{P}^\infty_{\mathcal{T}_1}(\mathbf{z_1}) =  \mathcal{P}^\infty_{\mathcal{T}_2}(\mathbf{z_2})$ up to conjugation. 

In \cite[Corollary 1.2, 1.5 and Proposition A.1]{KSS}, Kalelkar, Schleimer and Segerman prove that there exists a Zariski open subset $U_M$ of $\mathrm X_0(M)$ containing the character $[\rho_0]$ such that for any $[\rho]\in U_M$, 
\begin{enumerate}
\item the set of $\rho$-regular ideal triangulations is non-empty, and
\item any two boundary equivalent pairs are connected by a finite sequence of boundary equivalent pairs through 0-2, 2-0, 2-3 and 3-2 Pachner moves.
\end{enumerate}
It is known that the 1-loop invariant remains unchanged under any 2-3 and 3-2 move connecting two boundary equivalent $\rho$-regular ideal triangulations \cite[Theorem 1.4, 4.1]{DG}. In Proposition \ref{inv02move}, we prove that the 1-loop invariant also remains unchanged under any 0-2 and 2-0 move connecting two boundary equivalent $\rho$-regular ideal triangulations. Altogether, we have the following  invariance of the 1-loop invariant. 

\begin{theorem}\label{1loopreallyinv} Let $M$ be a hyperbolic 3-manifold with toroidal boundary and let $\boldsymbol\alpha$ be a system of simple closed curves of $\partial M$. Let $\mathcal{T}_1$ and $\mathcal{T}_2$ be two $\rho_0$-regular ideal triangulations of $M$. Then for any $[\rho] \in \mathcal{Z}_{\boldsymbol\alpha}$, $\mathbf{z_1}\in \mathcal{V}_0(\mathcal{T}_1)$ and $\mathbf{z_2}\in \mathcal{V}_0(\mathcal{T}_2)$, if $(\mathbf{z_1}, \mathcal{T}_1)$ and $(\mathbf{z_2}, \mathcal{T}_2)$ are boundary equivalent, we have
$$
\tau(M,\boldsymbol\alpha, \mathbf{z_1}, \mathcal{T}_1) =\tau(M,\boldsymbol\alpha, \mathbf{z_2}, \mathcal{T}_2).
$$
In particular, $\tau(M,\boldsymbol\alpha, \rho_0, \mathcal{T})$ is a topological invariant. 
\end{theorem}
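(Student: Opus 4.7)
The plan is to combine the Kalelkar--Schleimer--Segerman path connectedness theorem cited in the excerpt with invariance of the 1-loop invariant under each individual Pachner move. The excerpt already supplies invariance of $\tau$ under $2$-$3$ and $3$-$2$ moves via \cite{DG}, and the paper itself supplies invariance under $0$-$2$ and $2$-$0$ moves through Proposition \ref{inv02move}. The theorem is then a matter of fitting these ingredients together and extending the equality off the open set where the KSS result directly applies.

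First I would work on the Zariski open subset $U_M \subset \mathrm X_0(M)$ of \cite{KSS}. For $[\rho] \in U_M$ and any two boundary equivalent pairs $(\mathbf{z_1}, \mathcal{T}_1)$, $(\mathbf{z_2}, \mathcal{T}_2)$ with $\mathcal{P}_{\mathcal{T}_i}(\mathbf{z_i}) = [\rho]$, the KSS theorem produces a finite chain of boundary equivalent $\rho$-regular pairs connecting $(\mathbf{z_1}, \mathcal{T}_1)$ to $(\mathbf{z_2}, \mathcal{T}_2)$ through $0$-$2$, $2$-$0$, $2$-$3$, and $3$-$2$ Pachner moves. Applying the corresponding local invariance of $\tau$ at each step along the chain yields the desired equality whenever $[\rho]\in U_M\cap \mathcal{Z}_{\boldsymbol\alpha}$.

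Next I would extend this equality to the rest of $\mathcal{Z}_{\boldsymbol\alpha}$ by analytic continuation. Fix $\mathcal{T}_1$ and $\mathcal{T}_2$, and fix a consistent choice of boundary equivalent branches; then the assignment $\mathbf{z_1}\mapsto \mathbf{z_2}(\mathbf{z_1})$ is algebraic between the irreducible components $\mathcal{V}_0(\mathcal{T}_1)$ and $\mathcal{V}_0(\mathcal{T}_2)$. Both
$$\mathbf{z_1}\longmapsto \tau(M,\boldsymbol\alpha, \mathbf{z_1}, \mathcal{T}_1) \quad \text{and} \quad \mathbf{z_1}\longmapsto \tau(M,\boldsymbol\alpha, \mathbf{z_2}(\mathbf{z_1}), \mathcal{T}_2)$$
are rational functions on the irreducible variety $\mathcal{V}_0(\mathcal{T}_1)$, and they agree on the preimage of $U_M$, which is a nonempty Zariski open subset since it contains $\mathbf{z_0}$. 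By the identity principle for rational functions on an irreducible variety, they agree wherever both are regular; this includes every point whose character lies in $\mathcal{Z}_{\boldsymbol\alpha}$.

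For the topological invariance of $\tau(M,\boldsymbol\alpha, \rho_0, \mathcal{T})$, I would use that under $\rho_0$ every peripheral element is parabolic, so each cusp has a unique fixed point on $\partial \mathbb{H}^3$. Hence for any two $\rho_0$-regular triangulations $\mathcal{T}_1, \mathcal{T}_2$ the pseudo-developing maps at infinity attached to the unique preimages of $[\rho_0]$ are forced to coincide up to conjugation, so the pairs $(\mathbf{z_0}, \mathcal{T}_1)$ and $(\mathbf{z_0}, \mathcal{T}_2)$ are automatically boundary equivalent and the first two steps apply. The main obstacle in the program is Proposition \ref{inv02move} itself: unlike a $2$-$3$ move, a $0$-$2$ move introduces a pair of tetrahedra whose shape parameters are tied by the new edge equation, so one has to verify carefully that the extra factors in the Neumann--Zagier presentation of $\tau$ cancel.
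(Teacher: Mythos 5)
Your argument is correct and follows essentially the same route as the paper: the paper's proof likewise invokes the Kalelkar--Schleimer--Segerman chain-of-moves theorem on a Zariski open set, combines the 2-3/3-2 invariance from Dimofte--Garoufalidis with Proposition~\ref{inv02move} for the 0-2 and 2-0 moves, and then extends the identity by continuity of both sides (via the birational map of Proposition~\ref{biratiso}). Your closing observation about uniqueness of the preimage of $[\rho_0]$ forcing boundary equivalence is exactly the point the paper makes in the introduction to deduce topological invariance of $\tau(M,\boldsymbol\alpha,\rho_0,\mathcal{T})$.
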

\begin{remark}\label{rmkinde}
Without the assumption that $\mathcal{P}^\infty_{\mathcal{T}_1}(\mathbf{z_1}) =  \mathcal{P}^\infty_{\mathcal{T}_2}(\mathbf{z_2})$ up to conjugation, in general it is still unknown whether the 1-loop invariant defined by using $(\mathcal{T}_1,\mathbf{z_1})$ is the same as that defined by using $(\mathcal{T}_2, \mathbf{z_2})$. 
\end{remark}
\begin{corollary}\label{1loopreallyinvcor}
If Conjecture \ref{1loopconjstatement} is true for some $\rho_0$-regular ideal triangulation, then it is true for all $\rho_0$-regular ideal triangulation.
\end{corollary}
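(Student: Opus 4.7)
The plan is to anchor at the discrete faithful character $[\rho_0]$, where boundary equivalence between any two $\rho_0$-regular triangulations is forced, and then to propagate the identity to all of $\mathcal{V}_0(\mathcal{T}_2)$ using the algebraicity of both sides.

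Suppose Conjecture \ref{1loopconjstatement} holds for a $\rho_0$-regular triangulation $\mathcal{T}_1$, and let $\mathcal{T}_2$ be any other $\rho_0$-regular triangulation. For $i=1,2$, let $\mathbf{z_0^{(i)}}\in\mathcal{V}_0(\mathcal{T}_i)$ denote the unique preimage of $[\rho_0]$ under $\mathcal{P}_{\mathcal{T}_i}$. Since $\rho_0$ is parabolic on every peripheral subgroup, each element of $\pi_1(\partial M)$ has a unique fixed point on $\partial\mathbb{H}^3$, so the pseudo-developing map at infinity of $[\rho_0]$ is uniquely determined up to conjugation. Consequently $(\mathbf{z_0^{(1)}},\mathcal{T}_1)$ and $(\mathbf{z_0^{(2)}},\mathcal{T}_2)$ are automatically boundary equivalent.

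By Proposition \ref{z0smpt}, each $\mathbf{z_0^{(i)}}$ is a smooth point of $\mathcal{V}_0(\mathcal{T}_i)$, and since $\dim_\CC \mathcal{V}_0(\mathcal{T}_i)=\dim_\CC \mathrm{X}_0(M)$, the map $\mathcal{P}_{\mathcal{T}_i}$ is a local biholomorphism at $\mathbf{z_0^{(i)}}$. Therefore there exist a connected open neighborhood $U\subset \mathrm{X}_0(M)$ of $[\rho_0]$ and holomorphic sections $s_i:U\to\mathcal{V}_0(\mathcal{T}_i)$ with $s_i([\rho_0])=\mathbf{z_0^{(i)}}$. Since the pseudo-developing map at infinity varies continuously with the shape parameters, $(s_1([\rho]),\mathcal{T}_1)$ and $(s_2([\rho]),\mathcal{T}_2)$ remain boundary equivalent for every $[\rho]\in U$. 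Combining Theorem \ref{1loopreallyinv} with the assumption, we obtain
$$
\tau(M,\boldsymbol\alpha,s_2([\rho]),\mathcal{T}_2)=\tau(M,\boldsymbol\alpha,s_1([\rho]),\mathcal{T}_1)=\pm\,\mathbb T_{(M,\boldsymbol\alpha)}([\rho])
$$
for every $[\rho]\in U\cap \mathcal{Z}_{\boldsymbol\alpha}$.

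Finally, both $\tau(M,\boldsymbol\alpha,-,\mathcal{T}_2)$ and $\mathbb T_{(M,\boldsymbol\alpha)}\circ \mathcal{P}_{\mathcal{T}_2}$ are rational functions on the irreducible variety $\mathcal{V}_0(\mathcal{T}_2)$. The preceding step gives $\tau^2=(\mathbb T\circ \mathcal{P}_{\mathcal{T}_2})^2$ on the open subset $s_2(U\cap \mathcal{Z}_{\boldsymbol\alpha})$, hence on all of $\mathcal{V}_0(\mathcal{T}_2)$ by irreducibility; and since the complement of a proper Zariski closed subset in an irreducible complex variety is connected in the analytic topology, the sign in $\tau=\pm\mathbb T$ is constant across $\mathcal{V}_0(\mathcal{T}_2)\cap\mathcal{P}_{\mathcal{T}_2}^{-1}(\mathcal{Z}_{\boldsymbol\alpha})$. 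This is exactly Conjecture \ref{1loopconjstatement} for $\mathcal{T}_2$. The subtle point is that a boundary-equivalent lift from $\mathcal{V}_0(\mathcal{T}_2)$ to $\mathcal{V}_0(\mathcal{T}_1)$ need not exist \emph{globally}, because the $2^k$-to-$1$ holonomy map has branch loci that can disconnect the preferred lift; the rational-function argument bypasses this difficulty by requiring the boundary equivalence only on a single open neighborhood of $\mathbf{z_0^{(2)}}$, after which algebraicity does the rest.
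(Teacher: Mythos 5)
Your overall strategy — anchor at $[\rho_0]$, exploit the automatic boundary equivalence there, transport the identity to a neighborhood, and finish by algebraicity — is the same as the paper's, but the mechanism you use to transport is broken. You assert that because $\mathbf{z_0^{(i)}}$ is a smooth point of $\mathcal{V}_0(\mathcal{T}_i)$ and $\dim_\CC \mathcal{V}_0(\mathcal{T}_i)=\dim_\CC \mathrm{X}_0(M)$, the map $\mathcal{P}_{\mathcal{T}_i}$ is a local biholomorphism at $\mathbf{z_0^{(i)}}$, so holomorphic sections $s_i$ exist. This is false: equality of dimensions does not give invertibility of the differential. In fact, the paper emphasizes that $\mathcal{P}_{\mathcal{T}}$ is generically $2^k$-to-$1$ (one $\mathbb{Z}/2$ for each cusp, coming from the choice of fixed point on $\partial\mathbb{H}^3$), and that $\mathbf{z_0}$ is the \emph{unique} preimage of $[\rho_0]$. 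A finite map of degree $2^k>1$ with a one-point fiber over $[\rho_0]$ is necessarily ramified there, so $D\mathcal{P}_{\mathcal{T}_i}$ is degenerate at $\mathbf{z_0^{(i)}}$ and no local section through $\mathbf{z_0^{(i)}}$ can exist (the local model is $u\mapsto u^2$ in each cusp coordinate). Consequently the claim ``$(s_1([\rho]),\mathcal{T}_1)$ and $(s_2([\rho]),\mathcal{T}_2)$ remain boundary equivalent for every $[\rho]\in U$'' has no objects to refer to, and the subsequent propagation step collapses.

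The paper avoids this by never lifting from $\mathrm{X}_0(M)$ at all: Proposition \ref{biratiso} produces a rational map $\mathbf{z_1}(\cdot)$ from a Zariski open $W\subset\mathcal{V}_0(\mathcal{T}_2)$ to $\mathcal{V}_0(\mathcal{T}_1)$ carrying $\mathbf{z_0^2}$ to $\mathbf{z_0^1}$, built from a sequence of Pachner moves. Since each such move preserves boundary equivalence, for every $\mathbf{z_2}\in W$ the pairs $(\mathbf{z_2},\mathcal{T}_2)$ and $(\mathbf{z_1}(\mathbf{z_2}),\mathcal{T}_1)$ are boundary equivalent; Theorem \ref{1loopreallyinv} then gives $\tau(M,\boldsymbol\alpha,\mathbf{z_2},\mathcal{T}_2)=\tau(M,\boldsymbol\alpha,\mathbf{z_1}(\mathbf{z_2}),\mathcal{T}_1)=\pm\mathbb T_{(M,\boldsymbol\alpha)}([\rho_{\mathbf{z}_2}])$ on $W$, and continuity (equivalently, your irreducibility argument) finishes the job. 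Your proof becomes correct if you replace the nonexistent sections $s_i$ with the intertwining birational map of Proposition \ref{biratiso}; the final ``square and use irreducibility, then fix the sign by connectedness'' passage is fine and is just a more explicit version of the paper's ``by continuity of both sides.''
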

Combining Theorem \ref{mainthm} and Corollary \ref{1loopreallyinvcor}, we have the following result about the 1-loop conjectures for all fundamental shadow link complements.
\begin{corollary}\label{1loopconjFSLall}
Conjecture \ref{1loopconjstatementori} and \ref{1loopconjstatement} hold for all fundamental shadow link complements.
\end{corollary}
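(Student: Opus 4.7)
The plan is essentially to chain together the two ingredients already assembled in the paper: the direct verification of Conjecture \ref{1loopconjstatement} for a specific triangulation (Theorem \ref{mainthm}), and the triangulation-independence result (Corollary \ref{1loopreallyinvcor}). Before applying them, I would first record the preliminary observation that the ideal triangulation $\mathcal{T}$ furnished by Theorem \ref{idealtri} is $\rho_0$-regular. This is immediate from part (1) of that theorem: the triangulation is geometric at the complete hyperbolic structure, so the shape parameters $\mathbf{z_0}$ realizing the regular-ideal-octahedron decomposition satisfy $\mathcal{P}_{\mathcal{T}}(\mathbf{z_0}) = [\rho_0]$, placing $[\rho_0]$ in the image of the pseudo-developing map. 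Hence $\mathcal{T}$ is a valid triangulation on which the 1-loop invariant can be evaluated at the discrete faithful character.

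Next I would apply Theorem \ref{mainthm} directly. For a fundamental shadow link complement $M$, any system $\boldsymbol\alpha$ of simple closed curves on $\partial M$, and any $\mathbf{z} \in \mathcal{V}_0(\mathcal{T})$ with ${\mathcal{P}}_{\mathcal{T}}(\mathbf{z}) = [\rho_{\mathbf{z}}] \in \mathcal{Z}_{\boldsymbol\alpha}$, Theorem \ref{mainthm} gives
$$\tau(M,\boldsymbol\alpha, \mathbf{z}, \mathcal{T}) = \pm \mathbb T_{(M,\boldsymbol\alpha)}([\rho_{\mathbf{z}}]).$$
This is exactly Conjecture \ref{1loopconjstatement} for the pair $(M,\mathcal{T})$. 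To promote it from a statement about this particular triangulation to a statement about arbitrary $\rho_0$-regular ideal triangulations, I invoke Corollary \ref{1loopreallyinvcor}, which was itself a consequence of the topological invariance result Theorem \ref{1loopreallyinv} built on the work of Kalelkar--Schleimer--Segerman \cite{KSS}. This establishes Conjecture \ref{1loopconjstatement} for every fundamental shadow link complement and every $\rho_0$-regular triangulation thereof.

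Finally, to deduce Conjecture \ref{1loopconjstatementori}, I would appeal to the remark immediately following the statement of Conjecture \ref{1loopconjstatement}: since $[\rho_0] \in \mathcal{Z}_{\boldsymbol\alpha}$ for every $\boldsymbol\alpha$, the general version specializes to the original version at $\mathbf{z} = \mathbf{z_0}$. Therefore Conjecture \ref{1loopconjstatementori} also holds for all fundamental shadow link complements.

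There is essentially no technical obstacle in this proof; all of the work has been front-loaded into Theorems \ref{idealtri}, \ref{mainthm}, and \ref{1loopreallyinv}. The only point that warrants explicit mention, rather than being left implicit, is the verification that $\mathcal{T}$ is $\rho_0$-regular, since Corollary \ref{1loopreallyinvcor} is phrased conditionally on the existence of \emph{some} such $\rho_0$-regular triangulation for which the conjecture is known. Once that bookkeeping observation is made, the corollary follows by a one-line composition of the preceding results.
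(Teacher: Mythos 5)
Your proof is correct and follows exactly the paper's own route: the authors simply observe that the corollary follows by "combining Theorem \ref{mainthm} and Corollary \ref{1loopreallyinvcor}," with the $\rho_0$-regularity of $\mathcal{T}$ implicit from Theorem \ref{idealtri}(1). Your explicit bookkeeping observation that $\mathcal{T}$ is $\rho_0$-regular, and the note that Conjecture \ref{1loopconjstatementori} is the specialization of Conjecture \ref{1loopconjstatement} at $[\rho_0]$, are both sound and just make explicit what the paper leaves implicit.
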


Next, we study how the 1-loop invariants behave under Dehn-fillings. Theorem \ref{1loopreallyinv} allows us to investigate the 1-loop invariants by choosing suitable triangulations that simplify the computation.
The following result from \cite{KSS} provides ideal triangulations for hyperbolic 3-manifolds that are well-behaved under Dehn-fillings (see Proposition B.1 and Remark B.2 in \cite{KSS} for a more general result).
\begin{proposition}[Proposition B.1, \cite{KSS}]\label{niceidealintro}
Let $M$ be a hyperbolic 3-manifold with $k\geq 2$ toroidal boundary $T_1,\dots,T_k$. Given $l \in \{1,2,\dots, k-1\}$, let $(\alpha_1,\alpha_2,\dots,\alpha_l)$ be a system of simple closed curves on $T_1,\dots, T_l$. Let $M'$ be the manifold obtained by doing Dehn-fillings on the boundaries $T_1,\dots, T_l$ of $M$ that homotopically ``kill" the curves $(\alpha_1,\alpha_2,\dots,\alpha_l)$. Let $\rho' : \pi_1(M') \to \mathrm{PSL}(2;\CC)$ be a representation and let $\rho = \rho' |_{\pi_1(M)}$. Assume that $\rho$ is sufficiently close to the discrete faithful representation of $M$. Then there exists an ideal triangulation $\widehat{\mathcal{T}}$ of $M$ such that the following hold.
\begin{enumerate}
\item For $j = 1, . . . , k-1$, the cusp corresponding to $T_j$  meets exactly two ideal tetrahedra, $\Delta_1^j$ and $\Delta_2^j$. Each of these tetrahedra meets $T_j$ in exactly one ideal vertex.
\item On each $\{T_i\}_{i=1}^{l}$, if we remove the two ideal tetrahedra $\Delta_1^i,\Delta_2^i$ and fold the once-punctured torus along certain diagonal as shown in Figure \ref{niceideal1}, we obtain an ideal triangulation $\widehat{\mathcal{T}}'$ of the manifold $M'$.
\item There exists a choice of generators for $\mathrm{H}_1(T_k;\mathbb{Z})$, represented by curves $m_k$ and $l_k$, such that $m_k$ and $l_k$ meet the cusp triangulation inherited from $\widehat{\mathcal{T}}$ in a sequence of arcs cutting off single vertices of triangles, without backtracking, and such that $m_k$ and $l_k$ are disjoint from the tetrahedra $\Delta_1^i$ and $\Delta_2^i$, for all $i = 1, . . . , k-1$.
\item There exists $\mathbf{\widehat{z}} \in \mathcal{V}_{\widehat{\mathcal{T}}}$ and $\mathbf{\widehat{z}'} \in \mathcal{V}_{\widehat{\mathcal{T}}'}$ such that
$\mathcal{P}_{\widehat{\mathcal{T}}}(\mathbf{\widehat{z}}) = [\rho]$ and $\mathcal{P}_{\widehat{\mathcal{T}}'}(\mathbf{\widehat{z}'}) = [\rho']$. Moreover, $\mathbf{\widehat{z}'}$ is obtained from $\mathbf{\widehat{z}}$ by removing the shape parameters associated to the tetrahedra $\{\Delta_1^i, \Delta_2^i\}_{i=1}^l$. 
\end{enumerate} 
\end{proposition}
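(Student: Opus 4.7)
The plan is to work backwards: first produce an ideal triangulation $\widehat{\mathcal{T}}'$ of the filled manifold $M'$ with desirable properties on the unfilled cusp $T_k$, and then \emph{unfold} each filled cusp by inserting two layered solid-torus tetrahedra in the sense of Jaco--Rubinstein \cite{JR}, to obtain the triangulation $\widehat{\mathcal{T}}$ of $M$.

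First I would produce a $\rho'$-regular ideal triangulation $\widehat{\mathcal{T}}'$ of $M'$ carrying a solution $\widehat{\mathbf{z}'} \in \mathcal{V}_{\widehat{\mathcal{T}}'}$ with $\mathcal{P}_{\widehat{\mathcal{T}}'}(\widehat{\mathbf{z}'}) = [\rho']$. Existence of such a triangulation is guaranteed by the existence results for $\rho$-regular triangulations from \cite{KSS}, applied to $M'$ and $\rho'$, since $\rho'$ is close to the discrete faithful representation of $M'$ whenever $\rho$ is close to that of $M$. By performing a finite sequence of 2-3 and 3-2 moves confined to a regular neighborhood of $T_k$, I can simplify the cusp triangulation on $T_k$ so that a chosen pair of generators $m_k, l_k$ of $H_1(T_k;\ZZ)$ meets the cusp triangulation in arcs cutting off single vertices, without backtracking. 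Such local simplifications are always available because the cusp triangulation of $T_k$ is determined combinatorially by 2-3 moves in the interior, away from the other cusps.

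Next I unfold each filled cusp. For every $j \in \{1,\ldots,l\}$, the cusp of $M'$ corresponding to the solid torus attached to $T_j$ has a once-punctured-torus cross-section in $M$ that collapses, under the Dehn-filling, along the diagonal visible in Figure \ref{niceideal1}. Reversing this collapse amounts to inserting two ideal tetrahedra $\Delta_1^j, \Delta_2^j$ layered along the folded edge — precisely the top two layers of a Jaco--Rubinstein layered solid torus — each of which meets $T_j$ in exactly one ideal vertex. The construction is supported in a small neighborhood of the filled cusp, so the cusp triangulation of $T_k$, and in particular the arcs along $m_k$ and $l_k$, remains disjoint from all the inserted tetrahedra $\Delta_1^j, \Delta_2^j$. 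This immediately establishes properties (1), (2) and (3), with the inverse operation being exactly the removal-and-folding described in (2).

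The main obstacle is property (4), the compatibility of shape parameters. The triangulation $\widehat{\mathcal{T}}$ has $2l$ more shape parameters and $2l$ more edge equations than $\widehat{\mathcal{T}}'$; simultaneously, $\widehat{\mathcal{T}}$ prescribes holonomies on $l$ additional cusps according to $\rho$, whereas $\rho'$ satisfies the trivial meridian relations killing $\alpha_1,\ldots,\alpha_l$. Given $\widehat{\mathbf{z}'}$, the problem reduces, for each $j$, to solving a system of two equations in the two unknown shape parameters of $\Delta_1^j, \Delta_2^j$, namely the two internal edge equations subject to the condition that the induced meridian holonomy on $T_j$ equals that of $\rho$. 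At the discrete faithful representation $\rho_0$ of $M$, the layered tetrahedral pair admits an explicit geometric solution — the canonical geometric filling of the layered solid torus by two positively oriented ideal tetrahedra — which one checks is a smooth point of the local system defined by these two equations. By the implicit function theorem at $[\rho_0]$, a unique nearby solution therefore exists for every $\rho'$ sufficiently close to the discrete faithful character of $M'$. The resulting $\widehat{\mathbf{z}} \in \mathcal{V}_{\widehat{\mathcal{T}}}$ satisfies $\mathcal{P}_{\widehat{\mathcal{T}}}(\widehat{\mathbf{z}}) = [\rho]$ and reduces to $\widehat{\mathbf{z}'}$ after the prescribed deletion of shape parameters, completing the proof.
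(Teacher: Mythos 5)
This statement is imported verbatim from Kalelkar--Schleimer--Segerman: the paper cites it as Proposition~B.1 of \cite{KSS} and supplies no proof. There is therefore no proof in the paper to compare against; I can only evaluate your outline on its own terms.

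Your outline runs in the wrong direction, and this is where the real gap lies. You propose to first choose a $\rho'$-regular triangulation $\widehat{\mathcal{T}}'$ of the filled manifold $M'$ and then \emph{unfold} each filled cusp by inserting two layered tetrahedra. But unfolding along a filled cusp is only available if $\widehat{\mathcal{T}}'$ already has very special local combinatorics near the core geodesic of the Dehn-filled solid torus: one needs a particular edge of $\widehat{\mathcal{T}}'$ playing the role of $f+g$ in Figure~\ref{niceideal1}, with the two adjacent faces arranged so that ungluing them and inserting two tetrahedra reproduces a one-vertex triangulated torus cusp whose meridian is the filling slope $\alpha_j$. A generic $\rho'$-regular triangulation of $M'$ will not have the core curve represented by an edge at all, let alone in this exact configuration, and your sentence ``Reversing this collapse amounts to inserting two ideal tetrahedra $\Delta_1^j, \Delta_2^j$'' silently assumes the outcome. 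The standard construction (and, as far as I understand, the one in \cite{KSS}) works forward: start from \emph{any} triangulation of $M$ and modify it, using layered solid tori of Jaco--Rubinstein \cite{JR} plus Pachner moves, so that each cusp $T_j$, $j\le k-1$, meets exactly two tetrahedra with the correct exposed slope; then $\widehat{\mathcal{T}}'$ is \emph{defined} by folding, and properties (1)--(3) are built in rather than deduced. There is also a subsidiary circularity worth flagging: you invoke ``the existence results for $\rho$-regular triangulations from \cite{KSS}'' to kick off a proof of a result from \cite{KSS}; this can be legitimate if one tracks the logical dependencies carefully, but it should be stated explicitly.

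Two smaller points. First, your equation count for property (4) is off. Adding the two tetrahedra $\Delta_1^j, \Delta_2^j$ adds two shape parameters, but it adds only one \emph{independent} edge equation net (the new edge $h$ forces $z_{n+1}z_{n+2}=1$, after which the $f$- and $g$-equations in $\widehat{\mathcal{T}}$ collapse to the single already-satisfied $f+g$-equation in $\widehat{\mathcal{T}}'$, as exploited in the paper's proof of Theorem~\ref{1loopsurg}); the second constraint is the meridian holonomy on $T_j$. So it is genuinely two equations in two unknowns, not the overdetermined ``two internal edge equations subject to the meridian condition'' as phrased. Second, the final part of (4) --- that $\widehat{\mathbf{z}}'$ is obtained from $\widehat{\mathbf{z}}$ by literally deleting the new coordinates and leaving the rest unchanged --- is exactly the compatibility just described, and is not automatic from the implicit function theorem alone; it uses that the new edge equation $z_{n+1}z_{n+2}=1$ makes the old edge equations consistent. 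Your IFT argument should be rephrased to make this explicit.
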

\begin{figure}[h]
\centering
\includegraphics[scale=0.2]{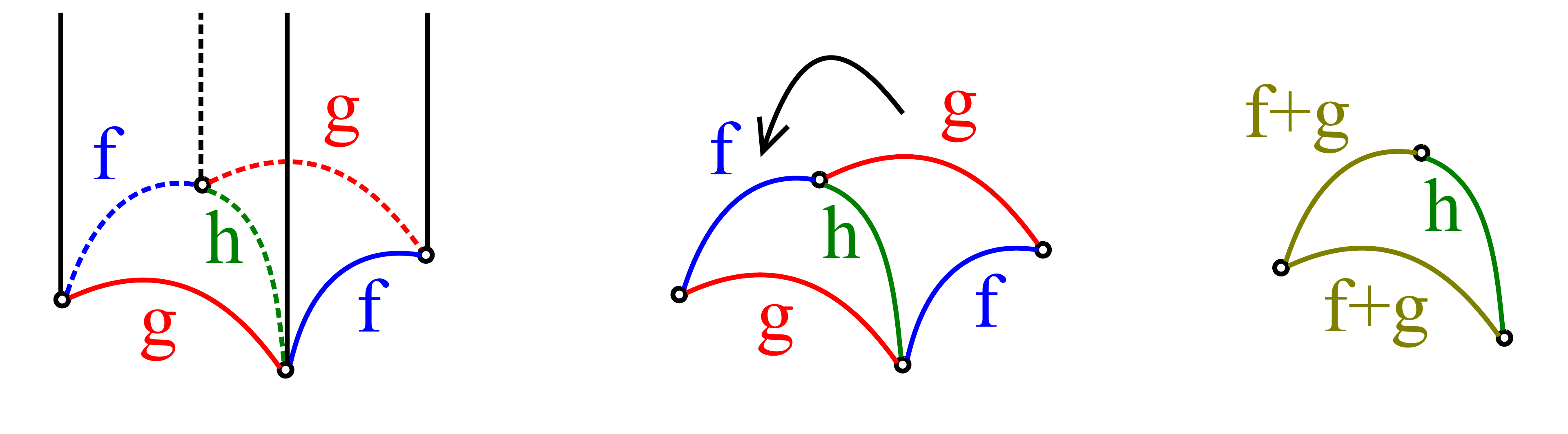}
\caption{In Proposition \ref{niceidealintro}, the cusp $T_j$ for $j=1,\dots,k-1$ meet exactly two ideal tetrahedra with the edges of the base triangles labelled by $f,g$ and $h$ (left). To obtain a triangulation of the Dehn-filled manifold, we first remove the two tetrahedra and then fold the two ideal triangles along the edge labelled by $h$ (middle). After the gluing, the four edges of the parallelogram are identified and labelled by $f+g$ (right). }\label{niceideal1}
\end{figure}
By using the triangulations described in Proposition \ref{niceidealintro}, we obtain the following result.
\begin{theorem}\label{1loopsurg} Let $\boldsymbol \alpha = (\alpha_1,\dots,\alpha_l,\alpha_{l+1},\dots,\alpha_k)$ and $\boldsymbol\alpha' = (\alpha_{l+1},\dots,\alpha_k)$ be systems of simple closed curves on $\partial M$ and $\partial M'$ respectively. 
Under the assumption of Proposition \ref{niceidealintro}, we have
$$
\tau(M',\boldsymbol \alpha',\mathbf{\widehat{z}'},\widehat{\mathcal{T}}')
= 
\pm \tau(M,\boldsymbol \alpha,\mathbf{\widehat{z}},\widehat{\mathcal{T}})\frac{1}{\prod_{i=1}^l 4\sinh^2 \frac{\mathrm{H}(\gamma_i)}{2}},
$$
where $\mathrm{H}(\gamma_i)$ is the holonomy of the core curve $\gamma_i$ of the i-th Dehn-filled solid torus.
\end{theorem}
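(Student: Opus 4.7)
The strategy is to compare the Neumann--Zagier square matrices underlying $\tau(M,\boldsymbol\alpha,\mathbf{\widehat{z}},\widehat{\mathcal{T}})$ and $\tau(M',\boldsymbol\alpha',\mathbf{\widehat{z}'},\widehat{\mathcal{T}}')$ directly, exploiting the strong locality of the triangulation modification granted by Proposition \ref{niceidealintro}. Denote by $NZ(\mathcal{T})$ the square matrix whose rows are the log-derivatives, with respect to the shape parameters, of a complete system of edge equations together with one meridian equation per boundary component. By part (4) of Proposition \ref{niceidealintro}, the shape vector $\mathbf{\widehat{z}'}$ is obtained from $\mathbf{\widehat{z}}$ by forgetting the $2l$ shape parameters $(z_1^i,z_2^i)_{i=1}^l$ associated with the removed pairs $\{\Delta_1^i,\Delta_2^i\}_{i=1}^l$, so the computation reduces to identifying a specific block of $NZ(\widehat{\mathcal{T}})$.

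Order the tetrahedra of $\widehat{\mathcal{T}}$ so that $\{\Delta_1^i,\Delta_2^i\}_{i=1}^l$ come first, and order the rows so that for each $i\le l$, the meridian equation for $\alpha_i$ together with two of the three local edge equations $E_f^i,E_g^i,E_h^i$ at the cusp $T_i$ appear in the first $3l$ rows. Parts (1) and (3) of Proposition \ref{niceidealintro} imply that the remaining rows (the complementary local edge equations, the meridian equations for the unfilled cusps, and the surviving global edge equations) do not involve any of the removed shape columns. Using the combinatorics of the fold in Figure \ref{niceideal1}, the natural linear combination of $(E_f^i,E_g^i,E_h^i)$ that produces the new edge equation $E_{f+g}^i$ of $\widehat{\mathcal{T}}'$ yields, on restriction to the surviving shape columns, precisely the Neumann--Zagier matrix of $\widehat{\mathcal{T}}'$. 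Consequently the resulting block upper-triangular form gives
\begin{equation*}
\det NZ(\widehat{\mathcal{T}})=\det NZ(\widehat{\mathcal{T}}')\cdot \prod_{i=1}^l \det L_i,
\end{equation*}
where $L_i$ is the $2\times 2$ block of local equations against the columns $(\log z_1^i,\log z_2^i)$.

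The final step identifies each $\det L_i$, after dividing by the local normalization $z_1^i(1-z_1^i)z_2^i(1-z_2^i)$ coming from the shape-dependent prefactor in the definition of $\tau$, with $4\sinh^2(\mathrm{H}(\gamma_i)/2)$. The logarithmic holonomy $\mathrm{H}(\gamma_i)$ of the core curve $\gamma_i$ equals, up to a unimodular change of basis on $\mathrm{H}_1(T_i;\mathbb{Z})$, the logarithmic holonomy of a longitude on $T_i$ evaluated once the meridian $\alpha_i$ is killed, hence an explicit linear function of $(\log z_1^i,\log z_2^i,\log(1-z_1^i),\log(1-z_2^i))$ dictated by the local cusp triangulation. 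A direct $2\times 2$ Jacobian evaluation then produces
\begin{equation*}
\det L_i=\pm\bigl(e^{\mathrm{H}(\gamma_i)/2}-e^{-\mathrm{H}(\gamma_i)/2}\bigr)^2 \cdot z_1^i(1-z_1^i)z_2^i(1-z_2^i)=\pm 4\sinh^2(\mathrm{H}(\gamma_i)/2)\cdot z_1^i(1-z_1^i)z_2^i(1-z_2^i),
\end{equation*}
which combined with the block factorization gives the surgery formula.

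The main obstacle is the combinatorial bookkeeping of the fold: verifying that a specific linear combination of the local edge equations $(E_f^i,E_g^i,E_h^i)$ of $\widehat{\mathcal{T}}$, restricted to the non-removed shape columns, equals exactly the new edge equation $E_{f+g}^i$ of $\widehat{\mathcal{T}}'$, without introducing spurious scaling factors. This is a local verification at each filled cusp involving only the two-triangle cusp triangulation of $T_i$ and the folding depicted in Figure \ref{niceideal1}. Once this is settled, the two-variable identification of $\det L_i$ with $4\sinh^2(\mathrm{H}(\gamma_i)/2)$ follows from a short direct computation in the spirit of the shape parameter manipulations used in the proof of Theorem \ref{mainthm}.
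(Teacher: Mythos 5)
Your high-level strategy — block-factorizing the Neumann--Zagier Jacobian so that the two removed shape columns contribute a local $2\times 2$ determinant while the remaining block reproduces the filled manifold's Jacobian — is exactly the paper's strategy. However, there are three concrete problems in the execution.

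First, the row operation achieving block triangularity is not what you describe. You propose taking a linear combination of the three local edge equations $(E_f^i,E_g^i,E_h^i)$, but $E_h$ is a dependent edge that also meets tetrahedra outside $\{\Delta_1^i,\Delta_2^i\}$, so it is not clear that the resulting row, restricted to the surviving shape columns, equals the $\widehat{\mathcal{T}}'$-edge equation. The paper instead eliminates the removed-column entries of the surviving edge row by adding a multiple of the \emph{meridian} row $\alpha_i$: by Proposition \ref{niceidealintro}(1) the meridian meets only the two new tetrahedra, so its row vanishes outside the removed columns, and Lemma \ref{1loopsurglem} shows (using the crucial identity $z_{n+1}z_{n+2}=1$, which you never invoke) that the meridian's restriction to the two removed columns is a scalar multiple of $(1/z_{n+1}, 1/z_{n+2})$. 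Without this mechanism, the block factorization does not go through cleanly.

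Second, your claimed formula $\det L_i = \pm 4\sinh^2(\mathrm{H}(\gamma_i)/2)\cdot z_1^i(1-z_1^i)z_2^i(1-z_2^i)$ does not match a direct computation. With $z_{n+1}z_{n+2}=1$ the actual $2\times 2$ block ($g$-row against $\alpha$-row, against the two removed columns) has determinant $\dfrac{z_{n+1}+z_{n+2}+2}{(z_{n+1}-1)(z_{n+2}-1)}$, and the flattening normalization contributed by the two new tetrahedra, with the half-integer flattening $(0,\tfrac12,\tfrac12)$, is $\pm\dfrac{1}{(1-z_{n+1})(1-z_{n+2})}$; their ratio is $\pm(z_{n+1}+z_{n+2}+2)=\mp 4\sinh^2(\mathrm{H}(\gamma_i)/2)$. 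Your normalization $z_1^i(1-z_1^i)z_2^i(1-z_2^i)$ introduces an extra factor of $(1-z_{n+1})(1-z_{n+2})$ relative to the correct one (after using $z_{n+1}z_{n+2}=1$), which is not generically $\pm 1$, so the quantities do not cancel as claimed.

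Third, you gloss over the combinatorial flattening. The denominator $\prod_i \xi_i^{f_i}\xi_i'^{f_i'}\xi_i''^{f_i''}$ in the 1-loop invariant depends on the flattening, and extending a strong combinatorial flattening of $\widehat{\mathcal{T}}'$ to one of $\widehat{\mathcal{T}}$ is nontrivial: one needs the constraint $f_1=f_2$, $f_1'=f_2'$, $f_1''=f_2''$ on the two new tetrahedra (which follows from the strong flattening condition applied to simple closed curves crossing the fold, see Figure \ref{niceideal3}), and the sign of the resulting correction must be checked (the paper's Lemma \ref{changegencom} does this). Simply "dividing by the local normalization" hides a nontrivial step.
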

Note that the behavior of the 1-loop invariants under Dehn-fillings described in Theorem \ref{1loopsurg} matches with the surgery formula satisfied by the adjoint twisted Reidemeister torsion \cite[Theorem 4.1]{P}. As a consequence of Theorem \ref{mainthm} and \ref{1loopsurg}, we prove the 1-loop conjecture for a large class of hyperbolic 3-manifolds with toroidal boundary. 
\begin{corollary}\label{corsufflong}
Conjecture \ref{1loopconjstatementori} and \ref{1loopconjstatement} hold for any hyperbolic 3-manifold $M$ with toroidal boundary obtained by doing sufficiently long Dehn-fillings on the boundary components of some fundamental shadow link complement. 
\end{corollary}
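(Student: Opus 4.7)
The strategy is to propagate the verified 1-loop conjecture on fundamental shadow link complements through the Dehn-filling operation by comparing two surgery formulas: the one for the 1-loop invariant in Theorem \ref{1loopsurg} and the classical surgery formula for the adjoint twisted Reidemeister torsion of Porti \cite[Theorem 4.1]{P}. Fix a fundamental shadow link complement $M$ with toroidal boundary $T_1,\dots,T_k$, and let $M'$ be a hyperbolic 3-manifold obtained from $M$ by sufficiently long Dehn-fillings along simple closed curves $\mu_1,\dots,\mu_l$ on $T_1,\dots,T_l$. Choose a system $\boldsymbol\alpha'=(\alpha_{l+1},\dots,\alpha_k)$ of simple closed curves on the remaining boundary tori of $M'$ and extend it to $\boldsymbol\alpha=(\mu_1,\dots,\mu_l,\alpha_{l+1},\dots,\alpha_k)$ on $\partial M$. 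Let $\rho_0'$ denote the discrete faithful representation of $\pi_1(M')$ and $\rho=\rho_0'|_{\pi_1(M)}$; for sufficiently long fillings, Thurston's hyperbolic Dehn surgery theorem guarantees that $\rho$ lies on the distinguished component of the $\mathrm{PSL}(2;\CC)$-character variety of $M$.

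Next, I would apply Proposition \ref{niceidealintro} to produce compatible ideal triangulations $\widehat{\mathcal T}$ of $M$ and $\widehat{\mathcal T}'$ of $M'$ with shape parameters $\mathbf{\widehat z}\in\mathcal V_0(\widehat{\mathcal T})$ and $\mathbf{\widehat z'}\in\mathcal V_0(\widehat{\mathcal T}')$ realizing $[\rho]$ and $[\rho_0']$ respectively. Theorem \ref{mainthm}, combined with the topological invariance provided by Corollary \ref{1loopreallyinvcor} applied to $M$, yields
$$
\tau(M,\boldsymbol\alpha,\mathbf{\widehat z},\widehat{\mathcal T})=\pm\,\mathbb T_{(M,\boldsymbol\alpha)}([\rho]).
$$
Multiplying both sides by $\prod_{i=1}^l\bigl(4\sinh^2\frac{\mathrm H(\gamma_i)}{2}\bigr)^{-1}$, where $\gamma_i$ is the core of the $i$-th Dehn-filled solid torus, and invoking Theorem \ref{1loopsurg} on the left together with Porti's surgery formula \cite[Theorem 4.1]{P} on the right, produces
$$
\tau(M',\boldsymbol\alpha',\mathbf{\widehat z'},\widehat{\mathcal T}')=\pm\,\mathbb T_{(M',\boldsymbol\alpha')}([\rho_0']).
$$
A final application of Corollary \ref{1loopreallyinvcor} to $M'$ extends this equality from $\widehat{\mathcal T}'$ to every $\rho_0'$-regular ideal triangulation, establishing Conjecture \ref{1loopconjstatementori} for $M'$.

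To upgrade to Conjecture \ref{1loopconjstatement}, I would move from the single character $[\rho_0']$ to an arbitrary $[\rho_{\mathbf z}]\in\mathcal Z_{\boldsymbol\alpha'}$ arising from some $\mathbf z\in\mathcal V_0(\widehat{\mathcal T}')$. The restriction $\rho_{\mathbf z}|_{\pi_1(M)}$ still lies on the distinguished component of the character variety of $M$ by openness of the distinguished component and of the hypotheses of Proposition \ref{niceidealintro}. Because both sides of the desired identity are algebraic functions on the irreducible component $\mathcal V_0(\widehat{\mathcal T}')$, and because Theorem \ref{mainthm}, Theorem \ref{1loopsurg}, and Porti's surgery formula each hold at the level of generic characters on the distinguished component, the equality extends by analytic continuation from a neighborhood of $[\rho_0']$ to all of $\mathcal Z_{\boldsymbol\alpha'}\cap\mathcal V_0(\widehat{\mathcal T}')$.

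The main obstacle I anticipate is making sure the correction factor $\prod_{i=1}^l\bigl(4\sinh^2\frac{\mathrm H(\gamma_i)}{2}\bigr)^{-1}$ appearing in Theorem \ref{1loopsurg} matches \emph{exactly} its counterpart in Porti's surgery formula, including all sign ambiguities and the precise identification of the core holonomies $\mathrm H(\gamma_i)$ on the two sides. A secondary technical point is verifying that the single phrase ``sufficiently long'' can be made to simultaneously guarantee that $M'$ is hyperbolic, that $\rho$ and all nearby characters on $\mathcal V_0(\widehat{\mathcal T}')$ restrict into the distinguished component of the character variety of $M$, and that the triangulations produced by Proposition \ref{niceidealintro} exist with the required compatibility properties.
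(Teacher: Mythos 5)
Your proposal follows essentially the same route as the paper: construct compatible triangulations via Proposition \ref{niceidealintro}, establish the equality on $M$ using Theorem \ref{mainthm} together with topological invariance, transfer to $M'$ via the two surgery formulas (Theorem \ref{1loopsurg} and Porti's), and then extend by analytic continuation and Corollary \ref{1loopreallyinvcor}.

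There is, however, one concrete technical subtlety that the paper addresses explicitly and which your proposal only brushes against in its closing paragraph. Your plan is to deduce $\tau(M,\boldsymbol\alpha,\mathbf{\widehat z},\widehat{\mathcal T})=\pm\mathbb T_{(M,\boldsymbol\alpha)}([\rho])$ by combining Theorem \ref{mainthm} with Corollary \ref{1loopreallyinvcor}, which transports the conjecture from the fundamental-shadow-link triangulation $\mathcal{T}$ to $\widehat{\mathcal{T}}$. The difficulty is that the restricted representation $\rho=\rho'|_{\pi_1(M)}$ is not the discrete faithful representation of $M$, and it is \emph{not clear a priori} that the explicit triangulation $\mathcal{T}$ of Theorem \ref{idealtri} is $\rho$-regular; if it is not, then at the specific character $[\rho]$ there is no point of $\mathcal{V}_0(\mathcal{T})$ to compare with, and the Pachner-move argument that underlies the topological invariance cannot be run directly at $[\rho]$. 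The paper resolves this by perturbing: for $\tilde\rho$ generic and close to $\rho$, both $\widehat{\mathcal{T}}$ and the FSL triangulation $\mathcal{T}$ are $\tilde\rho$-regular, so the identity $\tau(M,\boldsymbol\alpha,\mathbf{\tilde z},\widehat{\mathcal T})=\pm\mathbb T_{(M,\boldsymbol\alpha)}([\tilde\rho])$ holds on a dense set, and one then passes to $\rho$ by continuity (Proposition \ref{toranaglu}). Your appeal to Corollary \ref{1loopreallyinvcor} effectively sweeps this density argument inside that corollary's proof rather than making it explicit, and also tacitly assumes both that $\widehat{\mathcal{T}}$ is $\rho_0$-regular and that the KSS shape parameter $\mathbf{\widehat z}$ lands in the distinguished component $\mathcal{V}_0(\widehat{\mathcal{T}})$; these are true but deserve mention. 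Your remaining flagged concerns are minor: the $4\sinh^2(\mathrm H(\gamma_i)/2)$ factors match on the nose (both formulas are stated with that normalization, and the nonvanishing of $\sinh(\mathrm H(\gamma_i)/2)$ is Remark \ref{rmksurg}), and the sign ambiguity is absorbed by the $\pm$ in the statement.
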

\begin{remark}
For any fundamental shadow link complement, by \cite[Remark B.2]{KSS} and the result in \cite{HK}, if we remove at most 114 simple closed curves on each boundary torus of $M$, then the Dehn-fillings along any remaining system of simple closed curves are sufficiently long. In this case, Corollary \ref{corsufflong} applies.
\end{remark}
\begin{remark}
In contrast to Remark \ref{rmkinde}, Corollary \ref{1loopconjFSLall} and \ref{corsufflong} imply that for all fundamental shadow link complments and most of their Dehn-fillings, the 1-loop invariant depends on the ideal triangulation but not on the pseudo-developing map at infinity.
\end{remark}

As an intermediate step in the proof of Theorem \ref{mainthm}, we prove that the 1-loop invariant satisfies the same change of curve formula as the one satisfied by the adjoint twisted Reidemeister torsion. In particular, once the 1-loop conjecture is true for a system of simple closed curves, the conjecture is true for any system of simple closed curve (See \cite{S1} for another proof of the change of curve formula for manifolds with one torus boundary).
\begin{theorem}\label{COC1loop} Let $\mathcal{T}$ be an ideal triangulation and $\mathbf{z} \in \mathcal{V}_{\mathcal{T}}$. Let $\boldsymbol\alpha, \boldsymbol\alpha'$ are two systems of simple closed curves on $\partial M$. Assume that $\tau(M,\boldsymbol\alpha, \mathbf{z}, \mathcal{T})$ is non-zero. Then
$$
\tau(M,\boldsymbol\alpha', \mathbf{z}, \mathcal{T}) 
= \pm\det\bigg( \frac{\partial \mathrm H(\alpha'_i)}{\partial \mathrm H(\alpha_j)}\bigg)_{ij}\tau(M,\boldsymbol\alpha, \mathbf{z}, \mathcal{T}) .
$$
\end{theorem}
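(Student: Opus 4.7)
The plan is to unfold the definition of the 1-loop invariant in the Neumann--Zagier framework and then observe that changing the system of peripheral curves merely replaces one block of rows in the underlying matrix by linear combinations of the old rows, so that $\det$ transforms by the corresponding change-of-basis factor. Recall from \cite{DG} that, up to a product of shape-parameter factors that depends only on $\mathbf{z}$ and on $\mathcal{T}$ (hence is common to both $\boldsymbol\alpha$ and $\boldsymbol\alpha'$), one has
$$
\tau(M,\boldsymbol\alpha,\mathbf{z},\mathcal{T})
= \pm \tfrac12 \det J_{\boldsymbol\alpha}(\mathbf{z}),
$$
where $J_{\boldsymbol\alpha}$ is the $N\times N$ matrix (with $N$ the number of ideal tetrahedra of $\mathcal{T}$) whose first $N-k$ rows are derivatives with respect to $\log z_1,\dots,\log z_N$ of a full-rank subset of the edge gluing equations, and whose remaining $k$ rows are the derivatives $\partial \mathrm{H}(\alpha_i)/\partial \log z_j$ of the cusp holonomies.

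The first step is the immediate observation that $J_{\boldsymbol\alpha}$ and $J_{\boldsymbol\alpha'}$ share the same top $N-k$ rows: the edge equations do not see the peripheral system. Only the bottom $k$ rows change. Next I would pass to the gluing variety $\mathcal{V}_{\mathcal{T}}$, where the edge equations hold; at a point $\mathbf{z}$ where the cusp rows together with the edge rows span $\CC^N$ (which is exactly the non-vanishing hypothesis $\det J_{\boldsymbol\alpha}\neq 0$), the functions $\mathrm{H}(\alpha_1),\dots,\mathrm{H}(\alpha_k)$ form smooth local coordinates on $\mathcal{V}_{\mathcal{T}}$ transverse to the edge strata. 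On $\mathcal{V}_{\mathcal{T}}$ the chain rule gives
$$
d\mathrm{H}(\alpha'_i) \;=\; \sum_{j=1}^{k} \frac{\partial \mathrm{H}(\alpha'_i)}{\partial \mathrm{H}(\alpha_j)}\, d\mathrm{H}(\alpha_j),
$$
and pairing with $\partial/\partial \log z_\ell$ for each $\ell$ yields the identity of row vectors
$$
\left(\frac{\partial \mathrm{H}(\alpha'_i)}{\partial \log z_\ell}\right)_\ell
= \sum_{j=1}^{k} \frac{\partial \mathrm{H}(\alpha'_i)}{\partial \mathrm{H}(\alpha_j)}
\left(\frac{\partial \mathrm{H}(\alpha_j)}{\partial \log z_\ell}\right)_\ell
\pmod{\text{edge rows}}.
$$

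To transfer this congruence on $\mathcal{V}_{\mathcal{T}}$ into an equality of rows in the ambient $\log z$-space, I would use elementary row operations: since subtracting $\CC$-linear combinations of the top $N-k$ edge rows does not alter $\det J_{\boldsymbol\alpha'}$, and since after such subtraction the bottom block of $J_{\boldsymbol\alpha'}$ becomes exactly $(\partial \mathrm{H}(\alpha'_i)/\partial \mathrm{H}(\alpha_j))_{ij}$ times the bottom block of $J_{\boldsymbol\alpha}$, multiplicativity of determinants gives
$$
\det J_{\boldsymbol\alpha'}
= \det\!\left(\frac{\partial \mathrm{H}(\alpha'_i)}{\partial \mathrm{H}(\alpha_j)}\right)_{ij} \cdot \det J_{\boldsymbol\alpha}.
$$
Because the shape-parameter pre-factor is identical for $\boldsymbol\alpha$ and $\boldsymbol\alpha'$, this identity lifts to the claimed relation between $\tau(M,\boldsymbol\alpha',\mathbf{z},\mathcal{T})$ and $\tau(M,\boldsymbol\alpha,\mathbf{z},\mathcal{T})$ up to the overall sign $\pm$.

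The main obstacle is the justification that the row-reduction step is genuinely available, i.e., that the Jacobian $(\partial \mathrm{H}(\alpha'_i)/\partial \mathrm{H}(\alpha_j))_{ij}$ is well defined as a function on $\mathcal{V}_{\mathcal{T}}$ near $\mathbf{z}$. This requires that the edge equations and the curves $\alpha_1,\dots,\alpha_k$ together cut out $\mathbf{z}$ as a regular point of the holonomy fibration, which is exactly the hypothesis $\tau(M,\boldsymbol\alpha,\mathbf{z},\mathcal{T})\neq 0$; once this is in hand, the implicit function theorem provides the required local coordinates and everything else is bookkeeping.
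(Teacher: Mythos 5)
Your argument is correct and is essentially the paper's proof: both exploit the fact that (after cancelling the common prefactor, which works because the flattening is a \emph{strong} combinatorial flattening) the change of boundary curves only modifies the last $k$ rows of the Jacobian $D_{\mathbf z}\mathcal G$, and that modulo the edge rows the new cusp rows are the linear transform of the old ones by $\big(\partial\mathrm H(\alpha'_i)/\partial\mathrm H(\alpha_j)\big)$, yielding the Jacobian determinant factor. The paper packages this block-lower-triangular structure by applying the inverse function theorem to $\mathcal G_{\boldsymbol\alpha'}\circ\mathcal G_{\boldsymbol\alpha}^{-1}$ rather than doing explicit row operations, but this is a cosmetic rather than a mathematical difference.
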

\begin{remark}
Since conjecturally the 1-loop invariant is equal to the adjoint twisted Reidemeister torsion, which is non-zero by definition, the requirement in Theorem \ref{COC1loop} that $\tau(M,\boldsymbol\alpha, \mathbf{z}, \mathcal{T})$ being non-zero is not an additional assumption in the context of 1-loop conjecture.
\end{remark}

\subsection*{Outline of this paper}
In Section \ref{prelim}, we recall the construction of fundamental shadow links, the definition of the adjoint twisted Reidemeister torsion and the definition of the 1-loop invariant. Then in Section \ref{triFSL}, we study the geometry of fundamental shadow link complements. More precisely, we construct an ideal triangulation of a $D$-block in Section \ref{GEDB} and solve the gluing equation explicitly in terms of the holonomy around the ideal vertices. In Section \ref{GEDB2}, by using the ideal triangulation on each $D$-block, we construct an ideal triangulation on every fundamental shadow link complement and solve the gluing equations explicitly. We prove Theorem \ref{volhypideal} and \ref{volmfd} in Section \ref{pfvolhypideal}. The key idea is to glue hyperideal tetrahedra together to get a fundamental shadow link complement with hyperbolic cone structure and then compute the volume of the link complement by summing up the volume of the ideal tetrahedra in the triangulation. We prove Theorems \ref{mainthm} and \ref{COC1loop} in Section \ref{1loopinv}. The idea is to do the computation on each $D$-block (Proposition \ref{1loopDB}) and then show that the torsion of the manifold is a product of the contribution from the $D$-blocks (Section \ref{pfmainthm} and Lemma \ref{detL}). In Section \ref{Sinv02}, we prove Theorem \ref{1loopreallyinv} and Corollary \ref{1loopconjFSLall}. Finally, in Section \ref{surgforS}, we prove Theorem \ref{1loopsurg} and Theorem \ref{corsufflong}. 

\section*{Acknowledgement}
The authors would like to express their gratitude to their supervisor Tian Yang for his guidance. The authors would also like to thank Giulio Belletti, Francis Bonahon, Effie Kalfagianni, Feng Luo, Joan Porti, Jessica Purcell and Seokbeom Yoon for valuable discussions. The authors are grateful to Saul Schleimer and Henry Segerman for carefully explaining their results in \cite{KSS}. The first author was supported by the NSF grants DMS-1812008 and DMS-2203334 (PI: Tian Yang). The second author was partially supported by the Hagler Institute HEEP Fellowship from the Hagler Institute for Advanced Study at Texas A\&M University.

\section{Preliminary}\label{prelim}

\subsection{Fundamental shadow links}\label{fsl}
In this section we recall the construction and basic properties of the fundamental shadow links. 
\subsubsection{Topological construction}
\begin{figure}[h]
\centering
\includegraphics[scale=0.2]{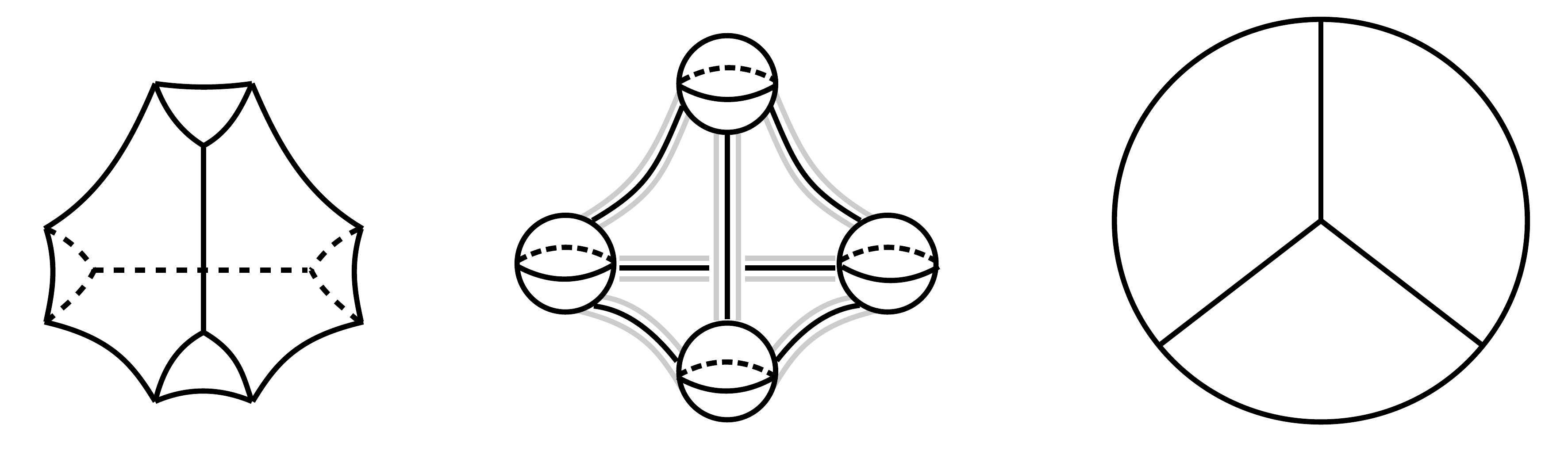}
\caption{A truncated tetrahedron (left), a $D$-block (middle) and a tetrahedron graph (right).}
\label{bb}
\end{figure}

The building blocks for the fundamental shadow links are truncated tetrahedra as shown in Figure \ref{bb}. An \emph{edge} of a truncated tetrahedron is the intersection of two hexagonal faces. If we glue two truncated tetrahedra together along the hexagonal faces via the identity map, we obtain the double of the truncated tetrahedra whose boundary consists of four spheres, each of them has three marked points corresponding to the edges incident to the triangular faces of the truncated tetrahedra. To construct a fundamental shadow link, we pick $c$ copies of the double of the truncated tetrahedra constructed above and glue them together via orientation reversing homeomorphisms along the boundaries that send marked points to marked points. The edges of the truncated tetrahedra will be glued together to form a link $L_{\text{FSL}}$ inside the ambient manifold $M_c=\#^{c+1}(S^2\times S^1).$ We call a link obtained this way a \emph{fundamental shadow link}, and its complement in $M_c$ a \emph{fundamental shadow link complement}. 
The family of the fundamental shadow link complements satisfies the following universal property.

\begin{theorem}[\cite{CT}]\label{CT} Any compact oriented $3$-manifold with toroidal or empty boundary can be obtained from a suitable fundamental shadow link complement by doing an integral Dehn-filling to some of the boundary components.
\end{theorem}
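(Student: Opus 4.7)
My plan is to realize any compact oriented 3-manifold $N$ with toroidal or empty boundary as an integral Dehn-filling of a manifold obtained from the doubling-and-gluing construction of Section \ref{fsl}. The strategy is to start from a suitable triangulation of $N$, thicken each 3-simplex to a $D$-block, and absorb the original face identifications into $D$-block gluings, leaving only the edge/vertex data to be resolved by Dehn-filling.

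First, I would choose a (possibly ideal) triangulation $\mathcal{T}=\{\Delta_1,\dots,\Delta_c\}$ of $N$. If $N$ is closed, I pass to a triangulation with a single vertex and then drill out a small neighborhood of the 1-skeleton (or of a single spine curve), producing an auxiliary manifold $N'$ with toroidal boundary and a natural ideal triangulation whose ideal vertices record the drilled curves; doing the canonical integral Dehn-filling on these extra cusps at the end will recover $N$. If $N$ already has toroidal boundary, I use an ideal triangulation adapted to $\partial N$ (for instance the one coming from a spine). Next, I replace each $\Delta_i$ by a truncated tetrahedron $\widetilde{\Delta}_i$ and form the $D$-block $\mathcal{D}_i$ by doubling $\widetilde{\Delta}_i$ along its four hexagonal faces, exactly as in Section \ref{fsl}. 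Each face-pairing of $\mathcal{T}$ is an orientation-reversing homeomorphism of hexagonal faces; under doubling it induces an orientation-reversing homeomorphism of the corresponding 3-punctured sphere boundary components of the two $D$-blocks, and I glue along that. The resulting manifold $M$ sits inside $\#^{c+1}(\SS^2\times\SS^1)$ as the complement of a fundamental shadow link $L_{\text{FSL}}$ by construction, with toroidal boundary coming from the blue cylinders around removed edges.

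Second, I would exhibit $N$ as an integral Dehn-filling of $M$. The boundary tori of $M$ come in two flavors: those corresponding to edges of $\mathcal{T}$ (via the blue cylinders) and those corresponding to ideal vertices of $\mathcal{T}$ (matching $\partial N$ or the drilled-out curves). For each edge $e$ of $\mathcal{T}$, the doubling produces two parallel copies of a regular neighborhood $\nu(e)$ inside $M$, glued along an annulus; the torus boundary comes equipped with a natural meridian $\mu_e$ (the boundary of a meridian disk of $\nu(e)$ before doubling) and a natural longitude $\lambda_e$ running once around the edge cycle. Filling along $\mu_e$ reinstalls a single copy of $\nu(e)$, collapsing the double and recovering the original neighborhood of $e$ in $N$; this is an integral slope with respect to the canonical basis. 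Performing this $(1,0)$-filling on every type-(a) torus simultaneously, and (if needed) the meridional filling on the type-(b) tori coming from drilled curves, yields $N$. The universality statement then follows.

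The main obstacle I anticipate is the local bookkeeping in the second step: one must verify that the slope which collapses the double of a neighborhood of a valence-$v(e)$ edge back to a single copy is genuinely integral in the meridian/longitude basis intrinsic to the $D$-block picture, independently of the number of $D$-blocks meeting at $e$ and of the combinatorics of how their 3-punctured spheres are identified. This requires tracking how the canonical $(\mu_e,\lambda_e)$ transforms under the $D$-block gluings, and matching it with the slope determined by the folding move that undoes the double. Once this local identification is in place, assembling the fillings globally is automatic, and the integrality claim in the theorem follows from the fact that each individual collapsing slope has coefficient $\pm 1$ on the meridian.
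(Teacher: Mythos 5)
This theorem is cited from Costantino--Thurston \cite{CT} and is not proved in the present paper, so there is no in-paper argument to compare against. In \cite{CT} the proof runs through Turaev's four-dimensional shadow formalism: every such $N$ admits a shadow, a simple polyhedron $P$ carrying half-integer gleams in a bounding $4$-manifold; the $3$-manifold reconstructed from $P$ with all gleams set to zero is a fundamental shadow link complement, and restoring the original gleams is effected by integral Dehn fillings. Your proposal tries to bypass the $4$-dimensional picture and build the link complement directly from a triangulation of $N$. The informal dictionary you use (tetrahedron $\leftrightarrow$ $D$-block, face $\leftrightarrow$ sphere gluing, edge $\leftrightarrow$ boundary torus) is the right one when read through the dual spine, but as written the argument has two genuine gaps.

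The first is the claim that a face-pairing of your triangulation ``under doubling induces an orientation-reversing homeomorphism of the corresponding $3$-punctured sphere boundary components.'' With the conventions of Section~\ref{fsl}, the hexagonal faces are precisely what the truncated tetrahedron is doubled along, so they become interior to the $D$-block; the $3$-punctured spheres are the doubles of the \emph{triangles of truncation} and correspond to the four \emph{vertices} of the tetrahedron, not its faces. A face-pairing touches three spheres on each side, and the vertex identifications of a triangulation are generically many-to-one rather than a perfect matching, so a face-pairing does not single out a pair of spheres to glue; getting the matching right requires passing to the dual spine (vertices dual to tetrahedra, edges dual to faces), which is where \cite{CT} start. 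The second gap is the Dehn-filling step. The manifold glued from $D$-blocks is not ``the double of $N$ with the $1$-skeleton drilled out'': the $D$-block doubles each truncated tetrahedron along its hexagonal faces \emph{before} any gluing, whereas your ``folding'' picture implicitly performs the face-pairings first and doubles afterwards, and these produce different manifolds. Even with the combinatorics corrected, the slope along which $T_e$ must be filled to recover $N$ is controlled in \cite{CT} by the gleam of the region dual to $e$ --- a $4$-dimensional twisting invariant recording how the thickening wraps around that edge --- and proving that this slope is integral in the canonical basis is precisely what the $4$-manifold machinery accomplishes; your proposal offers no substitute for that computation.
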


\subsubsection{$D$-blocks and hyperbolic $D$-blocks}\label{DhypD}
In the language of \cite{CT}, a \emph{$D$-block} (Figure \ref{bb}, middle) is the manifold obtained by first gluing two truncated tetrahedra together along the hexagonal faces via the identity map and then removing the six edges of the tetrahedra. Topologically, a $D$-block is homeomorphic to the complement of the tetrahedron graph in $\SS^3$ (Figure \ref{bb}, right). Alternatively, a fundamental shadow link complement can be constructed by gluing $c$ copies of $D$-blocks together via orientation reversing homeomorphisms along the 3-puncture spheres.

To see that all fundamental shadow link complements are hyperbolic, note that topologically, by shrinking all the edges of the tetrahedra into ideal vertices, we can decompose the link complements into ideal octahedra. Geometrically, we can put a complete hyperbolic structure on the link complement by putting a hyperbolic structure on the ideal octahedra such that each of them becomes a regular ideal hyperbolic octahedron. 
More generally, for a fundamental shadow link $L_{\text{FSL}}$ with $n$ components, a hyperbolic cone structure on $M_c$ with singular locus $L_{\text{FSL}}$ and with sufficiently small cone angles can be constructed as follows. Recall that each component of $L_{\text{FSL}}$ is obtained by gluing the edges of the $D$-blocks together along the end points. For each edge $e^i_k$ of a $D$-block $\mathcal{D}_i$, where $k=1,\dots,6$ and $i=1,\dots,c$, we let $\theta^i_k$ be half of the cone angle of the link component that the edge belongs to. Then we can construct a hyperbolic $D$-blocks with cone angles $(2\theta^i_1,\dots,2\theta^i_6)$ by gluing two hyperideal tetrahedra with dihedral angles $(\theta^i_1,\dots,\theta^i_6)$ along the hyperbolic hexagonal faces via the identity map. The existence of such hyperideal tetrahedron is guaranteed when the cone angles are sufficiently small. We call the $D$-block with such a hyperbolic structure a \emph{hyperbolic $D$-block}. Then the desired hyperbolic cone structure on $M_c$ with singularity along $L_{\text{FSL}}$ can be constructed by gluing the hyperbolic $D$-blocks together via orientation reversing isometries between the hyperbolic 3-puncture spheres.

\subsection{Adjoint twisted Reidemeister torsion}\label{TRT}
\subsubsection{Definition}
Let $\mathrm C_*$ be a finite chain complex 
$$0\to \mathrm C_d\xrightarrow{\partial}\mathrm C_{d-1}\xrightarrow{\partial}\cdots\xrightarrow{\partial}\mathrm C_1\xrightarrow{\partial} \mathrm C_0\to 0$$
of $\mathbb C$-vector spaces, and for each $\mathrm C_k$ choose a basis $\mathbf c_k.$ Let $\mathrm H_*$ be the homology of $\mathrm C_*,$ and for each $\mathrm H_k$ choose a basis $\mathbf h_k$ and a lift $\widetilde{\mathbf h}_k\subset \mathrm C_k$ of $\mathbf h_k.$ We also choose a basis $\mathbf b_k$ for each image $\partial (\mathrm C_{k+1})$ and a lift $\widetilde{\mathbf b}_k\subset \mathrm C_{k+1}$ of $\mathbf b_k.$ Then $\mathbf b_k\sqcup \widetilde{\mathbf b}_{k-1}\sqcup \widetilde{\mathbf h}_k$ form a basis of $\mathrm C_k.$ Let $[\mathbf b_k\sqcup \widetilde{\mathbf b}_{k-1}\sqcup \widetilde{\mathbf h}_k;\mathbf c_k]$ be the determinant of the transition matrix from the standard basis $\mathbf c_k$ to the new basis $\mathbf b_k\sqcup \widetilde{\mathbf b}_{k-1}\sqcup \widetilde{\mathbf h}_k.$
 Then the Reidemeister torsion of the chain complex $\mathrm C_*$ with the chosen bases $\mathbf c_*$ and $\mathbf h_*$ is defined by 
\begin{equation*}
\mathrm{Tor}(\mathrm C_*, \{\mathbf c_k\}, \{\mathbf h_k\})=\pm\prod_{k=0}^d[\mathbf b_k\sqcup \widetilde{\mathbf b}_{k-1}\sqcup \widetilde{\mathbf h}_k;\mathrm c_k]^{(-1)^{k+1}}.
\end{equation*}
It is easy to check that $\mathrm{Tor}(\mathrm C_*, \{\mathbf c_k\}, \{\mathbf h_k\})$ depends only on the choice of $\{\mathbf c_k\}$ and $\{\mathbf h_k\},$ and does not depend on the choices of $\{\mathbf b_k\}$ and the lifts $ \{\widetilde{\mathbf b}_k\}$ and $\{\widetilde{\mathbf h}_k\}.$

We recall the twisted Reidemeister torsion of a CW-complex following the conventions in \cite{P2}. Let $K$ be a finite CW-complex and let $\rho:\pi_1(M)\to\mathrm{SL}(N;\mathbb C)$ be a representation of its fundamental group. Consider the twisted chain complex 
$$\mathrm C_*(K;\rho)= \mathbb C^N\otimes_\rho \mathrm C_*(\widetilde K;\mathbb Z)$$
where $\mathrm C_*(\widetilde K;\mathbb Z)$ is the simplicial complex of the universal covering of $K$ and $\otimes_\rho$ means the tensor product over $\mathbb Z$ modulo the relation
$$\mathbf v\otimes( \gamma\cdot\mathbf c)=\Big(\rho(\gamma)^T\cdot\mathbf v\Big)\otimes \mathbf c,$$
where $T$ is the transpose, $\mathbf v\in\mathbb C^N,$ $\gamma\in\pi_1(K)$ and $\mathbf c\in\mathrm C_*(\widetilde K;\mathbb Z).$ The boundary operator on $\mathrm C_*(K;\rho)$ is defined by
$$\partial(\mathbf v\otimes \mathbf c)=\mathbf v\otimes \partial(\mathbf c)$$
for $\mathbf v\in\mathbb C^N$ and $\mathbf c\in\mathrm C_*(\widetilde K;\mathbb Z).$ Let $\{\mathbf e_1,\dots,\mathbf e_N\}$ be the standard basis of $\mathbb C^N,$ and let $\{c_1^k,\dots,c_{d^k}^k\}$ denote the set of $k$-cells of $K.$ Then we call
$$\mathbf c_k=\big\{ \mathbf e_i\otimes c_j^k\ \big|\ i\in\{1,\dots,N\}, j\in\{1,\dots,d^k\}\big\}$$
the standard basis of $\mathrm C_k(K;\rho).$ Let $\mathrm H_*(K;\rho)$ be the homology of the chain complex $\mathrm C_*(K;\rho)$ and let $\mathbf h_k$ be a basis of $\mathrm H_k(K;\rho).$ Then the Reidemeister torsion of $K$ twisted by $\rho$ with basis $\{\mathbf h_k\}$ is 
$$\mathrm{Tor}(K, \{\mathbf h_k\}; \rho)=\mathrm{Tor}(\mathrm C_*(K;\rho),\{\mathbf c_k\}, \{\mathbf h_k\}).$$
By \cite{P}, $\mathrm{Tor}(K, \{\mathbf h_k\}; \rho)$ depends only on the conjugacy class of $\rho.$ By for e.g. \cite{T2}, the Reidemeister torsion is invariant under elementary expansions and elementary collapses of CW-complexes, and by \cite{M}  it is invariant under subdivisions, hence defines an invariant of PL-manifolds and of topological manifolds of dimension less than or equal to $3.$

\subsubsection{Torsion as a function on the character variety}
We list some results by Porti\,\cite{P} for the Reidemeister torsions of hyperbolic $3$-manifolds twisted by the adjoint representation $\mathrm {Ad}_\rho=\mathrm {Ad}\circ\rho$ of the holonomy  $\rho$ of the hyperbolic structure. Here $\mathrm {Ad}$ is the adjoint action of $\mathrm {PSL}(2;\mathbb C)$ on its Lie algebra $\mathbf{sl}(2;\mathbb C)\cong \mathbb C^3.$

For a closed oriented hyperbolic $3$-manifold  $M$ with the holonomy representation $\rho,$ by the Weil local rigidity theorem and the Mostow rigidity theorem,
$\mathrm H_k(M;\mathrm{Ad}_\rho)=0$ for all $k.$ Then the twisted Reidemeister torsion 
$$\mathrm{Tor}(M;\mathrm{Ad}_\rho)\in\mathbb C^*/\{\pm 1\}$$
 is defined without making any additional choice.

For a compact, orientable  $3$-manifold  $M$ with boundary consisting of $n$ disjoint tori $T_1 \dots,  T_n$ whose interior admits a complete hyperbolic structure with  finite volume, let $\mathrm X(M)$ be the $\mathrm{PSL}(2; \CC)$-character variety of $M,$ let $\mathrm X_0(M)\subset\mathrm X(M)$ be the distinguished component containing the character of a chosen lifting of the holonomy representation of the complete hyperbolic structure of $M,$ and let $\mathrm X_0^{\text{irr}}(M)\subset\mathrm X_0(M)$ be the subset consisting of the irreducible characters. 
 
\begin{theorem}[Section 3.3.3, \cite{P}]\label{HM} For a system of simple closed curves $\boldsymbol\alpha=(\alpha_1,\dots,\alpha_n)$ on $\partial M$ with $\alpha_i\subset T_i,$ $i\in\{1,\dots,n\},$  and a character $[\rho]$ in a non-empty Zariski open subset of $\mathrm X_0^{\text{irr}}(M),$  we have:
\begin{enumerate}[(i)]
\item For $k\neq 1,2,$ $\mathrm H_k(M;\mathrm{Ad}\rho)=0.$
\item  For $i\in\{1,\dots,n\},$ up to scalar $\mathrm Ad_\rho(\pi_1(T_i))^T$ has a unique invariant vector $\mathbf I_i\in \mathbb C^3;$ and
$$\mathrm H_1(M;\mathrm{Ad}\rho)\cong \mathbb C^n$$ 
with a basis
$$\mathbf h^1_{(M,\alpha)}=\{\mathbf I_1\otimes [\alpha_1],\dots, \mathbf I_n\otimes [\alpha_n]\}$$
where $([\alpha_1],\dots,[\alpha_n])\in \mathrm H_1(\partial M;\mathbb Z)\cong 
\bigoplus_{i=1}^n\mathrm H_1(T_i;\mathbb Z).$
 
\item Let $([T_1],\dots,[T_n])\in \bigoplus_{i=1}^n\mathrm H_2(T_i;\mathbb Z)$ be the fundamental classes of $T_1,\dots, T_n.$ Then 
 $$\mathrm H_2(M;\mathrm{Ad}\rho)\cong\bigoplus_{i=1}^n\mathrm H_2(T_i;\mathrm{Ad}\rho)\cong \mathbb C^n$$ 
with  a basis 
$$\mathbf h^2_M=\{\mathbf I_1\otimes [T_1],\dots, \mathbf I_n\otimes [T_n]\}.$$
\end{enumerate}
\end{theorem}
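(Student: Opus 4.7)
The plan is to follow the strategy originally used by Porti, combining Poincar\'e--Lefschetz duality, the long exact sequence of the pair $(M,\partial M)$ with twisted coefficients, and a direct computation of the twisted homology of each boundary torus $T_i$. The key observation that drives everything is that $\mathrm{Ad}\rho$ is self-dual via the Killing form, so Poincar\'e--Lefschetz duality gives $H_k(M;\mathrm{Ad}\rho)\cong H^{3-k}(M,\partial M;\mathrm{Ad}\rho)\cong H_{3-k}(M,\partial M;\mathrm{Ad}\rho)^{*}$, which forces the long exact sequence of the pair to be self-dual.

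First I would dispose of part (i). Since $M$ is a compact orientable $3$-manifold with non-empty boundary, it deformation retracts to a $2$-complex, so $H_k(M;\mathrm{Ad}\rho)=0$ for $k\geq 3$. For $k=0$, irreducibility of $\rho$ implies that the $\mathrm{Ad}\rho$-action on $\mathbb{C}^{3}\cong \mathbf{sl}(2;\mathbb{C})$ has no non-zero global invariant, so $H_{0}(M;\mathrm{Ad}\rho)=0$ on a Zariski open subset. This Zariski open subset is defined by excluding the closed locus where some peripheral element becomes $\pm I$ or central under $\mathrm{Ad}\rho$, or where the image of $\rho$ is reducible.

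Next I would compute the twisted homology of each boundary torus. Generically the elements of $\rho(\pi_1(T_i))$ are commuting loxodromic (or parabolic at the complete structure) elements with a common pair of fixed points on $\partial\mathbb{H}^3$; their adjoint representations share a single invariant line in $\mathbf{sl}(2;\mathbb{C})$, which I would define to be $\mathbb{C}\cdot\mathbf{I}_i$. A direct Koszul-type calculation on $T_i\simeq S^{1}\times S^{1}$ with abelian coefficient system yields
$$H_{0}(T_i;\mathrm{Ad}\rho)\cong\mathbb{C},\qquad H_{1}(T_i;\mathrm{Ad}\rho)\cong\mathbb{C}^{2},\qquad H_{2}(T_i;\mathrm{Ad}\rho)\cong\mathbb{C},$$
where $H_{2}$ is spanned by $\mathbf{I}_i\otimes[T_i]$ and $H_{1}$ is spanned by $\mathbf{I}_i\otimes[\alpha_i]$ and $\mathbf{I}_i\otimes[\beta_i]$ for any basis $\{\alpha_i,\beta_i\}$ of $H_{1}(T_i;\mathbb{Z})$.

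Finally I would feed these calculations into the long exact sequence of $(M,\partial M)$. Using the self-duality noted above together with vanishing from part (i) of $H_{0}$ and $H_{3}$, the map $H_{2}(\partial M;\mathrm{Ad}\rho)\to H_{2}(M;\mathrm{Ad}\rho)$ becomes an isomorphism, producing the basis $\mathbf{h}^{2}_{M}$ of part (iii). For part (ii), the classical ``half-lives-half-dies'' principle in its twisted form guarantees that the image of $H_{1}(\partial M;\mathrm{Ad}\rho)\cong\mathbb{C}^{2n}$ in $H_{1}(M;\mathrm{Ad}\rho)$ has dimension exactly $n$; one then verifies that the $n$ classes $\mathbf{I}_i\otimes[\alpha_i]$ are linearly independent in this image on a Zariski open subset, and that no further classes survive, giving the basis $\mathbf{h}^{1}_{(M,\boldsymbol\alpha)}$. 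The main obstacle is the twisted half-lives-half-dies step: one must verify carefully that the pairing induced by Poincar\'e duality and the Killing form is non-degenerate on the relevant images, and identify the Zariski open locus on which the dimensions of the invariant subspaces $(\mathbb{C}^{3})^{\mathrm{Ad}\rho(\pi_1(T_i))}$ are exactly $1$ for every $i$ simultaneously.
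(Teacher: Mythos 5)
The paper does not prove this theorem; it is imported verbatim as \cite[Section~3.3.3]{P}, so there is no in-paper proof to compare against. That said, your sketch does reconstruct the skeleton of Porti's actual argument (Poincar\'e--Lefschetz duality, self-duality of $\mathrm{Ad}\rho$ via the Killing form, the twisted cohomology computation on each torus, and the half-lives-half-dies principle), so the framework is right.

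There is, however, a real gap in the $H_1$ computation. Half-lives-half-dies gives you $\dim\mathrm{im}\bigl(H_1(\partial M;\mathrm{Ad}\rho)\to H_1(M;\mathrm{Ad}\rho)\bigr)=n$, but by itself this does \emph{not} pin down $\dim H_1(M;\mathrm{Ad}\rho)$: the long exact sequence of the pair together with the twisted Euler characteristic argument ($\dim H_1=\dim H_2$) is self-consistent for any value of this common dimension. What is needed additionally is the \emph{injectivity} of the restriction map $H^1(M;\mathrm{Ad}\rho)\to H^1(\partial M;\mathrm{Ad}\rho)$ (equivalently, surjectivity of $H_1(\partial M;\mathrm{Ad}\rho)\to H_1(M;\mathrm{Ad}\rho)$), which Porti obtains by a Calabi--Weil-type rigidity argument at the complete structure plus upper-semicontinuity of cohomology dimensions along the character variety. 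Your claim that ``the map $H_2(\partial M)\to H_2(M)$ becomes an isomorphism'' silently presupposes this dimension count rather than establishing it. A second, smaller issue: the linear independence of the classes $\mathbf I_i\otimes[\alpha_i]$ and the non-triviality of each one is exactly what the paper packages into the notion of $\boldsymbol\alpha$-regularity (Definition~\ref{reg}); it is \emph{defined} to be the condition under which these classes form a basis, and one then needs a separate argument (Porti's) that the $\boldsymbol\alpha$-regular locus is non-empty and Zariski open. Saying ``one then verifies'' is precisely where the content of the theorem sits, and as written the sketch does not close that step.
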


\begin{remark}[\cite{P}] Important examples of the generic characters in Theorem \ref{HM} include the holonomy representation of the complete hyperbolic structure on the interior of $M,$
 the restriction of the holonomy of the closed $3$-manifold $M_\mu$ obtained from $M$ by doing the hyperbolic Dehn surgery along the system of simple closed curves $\mu$ on $\partial M,$
 and by \cite{HK} the holonomy of a hyperbolic structure on the interior of $M$ whose completion is a conical manifold with cone angles less than $2\pi.$
\end{remark}

\begin{definition}\label{reg} Let $\boldsymbol\alpha=(\alpha_1,\dots,\alpha_n)$ be a system of simple closed curves on $\partial M$ with $\alpha_i\subset T_i,$ $i\in\{1,\dots,n\}.$  A character $[\rho]\in \mathrm X_0^{\text{irr}}(M)$ is \emph{$\boldsymbol\alpha$-regular} if condition (ii) in Theorem \ref{HM} is satisfied.
\end{definition}
It is known that for any $\boldsymbol\alpha=(\alpha_1,\dots,\alpha_n)$, every character near the character of the discrete faithful representation is $\boldsymbol\alpha$-regular \cite{P, NZ}. In particular, the set $\mathcal{Z}_{\boldsymbol\alpha} \subset X_0^{\text{irr}}(M)$ of all $\boldsymbol\alpha$-regular character is non-empty and Zariski open.

\begin{definition} \label{ATRT}
The \emph{adjoint twisted Reidemeister torsion} of $M$  \emph{with respect to $\boldsymbol\alpha$} is the function 
$$\mathbb T_{(M,\boldsymbol\alpha)}: \mathrm X_0^{\text{irr}}(M)\to\mathbb C/\{\pm 1\}$$ 
defined by
$$\mathbb T_{(M,\boldsymbol\alpha)}([\rho])=\mathrm{Tor}(M, \{\mathbf h^1_{(M,\boldsymbol\alpha)},\mathbf h^2_M\};\mathrm{Ad}_\rho)$$
if $[\rho] \in \mathcal{Z}_{\boldsymbol\alpha}$, and by $0$ if otherwise. 
\end{definition}

\begin{theorem}[Theorem 4.1, \cite{P}]\label{funT}
Let $M$ be a compact, orientable  $3$-manifold with boundary consisting of $n$ disjoint tori $T_1 \dots,  T_n$ whose interior admits a complete hyperbolic structure with  finite volume. Let  $\mathbb C(\mathrm X_0^{\text{irr}}(M))$ be the ring of rational functions over $\mathrm X_0^{\text{irr}}(M).$ Then
\begin{equation*}
\begin{split}
\mathrm H_1(\partial M;\mathbb Z)&\to \mathbb C( X_0^{\text{irr}}(M))\\
\quad\quad\boldsymbol\alpha\quad\ \  &\mapsto \quad\mathbb T_{(M,\boldsymbol\alpha)}
\end{split}
\end{equation*}
 up to sign defines  a  function which is a $\mathbb Z$-multilinear homomorphism with respect to the direct sum $\mathrm H_1(\partial M;\mathbb Z)\cong 
\bigoplus_{i=1}^n\mathrm H_1(T_i;\mathbb Z)$ satisfying the following properties:
\begin{enumerate}[(i)]
\item For any system of simple closed curves $\boldsymbol\alpha$ on $\partial M,$ the support of $\mathbb T_{(M,\boldsymbol\alpha)}$
contains a Zariski-open subset of $\mathrm X_0^{\text{irr}}(M)$ consisting of all the $\boldsymbol\alpha$-regular characters in $\mathrm X^{\text{irr}}_0(M).$ 

\item \emph{(Change of Curves Formula).} Let $\boldsymbol\beta=\{\beta_1,\dots,\beta_n\}$ and $\boldsymbol\gamma=\{\gamma_1,\dots,\gamma_n\}$ be two systems of simple closed curves on $\partial M.$ Let $( \mathrm{H}(\beta_1),\dots,  \mathrm{H}(\beta_n))$ and $( \mathrm{H}(\gamma_1),\dots, \mathrm{H}(\gamma_n))$ respectively be the logarithmic holonomies of the curves in $\boldsymbol\beta$ and $\boldsymbol\gamma.$ Then we have the equality of rational functions
\begin{equation}\label{coc}
\mathbb T_{(M,\boldsymbol\beta)}
=\pm\det\bigg( \frac{\partial ( \mathrm{H}(\beta_1),\dots,  \mathrm{H}(\beta_n))}{\partial ( \mathrm{H}(\gamma_1),\dots, \mathrm{H}(\gamma_n))}\bigg)\mathbb T_{(M,\boldsymbol\gamma)},
\end{equation}
 where $\frac{\partial ( \mathrm{H}(\beta_1),\dots,  \mathrm{H}(\beta_n))}{\partial ( \mathrm{H}(\gamma_1),\dots, \mathrm{H}(\gamma_n))}$ is the Jocobian matrix of the holomorphic function $( \mathrm{H}(\beta_1),\dots,  \mathrm{H}(\beta_n))$ with respect to $( \mathrm{H}(\gamma_1),\dots, \mathrm{H}(\gamma_n)).$

\item \emph{(Surgery Formula).} For $m$ with $0\leqslant m\leqslant n,$ let $\boldsymbol\mu=(\mu_1,\dots,\mu_m)$ be a system of simple closed curves on $\partial M$ such that $\mu_i\subset T_i,$ and let $\boldsymbol\nu=(\nu_{m+1},\dots,\nu_n)$  be a system of simple closed curves on $\partial M$ such that $\nu_j\subset T_j.$ Let $M_{\boldsymbol\mu}$ be a hyperbolic $3$-manifold obtained from $M$ be doing the Dehn-filling along $\boldsymbol\mu.$ Then $\boldsymbol\nu$ can be considered as a system of simple closed curves on $\partial M_{\boldsymbol\mu}.$ Let $[\rho_{\boldsymbol\mu}]\in\mathrm X_0^{\text{irr}}(M_{\boldsymbol\mu})$ and let $[\rho]\in \mathrm X^{\text{irr}}_0(M)$ be the restriction of $[\rho_{\boldsymbol\mu}]$ on $M.$ Let $( \mathrm{H}(\gamma_1),\dots, \mathrm{H}(\gamma_m))$ be the logarithmic holonomies in $\rho$ of the core curves $\gamma_1,\dots,\gamma_m$ of the solid tori filled in. If $\rho_{\boldsymbol\mu}$ is $\boldsymbol\nu$-regular and $\sinh (\mathrm{H}(\gamma_i)/2) \neq 0$ for $i=1,\dots,m$, then $\rho$ is $\boldsymbol\mu\cup\boldsymbol\nu$-regular, and 
\begin{equation}\label{sf}
\mathbb T_{(M_{\boldsymbol\mu},\boldsymbol\nu)}([\rho_{\boldsymbol\mu}])=\pm\mathbb T_{(M,\boldsymbol\mu\cup\boldsymbol\nu)}([\rho])\prod_{i=1}^m\frac{1}{4\sinh^2\frac{  \mathrm{H}(\gamma_i)}{2}}.
\end{equation}
\end{enumerate}
\end{theorem}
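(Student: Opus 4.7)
The plan is to prove parts (i)--(iii) by exploiting three standard tools from the theory of Reidemeister torsion: (a) its change-of-basis behavior (the torsion gets multiplied by the determinant of the base-change matrix in homology), (b) its multiplicativity under gluing of CW-pairs via a Mayer--Vietoris argument, and (c) its rational dependence on the twisting representation.

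For part (i), once the bases $\mathbf{h}^1_{(M,\boldsymbol\alpha)}$ and $\mathbf{h}^2_M$ are chosen as in Theorem~\ref{HM}, the boundary maps of $\mathrm C_*(M;\mathrm{Ad}_\rho)$ are polynomial in the matrix entries of $\rho$, and the invariant vectors $\mathbf I_i$ can be chosen to depend rationally on the character (they are determined up to scaling by the common fixed point of $\mathrm{Ad}_\rho(\pi_1(T_i))$). Hence $\mathbb T_{(M,\boldsymbol\alpha)}$ extends to a rational function on $\mathrm X_0^{\text{irr}}(M)$, and it suffices to exhibit one $\boldsymbol\alpha$-regular character at which it does not vanish. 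I would do this at the discrete faithful character $[\rho_0]$, where Weil rigidity and Theorem~\ref{HM} give $\mathrm H_*(M;\mathrm{Ad}_{\rho_0})$ the exact dimensions needed, and where a direct computation on a triangulated model produces an explicit nonzero value.

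For part (ii), since the underlying chain complex and the basis $\mathbf{h}^2_M$ are independent of the system of curves, only $\mathbf{h}^1$ changes. The change-of-basis rule for Reidemeister torsion then gives
$$
\mathbb T_{(M,\boldsymbol\beta)}([\rho]) = \pm\det\!\big(M_{\boldsymbol\beta,\boldsymbol\gamma}\big)\,\mathbb T_{(M,\boldsymbol\gamma)}([\rho]),
$$
where $M_{\boldsymbol\beta,\boldsymbol\gamma}$ is the transition matrix from $\{\mathbf I_i\otimes[\gamma_i]\}$ to $\{\mathbf I_i\otimes[\beta_i]\}$ in $\mathrm H_1(M;\mathrm{Ad}_\rho)$. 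Using the decomposition $\mathrm H_1(M;\mathrm{Ad}_\rho)\cong\bigoplus_i\mathrm H_1(T_i;\mathrm{Ad}_\rho)$ of Theorem~\ref{HM} together with the standard pairing between twisted 1-homology of a boundary torus and infinitesimal deformations of its holonomy, a Fox calculus computation on each $T_i$ identifies the class of $\mathbf I_i\otimes[\alpha]$ with the differential of $\mathrm H(\alpha)$ at $[\rho]$. This realizes $M_{\boldsymbol\beta,\boldsymbol\gamma}$ as the Jacobian matrix $\big(\partial\mathrm H(\beta_i)/\partial\mathrm H(\gamma_j)\big)_{ij}$, yielding the change of curves formula.

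For part (iii), write $M_{\boldsymbol\mu}=M\cup_{T_1\sqcup\cdots\sqcup T_m}(V_1\sqcup\cdots\sqcup V_m)$, where $V_i$ is the attached solid torus whose meridian is $\mu_i$ and whose core is $\gamma_i$. The Mayer--Vietoris gluing formula for Reidemeister torsion, applied to the adjoint-twisted complexes with compatibly chosen bases on $M$, each $V_i$, and each $T_i$, reduces the identity to a computation of $\mathrm{Tor}(V_i;\mathrm{Ad}_\rho)$ divided by $\mathrm{Tor}(T_i;\mathrm{Ad}_\rho)$, with the torus contributions cancelling against the basis pieces of $\mathbf{h}^2_M$ restricted to the $T_i$. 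Since $V_i$ deformation retracts onto its core circle and $\mathrm{Ad}_\rho(\gamma_i)$ acts on $\mathbf{sl}(2;\CC)$ with eigenvalues $1,e^{\mathrm H(\gamma_i)},e^{-\mathrm H(\gamma_i)}$, the torsion of the solid torus equals $\pm\big(e^{\mathrm H(\gamma_i)/2}-e^{-\mathrm H(\gamma_i)/2}\big)^{-2}=\pm 1/(4\sinh^2(\mathrm H(\gamma_i)/2))$, which produces the claimed product. The main obstacle will be in (iii): bookkeeping the Mayer--Vietoris basis data precisely enough so that the cusp basis on $M_{\boldsymbol\mu}$ matches the restriction of $\mathbf{h}^1_{(M,\boldsymbol\mu\cup\boldsymbol\nu)}$ along $T_{m+1},\dots,T_n$ and produces the clean product formula with no spurious determinantal factors, together with verifying that all relevant homologies vanish on the gluing tori so the gluing formula takes its simplest form.
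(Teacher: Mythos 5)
This statement is Theorem 4.1 of Porti's memoir \cite{P}; the paper cites it as a black box and supplies no proof of its own, so the comparison is between your sketch and Porti's argument. Your broad strategy---(i) rationality of the torsion together with non-vanishing at the discrete faithful character, (ii) change of basis in $\mathrm H_1(M;\mathrm{Ad}_\rho)$, (iii) Mayer--Vietoris multiplicativity against the attached solid tori---is indeed the architecture of Porti's proof. Your computation of the solid-torus torsion as $\pm 1/(4\sinh^2(\mathrm H(\gamma_i)/2))$ from the $\mathrm{Ad}$-eigenvalues $1,e^{\mathrm H(\gamma_i)},e^{-\mathrm H(\gamma_i)}$ is also correct.

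There are, however, two genuine gaps in part (ii). First, the asserted decomposition $\mathrm H_1(M;\mathrm{Ad}_\rho)\cong\bigoplus_i\mathrm H_1(T_i;\mathrm{Ad}_\rho)$ is false on dimensional grounds: the left-hand side is $\mathbb C^n$ while the right-hand side is $\mathbb C^{2n}$. What Theorem~\ref{HM} actually provides is that $\mathrm H_2(M;\mathrm{Ad}_\rho)\cong\bigoplus_i\mathrm H_2(T_i;\mathrm{Ad}_\rho)$, while for $\mathrm H_1$ one only knows that the $n$ vectors $\mathbf I_i\otimes[\alpha_i]$ form a basis; the image of each $\mathrm H_1(T_i;\mathrm{Ad}_\rho)$ in $\mathrm H_1(M;\mathrm{Ad}_\rho)$ is a one-dimensional line, not a $\mathbb C^2$-summand. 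Second, and more substantively, the claim that this ``realizes $M_{\boldsymbol\beta,\boldsymbol\gamma}$ as the Jacobian matrix'' cannot be literally correct. Because $\mathbf I_i\otimes[\beta_i]$ and $\mathbf I_i\otimes[\gamma_i]$ both lie in the same one-dimensional line (the image of $\mathrm H_1(T_i)$ in $\mathrm H_1(M)$), the transition matrix $M_{\boldsymbol\beta,\boldsymbol\gamma}$ is necessarily diagonal, whereas the Jacobian $\big(\partial\mathrm H(\beta_i)/\partial\mathrm H(\gamma_j)\big)_{ij}$ has non-zero off-diagonal entries in general, since $\mathrm H(\beta_i)$ depends holomorphically on all the coordinates $\mathrm H(\gamma_1),\dots,\mathrm H(\gamma_n)$. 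So these two matrices are not equal as matrices; what is true---and what requires proof---is that their determinants agree. Establishing that equality of determinants is exactly where Porti invokes Poincar\'e--Lefschetz duality between $\mathrm H_1(M;\mathrm{Ad}_\rho)$ and $\mathrm H^2(M,\partial M;\mathrm{Ad}_\rho)$, the Lagrangian property of the restriction $\mathrm H^1(M;\mathrm{Ad}_\rho)\to\mathrm H^1(\partial M;\mathrm{Ad}_\rho)$ with respect to the boundary cup-product pairing, and Goldman's formula relating twisted cocycles dual to a peripheral curve with the differential of its trace; a Fox-calculus computation on a single $T_i$ does not by itself see the interaction between different cusps that produces the off-diagonal Jacobian entries. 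Finally, for part (iii), you correctly identify the solid-torus contribution, but the Mayer--Vietoris bookkeeping you flag as the ``main obstacle'' is genuinely where the work lies: one must verify that the torsions $\mathrm{Tor}(T_i;\mathrm{Ad}_\rho)$ of the gluing tori and the torsion of the Mayer--Vietoris long exact sequence combine with the chosen bases $\mathbf h^1,\mathbf h^2$ to leave exactly the claimed product, and your sketch stops short of this.
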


\begin{remark}\label{rmksurg} For the surgery formula, if the core curve $\gamma_i$ is free homotopic to a geodesic in $M_{\boldsymbol\mu}$, then $\Re\mathrm{H}(\gamma_i)\neq 0$ and hence the condition that $\sinh (\mathrm{H}(\gamma_i)/2) \neq 0$ is satisfied. In particular, by the result in \cite{HK}, if we remove at most 114 simple closed curves on each boundary torus, then for the Dehn-fillings along any remaining system of simple closed curves, we have $\sinh (\mathrm{H}(\gamma_i)/2) \neq 0$.
\end{remark}
\begin{remark}
In this paper, we only focus on the adjoint twisted Reidemeister torsion defined on the distinguished component of the $\mathrm{PSL}(2;\CC)$-character variety of $M$. Nevertheless, the invariant can be defined on any component of the character variety with dimension equal to the number of boundary component of $M$ (see for example \cite[Section 2.2]{WY3}). 
\end{remark}

\subsubsection{Adjoint twisted Reidemeister torsion of fundamental shadow link complements}\label{TFSL}
In \cite{WY3}, T. Yang and the second author compute the adjoint twisted Reidemeister torsion of fundamental shadow link complement in terms of the determinants of the Gram matrix functions of the $D$-blocks. 
Recall that an edge of a truncated tetrahedron is the intersection of two hexagonal faces. Let $(e_1,\dots,e_6)$ be the six edges of a truncated tetrahedron in such a way that $(e_k, e_{k+3})$ is a pair of opposite edges for $k=1,2,3$ as shown in Figure \ref{pi1tetra}. By construction, a $D$-block $\mathcal{D}$ is obtained by gluing two truncated tetrahedra together along the hexagonal faces and then removing the edges of the tetrahedra.  Especially, as shown in Figure \ref{pi1tetra}, since $\mathcal{D}$ is homeomorphic to the complement of the tetrahedron graph, the fundamental group $\pi_1(\mathcal{D})$ of a $D$-block is a rank $3$ free group generated by $[m_1], [m_2]$ and $[m_3]$, where $m_1,m_2,m_3 \in \pi_1(\mathcal{D})$ are the meridians around the edges $e_1,e_2,e_3$. In the Wirtinger presentation, the meridians around the edges $e_4,e_5$ and $e_6$ are given by $[m_3^{-1}m_2], [m_1^{-1}m_3]$ and $[m_2^{-1}m_1]$ respectively, where $m_k^{-1}m_j$ represents the curve that goes along $m_k^{-1}$ first and then goes along $m_j$.

\begin{figure}[h]
\centering
\includegraphics[scale=0.25]{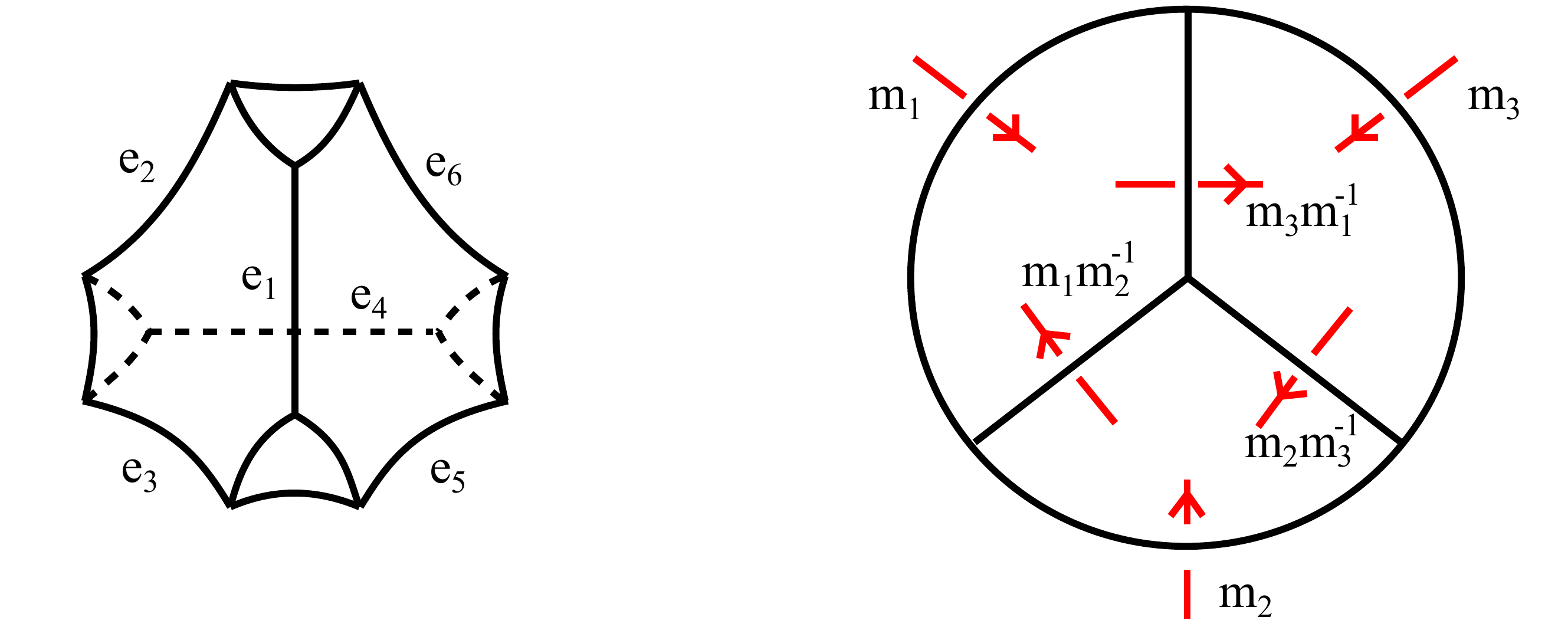}
\caption{The Wirtinger presentation of $\pi_1(\mathcal{D})$. In this figure, $m_i^{-1}m_j$ represents the element in $\pi_1(\mathcal{D})$ that follows $m_i^{-1}$ and then $m_j$.}
\label{pi1tetra}
\end{figure}

The $\mathrm{SL}(2;\CC)$-character variety of the rank 3 free group $\pi_1(\mathcal{D})$ is studied by Goldman in \cite{G1}. It is shown in \cite[Section 5.1.1]{G1} that the $\mathrm{SL}(2;\CC)$-character variety $\chi(\mathcal{D})$ of $\pi_1(\mathcal{D})$ is homeomorphic to a hypersurface in $\CC^7$ parametrized by the seven traces 
$\mathrm{Tr}\tilde\rho([m_1])$, $\mathrm{Tr}\tilde\rho([m_2])$, $\mathrm{Tr}\tilde\rho([m_3])$, $\mathrm{Tr}\tilde\rho([m_1m_2])$, $\mathrm{Tr}\tilde\rho([m_1m_3])$, $\mathrm{Tr}\tilde\rho([m_2m_3])$ and $\mathrm{Tr}\tilde\rho([m_1m_2m_3])$, where $[\tilde\rho] \in \chi(\mathcal{D})$. Moreover, it is shown in \cite[Proposition 5.1.1]{G1} that $\chi(\mathcal{D})$ is a double branched cover of $\CC^6$ parametrized by the first six traces. Furthermore, a representation $\tilde\rho: \pi_1(D) \to \mathrm{SL}(2;\CC)$ is not in the branch locus if and only if 
$$f_{\mathcal{D}}\big(\mathrm{Tr}\tilde\rho([m_1]),\mathrm{Tr}\tilde\rho([m_2]),\mathrm{Tr}\tilde\rho([m_3]),\mathrm{Tr}\tilde\rho([m_1m_2]),\mathrm{Tr}\tilde\rho([m_1m_3]),\mathrm{Tr}\tilde\rho([m_2m_3]) \big)\neq 0,$$
where $f_{\mathcal{D}}$ is the polynomial
\begin{equation*}
\begin{split}
f_{\mathcal{D}}(t_1,t_2,t_3&,t_{12},t_{13},t_{23})=\big(t_{12}t_3+t_{13}t_2+t_{23}t_1-t_1t_2t_3\big)^2\\
&-4\big(t_1^2+t_2^2+t_3^2+t_{12}^2+t_{13}^2+t_{23}^2-t_1t_2t_{12}-t_1t_3t_{13}-t_2t_3t_{23}+t_{12}t_{13}t_{23}-4\big).
\end{split}
\end{equation*}

In our setting, we let $m_4=m_3^{-1}m_2$, $m_5=m_1^{-1}m_3$ and $m_6=m_2^{-1}m_1$ such that $m_1,m_2,\dots,m_6$ are the meridians around the six edges $e_1,e_2,\dots,e_6$ in $\mathcal{D}$ respectively. Given a representation $\tilde\rho: \pi_1(D) \to \mathrm{SL}(2;\CC)$, 
the \emph{logarithmic holonomies} of $(m_1,m_2,\dots,m_6)$ in $\widetilde \rho$ are up to sign the complex numbers 
$$(\mathrm{H}(m_1),\mathrm{H}(m_2),\dots,\mathrm{H}(m_6)) \in \CC^6$$ such that 
$$
\mathrm {Tr}\widetilde\rho([m_l])=-2\cosh\frac{\mathrm{H}(m_l)}{2}
$$
for $l=1,\dots,6$. 
In this way, if $\mathcal{D}$ is with the hyperbolic structure obtained by doubling the regular ideal octahedron, $\rho_0:\pi_1(\mathcal{D})\to\mathrm{PSL}(2;\mathbb C)$ is the holonomy representation of this hyperbolic structure on $\mathcal{D}$ and $\widetilde \rho_0:\pi_1(\mathcal{D})\to\mathrm{SL}(2;\mathbb C)$ is the lifting of $\rho_0$ with
$$
\mathrm {Tr}\widetilde\rho([m_l])=-2
$$
for $l=1,\dots,6$, then the logarithmic holonomies of $(m_1,\dots,m_6)$  in $\widetilde\rho_0$ are $(0,\dots,0).$ We notice that the complete hyperbolic structure on a fundamental shadow link complement is obtained by gluing such $D$-blocks together by isometries along the faces. Therefore, this hyperbolic structure can be considered as the ``complete hyperbolic structure" on $\mathcal{D}.$

Given a representation $\widetilde \rho:\pi_1(\mathcal{D})\to\mathrm{SL}(2;\mathbb C)$, the \emph{determinant of the associated Gram matrix} is the determinant of the matrix 
$$\mathbb G=\mathbb G\Bigg(\frac{\mathrm{H}(m_1)}{2},\dots,\frac{\mathrm{H}(m_6)}{2} \Bigg)=\left[
\begin{array}{cccc}
1 & -\cosh \frac{\mathrm{H}(m_1)}{2} & -\cosh  \frac{\mathrm{H}(m_6)}{2} &-\cosh  \frac{\mathrm{H}(m_5)}{2}\\
-\cosh  \frac{\mathrm{H}(m_1)}{2}& 1 &-\cosh  \frac{\mathrm{H}(m_2)}{2} & -\cosh  \frac{\mathrm{H}(m_3)}{2}\\
-\cosh  \frac{\mathrm{H}(m_6)}{2} & -\cosh  \frac{\mathrm{H}(m_2)}{2} & 1 & -\cosh  \frac{\mathrm{H}(m_4)}{2} \\
-\cosh  \frac{\mathrm{H}(m_5)}{2} & -\cosh  \frac{\mathrm{H}(m_3)}{2} & -\cosh  \frac{\mathrm{H}(m_4)}{2}  & 1 \\
 \end{array}\right].$$
 By the trace identity of the matrices in $\mathrm{SL}(2;\mathbb C)$ that
$$
\mathrm{Tr}(A)\mathrm{Tr}(B) = \mathrm{Tr}(AB) + \mathrm{Tr}(AB^{-1})
$$
for $A,B \in \mathrm{SL}(2;\mathbb C)$, we have 
\begin{align*}
\mathrm {Tr}\widetilde\rho([m_2m_3]) &= \mathrm {Tr}\widetilde\rho([m_2])\mathrm {Tr}\widetilde\rho([m_3]) - \mathrm {Tr}\widetilde\rho([m_4]),\\
\mathrm {Tr}\widetilde\rho([m_1m_3]) &=\mathrm {Tr}\widetilde\rho([m_1])\mathrm {Tr}\widetilde\rho([m_3]) - \mathrm {Tr}\widetilde\rho([m_5]),\\
\mathrm {Tr}\widetilde\rho([m_1m_2]) &=\mathrm {Tr}\widetilde\rho([m_1])\mathrm {Tr}\widetilde\rho([m_2]) - \mathrm {Tr}\widetilde\rho([m_6]).
\end{align*}
Then by a direct computation, as functions in $\mathrm {Tr}\widetilde\rho([m_1]), \dots,\mathrm {Tr}\widetilde\rho([m_6])$, we have
$$
f_{\mathcal{D}}\big(\mathrm{Tr}\tilde\rho([m_1]),\mathrm{Tr}\tilde\rho([m_2]),\mathrm{Tr}\tilde\rho([m_3]),\mathrm{Tr}\tilde\rho([m_1m_2]),\mathrm{Tr}\tilde\rho([m_1m_3]),\mathrm{Tr}\tilde\rho([m_2m_3]) \big)\\
=16\det \mathbb G,$$
  and $\widetilde\rho$ is not in the branch locus of the double branched cover of the $\mathrm{SL}(2;\mathbb C)$-character variety of $D$ over $\mathbb C^6$  if and only if $\det\mathbb G\neq 0.$

Since $\pi_1(\mathcal{D})$ is a free group, every $\mathrm{PSL}(2;\mathbb C)$-representation of it lifts to  $\mathrm{SL}(2;\mathbb C)$-representation Hence the  $\mathrm{SL}(2;\mathbb C)$-character variety of $\mathcal{D}$ is a branched cover of the $\mathrm{PSL}(2;\mathbb C)$-character variety of  $\mathcal{D},$ and the latter is an irreducible algebraic variety.  For a representation $\rho:\pi_1(\mathcal{D})\to\mathrm{PSL}(2;\mathbb C),$ we defined the logarithmic holonomies $(\mathrm{H}(m_1),\dots,\mathrm{H}(m_6))$ and the determinant of the associated Gram matrix $\mathbb G$ of $\rho$  as those of a lifting $\widetilde \rho:\pi_1(\mathcal{D})\to\mathrm{SL}(2;\mathbb C)$ of $\rho.$ Notice that the logarithmic holonomies depend on the choice of the liftings of $\rho,$ and a different lifting will change $\mathbb G$ by multiplying some rows and the corresponding columns by $-1$ at the same time, which does not change its determinant. Therefore, the \emph{determinant of the associated Gram matrix} $\det\mathbb G$ is independent of the choice of the liftings, and  is a well defined quantity of $\rho.$

By using the determinant of the associated Gram matrix of the building blocks, the adjoint twisted Reidemeister torsion of fundamental shadow link complements is given as follows.

\begin{theorem}[Theorem 1.1, \cite{WY3}] \label{main1}  Let $M=\#^{c+1}(\SS^2\times \SS^1)\setminus L_{\text{FSL}}$ be the complement of a fundamental shadow link $L_{\text{FSL}}$ with $n$ components $L_1,\dots,L_n,$ which is the orientable double of the union of truncated tetrahedra $\Delta_1,\dots, \Delta_c$ along pairs of the triangles of truncation (see Section \ref{fsl}).

\begin{enumerate}[(1)] 
\item  Let $\boldsymbol m=(m_1,\dots, m_n)$ be the system of the meridians of a tubular neighborhood of the components of $L_{\text{FSL}}.$ For an $\boldsymbol m$-regular $\mathrm{PSL}(2; \mathbb C)$-character $[\rho]$ of $M$ (see Definition \ref{reg}), let $( \mathrm{H}(m_1),
\dots, \mathrm{H}(m_n))$ be the logarithmic holonomies of $\boldsymbol m$ in $\rho.$ For each $k\in\{1,\dots,c\},$ let $L_{k_1},\dots,L_{k_6}$ be the components of $L_{\text{FSL}}$ intersecting $\Delta_k,$ and let $\mathbb G_k=\mathbb G\Big(\frac{ \mathrm{H}(m_{k_1})}{2},\dots,\frac{ \mathrm{H}(m_{k_6})}{2}\Big)$ be the value of the Gram matrix function at $\Big(\frac{ \mathrm{H}(m_{k_1})}{2},\dots,\frac{ \mathrm{H}(m_{k_6})}{2}\Big).$ Then the adjoint twisted Reidemeister torsion of $M$ with respect to $\boldsymbol m$ (see Definition \ref{ATRT}) at $[\rho]$ is 
$$\mathbb T_{(M,\boldsymbol m)}([\rho])=\pm2^{3c}\prod_{k=1}^c \sqrt{\det\mathbb G_k}.$$

\item In addition to the conditions of (1), let $\boldsymbol \mu=(\mu_1,\dots,\mu_n)$ be a system of simple closed curves on $\partial M,$ and let $( \mathrm{H}(\mu_1),\dots,  \mathrm{H}(\mu_n))$ be their logarithmic holonomies which are holomorphic functions of $( \mathrm{H}(m_1),\dots, \mathrm{H}(m_n))$ near $[\rho].$ If $[\rho]$ is $\boldsymbol\mu$-regular, then the adjoint twisted Reidemeister torsion of $M$ with respect to $\boldsymbol\mu$ at $[\rho]$ is
 $$ \mathbb T_{(M,\boldsymbol\mu)}([\rho])=\pm2^{3c}\det\bigg(\frac{\partial ( \mathrm{H}(\mu_1),\dots,  \mathrm{H}(\mu_n))}{\partial ( \mathrm{H}(m_1),\dots, \mathrm{H}(m_n))}\bigg)\prod_{k=1}^c \sqrt{\det\mathbb G_k},$$
 where $\frac{\partial ( \mathrm{H}(\mu_1),\dots,  \mathrm{H}(\mu_n))}{\partial ( \mathrm{H}(m_1),\dots, \mathrm{H}(m_n))}$ is the Jacobian matrix of the holomorphic function $( \mathrm{H}(\mu_1),\dots,  \mathrm{H}(\mu_n))$ with respect to $( \mathrm{H}(m_1),\dots, \mathrm{H}(m_n))$ evaluated at $[\rho].$ 
\end{enumerate} 
\end{theorem}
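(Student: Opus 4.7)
The plan is to deduce part (2) from part (1) by a single application of the Change of Curves Formula (Theorem \ref{funT}(ii)), which supplies exactly the Jacobian factor $\det(\partial(\mathrm H(\mu_1),\dots,\mathrm H(\mu_n))/\partial(\mathrm H(m_1),\dots,\mathrm H(m_n)))$. So the entire task reduces to establishing the meridian formula in part (1).

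For (1), I would decompose $M$ along its $2c$ three-punctured spheres $S_1,\dots,S_{2c}$ into the $c$ $D$-blocks $\mathcal{D}_1,\dots,\mathcal{D}_c$, choose CW-structures on the blocks that restrict to a common CW-structure on each $S_j$, and then compute the torsion by multiplicativity under the Mayer--Vietoris short exact sequence
\begin{equation*}
0\to\bigoplus_{j=1}^{2c} C_*(S_j;\mathrm{Ad}_\rho)\to\bigoplus_{k=1}^{c} C_*(\mathcal{D}_k;\mathrm{Ad}_\rho)\to C_*(M;\mathrm{Ad}_\rho)\to 0.
\end{equation*}
The $\boldsymbol m$-regularity hypothesis on $[\rho]$ guarantees that for each boundary torus $T_i$ the invariant vector $\mathbf I_i\in\mathbb C^3$ of $\mathrm{Ad}_\rho(\pi_1(T_i))^T$ is well defined up to scalar, which pins down coherent Porti bases $\{\mathbf I_i\otimes[m_i]\}$ and $\{\mathbf I_i\otimes[T_i]\}$ for $H_1(M;\mathrm{Ad}_\rho)$ and $H_2(M;\mathrm{Ad}_\rho)$, together with matching bases on the individual pieces compatible with the gluing.

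The heart of the computation is the contribution of a single $D$-block $\mathcal{D}_k$. Since $\pi_1(\mathcal{D}_k)$ is free of rank three, generated by three meridians $m_{k_1},m_{k_2},m_{k_3}$, the adjoint-twisted chain complex of $\mathcal{D}_k$ collapses to a single map $\mathbb C^9\to\mathbb C^3$ built from the three matrices $\mathrm{Ad}\,\tilde\rho(m_{k_i})-\mathrm{Id}$. Choosing bases on $H_0$ and $H_1$ from the invariant vectors at the four boundary three-punctured spheres of $\mathcal{D}_k$, the torsion becomes an explicit polynomial determinant in the six traces $\mathrm{Tr}\,\tilde\rho(m_{k_l})$. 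Using the trace identities recorded in Section \ref{TFSL}, I expect this polynomial to match the Goldman discriminant $f_{\mathcal{D}}$ evaluated at those six traces, and hence by $f_{\mathcal{D}}=16\det\mathbb G$ to equal $16\det\mathbb G_k$ up to a universal constant. A parallel but easier computation on each $S_j$ (whose $\pi_1$ is free of rank two) yields a constant factor in the denominator of the multiplicativity identity, and after cancellation the combined contributions should produce $\pm 2^{3c}\prod_k\sqrt{\det\mathbb G_k}$.

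The main obstacle is extracting the square root $\sqrt{\det\mathbb G_k}$ rather than $\det\mathbb G_k$ itself, and simultaneously pinning down the prefactor $2^{3c}$. Conceptually the square root reflects the double-branched-cover structure of the $\mathrm{SL}(2;\mathbb C)$-character variety of $\mathcal{D}$ over $\mathbb C^6$, branched precisely along $\{\det\mathbb G=0\}$; the expectation is that the four boundary three-punctured spheres incident to $\mathcal{D}_k$ each contribute a factor involving $\sqrt{\det\mathbb G_k}$ in the denominator, halving the exponent coming from the block-level computation while producing the exponent $3c$ of $2$ through the normalization of the Porti bases $\mathbf I_i\otimes[m_i]$. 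Carrying out this combinatorial bookkeeping carefully, and checking it against a tractable example such as the Borromean rings complement ($c=1$), would complete the proof.
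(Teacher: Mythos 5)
You are proposing to prove a statement that the present paper itself does not prove: Theorem~\ref{main1} is quoted verbatim from Theorem~1.1 of \cite{WY3} and used as an input, so there is no in-text proof to compare against. I will therefore assess your sketch against the strategy of \cite{WY3}.

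Your high-level plan is the right one, and matches the approach of \cite{WY3}: deduce (2) from (1) by the Change of Curves Formula (Theorem~\ref{funT}(ii)), and attack (1) by cutting $M$ along the $2c$ three-punctured spheres into $D$-blocks, applying multiplicativity of Reidemeister torsion for the Mayer--Vietoris sequence, and using $\boldsymbol m$-regularity to fix coherent Porti bases. The identification of $\pi_1(\mathcal{D}_k)$ as a rank-$3$ free group and the reduction of the $D$-block torsion to a finite linear-algebra computation in the adjoint of $\tilde\rho(m_{k_i})$ are also correct starting points.

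The genuine gap is precisely where you flag it: the square root. Your proposed mechanism --- that the $D$-block torsion should match the Goldman discriminant $f_{\mathcal{D}}=16\det\mathbb G_k$, with the four incident three-punctured-sphere torsions supplying denominators ``involving $\sqrt{\det\mathbb G_k}$'' that halve the exponent --- cannot work as stated. Each three-punctured sphere $S$ is shared by two different $D$-blocks $\mathcal{D}_j$ and $\mathcal{D}_k$, and $\pi_1(S)$ only sees the three meridians meeting $S$; its torsion is a function of those three traces alone and cannot encode block-specific data such as $\det\mathbb G_j$ and $\det\mathbb G_k$ simultaneously. Consistency of the Mayer--Vietoris bookkeeping therefore rules out the proposed cancellation. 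In \cite{WY3} the square root already appears in the torsion of a single $D$-block: with respect to the Porti bases induced by the invariant vectors, $\mathrm{Tor}(\mathcal{D}_k;\mathrm{Ad}_\rho)$ is a multiple of $\sqrt{\det\mathbb G_k}$ (not of $\det\mathbb G_k$), while the three-punctured spheres contribute only powers of $2$. Concretely, the torsion of $\mathcal{D}_k$ is naturally expressed in Goldman's seven trace coordinates, and after projecting to the first six traces one picks up the ramification of the double branched cover $\chi(\mathcal{D})\to\mathbb C^6$: the seventh trace satisfies a quadratic over $\mathbb C^6$ whose discriminant is $f_{\mathcal{D}}=16\det\mathbb G_k$, and the torsion is linear in the branch choice $\pm\sqrt{f_{\mathcal{D}}}$. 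This explicit $D$-block computation --- which also pins down the exponent of $2$ without your ad hoc normalization argument --- is the technical core that your sketch leaves undone, so as written the proposal does not constitute a proof.
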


Let $M$ be a fundamental shadow link complement as in Theorem \ref{main1}. 
For $m$ with $0\leqslant m\leqslant n,$ let $\boldsymbol\mu=(\mu_1,\dots,\mu_m)$ be a system of simple closed curves on $\partial M$ such that $\mu_i\subset T_i,$ and let $\boldsymbol\nu=(\nu_{m+1},\dots,\nu_n)$  be a system of simple closed curves on $\partial M$ such that $\nu_j\subset T_j.$  Let $M_{\boldsymbol\mu}$ be the  $3$-manifold  obtained from $M$ by doing the Dehn-filling along $\boldsymbol\mu.$ Then $\boldsymbol\nu$ can be considered as a system of simple closed curves on $\partial M_{\boldsymbol\mu}.$ If $m=n,$ then $\boldsymbol \nu=\emptyset$ and $M_{\boldsymbol\mu}$ is a closed  $3$-manifold.

 \begin{theorem}[Theorem 1.4, \cite{WY3}] \label{Rtorthm}  Suppose $M_{\boldsymbol\mu}$ is hyperbolic. 
Let $[\rho_{\boldsymbol\mu}]$ be a $\boldsymbol\nu$-regular character of $M_{\boldsymbol\mu}$  and let $\rho$ be the restriction of $\rho_{\boldsymbol\mu}$ on $M.$ 
Let $( \mathrm{H}(m_1),\dots, \mathrm{H}(m_n))$ be the logarithmic holonomies of the system of meridians $\boldsymbol m$  in $[\rho]$ and for each $k\in\{1,\dots,c\},$ let $L_{k_1},\dots,L_{k_6}$ be the components of $L_{\text{FSL}}$ intersecting $\Delta_k$ and let $\mathbb G_k=\mathbb G\Big(\frac{ \mathrm{H}(m_{k_1})}{2},\dots,\frac{ \mathrm{H}(m_{k_6})}{2}\Big)$ be the value of the Gram matrix function at $\Big(\frac{ \mathrm{H}(m_{k_1})}{2},\dots,\frac{ \mathrm{H}(m_{k_6})}{2}\Big).$ Let $( \mathrm{H}(\mu_1),\dots, \mathrm{H}(\mu_m))$ and $( \mathrm{H}(\nu_{m+1}),\dots, \mathrm{H}(\nu_n))$ respectively be the logarithmic holonomies of $\boldsymbol\mu$ and $\boldsymbol\nu$ considered as functions of $( \mathrm{H}(m_1),\dots, \mathrm{H}(m_n))$ near $[\rho].$  Let $(\gamma_1,\dots,\gamma_m)$ be a system of  simple closed curves on $\partial M$ that are isotopic to the core curves of the solid tori filled in and let $( \mathrm{H}(\gamma_1),\dots, \mathrm{H}(\gamma_m))$ be their logarithmic holonomies in $[\rho].$ 
If $[\rho]$ is in the distinguished component of the $\mathrm{PSL}(2;\mathbb C)$-character variety of $M,$ then the adjoint twisted Reidemeister torsion of $M_{\boldsymbol\mu}$ with respect to $\boldsymbol \nu$ at $[\rho_{\boldsymbol\mu}]$ is 
\begin{align*} &\mathbb T_{(M_{\boldsymbol\mu},\boldsymbol\nu)}([\rho_{\boldsymbol\mu}])\\
=& \pm2^{3c-2m}\det\bigg(\frac{\partial ( \mathrm{H}(\mu_1),\dots, \mathrm{H}(\mu_m), \mathrm{H}(\nu_{m+1}),\dots, \mathrm{H}(\nu_n))}{\partial ( \mathrm{H}(m_1),\dots, \mathrm{H}(m_n))}\bigg)\prod_{k=1}^c \sqrt{\det\mathbb G_k}\prod_{i=1}^m\frac{1}{\sinh^2\frac{ \mathrm{H}(\gamma_i)}{2}},
\end{align*}
 where $\frac{\partial ( \mathrm{H}(\mu_1),\dots, \mathrm{H}(\mu_m), \mathrm{H}(\nu_{m+1}),\dots, \mathrm{H}(\nu_n))}{\partial ( \mathrm{H}(m_1),\dots, \mathrm{H}(m_n))}$ is the Jacobian matrix of the holomorphic function \linebreak $( \mathrm{H}(\mu_1),\dots, \mathrm{H}(\mu_m), \mathrm{H}(\nu_{m+1}),\dots, \mathrm{H}(\nu_n))$ with respect to $( \mathrm{H}(m_1),\dots, \mathrm{H}(m_n))$ evaluated at $[\rho].$
 
 In particular, if $M_{\boldsymbol\mu}$ is closed, $\rho_{\boldsymbol\mu}$ is the holonomy representation of the hyperbolic structure and $[\rho]$ is in the distinguished component of the $\mathrm{PSL}(2;\mathbb C)$-character variety of $M,$ then the adjoint twisted Reidemeister torsion of $M_{\boldsymbol\mu}$ is 
  $$ \mathrm {Tor}(M_{\boldsymbol\mu};\mathrm{Ad}_{\rho_{\boldsymbol\mu}})=\pm2^{3c-2n}\det\bigg(\frac{\partial ( \mathrm{H}(\mu_1),\dots, \mathrm{H}(\mu_n))}{\partial ( \mathrm{H}(m_1),\dots, \mathrm{H}(m_n))}\bigg)\prod_{k=1}^c \sqrt{\det\mathbb G_k}\prod_{i=1}^n\frac{1}{\sinh^2\frac{ \mathrm{H}(\gamma_i)}{2}}.$$
 
\end{theorem}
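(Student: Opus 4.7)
The strategy is essentially a chain of substitutions combining three results already at hand: the Surgery Formula \eqref{sf}, the Change of Curves Formula \eqref{coc}, and the explicit product formula for the torsion with respect to the meridian system from Theorem \ref{main1}(1). First I would apply \eqref{sf} to the filling $M\rightsquigarrow M_{\boldsymbol\mu}$. Since $M_{\boldsymbol\mu}$ is hyperbolic, each core curve $\gamma_i$ is freely homotopic to a closed geodesic, so $\Re \mathrm H(\gamma_i)\neq 0$ and the nondegeneracy $\sinh(\mathrm H(\gamma_i)/2)\neq 0$ required by Theorem \ref{funT}(iii) holds. Combined with the $\boldsymbol\nu$-regularity of $[\rho_{\boldsymbol\mu}]$, this produces the $(\boldsymbol\mu\cup\boldsymbol\nu)$-regularity of $[\rho]$ together with
\[ \mathbb T_{(M_{\boldsymbol\mu},\boldsymbol\nu)}([\rho_{\boldsymbol\mu}]) = \pm\,\mathbb T_{(M,\boldsymbol\mu\cup\boldsymbol\nu)}([\rho])\prod_{i=1}^m \frac{1}{4\sinh^2\frac{\mathrm H(\gamma_i)}{2}}. \]

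Next I would apply the Change of Curves Formula \eqref{coc} with $\boldsymbol\beta=\boldsymbol\mu\cup\boldsymbol\nu$ and $\boldsymbol\gamma=\boldsymbol m$ to rewrite $\mathbb T_{(M,\boldsymbol\mu\cup\boldsymbol\nu)}([\rho])$ as $\pm\det\bigl(\partial \mathrm H(\boldsymbol\mu\cup\boldsymbol\nu)/\partial \mathrm H(\boldsymbol m)\bigr)\,\mathbb T_{(M,\boldsymbol m)}([\rho])$, producing exactly the Jacobian factor appearing in the statement. Finally I would substitute the explicit evaluation $\mathbb T_{(M,\boldsymbol m)}([\rho])=\pm 2^{3c}\prod_{k=1}^c\sqrt{\det\mathbb G_k}$ from Theorem \ref{main1}(1). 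Collecting the constants $\prod_{i=1}^m \tfrac14 = 2^{-2m}$ with the leading factor $2^{3c}$ produces $2^{3c-2m}$, giving the first displayed formula of Theorem \ref{Rtorthm}. The closed case $m=n$, $\boldsymbol\nu=\emptyset$ is an immediate specialization: since $\mathrm H_\ast(M_{\boldsymbol\mu};\mathrm{Ad}_{\rho_{\boldsymbol\mu}})=0$ by Mostow rigidity, $\mathbb T_{(M_{\boldsymbol\mu},\emptyset)}([\rho_{\boldsymbol\mu}])$ coincides with the classical adjoint twisted Reidemeister torsion $\mathrm{Tor}(M_{\boldsymbol\mu};\mathrm{Ad}_{\rho_{\boldsymbol\mu}})$, and the general formula specializes to the second displayed equation.

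The only nontrivial content beyond this mechanical assembly is regularity bookkeeping: the Change of Curves Formula is a priori an identity of rational functions on $\mathrm X_0^{\mathrm{irr}}(M)$, so I must verify that $[\rho]$ avoids the indeterminacy loci on both sides and that the pointwise evaluation at $[\rho]$ is legitimate. This is where the hypothesis that $[\rho]$ lies in the distinguished component enters: the set $\mathcal Z_{\boldsymbol m}$ of $\boldsymbol m$-regular characters is Zariski open and contains the discrete faithful character, and $\det \mathbb G_k\neq 0$ also holds on a Zariski open subset of the distinguished component, so the identities extend by analyticity from their generic domain of validity to any $[\rho]$ in the distinguished component at which both sides remain finite. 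I expect this regularity discussion to be the main point requiring care; the rest reduces to keeping track of signs and powers of $2$.
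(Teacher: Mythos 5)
The paper does not give a proof of Theorem~\ref{Rtorthm}: it is quoted from \cite{WY3} (where it is stated as Theorem 1.4), so there is no internal argument to compare against. That said, your derivation is the intended one. The statement is exactly the conjunction of the surgery formula (Theorem~\ref{funT}(iii)), which reduces $\mathbb T_{(M_{\boldsymbol\mu},\boldsymbol\nu)}([\rho_{\boldsymbol\mu}])$ to $\mathbb T_{(M,\boldsymbol\mu\cup\boldsymbol\nu)}([\rho])$ at the cost of the factor $\prod_{i=1}^m(4\sinh^2\tfrac{\mathrm H(\gamma_i)}{2})^{-1}$, and the Gram-matrix evaluation of the torsion on the link complement. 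In fact you can shortcut your middle two steps: Theorem~\ref{main1}(2), applied to the curve system $\boldsymbol\mu\cup\boldsymbol\nu$, already packages the change-of-curves Jacobian together with the evaluation $\pm 2^{3c}\prod_{k=1}^c\sqrt{\det\mathbb G_k}$, so one substitution into the surgery formula finishes the computation; the book-keeping $2^{3c}\cdot 4^{-m}=2^{3c-2m}$ matches.

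Two cautions about your regularity discussion. First, you justify $\sinh(\mathrm H(\gamma_i)/2)\neq 0$ by observing that the core curves are geodesics, hence $\Re\mathrm H(\gamma_i)\neq 0$. That reasoning applies to the holonomy of a hyperbolic structure on $M_{\boldsymbol\mu}$, but the theorem is stated for an arbitrary $\boldsymbol\nu$-regular character $[\rho_{\boldsymbol\mu}]$, which need not be close to the complete structure; Remark~\ref{rmksurg} is careful to present the geodesic argument only as a sufficient condition. In the generality of the statement, the nonvanishing of $\sinh(\mathrm H(\gamma_i)/2)$ is an implicit hypothesis (required already for the right-hand side to be finite), not something you can deduce. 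Second, your appeal to analyticity to move from the $\boldsymbol m$-regular locus to $[\rho]$ is the right mechanism, but should be phrased more precisely: $\mathbb T_{(M,\boldsymbol m)}$ vanishes by definition off the $\boldsymbol m$-regular locus while $\prod_k\sqrt{\det\mathbb G_k}$ need not, so the pointwise identity in Theorem~\ref{main1}(1) does \emph{not} extend to all of the distinguished component. What does extend is the combination in Theorem~\ref{main1}(2), where the Jacobian determinant absorbs the degeneration; this is precisely why the hypothesis in the theorem is $\boldsymbol\mu\cup\boldsymbol\nu$-regularity of $[\rho]$ (which is a conclusion of the surgery formula) rather than $\boldsymbol m$-regularity. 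With these points tightened, your argument is complete and coincides in substance with the proof given in \cite{WY3}.
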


\subsection{Neumann-Zagier datum and gluing equations}\label{NZD}
Let $M$ be an oriented hyperbolic $3$-manifold with $\partial M = T_1 \coprod \dots \coprod T_k$. On each $T_i$, we choose a simple closed curve $\alpha_i \in \pi_1(T_i)$ and let $\alpha = (\alpha_1,\dots,\alpha_k) \in \pi_1(\partial M)$ be the system of simple closed curves. Let $\mathcal{T} = \{\Delta_1,\dots,\Delta_n\}$ be an ideal triangulation of $M$ and let $E = \{e_1,\dots, e_n\}$ be the set of edges. For each $\Delta_i$, we choose a quad type (i.e. a pair of opposite edges) and assign a shape parameter $z_i\in \CC\setminus \{0,1\}$ to the edges. For $z_i \in \CC\setminus \{0,1\}$, we define $z_i ' = \frac{1}{1-z_i}$, $z_i'' = 1-\frac{1}{z_i}$. Recall that for each ideal tetrahedron, opposite edges share the same shape parameters (See Figure \ref{edgehol}, left). By \cite{NZ}, there exists $n-k$ linearly independent edge equations, in the sense that if these $n-k$ edge equations are satisfied, the remaining $k$ edge equations will automatically be satisfied. Without loss of generality we assume that $\{e_1,\dots,e_{n-k}\}$ is a set of linearly independent edges. For each edge $e_i$, we let $E_{i,j}$ be the numbers of edges with shape parameter $z_j$ that is incident to $e_i$. We define $E_{i,j}'$ and $E_{i,j}''$ be respectively the corresponding counting with respect to $z_j'$ and $z_j''$ (see Figure \ref{edgehol}, middle). The gluing variety $\mathcal{V}_{\mathcal{T}}$ is the affine variety in $(z_1,z_1',z_1'', \dots, z_n, z_n', z_n'') \in \CC^{3n}$ defined by the zero sets of the polynomials
\begin{align*}
p_i = z_i(1-z_i'')- 1,\quad p_i' = z_i'(1-z_i) -1,\quad p_i'' = z_i''(1-z_i') -1 
\end{align*}
for $i=1,\dots, n$ and the polynomials
\begin{align*}
\prod_{i=1}^n z_i^{E_{ij}} (z_i')^{E_{ij}'} (z_i'')^{E_{ij}''}  -1
\end{align*}
for $j=1,\dots, n-k$. By using the equations $p_i=p_i'=p_i'' =0$ for $i=1,\dots,n$, for simplicity we will use $(z_1,\dots, z_n) \in (\CC\setminus \{0,1\})^n$ to represent a point in $\mathcal{V}_{\mathcal{T}}$. Besides, for each $\alpha_i \in \pi_1(T_i)$, we let $C_{i,j}$ be the numbers of edges with shape $z_i$ on the left hand side of $\alpha_i$ minus the numbers of edges with shape $z_i$ on the right hand side of $\alpha_i$. We define $C_{i,j}'$ and $C_{i,j}''$ be respectively the corresponding counting with respect to $z_j'$ and $z_j''$ (see Figure \ref{edgehol}, right). Given $(z_1,\dots, z_n) \in \mathcal{V}_{\mathcal{T}}$, the holonomy of $\alpha_i$ is given by
$$
\lambda_i = \prod_{i=1}^n z_i^{C_{ij}} (z_i')^{C_{ij}'} (z_i'')^{C_{ij}''}
$$
for $i=1,\dots, k$. There is a well-defined map
$$ \mathcal{P}_{\mathcal{T}} : \mathcal{V}_{\mathcal{T}} \to \mathrm{X}(M)$$
that sends $\mathbf{z}=(z_1,\dots, z_n)  \in \mathcal{V}_{\mathcal{T}}$ to the character $[\rho_{\mathbf{z}}]$ of the pseudo-developing map $\rho_{\mathbf{z}}$ with 
$$
\rho_{\mathbf{z}}(\alpha_i) = \pm 
\begin{pmatrix}
\sqrt{\lambda_i} & * \\
0 & \sqrt{\lambda_i}^{-1}
\end{pmatrix}$$
for $i=1,\dots,k$ up to conjugation. 

\begin{figure}[h]
  \centering
          \includegraphics[scale=0.13]{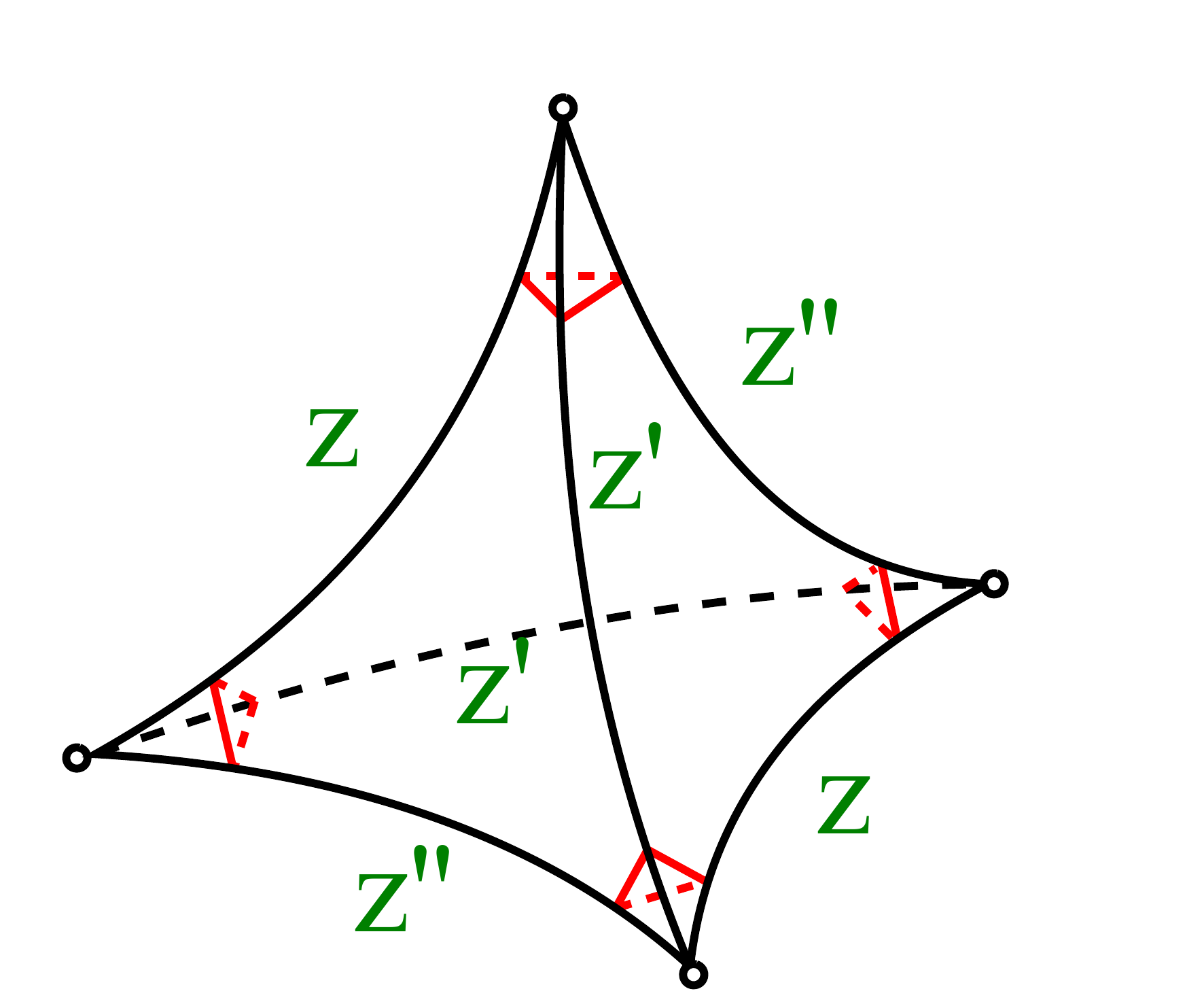} \qquad
\includegraphics[scale=0.13]{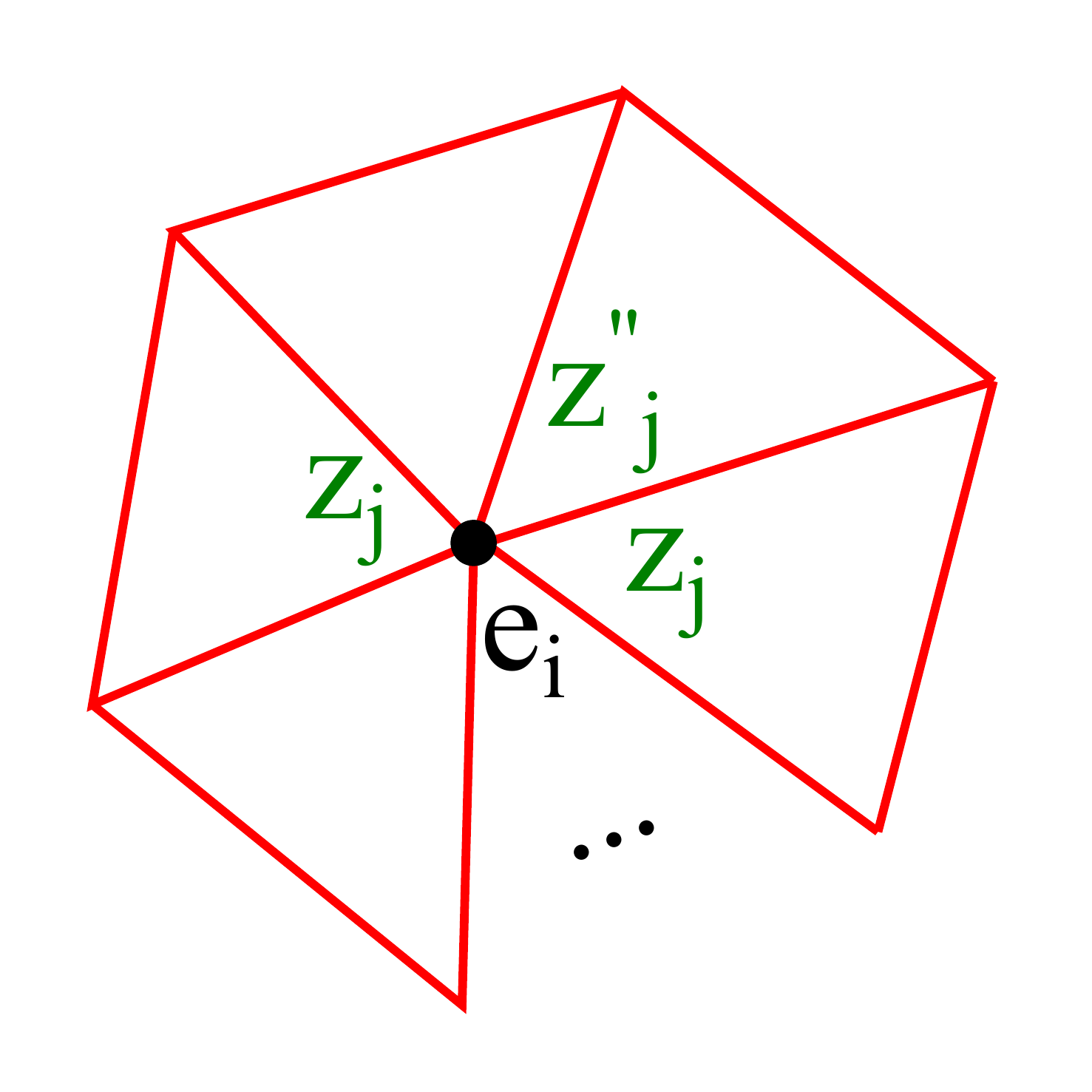}      \qquad\quad  \includegraphics[scale=0.13]{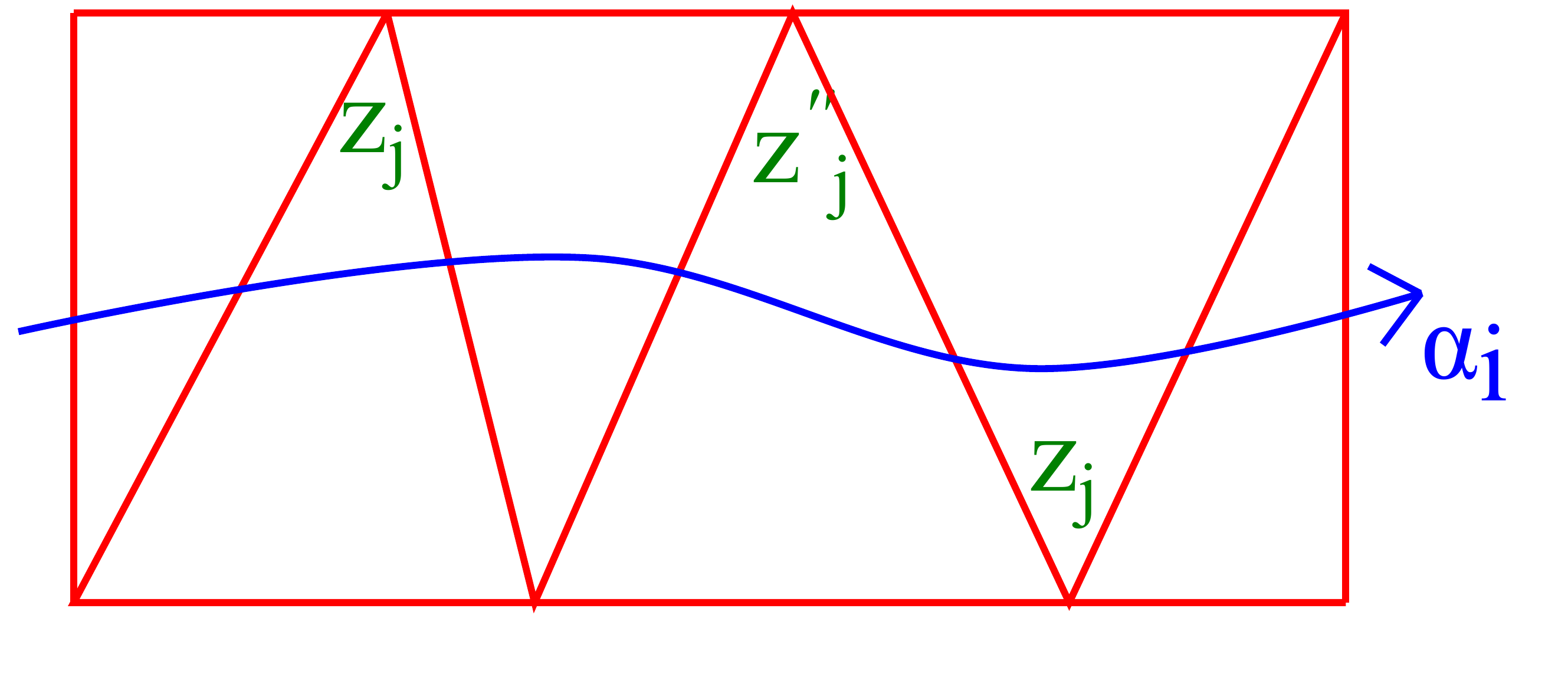}
\caption{On the left hand side, we have an ideal tetrahedron with shape parameters $z_i, z_i'$ and $z_i''$ assigned to the edges. In the middle, the black dot with a label $e_i$ corresponds to the $i$-th edge, which is surrounded by truncated triangles around the ideal vertices of the tetrahedra in the triangulation. In this example, $E_{ij} = 2$. On the right hand side, the rectangle is a fundamental domain of the boundary torus $T_i$ and $\alpha_i$ is a lifting of the simple closed curve $\alpha_i \in \pi_1(T_i)$ to the fundamental domain. In this example, we have $C_{ij} = 1 - 1 = 0$.}\label{edgehol}
\end{figure}
        
We define three $n \times n$ matrices $G,G',G'' \in M_{n\times n}(\ZZ)$ by
\begin{align*}
G=
\begin{pmatrix}
E_{1,1} & E_{1,2} & \dots & E_{1,n} \\
\vdots & &  & \vdots \\
E_{n-k,1} & E_{n-k,2} & \dots & E_{n-k,n} \\
C_{1,1} & C_{1,2} & \dots & C_{1,n} \\
\vdots & &  & \vdots   \\
C_{k,1} & C_{k,2} & \dots & C_{k,n}   
\end{pmatrix},\qquad
G'=
\begin{pmatrix}
E_{1,1}' & E_{1,2}' & \dots & E_{1,n}' \\
\vdots & &  & \vdots \\
E_{n-k,1}' & E_{n-k,2}' & \dots & E_{n-k,n}' \\
C_{1,1}' & C_{1,2}' & \dots & C_{1,n}' \\
\vdots & &  & \vdots   \\
C_{k,1}' & C_{k,2}' & \dots & C_{k,n}'   
\end{pmatrix}
\end{align*}
and
\begin{align*}
G''=
\begin{pmatrix}
E_{1,1}'' & E_{1,2}'' & \dots & E_{1,n}'' \\
\vdots & &  & \vdots \\
E_{n-k,1}'' & E_{n-k,2}'' & \dots & E_{n-k,n}'' \\
C_{1,1}'' & C_{1,2}'' & \dots & C_{1,n}'' \\
\vdots & &  & \vdots   \\
C_{k,1}'' & C_{k,2}'' & \dots & C_{k,n}''  
\end{pmatrix}.
\end{align*}
Let $\log z, \log z', \log z'' \in M_{n\times 1}(\CC)$ be the column vectors
\begin{align*}
\log z &= 
\begin{pmatrix}
\log z_1\\
\vdots \\
\log z_n
\end{pmatrix} ,\quad
\log z' = 
\begin{pmatrix}
\log z_1' \\
\vdots\\
\log z_n'
\end{pmatrix} \text{ and }
\log z'' = 
\begin{pmatrix}
\log z_1'' \\
\vdots\\
\log z_n''
\end{pmatrix}.
\end{align*}
With respect to the ideal triangulation $\mathcal{T}$, we define the \emph{gluing map} $F:(\CC\setminus\{0,1\})^n \to \CC^n$ by 
\begin{align}\label{defF1loop}
F(z_1,\dots,z_n) = G \cdot \log z + G' \cdot \log z' + G'' \cdot \log z''.
\end{align}
Given $\mathbf{H(\boldsymbol\alpha)}=(\mathrm{H}(\alpha_1), \dots, \mathrm{H}(\alpha_k))\in \CC^k$ and $(l_1,\dots,l_n) \in \ZZ^n$, consider the gluing equations 
\begin{align}\label{gluingeql}
\mathcal{G}_{(l_1,\dots,l_n)}(\mathbf{H(\boldsymbol\alpha)}, \mathbf{z}) = 
\mathbf{0},
\end{align} 
where $\mathbf{z}=(z_1,\dots,z_n)$, $\mathbf{0} \in \CC^n$ is the zero vector and $\mathcal{G}_{(l_1,\dots,l_n)} : \CC^k \times (\CC\setminus\{0,1\})^n \to \CC^n$ is defined by
\begin{align}\label{defG1loop}
\mathcal{G}_{(l_1,\dots,l_n)}(\mathbf{H(\boldsymbol\alpha)}, \mathbf{z}) 
= F(z_1,\dots,z_n) 
-
\begin{pmatrix}
2\pi \sqrt{-1}\\
\vdots\\
2\pi \sqrt{-1}\\
\mathrm{H}(\alpha_1)\\
\vdots\\
\mathrm{H}(\alpha_k)
\end{pmatrix}
-
2\pi\sqrt{-1}
\begin{pmatrix}
l_1 \\
\vdots\\
l_{n-k} \\
l_{n-k+1} \\
\vdots\\
l_{n}
\end{pmatrix}.
\end{align}
Note that each point $(z_1,z_1',z_1'', \dots, z_n, z_n', z_n'') \in \mathcal{V}_{\mathcal{T}}$ gives a solution of the gluing equation (\ref{gluingeq}) with $\lambda_i = e^{\mathrm H(\alpha_i)}$ for $i=1,\dots, k$ and for some $(l_1,\dots, l_n)\in \ZZ^n$. 
To simplify the notation, we let $\mathcal{G} = \mathcal{G}_{(0,\dots,0)}$. In Section \ref{triFSL}, we will focus on the gluing equation 
\begin{align}\label{gluingeq}
\mathcal{G}(\mathbf{H(\boldsymbol\alpha)}, \mathbf{z}) = 
\mathbf{0}.
\end{align} 
Notice that for any $(l_1,\dots, l_n)\in \ZZ^n$, we have 
\begin{align}\label{DG=DF}
D_{\mathbf{z}}\mathcal{G}_{(l_1,\dots,l_n)}(\mathbf{H(\boldsymbol\alpha)}, \mathbf{z}) = DF_{\mathbf{z}},
\end{align}
where $D_{\mathbf{z}}\mathcal{G}_{(l_1,\dots,l_n)}$ and $DF_{\mathbf{z}}$ are the Jacobian matrixs of $\mathcal{G}_{(l_1,\dots,l_n)}$ and $F$ with respect to the variable $\mathbf{z}$.

\subsection{Combinatorial flattening}
Recall that a system of simple closed curves $\boldsymbol\alpha = (\alpha_1,\dots,\alpha_k)\in \pi_1(\partial M)$ consists of $k$ non-trivial simple closed curves $\alpha_i \in \pi_1(T_i)$. Given a vector $\mathbf v\in \CC^n$, we denote the transpose of $\mathbf v$ by $\mathbf v^T$.

\begin{definition}\label{CF}
For a system of simple closed curves $\boldsymbol\alpha = (\alpha_1,\dots,\alpha_k)\in \pi_1(\partial M)$, a combinatorial flattening with respect to $\boldsymbol \alpha$ consists of three vectors 
\begin{align*}
\mathbf{f} = (f_1,\dots,f_n),\quad
\mathbf{f'} = (f_1',\dots,f_n'),\quad
\mathbf{f}'' = (f_1'',\dots,f_n'') \in \mathbb{Z}^n
\end{align*}
such that 
\begin{itemize}
\item for $i=1,\dots, n$, $f_i + f_i' + f_i'' = 1$ and
\item the $i$-th entry of the vector 
$$G\cdot \mathbf{f}^T + G' \cdot {\mathbf{f}'}^T + G'' \cdot {\mathbf{f}''}^T $$ is equal to $2$ for $i=1,\dots, n-k$ and is equal to $0$ for $i=n-k+1,\dots, n$.
\end{itemize}
\end{definition}

We have the following stronger version of combinatorial flattening, which require the last condition in Definition \ref{CF} to be satisfied for all simple closed curves.
\begin{definition}\label{SCF}
A strong combinatorial flattening consists of three vectors 
\begin{align*}
\mathbf{f} = (f_1,\dots,f_n),\quad
\mathbf{f'} = (f_1',\dots,f_n'),\quad
\mathbf{f}'' = (f_1'',\dots,f_n'') \in \mathbb{Z}^n
\end{align*}such that 
\begin{itemize}
\item for $i=1,\dots, n$, $f_i + f_i' + f_i'' = 1$ and
\item for {\textbf {\textit any}} system of simple closed curves $\boldsymbol \alpha = (\alpha_1,\dots,\alpha_k)$, the $i$-th entry of the vector 
$$G\cdot \mathbf{f}^T + G' \cdot {\mathbf{f}'}^T + G'' \cdot {\mathbf{f}''}^T $$  is equal to $2$ for $i=1,\dots, n-k$ and is equal to $0$ for $i=n-k+1,\dots, n$.
\end{itemize}
\end{definition}
\begin{remark}
By \cite[Lemma 6.1]{N}, strong combinatorial flattening exists for any ideal triangulation.
\end{remark}

For computation purposes, we introduce the following definition of generalized strong combinatorial flattening, which allows $f_i, f_i'$ and $f_i''$ in Definition \ref{SCF} to be half-integers.
\begin{definition}\label{GSCF}
A generalized strong combinatorial flattening consists of three vectors 
\begin{align*}
\mathbf{f} = (f_1,\dots,f_n),\quad
\mathbf{f'} = (f_1',\dots,f_n'),\quad
\mathbf{f}'' = (f_1'',\dots,f_n'') \in \Big(\mathbb{Z} \cup \Big(\mathbb{Z}+\frac{1}{2}\Big)\Big)^n
\end{align*}
such that 
\begin{itemize}
\item for $i=1,\dots, n$, $f_i + f_i' + f_i'' = 1$ and
\item for {\textbf {\textit any}} system of simple closed curves $\alpha = (\alpha_1,\dots,\alpha_k)$, the $i$-th entry of the vector 
$$G\cdot \mathbf{f}^T + G' \cdot {\mathbf{f}'}^T + G'' \cdot {\mathbf{f}''}^T $$  is equal to $2$ for $i=1,\dots, n-k$ and is equal to $0$ for $i=n-k+1,\dots, n$.
\end{itemize}
\end{definition}
\begin{remark}
To verify the second condition in Definitions \ref{SCF} and \ref{GSCF}, since the condition is linear in $\alpha_k$, it suffices to verify the condition on a basis of $\pi_1(\partial M)$.
\end{remark}

\subsection{1-loop invariant and torsion as rational functions on the gluing variety}
\begin{definition}\label{defnrhoregular}
Let $M$ be a hyperbolic 3-manifold with toroidal boundary and let $\rho : \pi_1(M) \to \mathrm{PSL}(2;\CC)$ be a representation. An ideal triangulation $\mathcal{T}$ of $M$ is $\rho$-regular if there exists $\mathbf{z} \in \mathcal{V}(\mathcal{T})$ such that $\mathcal{P}_{\mathcal{T}}(\mathbf z) = [\rho]$.
\end{definition}
\begin{definition}[\cite{DG}] Let $\rho : \pi_1(M) \to \mathrm{PSL}(2;\CC)$ be an irreducible representation. 
Suppose $\mathcal{T}$ is a $\rho$-regular ideal triangulation with $\mathcal{P}_{\mathcal{T}}(\mathbf{z})=[\rho]$. Then the 1-loop invariant of $(M,\boldsymbol\alpha, \mathbf{z}, \mathcal{T})$ is defined by
$$\tau(M,\boldsymbol\alpha, \mathbf{z}, \mathcal{T}) = 
\pm \frac{1}{2} \mathrm{det}\Big((G - G') \Delta_{\mathbf{z}''} + (G'' - G') \Delta_{\mathbf{z}}^{-1}\Big) \prod_{i=1}^n \Big(z_i^{f_i''} z_i''^{-f_i}\Big)
,$$
where $(\mathbf{f},\mathbf{f}',\mathbf{f}'')$ is a strong combinatorial flattening, 
$$\Delta_{\mathbf{z}} 
= \begin{pmatrix}
z_1 & 0 & 0 & \dots & 0 \\
0 & z_2 & 0 & \dots & 0 \\
\vdots & \vdots & \vdots & \vdots & \vdots \\
0 & 0 & 0 & 0 & z_n
\end{pmatrix}
\quad \text{and} \quad
\Delta_{\mathbf{z}''} 
= \begin{pmatrix}
z_1'' & 0 & 0 & \dots & 0 \\
0 & z_2'' & 0 & \dots & 0 \\
\vdots & \vdots & \vdots & \vdots & \vdots \\
0 & 0 & 0 & 0 & z_n''
\end{pmatrix}.$$
\end{definition}
\begin{remark} For any hyperbolic 3-manifold, at the discrete faithful representation, the 1-loop invariant can be defined by using the usual combinatorial flattening defined in Definition \ref{CF} and it is independent of the choice of the flattening \cite[Theorem 1.4]{DG}.
\end{remark}

For $i=1,\dots, n$, let $\xi_i = \frac{d \log z_i}{d z_i} = \frac{1}{z_i}$, $\xi_i' = \frac{d \log z_i'}{d z_i} = \frac{1}{1-z_i}$ and $\xi_i''=\frac{d \log z_i''}{d z_i} = \frac{1}{z_i(z_i-1)}$. Let 
\begin{align*}
\boldsymbol{\xi} &= 
\begin{pmatrix}
\xi_1\\
\vdots \\
\xi_n
\end{pmatrix} ,\quad
\boldsymbol{\xi}' = 
\begin{pmatrix}
\xi_1'\\
\vdots \\
\xi_n'
\end{pmatrix}\quad \text{and}\quad
\boldsymbol{\xi}'' = 
\begin{pmatrix}
\xi_1''\\
\vdots \\
\xi_n''
\end{pmatrix}.
\end{align*}
The following symmetric formula of the 1-loop invariant was first studied by Siejakowski in \cite{S1,S2}. In particular, the formula provides a geometric interpretation of the adjoint twisted Reidemeister torsion in terms of the determinant of the Jacobian of the gluing equations of an $\rho$-regular ideal triangulation. For reader's convenience, we include a proof of the result. 
\begin{proposition}\label{1loopgluingjacobian}[Section 5.1.3, \cite{S1}]
Suppose $\mathcal{T}$ is $\rho$-regular with $\mathcal{P}_{\mathcal{T}} (\mathbf{z}) = [\rho]$. Then
$$\tau(M,\boldsymbol\alpha, \mathbf{z}, \mathcal{T}) = 
\pm \frac{1}{2}\frac{\det \Big(G \Delta_{\boldsymbol\xi} + G' \Delta_{\boldsymbol\xi'} + G'' \Delta_{\boldsymbol\xi''} \Big)}{\prod_{i=1}^n \xi_i^{f_i} \xi_i'^{f_i'} \xi_i''^{f_i''}}
= 
\pm \frac{1}{2}\frac{\mathrm{det}(D_{\mathbf z}F(\mathbf z))}{\prod_{i=1}^n \xi_i^{f_i} \xi_i'^{f_i'} \xi_i''^{f_i''}},
$$
where $F$ is defined in (\ref{defF1loop}) and $D_{\mathbf z}F$ is the Jacobian matrix of $F$.
\end{proposition}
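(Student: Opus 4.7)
\medskip

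\noindent\textbf{Proof plan.} The strategy is a direct symbolic verification: both claimed equalities reduce to column-by-column manipulations of the relevant matrices, using only the basic identities $\xi_i=1/z_i$, $\xi_i'=1/(1-z_i)$, $\xi_i''=1/(z_i(z_i-1))$, $z_i'=1/(1-z_i)$, $z_i''=1-1/z_i$, and the flattening relation $f_i+f_i'+f_i''=1$.

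The second equality is the easier one and I would establish it first. From the definition (\ref{defF1loop}), $F(\mathbf z)=G\cdot\log z+G'\cdot\log z'+G''\cdot\log z''$, and differentiating the three scalar logarithms gives $\frac{d\log z_i}{dz_i}=\xi_i$, $\frac{d\log z_i'}{dz_i}=\frac{1}{1-z_i}=\xi_i'$, and $\frac{d\log z_i''}{dz_i}=\frac{1}{z_i-1}-\frac{1}{z_i}=\xi_i''$. Since each of $z_i,z_i',z_i''$ depends only on $z_i$, the Jacobian is diagonal in an obvious sense and one reads off $D_{\mathbf z}F(\mathbf z)=G\Delta_{\boldsymbol\xi}+G'\Delta_{\boldsymbol\xi'}+G''\Delta_{\boldsymbol\xi''}$.

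For the first equality, the main idea is that the matrix inside $\det$ in the definition of $\tau$ is just the matrix $G\Delta_{\boldsymbol\xi}+G'\Delta_{\boldsymbol\xi'}+G''\Delta_{\boldsymbol\xi''}$ after rescaling each column. Writing $G_{\cdot i}$, $G'_{\cdot i}$, $G''_{\cdot i}$ for the $i$-th columns, the $i$-th column of $(G-G')\Delta_{\mathbf z''}+(G''-G')\Delta_{\mathbf z}^{-1}$ equals
$$z_i''\,G_{\cdot i}-(z_i''+z_i^{-1})\,G'_{\cdot i}+z_i^{-1}\,G''_{\cdot i}=z_i''\,G_{\cdot i}-G'_{\cdot i}+z_i^{-1}\,G''_{\cdot i},$$
using $z_i''+z_i^{-1}=(z_i-1)/z_i+1/z_i=1$. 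A direct check gives $(z_i-1)\xi_i=z_i''$, $(z_i-1)\xi_i'=-1$, and $(z_i-1)\xi_i''=1/z_i$, so this column is exactly $(z_i-1)$ times the $i$-th column of $G\Delta_{\boldsymbol\xi}+G'\Delta_{\boldsymbol\xi'}+G''\Delta_{\boldsymbol\xi''}$. Factoring out from each column therefore yields
$$\det\bigl((G-G')\Delta_{\mathbf z''}+(G''-G')\Delta_{\mathbf z}^{-1}\bigr)=\Bigl(\prod_{i=1}^{n}(z_i-1)\Bigr)\det\bigl(G\Delta_{\boldsymbol\xi}+G'\Delta_{\boldsymbol\xi'}+G''\Delta_{\boldsymbol\xi''}\bigr).$$

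It remains to compare the flattening normalizations. A routine expansion, using $z_i''=(z_i-1)/z_i$ and $f_i+f_i'+f_i''=1$, gives
$$\prod_{i=1}^{n}(z_i-1)\,z_i^{f_i''}\,z_i''^{-f_i}=\prod_{i=1}^{n}(z_i-1)^{1-f_i}\,z_i^{\,f_i+f_i''},$$
while substituting $\xi_i=z_i^{-1}$, $\xi_i'=(1-z_i)^{-1}$, $\xi_i''=z_i^{-1}(z_i-1)^{-1}$ into the denominator of the proposed formula yields
$$\prod_{i=1}^{n}\bigl(\xi_i^{f_i}\xi_i'^{f_i'}\xi_i''^{f_i''}\bigr)^{-1}=\prod_{i=1}^{n}(-1)^{f_i'}\,z_i^{\,1-f_i'}\,(z_i-1)^{1-f_i}.$$
The two expressions agree up to the overall sign $(-1)^{\sum f_i'}$, which is absorbed into the $\pm$. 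Combining this with the determinant identity of the previous paragraph proves the claim. The only step that requires a brief word of care is the proper bookkeeping of the flattening exponents and the sign; none of the manipulations involve the combinatorics of the triangulation, so the argument is entirely local in $i$ and essentially formal once the column rescaling is identified.
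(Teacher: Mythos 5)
Your proof is correct and follows essentially the same route as the paper: you factor $\prod_i(z_i-1)$ out of $\det\big((G-G')\Delta_{\mathbf z''}+(G''-G')\Delta_{\mathbf z}^{-1}\big)$ and match the flattening normalizations using $f_i+f_i'+f_i''=1$, which is exactly the paper's computation (written column-by-column rather than as a single matrix factorization through $\Delta_{\mathbf{z-1}}$). The only cosmetic difference is that you establish the trivial second equality first, whereas the paper proves the first equality and remarks that the second follows from the definition of $F$.
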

\begin{proof}
Recall that $z'' = 1-\frac{1}{z}$. Note that
\begin{align*}
(G - G') \Delta_{\mathbf{z}''} + (G'' - G') \Delta_{\mathbf{z}}^{-1}
&= G \Delta_{\mathbf{\frac{z-1}{z}}} - G' + G'' \Delta_{\mathbf{\frac{1}{z}}} \\
&= \Big(G \Delta_{\mathbf{\frac{1}{z}}} + G' \Delta_{\mathbf{\frac{1}{1-z}}} + G'' \Delta_{\mathbf{\frac{1}{z(z-1)}}} \Big) \Delta_{\mathbf{z-1}}\\
&= \Big(G \Delta_{\boldsymbol{\xi}} + G' \Delta_{\boldsymbol{\xi}'} + G'' \Delta_{\boldsymbol{\xi}''} \Big) \Delta_{\mathbf{z-1}}.
\end{align*}
Thus, we have
\begin{align}\label{for1}
\det\Big((G - G') \Delta_{\mathbf{z}''} + (G'' - G') \Delta_{\mathbf{z}}^{-1}\Big)
&= \det \Big(G \Delta_{\boldsymbol\xi} + G' \Delta_{\boldsymbol\xi'} + G'' \Delta_{\boldsymbol\xi''} \Big)
\prod_{i=1}^n (z_i - 1).
\end{align}
Besides,
\begin{align}\label{for2}
\frac{1}{\prod_{i=1}^n \xi_i^{f_i} \xi_i'^{f_i'} \xi_i''^{f_i''}} \notag
&= \prod_{i=1}^n z_i^{f_i} (1-z_i)^{f_i'} (z_i (z_i-1))^{f_i''} \notag\\
&=  \pm \prod_{i=1}^n z_i^{f_i+f_i''} (1-z_i)^{f_i' + f_i''} \notag\\
&=  \pm \prod_{i=1}^n z_i^{f_i+f_i''} (1-z_i)^{1-f_i} \notag\\
&= \pm \prod_{i=1}^n (1-z_i) \prod_{i=1}^n z_i^{f_i''} z_i''^{-f_i} ,
\end{align}
where in the second last equality we use the property of the combinatorial flattening that $f_i + f_i' + f_i'' = 1$.
Altogether, 
\begin{align*}
&\pm \frac{1}{2}\frac{\det \Big(G \Delta_{\boldsymbol\xi} + G' \Delta_{\boldsymbol\xi'} + G'' \Delta_{\boldsymbol\xi''} \Big)}{\prod_{i=1}^n \xi_i^{f_i} \xi_i'^{f_i'} \xi_i''^{f_i''}} \\
=& \pm \frac{1}{2}\frac{\det\Big((G - G') \Delta_{\mathbf{z}''} + (G'' - G') \Delta_{\mathbf{z}}^{-1}\Big)}{\prod_{i=1}^n (z_i - 1)} \Big(\prod_{i=1}^n (1-z_i) \prod_{i=1}^n z_i^{f_i''} z_i''^{-f_i} \Big) \\
=& \pm \frac{1}{2} \mathrm{det}\Big((G - G') \Delta_{\mathbf{z}''} + (G'' - G') \Delta_{\mathbf{z}}^{-1}\Big) \prod_{i=1}^n \Big(z_i^{f_i''} z_i''^{-f_i}\Big).
\end{align*}
This proves the first equality. The second equality follows from (\ref{defF1loop}) and the fact that $\xi_i = \frac{d \log z_i}{d z_i} $, $\xi_i' = \frac{d \log z_i'}{d z_i} $ and $\xi_i''=\frac{d \log z_i''}{d z_i}$.
\end{proof}
\begin{remark}\label{edgeid}
Recall that for any shape parameter $z\in \CC\setminus\{0,1\}$, we have $\log z + \log z' + \log z'' = \pi \sqrt{-1}$. In particular, we have $\xi+\xi'+\xi'' = \frac{d}{dz}(\log z + \log z' + \log z'') = 0$. Thus, when we compute the 1-loop invariant by taking the derivative of the gluing equations, we can apply the equation $\log z + \log z' + \log z'' = \pi \sqrt{-1}$ and ignore the term $\pi\sqrt{-1}$ without affecting the final result.
\end{remark}

To compare the 1-loop invariant with the adjoint twisted Reidemeister torsion discussed in Section \ref{TRT}, it is more convenient to consider the torsion as a function on the gluing variety. More precisely, given a point $\mathbf{z}$ in the gluing variety $\mathcal{V}_{\mathcal{T}}$, we let $\rho_{\mathbf{z}}:\pi_1(M) \to \mathrm{PSL}(2;\CC)$ be the associated pseudo-developing map. 
\begin{proposition}\label{toranaglu} 
The function sending $\mathbf{z} \in \mathcal{V}_{\mathcal{T}}$ to $\mathbb T_{(M,\boldsymbol\alpha)}(\rho_{\mathbf{z}})$ is locally a holomorphic function on the smooth points of $\mathcal{V}$.
\end{proposition}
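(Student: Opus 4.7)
The plan is to factor the function $\mathbf z \mapsto \mathbb T_{(M,\boldsymbol\alpha)}([\rho_{\mathbf z}])$ as the composition
\[
\mathcal V_{\mathcal T} \xrightarrow{\mathcal P_{\mathcal T}} \mathrm X(M) \xrightarrow{\mathbb T_{(M,\boldsymbol\alpha)}} \CC,
\]
argue that the first arrow is an algebraic morphism and the second is a rational function on $\mathrm X^{\text{irr}}_0(M)$, and then combine these two statements with the standard fact that a rational function on a complex algebraic variety is locally holomorphic on the smooth locus away from its pole divisor.

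For the first step I would use the explicit description of Section \ref{NZD}: once a base ideal vertex of a base tetrahedron is fixed at a chosen point of $\partial \mathbb H^3$, the developing map expresses $\rho_{\mathbf z}(\gamma)$ for every $\gamma \in \pi_1(M)$ as a product of $2\times 2$ matrices whose entries are Laurent monomials in $z_i$ and $1-z_i$. Consequently the traces $\mathbf z \mapsto \mathrm{Tr}\,\rho_{\mathbf z}(\gamma)$ are regular functions on $\mathcal V_{\mathcal T}$, and since the affine coordinate ring of $\mathrm X(M)$ is generated by such traces, $\mathcal P_{\mathcal T}$ is a morphism of affine varieties. Its restriction to any smooth point of $\mathcal V_{\mathcal T}$ is therefore holomorphic in local complex-analytic coordinates.

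For the second step I would invoke Porti's Theorem \ref{funT}, which asserts that $\mathbb T_{(M,\boldsymbol\alpha)}$ is a well-defined rational function on $\mathrm X^{\text{irr}}_0(M)$, holomorphic and nonzero on the Zariski-open support $\mathcal Z_{\boldsymbol\alpha}$. Pulling this rational function back along the morphism $\mathcal P_{\mathcal T}$ yields a rational function on the component of $\mathcal V_{\mathcal T}$ that maps into $\mathrm X^{\text{irr}}_0(M)$; on the smooth locus of $\mathcal V_{\mathcal T}$ this pullback admits a local expression as a ratio of two holomorphic functions, hence is locally holomorphic off a proper analytic subvariety, which is exactly the content of the proposition. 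The main subtlety is justifying that the torsion — whose definition involves several auxiliary choices, namely lifts of bases for the boundary images, bases for twisted homology, and the invariant eigenvectors $\mathbf I_i$ of $\mathrm{Ad}_\rho(\pi_1(T_i))^T$ — assembles into an honest rational function on the character variety rather than a merely locally analytic object; this is precisely what Porti's theorem provides, and we invoke it as a black box.
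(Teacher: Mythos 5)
Your proof is correct and takes essentially the same approach as the paper: both compose the rational dependence of $\rho_{\mathbf z}$ on the shape parameters with the holomorphic/rational dependence of the torsion on the representation, which you package via the explicit factoring through the character variety and Porti's rationality theorem, while the paper states it directly in terms of entries of $\rho_{\mathbf z}(\gamma)$. One small technicality worth noting: for the $\mathrm{PSL}(2;\CC)$-character variety the ring of regular functions is generated by squared traces (or traces of even-length words) rather than traces themselves, since traces are only well defined up to sign there.
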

\begin{proof}
For any $\gamma \in \pi_1(M)$, the entries of $\rho_{\mathbf{z}}(\gamma)$ can be written as a rational function in terms of the shape parameters $\mathbf{z}$ \cite[Section 4.0.8]{C}. Furthermore, the adjoint twisted Reidemeister torsion of is a holomorphic function in terms of the entries of the image of $\rho_{\mathbf{z}}$. Since the shape parameters are coordinate functions of the gluing variety, we have the desired result.
\end{proof}

\begin{proposition}\label{biratiso}
Let $\mathcal{T}_1$ and $\mathcal{T}_2$ be two $\rho_0$-regular ideal triangulations with $\mathcal{P}_{\mathcal{T}_1}(\mathbf{z_0^1}) = \mathcal{P}_{\mathcal{T}_2}(\mathbf{z_0^2}) = [\rho_0]$ for some $\mathbf{z_0^1} \in \mathcal{V}_{\mathcal{T}_1}$ and $\mathbf{z_0^2} \in \mathcal{V}_{\mathcal{T}_2}$. Then there exist birational maps between $\mathcal{V}_0(\mathcal{T}_1)$ and $\mathcal{V}_0(\mathcal{T}_2)$ that send $\mathbf{z_0^1}$ to $\mathbf{z_0^2}$ and vice versa.  In particular, these induce local biholomorphisms between a Euclidean neighborhood of $\mathbf{z_0^1} \in \mathcal{V}_0(\mathcal{T}_1)$ and a Euclidean neighborhood of $\mathbf{z_0^2} \in \mathcal{V}_0(\mathcal{T}_2)$.
\end{proposition}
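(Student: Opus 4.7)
The plan is to exploit the Kalelkar-Schleimer-Segerman connectivity result via Pachner moves and to interpret each such move as inducing a birational equivalence between gluing varieties.

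First, I would observe that at the discrete faithful representation $\rho_0$ every peripheral element is parabolic and therefore has a single fixed point on $\partial\mathbb{H}^3$. Consequently, $\mathcal{P}^\infty_{\mathcal{T}_i}(\mathbf{z_0^i})$ is canonically determined by $[\rho_0]$ up to conjugation, so the pairs $(\mathcal{T}_1,\mathbf{z_0^1})$ and $(\mathcal{T}_2,\mathbf{z_0^2})$ are automatically boundary equivalent. Applying the results of \cite[Corollary 1.2, 1.5 and Proposition A.1]{KSS} then yields a finite sequence of boundary equivalent pairs
\[
(\mathcal{T}_1,\mathbf{z_0^1}) = (\mathcal{S}_0,\mathbf{w_0}),\; (\mathcal{S}_1,\mathbf{w_1}),\;\dots,\;(\mathcal{S}_N,\mathbf{w_N}) = (\mathcal{T}_2,\mathbf{z_0^2})
\]
in which consecutive entries are related by a 0-2, 2-0, 2-3 or 3-2 Pachner move, each performed at a character sufficiently close to $[\rho_0]$.

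Next, I would verify that each Pachner move induces a birational map between the corresponding distinguished gluing components. For a 2-3 move, the classical pentagon relations express the three new shape parameters as rational functions of the two old ones together with the shapes along the boundary of the local bipyramid, and the inverse 3-2 move is symmetrically rational. For a 0-2 move, two new tetrahedra are inserted together with one new internal edge whose edge equation, combined with the two face-pairing conditions, algebraically determines the two new shape parameters as rational functions of the data along the boundary; the inverse 2-0 move is a coordinate projection and is manifestly rational. In each case the construction produces well-defined rational maps between Zariski open subsets of $\mathcal{V}(\mathcal{S}_j)$ and $\mathcal{V}(\mathcal{S}_{j+1})$ which restrict to a birational map $\Phi_j$ between the unique irreducible components containing $\mathbf{w_j}$ and $\mathbf{w_{j+1}}$, and which satisfies $\Phi_j(\mathbf{w_j}) = \mathbf{w_{j+1}}$.

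Finally, I would compose these maps to obtain a birational map $\Phi = \Phi_{N-1}\circ\cdots\circ\Phi_0$ from $\mathcal{V}_0(\mathcal{T}_1)$ to $\mathcal{V}_0(\mathcal{T}_2)$ with birational inverse, carrying $\mathbf{z_0^1}$ to $\mathbf{z_0^2}$ and conversely. Since by Proposition \ref{z0smpt} the points $\mathbf{z_0^1}$ and $\mathbf{z_0^2}$ are smooth, and since each Pachner move is defined in a Euclidean neighborhood of the shapes corresponding to $[\rho_0]$, the maps $\Phi$ and $\Phi^{-1}$ are holomorphic in Euclidean neighborhoods of $\mathbf{z_0^1}$ and $\mathbf{z_0^2}$; being mutually inverse rational maps between smooth points, they restrict to local biholomorphisms by the holomorphic inverse function theorem. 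The principal obstacle I anticipate is ensuring at each stage of the Pachner sequence that the correct algebraic branch of the 0-2 move is selected so that the distinguished components $\mathcal{V}_0$ are preserved; this is precisely where the boundary-equivalence assumption from \cite{KSS} is crucial, since it pins down the pseudo-developing map at infinity and therefore the continuous lift of shape parameters across every move, thereby fixing the branch unambiguously throughout the sequence.
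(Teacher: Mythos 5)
Your proof follows essentially the same route as the paper's: invoke the KSS connectivity result to obtain a finite chain of Pachner moves between $\mathcal{T}_1$ and $\mathcal{T}_2$, note that each move is a rational change of shape parameters, and compose these to obtain mutually inverse birational maps that send $\mathbf{z_0^1}$ to $\mathbf{z_0^2}$, from which the local biholomorphism at the smooth points follows. Your additional remarks on parabolic fixed points forcing boundary equivalence and on branch selection for 0-2 moves are correct amplifications of what the paper takes as given.
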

\begin{proof}
By \cite[Corollary 1.2]{KSS}, $\mathcal{T}_1$ and $\mathcal{T}_2$ are connected by a finite sequence of $\rho_0$-regular ideal triangulations through 0-2, 2-0, 2-3 and 3-2 moves. There is a rational map relating the shape parameters before and after each of those moves. In particular, there exists a rational map $f$ that sends $\mathbf{z_0^1}$ to $\mathbf{z_0^2}$ and send every $\mathbf{z^1} \in \mathcal{V}_0(\mathcal{T}_1)$ sufficiently close to $\mathbf{z_0^1}$ to some $\mathbf{z^2} \in \mathcal{V}_0(\mathcal{T}_2)$. The domain of this map can be extended to a Zariski open subset of $ \mathcal{V}_0(\mathcal{T}_1)$ where the denominators of all the rational functions involved are non-zero and all the shape parameters are non-degenerate. By switching the role of $\mathcal{T}_1$ and $\mathcal{T}_2$ and using the above sequence of moves in reverse, one obtains the inverse of $f$ on a Zariski open subset of $\mathcal{V}_0(\mathcal{T}_2)$ containing $\mathbf{z_0^2}$. This completes the proof.
\end{proof}
The following result is known under the assumption that all shape parameters have non-negative imaginary parts \cite[Corollary 15.2.17]{BM}. We prove that the result is also true in general.
\begin{proposition}\label{z0smpt}
Let $\mathcal{T}$ be a $\rho_0$-regular ideal triangulation with $\mathcal{P}_{\mathcal{T}}(\mathbf{z_0}) = [\rho_0]$. Then $\mathbf{z}_0$ is a smooth point of $\mathcal{V}(\mathcal{T})$.
\end{proposition}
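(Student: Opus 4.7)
The plan is to reduce the statement to the known case of a positively oriented ideal triangulation and then transfer smoothness through Proposition \ref{biratiso}. Specifically, if one can exhibit a $\rho_0$-regular ideal triangulation $\mathcal{T}_g$ of $M$ for which all shape parameters at the corresponding point $\mathbf{z}_0^g := \mathcal{P}_{\mathcal{T}_g}^{-1}([\rho_0])$ have positive imaginary parts, then \cite[Corollary 15.2.17]{BM} immediately yields that $\mathbf{z}_0^g$ is a smooth point of $\mathcal{V}(\mathcal{T}_g)$ of complex dimension $k$.

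To produce such a $\mathcal{T}_g$, I would start with the Epstein--Penner canonical ideal polyhedral decomposition of $M$, whose cells are non-degenerate convex ideal polyhedra in $\mathbb{H}^3$, and subdivide each cell into ideal tetrahedra. Fixing a total ordering on the finitely many ideal vertices and triangulating every shared face by a uniform lexicographic rule makes the subdivisions agree from both sides, while coning from the least-ordered vertex inside each $3$-cell produces non-flat, positively oriented tetrahedra at the complete structure. The resulting triangulation $\mathcal{T}_g$ is $\rho_0$-regular since it is realized by the complete hyperbolic structure on $M$, so the classical smoothness result applies at $\mathbf{z}_0^g$.

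With $\mathcal{T}_g$ in hand, I would apply Proposition \ref{biratiso} to the pair $(\mathcal{T}, \mathcal{T}_g)$, obtaining rational maps between $\mathcal{V}_0(\mathcal{T})$ and $\mathcal{V}_0(\mathcal{T}_g)$ that exchange $\mathbf{z}_0$ and $\mathbf{z}_0^g$ and induce a local biholomorphism between Euclidean neighborhoods of these two points. Since smoothness of a complex analytic variety at a point is preserved under local biholomorphisms, the smoothness of $\mathbf{z}_0^g$ in $\mathcal{V}(\mathcal{T}_g)$ implies the smoothness of $\mathbf{z}_0$ in $\mathcal{V}(\mathcal{T})$, which is the desired conclusion.

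The main obstacle is the first step: ensuring a globally consistent positively oriented subdivision of the Epstein--Penner decomposition. Local positivity inside a single convex ideal cell is automatic, but the combinatorial matching across shared faces must be arranged coherently; any scheme based on a fixed global vertex ordering handles this uniformly and keeps every tetrahedron non-degenerate at $\mathbf{z}_0^g$, after which the transfer of smoothness through Proposition \ref{biratiso} is formal.
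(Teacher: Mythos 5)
Your argument follows essentially the same route as the paper's: produce an Epstein--Penner--derived triangulation $\mathcal{T}_{EP}$ for which smoothness of the corresponding point is known via \cite[Corollary 15.2.17]{BM}, and then transfer smoothness to $\mathcal{T}$ through the local biholomorphism supplied by Proposition~\ref{biratiso}. The one place where you go further than the paper --- and in doing so overclaim --- is the construction of $\mathcal{T}_g$. You assert that the global-vertex-ordering cone subdivision of the Epstein--Penner cells yields a triangulation in which \emph{every} tetrahedron is non-degenerate and positively oriented at the complete structure. That is not known in general: whether every cusped hyperbolic $3$-manifold admits a geometric (strictly positively oriented) ideal triangulation is a well-known open problem. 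The specific difficulty with the lexicographic scheme is that in the quotient manifold a single Epstein--Penner cell can have several vertices lying at the same cusp, so a ``total ordering on the finitely many ideal vertices'' does not single out a cone vertex in each cell (and no equivariant total order on the lifted ideal vertices exists); resolving this typically forces one to accept some flat tetrahedra. Fortunately, this is harmless for the argument: as the paper's remark preceding Proposition~\ref{z0smpt} makes explicit, \cite[Corollary 15.2.17]{BM} only requires all shape parameters to have non-negative imaginary part, i.e.\ flat tetrahedra are permitted, and such a subdivision of the Epstein--Penner decomposition does always exist. If you weaken your first step to ``non-negative'' rather than ``positive'' orientation, your proof matches the paper's exactly.
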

\begin{proof}
It is known that there exists some ideal triangulation $\mathcal{T}_{EP}$ coming from the Epstein-Penner decomposition of the manifold such that the proposition is true for $\mathcal{T}_{EP}$ \cite[Corollary 15.2.17]{BM}. By Proposition \ref{biratiso}, near $\mathbf{z}_0 \in \mathcal{T}$, there exists a local biholomorphism from $\mathcal{V}(\mathcal{T})$ and $\mathcal{V}(\mathcal{T}_{EP})$. This gives the desired result.
\end{proof}
By Proposition \ref{toranaglu} and \ref{z0smpt}, we can consider the torsion as a holomorphic function defined on the smooth point of the irreducible component $\mathcal{V}_0(\mathcal{T}) \subset \mathcal{V}(\mathcal{T})$ containing $\mathbf{z}_0$.

\section{Triangulation of fundamental shadow link complements}\label{triFSL}
Given a fundamental shadow link $L_{\text{FSL}} \subset M_c$, where $c\in \NN$, to obtain an ideal triangulation of $M_c \setminus L_{\text{FSL}}$, we shrink the six edges of each $D$-block into six ideal vertices to obtain the union of two ideal octahedra $\mathcal{O}$ and $\tilde{\mathcal{O}}$, where $\mathcal{O}$ and $\tilde{\mathcal{O}}$ are glued together along four pair of faces as shown in Figure \ref{idealocta}. Then we glue the faces of the $D$-blocks together according to the construction of the fundamental shadow link. This gives a decomposition of $M_c \setminus L_{\text{FSL}}$ into $2c$ ideal octahedra. 
Note that around each ideal vertex of the $D$-block, the truncated rectangles are glued together to form a cylinder (Figure \ref{idealocta}, right). The boundary of the tubular neighborhood of each component of $L_{\text{FSL}}$ is obtained by gluing the cylinders together. 
To obtain an ideal triangulation, for $i=1,\dots, c$, we cut the $i$-th $D$-block, which consists of two ideal octahedra $\mathcal{O}_i$ and $\mathcal{O}_i'$, into eight ideal tetrahedra and assign shape parameters $\{z_{i,1},\dots, z_{i,4}, \tilde z_{i,1},\dots, \tilde z_{i,4}\}$ as shown in Figure \ref{idealoctatri}. This ideal triangulation of $M_c\setminus L_{\text{FSL}}$ induces triangulations of the boundary cylinders as shown in Figures \ref{trib14}, \ref{trib25} and \ref{trib36}. Note that at the complete hyperbolic structure, all shape parameters are equal to $\sqrt{-1}$ and all the ideal octahedra are regular (i.e. every dihedral angles are $\pi/2$). In particular, this triangulation is geometric.

\begin{figure}[h]
\centering
\includegraphics[scale=0.135]{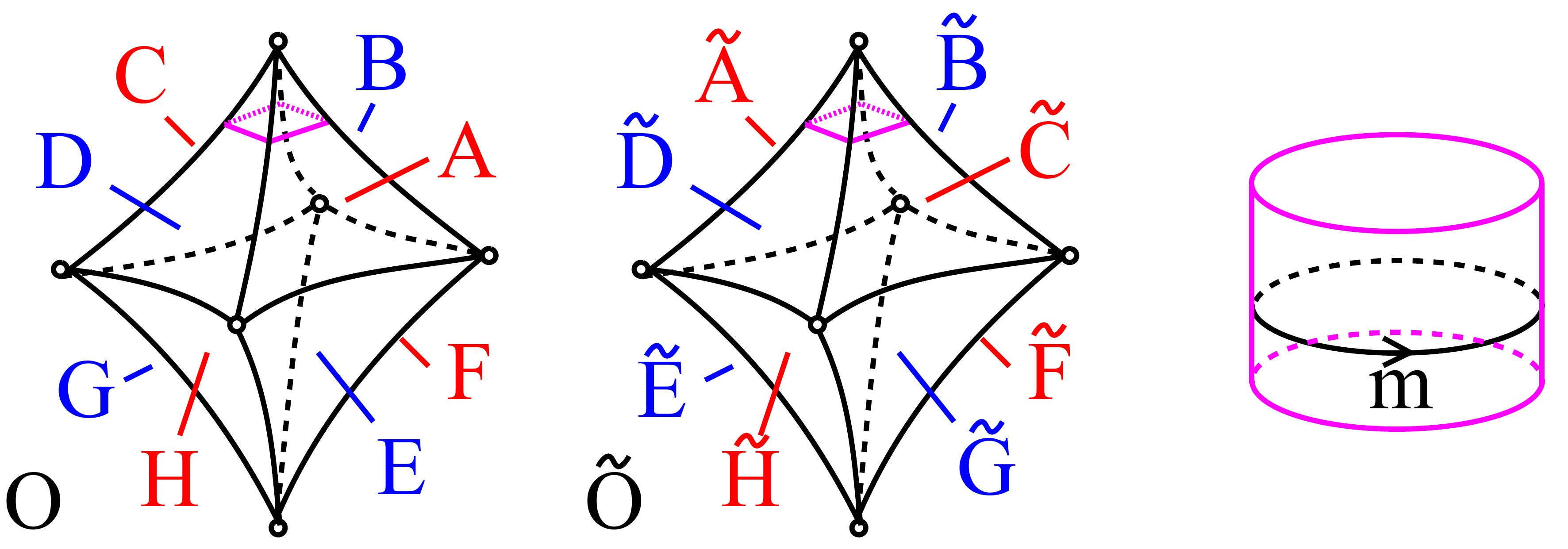}
\caption{The ideal octahedron $O$ is obtained by shrinking the 6 edges of a truncated tetrahedron into 6 ideal vertices. $\tilde{O}$ is another copy of $O$ and it is glued to $O$ along the blue faces by identifying $B$ with $\tilde{B}$, $D$ with $\tilde{D}$, $E$ with $\tilde{E}$ and $G$ with $\tilde{G}$. Especially, around each ideal vertex, such as the top one, a pair of opposite edges of the truncated rectangle (in purple) in $O$ is glued to another pair of opposite edges of the truncated rectangle in $\tilde{O}$ to form a cylinder (right).}\label{idealocta}
\end{figure}

\begin{figure}[h]
\centering
\includegraphics[scale=0.145]{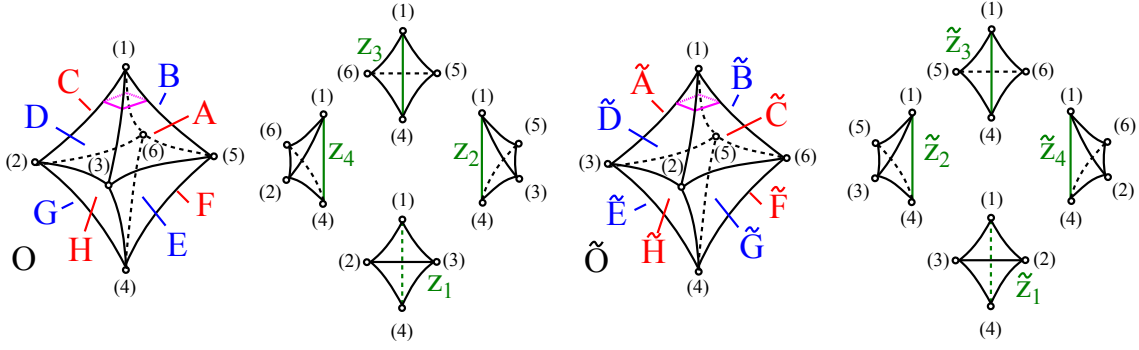}
\caption{Triangulation of each $D$-block into 8 ideal tetrahedra. The shape parameters of the green edges are shown in the Figure.}\label{idealoctatri}
\end{figure}

\begin{figure}[h]
\centering
\includegraphics[scale=0.12]{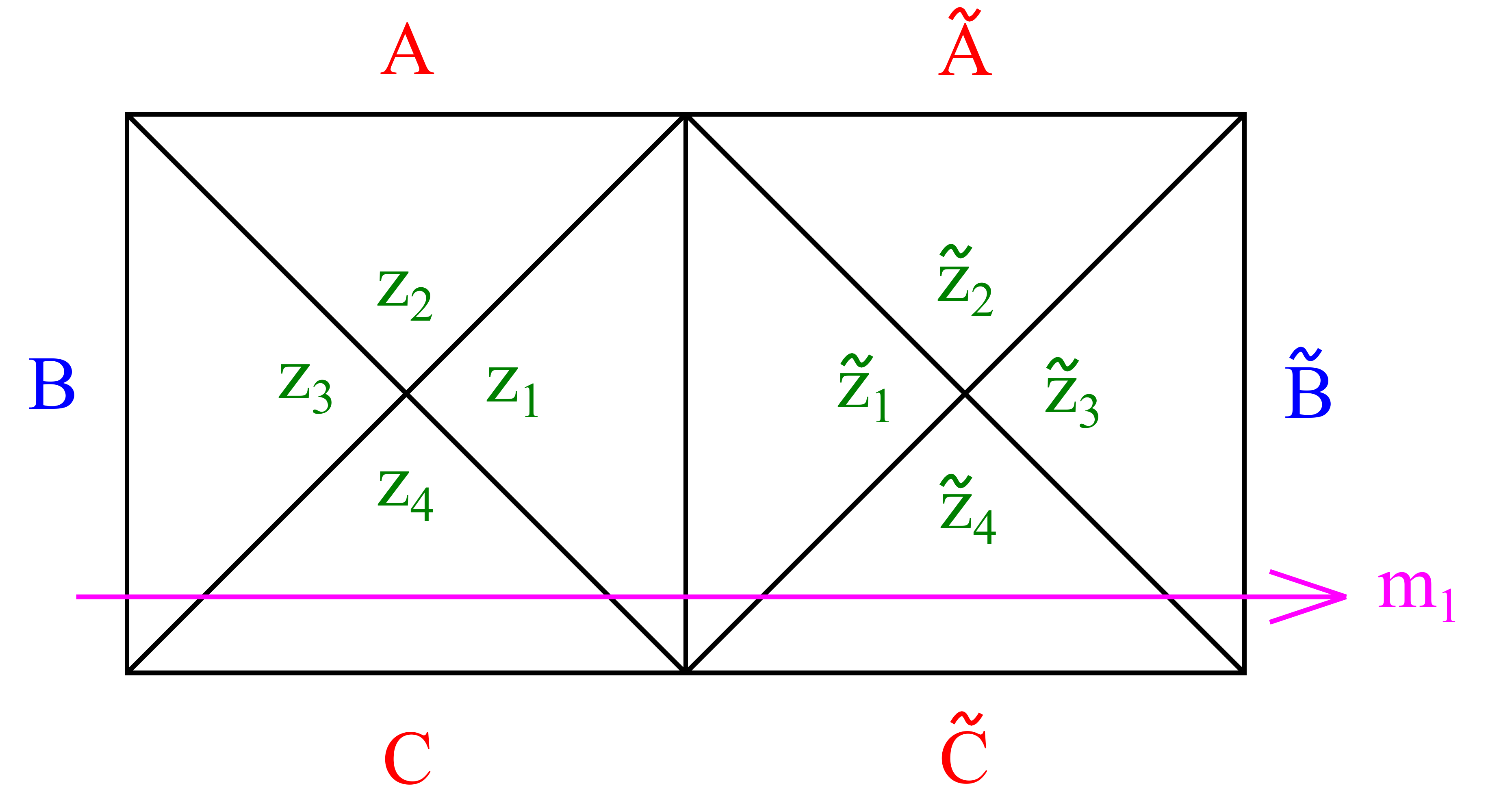}\qquad
\includegraphics[scale=0.12]{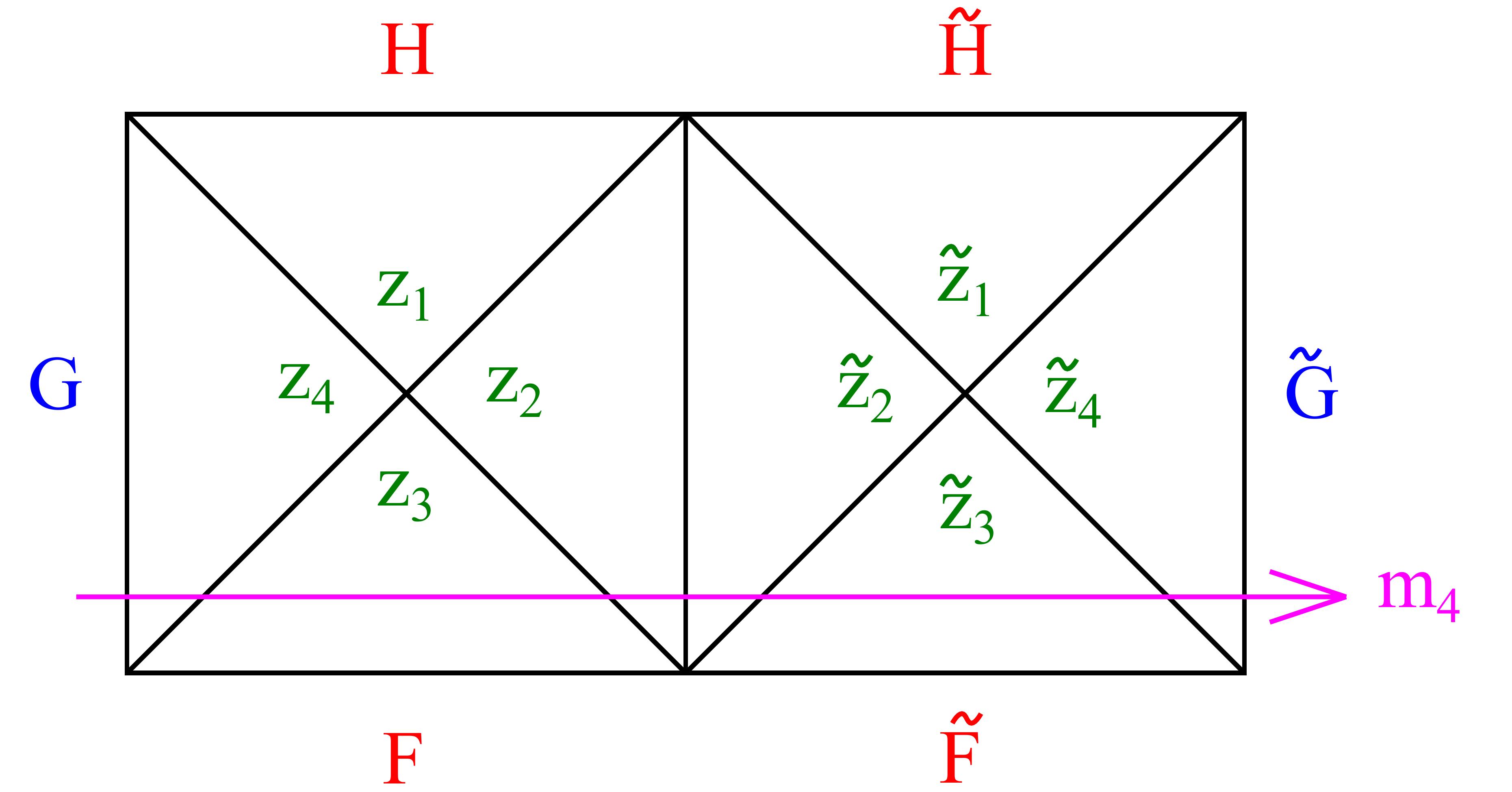}
\caption{Triangulations of vertices (1) and (4)}\label{trib14}
\end{figure}
\begin{figure}[h]
\centering
\includegraphics[scale=0.12]{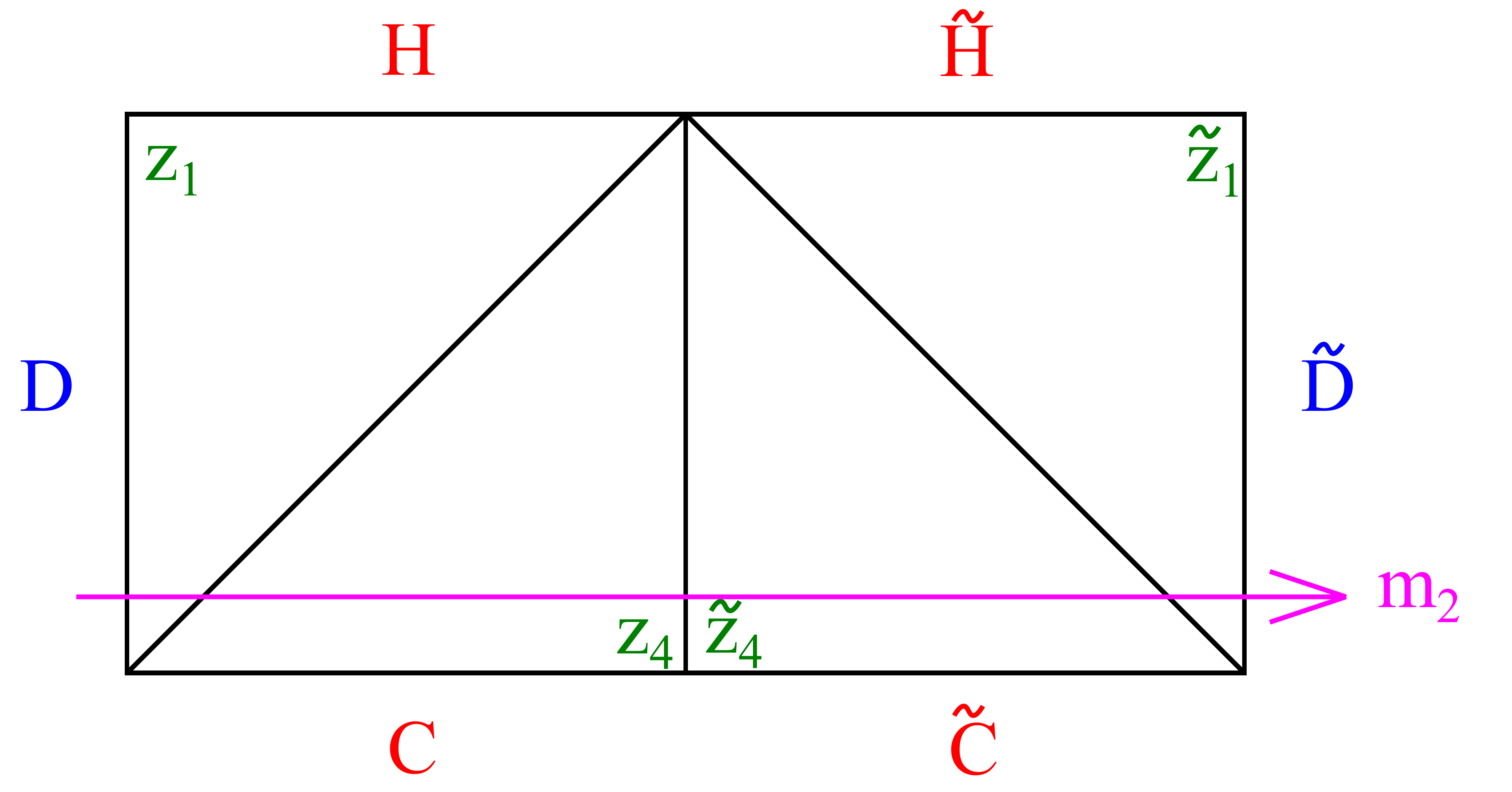}\qquad
\includegraphics[scale=0.12]{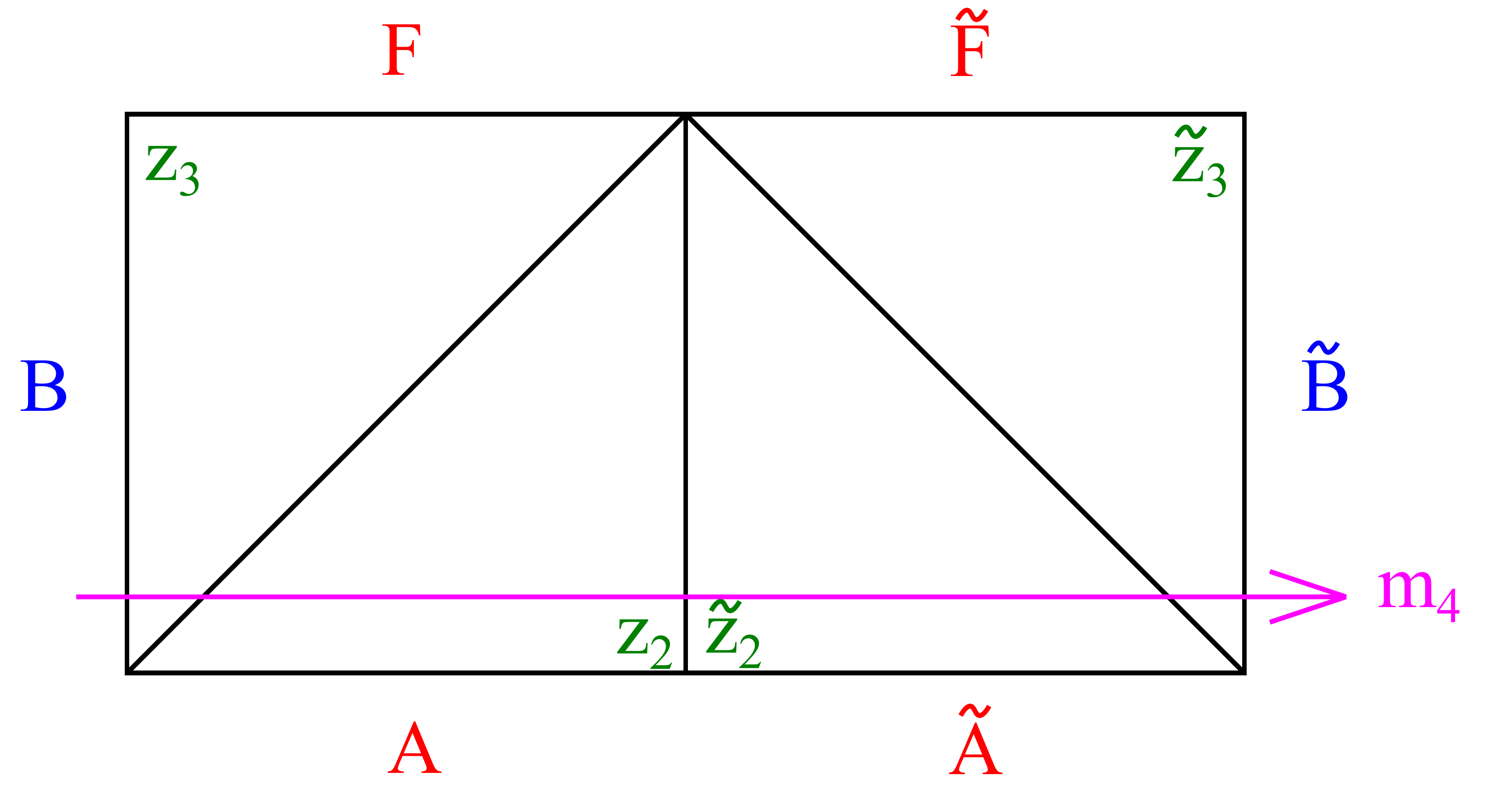}
\caption{Triangulations of vertices (2) and (5)}\label{trib25}
\end{figure}
\begin{figure}[h]
\centering
\includegraphics[scale=0.12]{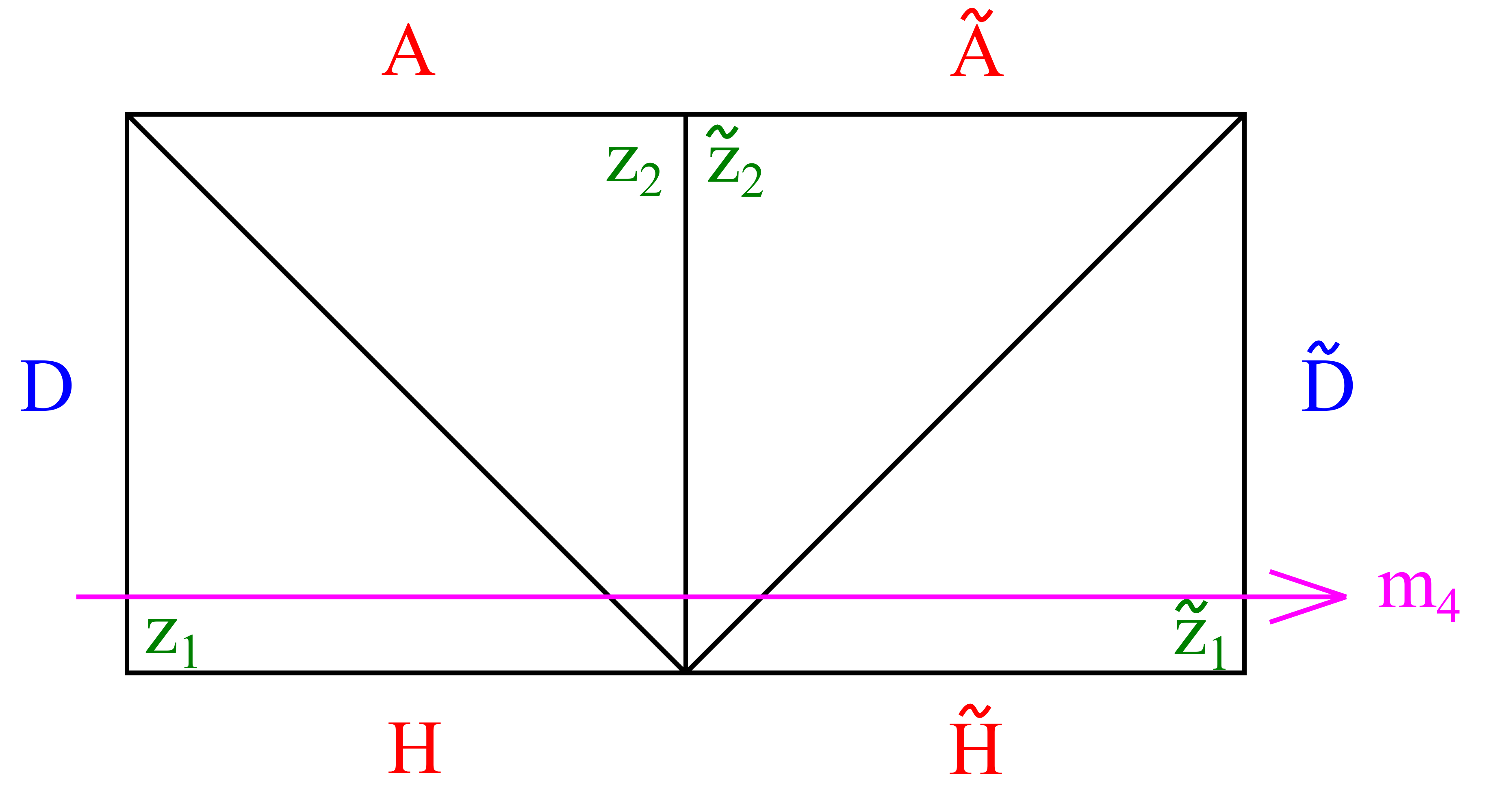}\qquad
\includegraphics[scale=0.12]{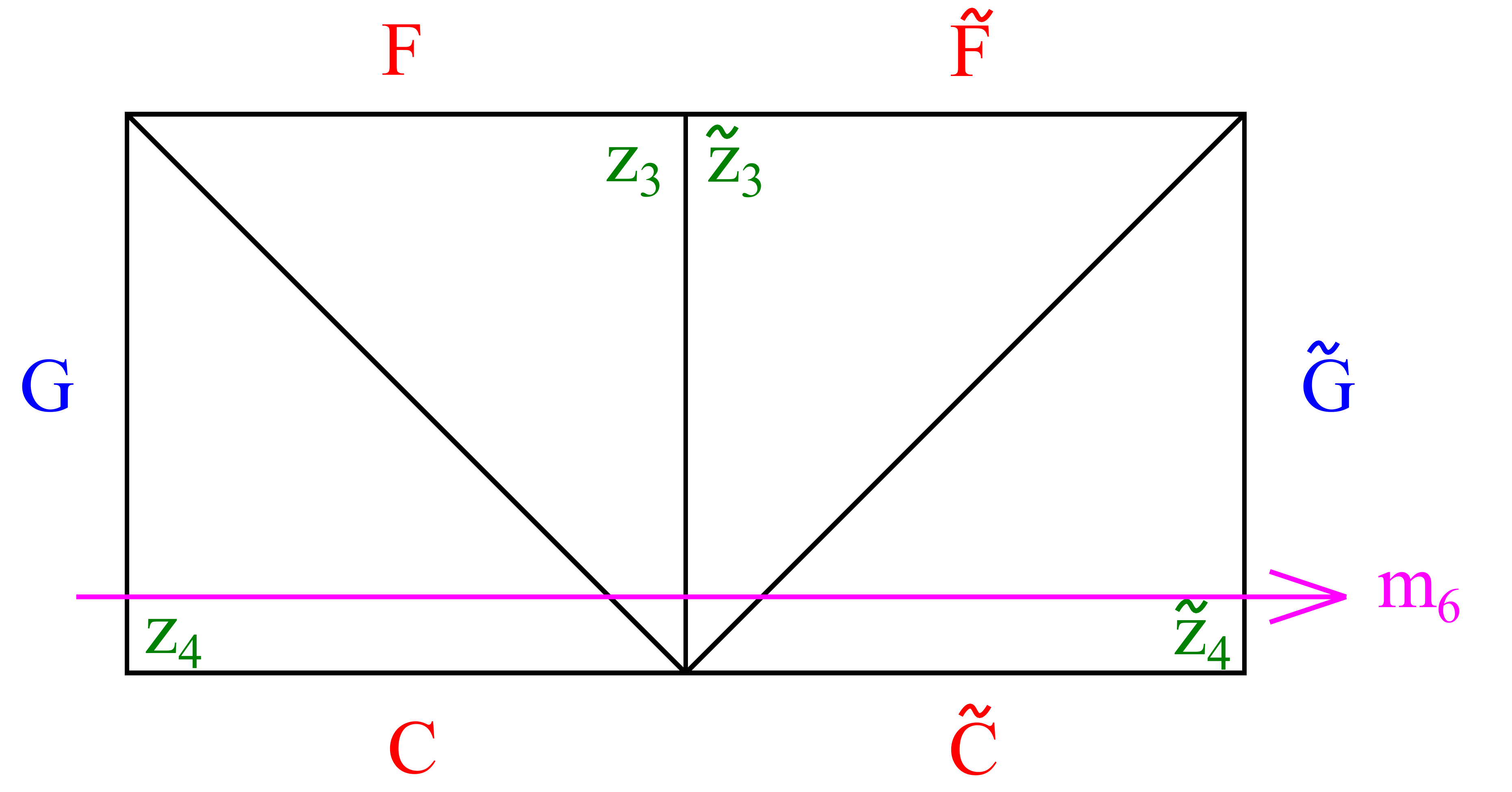}
\caption{Triangulations of vertices (3) and (6)}\label{trib36}
\end{figure}

\subsection{Gluing equations on a $D$-block}\label{GEDB}
We first study the gluing equations on each individual $D$-block. The main goal of this subsection is to prove Proposition \ref{explictsolnDB}, which provides an explicit solution of the gluing equations in terms of the holonomies of the six ideal vertices of the $D$-block. 

Given a $D$-block $\mathcal{D}$ consisting of ideal octahedra $\mathcal{O}$ and $\tilde{\mathcal{O}}$, we let $m_{1},\dots,m_{6}$ be the meridian of the cylinder at the $i$-th ideal vertex as shown in Figures \ref{trib14}, \ref{trib25} and \ref{trib36}. For $\mathbf{z} = (z_1,\dots, z_4, \tilde{z_1}\dots, \tilde{z_4})\in \CC\setminus\{0,1\}^8$, let $F_{e_1}, F_{e_2}, F_{m_1}, \dots, F_{m_6} : \CC\setminus\{0,1\}^8 \to \CC$ be the functions defined by
\begin{align*}
F_{e_1}(\mathbf{z} ) &= \log z_1 + \log z_2 + \log z_3 + \log z_4, \\
F_{e_2}(\mathbf{z} )  &= \log \tilde z_1 + \log \tilde z_2 + \log \tilde z_3 + \log \tilde z_4,\\
F_{m_1}(\mathbf{z} )  &= \log z_4 + \log \tilde z_4 - \log z_3'' - \log z_1' - \log \tilde z_1'' - \log \tilde z_3', \\
F_{m_2}(\mathbf{z} )  &= \log z_4' + \log \tilde z_4'' - \log z_1' - \log \tilde z_1'',\\
F_{m_3}(\mathbf{z} )  &= \log z_1'' + \log \tilde z_1' - \log z_2'' - \log \tilde z_2',\\
F_{m_4}(\mathbf{z} )  &= \log z_3 + \log \tilde z_3 - \log z_4'' - \log z_2' - \log \tilde z_2'' - \log \tilde z_4',\\
F_{m_5}(\mathbf{z} )  &= \log z_2' + \log \tilde z_2'' - \log z_3' - \log \tilde z_3'',\\
F_{m_6}(\mathbf{z} )  &= \log z_4'' + \log \tilde z_4' - \log z_3'' - \log \tilde z_3'.
\end{align*}
For each $\mathbf{H(m)}=\big(\mathrm{H}(m_1),\dots, \mathrm{H}(m_6)\big)\in \CC^6$, 
let $\mathcal{G}_{\mathcal{D}}: \CC^6 \times (\CC\setminus\{0,1\})^8 \to \CC^8$ be the function defined by
\begin{align}\label{defFDB}
&\mathcal{G}_{\mathcal{D}}( \mathbf{H(m)}, \mathbf{z} ) \notag\\
=&\Big(F_{e_1}(\mathbf{z} ) - 2\pi \sqrt{-1}, F_{e_2}(\mathbf{z} )  - 2\pi \sqrt{-1}, F_{m_1}(\mathbf{z} ) - \mathrm{H}(m_1), \dots, F_{m_6}(\mathbf{z} ) - \mathrm{H}(m_6)\Big).
\end{align}
Note that by direct computation, we have $\mathcal{G}_{\mathcal{D}}( (0,\dots,0), (\sqrt{-1},\dots, \sqrt{-1})) = (0,\dots,0)$. Geometrically, it means that when $\mathbf{H(m)} = (0,\dots,0)$, the octahedron becomes a regular ideal tetrahedron which is decomposed into four ideal tetrahedra with dihedral angles $\pi/2, \pi/4, \pi/4$. The following proposition shows that a solution to the equation $\mathcal{G}_{\mathcal{D}} = 0$ exists locally around $\mathbf{H(m)} = (0,\dots,0)$.

\begin{proposition}\label{solnexist} There exists a neighborhood $U \subset \CC^6$ containing the origin such that for any \linebreak $\mathbf{H(m)} \in U$, there exists a unique $\mathbf{z^*(H(m))}$ with $\mathbf{z^*}(0,\dots, 0) = \big(\sqrt{-1},\dots, \sqrt{-1}\big)$ such that \linebreak$\mathcal{G}_{\mathcal{D}}(\mathbf{H(m)}, \mathbf{z}^*(\mathbf{H(m)}) ) = (0,\dots,0)$.
\end{proposition}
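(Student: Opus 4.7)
The strategy is the holomorphic implicit function theorem applied to $\mathcal{G}_{\mathcal{D}} : \CC^6 \times (\CC\setminus\{0,1\})^8 \to \CC^8$ at the base point $(\mathbf{H(m)}, \mathbf{z}) = (\mathbf{0}, (\sqrt{-1},\dots,\sqrt{-1}))$. First I would verify by direct substitution that $\mathcal{G}_{\mathcal{D}}$ vanishes there: each of the two edge equations sums four copies of $\log \sqrt{-1} = \pi\sqrt{-1}/2$, giving $2\pi\sqrt{-1}$; each meridian equation evaluates to $0$ because the terms pair up via $\log z + \log z' + \log z'' = \pi\sqrt{-1}$ and the sign pattern in (\ref{defFDB}) is arranged so that all $\log 2$ contributions and all $\pi\sqrt{-1}/4$ contributions cancel separately.

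With the existence of a zero at the complete structure thereby established, the entire content of the proposition reduces to showing that the $8\times 8$ partial Jacobian $D_{\mathbf{z}}\mathcal{G}_{\mathcal{D}}$ is non-singular at this point. I read off its entries from (\ref{defFDB}) using
\[
\left.\tfrac{d\log z}{dz}\right|_{\sqrt{-1}} = -\sqrt{-1}, \qquad \left.\tfrac{d\log z'}{dz}\right|_{\sqrt{-1}} = \tfrac{1+\sqrt{-1}}{2}, \qquad \left.\tfrac{d\log z''}{dz}\right|_{\sqrt{-1}} = \tfrac{-1+\sqrt{-1}}{2}.
\]
Ordering rows by $(e_1, e_2, m_1,\dots, m_6)$ and columns by $(z_1,\dots,z_4,\tilde z_1,\dots,\tilde z_4)$, the two edge rows are supported on disjoint four-column blocks, while the six meridian rows couple the two blocks with a clean pattern inherited from the $\ZZ/2$-symmetry swapping $\mathcal{O}$ and $\tilde{\mathcal{O}}$. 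Passing to symmetric and antisymmetric combinations of $z_i$ and $\tilde z_i$ should block-diagonalize the Jacobian into two $4\times 4$ matrices whose determinants can be evaluated by inspection.

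The main obstacle is simply executing this finite determinant computation without arithmetic error; the entries are the three specific complex constants above together with their negatives, and while nonvanishing is expected to be clean, the precise sign bookkeeping coming from the six meridian relations requires care. Once nonvanishing is confirmed, the holomorphic implicit function theorem produces a neighborhood $U \subset \CC^6$ of the origin and a unique holomorphic map $\mathbf{z}^* : U \to (\CC\setminus\{0,1\})^8$ with $\mathbf{z}^*(\mathbf{0}) = (\sqrt{-1},\dots,\sqrt{-1})$ solving $\mathcal{G}_{\mathcal{D}}(\mathbf{H(m)}, \mathbf{z}^*(\mathbf{H(m)})) = \mathbf{0}$, which is exactly the proposition. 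Should the direct determinant route prove unwieldy, a constructive alternative is to eliminate variables by hand, reducing the system to a single quadratic equation in one auxiliary variable with coefficients that are Laurent polynomials in $u_i = e^{\mathrm{H}(m_i)/2}$, and then to select the root that specializes to $\sqrt{-1}$ at the complete structure, as anticipated by the explicit formulas appearing in the statement of Theorem \ref{volhypideal}; uniqueness in a small enough neighborhood then follows from the continuity of this chosen root together with nonvanishing of the discriminant at $\mathbf{H(m)} = \mathbf{0}$.
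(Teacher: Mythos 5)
Your approach is exactly the paper's: apply the holomorphic implicit function theorem after checking that the $8\times 8$ Jacobian $D_{\mathbf z}\mathcal{G}_{\mathcal{D}}$ is nonsingular at $\big((0,\dots,0),(\sqrt{-1},\dots,\sqrt{-1})\big)$, where the paper simply records $\det D_{\mathbf z}\mathcal{G}_{\mathcal{D}} = -32\sqrt{-1}$. One small caveat: the symmetric/antisymmetric change of variables you sketch does not actually block-diagonalize the Jacobian, because in (\ref{defFDB}) the swap $\mathcal{O}\leftrightarrow\tilde{\mathcal{O}}$ is accompanied by the swap $'\leftrightarrow''$ (e.g.\ $F_{m_1}$ contains $-\log z_1'$ but $-\log\tilde z_1''$), so the meridian rows acquire entries in both the symmetric and antisymmetric blocks; this does not affect the correctness of the strategy, only the hoped-for shortcut in the finite computation.
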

\begin{proof}
Note that the Jacobian matrix of $\mathcal{G}_{\mathcal{D}}$ with respect to $\mathbf{z}$ is given by
\begin{align*}
D_{\mathbf{z}}\mathcal{G}_{\mathcal{D}}
=
\begin{pmatrix}
\frac{1}{z_1} & \frac{1}{z_2} & \frac{1}{z_3} & \frac{1}{z_4} & 0 & 0 & 0 & 0 \\
0 & 0 & 0 & 0 & \frac{1}{\tilde{z_1}} & \frac{1}{\tilde{z_2}} & \frac{1}{\tilde{z_3}} & \frac{1}{\tilde{z_4}} \\
\frac{-1}{1-z_1} & 0 & \frac{-1}{z_3(z_3-1)} & \frac{1}{z_4} & \frac{-1}{\tilde{z_1}(\tilde{z_1} - 1)} & 0 & \frac{-1}{1-\tilde{z_3}} & \frac{1}{\tilde{z_4}} \\
\frac{-1}{1-z_1} & 0 & 0 & \frac{1}{1-z_4} & \frac{-1}{\tilde{z_1}(\tilde{z_1} - 1)} & 0 & 0 & \frac{1}{\tilde{z_4}(\tilde{z_4}-1)} \\
\frac{1}{z_1(z_1-1)} & \frac{-1}{z_2(z_2-1)} & 0 & 0 & \frac{1}{1-\tilde{z_1}} & \frac{-1}{1-\tilde{z_2}} & 0 & 0 \\
0 & \frac{-1}{1-z_2} & \frac{1}{z_3} & \frac{-1}{z_4(z_4-1)} & 0 & \frac{-1}{\tilde{z_2}(\tilde{z_2}-1)} & \frac{1}{\tilde{z_3}} & \frac{-1}{1-\tilde{z_4}}\\
0 & \frac{1}{1-z_2} & \frac{-1}{1-z_3} & 0 & 0 & \frac{1}{\tilde{z_2}(\tilde{z_2}-1)} & \frac{-1}{\tilde{z_3}(\tilde{z_3}-1)} & 0 \\
0 & 0 & \frac{-1}{z_3(z_3-1)} & \frac{1}{z_4(z_4-1)} & 0 & 0 & \frac{-1}{1-\tilde{z_3}} & \frac{1}{1-\tilde{z_4}} 
\end{pmatrix}
\end{align*}
with 
\begin{align}\label{detcomplete}
\mathrm{det} \big(D_{\mathbf{z}}\mathcal{G}_{\mathcal{D}} \big( (0,\dots,0), \big(\sqrt{-1},\dots, \sqrt{-1}\big)\big) \big) = -32 \sqrt{-1} \neq 0.
\end{align} 
The result follows from the implicit function theorem.
\end{proof}

Next, we are going to solve the equation $\mathcal{G}_{\mathcal{D}}( \mathbf{H(m)}, \mathbf{z} ) = (0,\dots, 0)$ explicitly. The key idea is to apply a change of variables that convert the 8 non-linear gluing equations into 7 linear equations together with 1 non-linear equation. This simplifies the computation and allows us to reduce the problem of solving the gluing equations into the problem of solving a single quadratic equation that has a close relationship with the Gram matrix of the $D$-block. We will further discuss the change of variables later in Section \ref{compEB}.

More precisely, let $u_l = e^{\frac{\mathrm{H}(m_l)}{2}}$ for $l=1,\dots,6$ and  
$$ z^* = \frac{-B - \sqrt{B^2 - 4AC}}{2A},$$
where 
\begin{equation}\label{defABC}
\begin{split}
A&= - \frac{u_1}{u_4} - \frac{u_1 u_3}{u_2} - \frac{u_1}{u_2u_3} - \frac{u_1}{u_2^2 u_4} - \frac{u_5}{u_2} - \frac{u_6}{u_2u_4} - \frac{1}{u_2u_4u_6} - \frac{1}{u_2u_5},\\
B&= - u_1u_4 + \frac{u_1}{u_4} + \frac{u_4}{u_1} - \frac{1}{u_1u_4}
 + u_2u_5 + \frac{u_2}{u_5} + \frac{u_5}{u_2} + \frac{1}{u_2u_5}
 - u_3 u_6 - \frac{u_3}{u_6} - \frac{u_6}{u_3} - \frac{1}{u_3u_6}  ,\\
C&= - \frac{u_4}{u_1} - \frac{u_2}{u_1 u_3} - \frac{u_2u_3}{u_1} - \frac{u_2^2 u_4}{u_1} - \frac{u_2}{u_5} - \frac{u_2u_4}{u_6} - u_2u_4u_6 - u_2u_5.
\end{split}
\end{equation}
By a direct computation, one can verify that
\begin{align}\label{disgram}
B^2 - 4AC = 16 \det \mathbb{G},
\end{align}
where $\det \mathbb{G}$ is the determinant of the associated Gram matrix defined in Section \ref{TFSL}.

\begin{proposition}\label{explictsolnDB}
The solution $\mathbf{z^*(H(m))}$ to the equation $\mathcal{G}_{\mathcal{D}}( \mathbf{H(m)}, \mathbf{z} ) = (0,\dots, 0)$ in Proposition \ref{solnexist} is given by $\mathbf{z^*(H(m))} = (z_1^*,\dots, z_4^*, \tilde{z}_1^*,\dots, \tilde{z}_4^*)$, where
\begin{empheq}[left = \empheqlbrace]{equation}\label{solnu}
\begin{split}
z_1^*&=\frac{z^*-u_2^2}{z^*+u_2u_3u_4}, \quad
z_2^*=\frac{z^*u_1u_3u_5 - u_2u_3u_4}{z^* u_1u_3u_5 + u_1u_2u_4u_5},  \\
z_3^*&=\frac{z^*u_1u_6 - u_2u_4u_5u_6}{z^*u_1u_6+u_2}, \quad
z_4^*=\frac{z^* u_1 - u_1}{z^* u_1 + u_2 u_6} \\
\tilde z_1^*&=-\frac{z^*u_2 + u_2^2 u_3 u_4}{z^* u_3 u_4 - u_2^2 u_3 u_4 }, \quad
\tilde z_2^*=-\frac{z^*u_3 + u_2 u_4}{z^* u_1u_5- u_2u_4}, \\
\tilde z_3^*&=-\frac{z^* u_1u_4u_5u_6 + u_2u_4u_5}{z^* u_1 - u_2u_4u_5} ,  \quad
\tilde z_4^* =-\frac{z^* u_1 + u_2u_6}{z^* u_2u_6 - u_2u_6}.
\end{split}
\end{empheq}
\end{proposition}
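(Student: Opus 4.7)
The plan is to verify directly that the formulas (\ref{solnu}) satisfy $\mathcal{G}_{\mathcal{D}}(\mathbf{H(m)},\mathbf{z}^*) = 0$ and reduce to $(\sqrt{-1},\ldots,\sqrt{-1})$ at the complete structure $\mathbf{H(m)} = 0$, and then apply the uniqueness established in Proposition \ref{solnexist}. The organizing observation is that each $z_i^*$ and $\tilde z_i^*$ in (\ref{solnu}) is a Möbius transformation in a single auxiliary parameter $z$, so the ansatz has exactly one free variable. The task therefore splits into three pieces: showing that six of the eight gluing equations are absorbed into the ansatz as identities in $z$, that the two edge equations both reduce to the single quadratic $Az^2 + Bz + C = 0$, and that the indicated branch of the square root matches the complete structure.

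First, I would exponentiate (\ref{defFDB}): the edge equations become $z_1 z_2 z_3 z_4 = 1$ and $\tilde z_1 \tilde z_2 \tilde z_3 \tilde z_4 = 1$, while each holonomy equation becomes a monomial identity equal to $u_l^2$. Substituting (\ref{solnu}) into each of the six exponentiated holonomy equations and cancelling the linear-in-$z$ factors that appear in numerators and denominators, I would verify that each reduces to an identity in $z$ and the $u_l$'s. As a representative example, a short computation from (\ref{solnu}) gives
\begin{align*}
\frac{(1-z_1^*)\tilde z_1^*}{\tilde z_1^* - 1} = \frac{u_2^2}{z}, \qquad \frac{\tilde z_4^* - 1}{(1 - z_4^*)\tilde z_4^*} = z,
\end{align*}
whose product is $u_2^2$ independently of $z$, confirming the $F_{m_2}$-equation. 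A parallel computation handles the remaining five holonomy equations. Short calculations from (\ref{solnu}) also yield
\begin{align*}
z_1^* \tilde z_1^* = -\tfrac{u_2}{u_3 u_4},\quad z_2^* \tilde z_2^* = -\tfrac{u_3}{u_1 u_5},\quad z_3^* \tilde z_3^* = -u_4 u_5 u_6,\quad z_4^* \tilde z_4^* = -\tfrac{u_1}{u_2 u_6},
\end{align*}
whose product is $1$. Consequently the two edge equations $z_1 z_2 z_3 z_4 = 1$ and $\tilde z_1 \tilde z_2 \tilde z_3 \tilde z_4 = 1$ are equivalent on the Möbius image, so only one of them needs to be imposed.

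The central computational step is then to substitute (\ref{solnu}) into $z_1 z_2 z_3 z_4 = 1$, clear the common denominator $(z+u_2 u_3 u_4)(z u_1 u_3 u_5 + u_1 u_2 u_4 u_5)(z u_1 u_6 + u_2)(z u_1 + u_2 u_6)$, expand the numerator $(z - u_2^2)(z u_1 u_3 u_5 - u_2 u_3 u_4)(z u_1 u_6 - u_2 u_4 u_5 u_6)(z u_1 - u_1)$, and collect powers of $z$ to recognize the quadratic $Az^2 + Bz + C = 0$ with the coefficients (\ref{defABC}). This expansion is a finite but lengthy symbolic identity in $u_1,\ldots,u_6$ and is the principal obstacle of the proof; it can be carried out by careful bookkeeping or by symbolic computation. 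Finally, to select the correct branch of the square root, observe that at $u_1 = \cdots = u_6 = 1$ one has $A = C = -8$ and $B = 0$, so $B^2 - 4AC = -256$; with the principal root $16\sqrt{-1}$ and the minus sign in the quadratic formula, $z^* = \sqrt{-1}$, and substitution into (\ref{solnu}) gives $z_i^* = \tilde z_i^* = \sqrt{-1}$ for each $i$, matching the regular ideal octahedron seen in the proof of Proposition \ref{solnexist}. The uniqueness there then identifies $(z_1^*,\ldots,\tilde z_4^*)$ with $\mathbf{z}^*(\mathbf{H(m)})$ throughout a neighborhood of the origin.
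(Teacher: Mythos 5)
Your argument is correct, but it is a genuinely different route from the paper's. The paper \emph{derives} the formulas: working first under the assumption that all $\mathrm{H}(m_l)$ are purely imaginary, it introduces the substitution $z_i = r_i e^{\sqrt{-1}\phi_i}$, $\tilde z_i = r_i^{-1}e^{\sqrt{-1}\phi_i}$, which turns seven of the eight gluing equations into linear equations in the arguments that are solved explicitly in terms of the free variable $\Arg z_4'$, and the last one ($r_1r_2r_3r_4=1$) into the quadratic $Az^2 + Bz + C = 0$ via the Euclidean sine law; the general complex case is then obtained by analytic continuation through Lemma 3.4. You instead \emph{verify} the closed-form answer directly: you observe that exponentiating the six holonomy equations yields identities in the auxiliary parameter $z$ (a consequence of the telescoping structure $z_i^*\tilde z_i^* = \text{const}$ and the quotients $(1-z_i^*)\tilde z_i^*/(\tilde z_i^*-1)$ being monomials in $z$ and the $u_l$'s), that the product relations $\prod z_i^*\tilde z_i^*=1$ make the two edge equations equivalent, and that the remaining single edge equation gives the quadratic; then you fix the branch at the complete structure and cite the uniqueness in Proposition 3.2. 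Your approach has the advantage of bypassing both the restriction to imaginary holonomies and the analytic-continuation step, and the identities you extract clarify why one free parameter suffices.

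Two small points worth tightening. First, after clearing denominators in $z_1^*z_2^*z_3^*z_4^* = 1$, the resulting polynomial $\prod N_i - \prod D_i$ is a priori cubic, not quadratic: the leading $z^4$ coefficients cancel because each $z_i^* \to 1$ as $z \to\infty$, and the constant terms also cancel, so the cubic has the form $\lambda z(Az^2+Bz+C)$ with $\lambda = u_1^2u_2^2u_3u_4u_5u_6$. You should say explicitly that one factors out the root $z=0$ before identifying the quadratic, since $z=0$ is not the root you want. Second, your verification establishes the \emph{exponentiated} gluing relations; to invoke the uniqueness in Proposition 3.2 (which concerns the logarithmic system $\mathcal{G}_{\mathcal D}=0$) you should note that $\mathcal{G}_{\mathcal D}(\mathbf{H(m)},\mathbf z^{\mathrm{explicit}}(\mathbf{H(m)}))$ is a continuous $2\pi\sqrt{-1}\ZZ$-valued function of $\mathbf{H(m)}$ that vanishes at the origin and is therefore identically zero on a connected neighborhood. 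Both are easy to supply and do not affect the soundness of the argument.
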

\begin{proof}
{\bf Case 1: }
First, we consider the case where $\mathrm{H}(m_l) = 2 \theta_l \sqrt{-1}$ for $\theta_l \in \RR$. Put 
\begin{empheq}[left = \empheqlbrace]{equation} \label{subs}
\begin{split}
&z_1= r_1 e^{\sqrt{-1} \phi_1}, \quad z_2= r_2 e^{\sqrt{-1} \phi_2}, \quad z_3= r_3 e^{\sqrt{-1} \phi_3}, \quad z_4= r_4 e^{\sqrt{-1} \phi_4}\\
&\tilde z_1= \frac{1}{r_1} e^{\sqrt{-1} \phi_1}, \quad \tilde z_2= \frac{1}{r_2} e^{\sqrt{-1} \phi_2}, \quad \tilde z_3= \frac{1}{r_3} e^{\sqrt{-1} \phi_3}, \quad \tilde z_4= \frac{1}{r_4} e^{\sqrt{-1} \phi_4},
\end{split}
\end{empheq}
where $r_1,\dots, r_4 \in \RR_{>0}$ and $\phi_1,\dots, \phi_4 \in \RR$. In particular, for $i=1,\dots, 4$, we have
\begin{align}\label{cong1}
\tilde z_i'' = \frac{\tilde z_i - 1}{\tilde z_i} 
= \frac{\frac{1}{r_i}e^{\sqrt{-1}\phi_i} - 1}{\frac{1}{r_i}e^{\sqrt{-1}\phi_i}}
= 1 - r_ie^{-\sqrt{-1}\phi_i} = \frac{1}{\overline {z_i'}}
\end{align}
and
\begin{align}\label{cong2}
 z_i'' = \frac{ z_i - 1}{ z_i} 
= \frac{r_ie^{\sqrt{-1}\phi_i} - 1}{r_ie^{\sqrt{-1}\phi_i}}
= 1 - \frac{1}{r_i}e^{-\sqrt{-1}\phi_i} = \frac{1}{\overline {\tilde z_i'}},
\end{align}
where $\overline{z}$ is the complex conjugate of $z$. Especially, from (\ref{cong1}) and (\ref{cong2}) we have 
$$|\tilde z_i''| = |z_i'|^{-1},\ \ |z_i''| = |\tilde z_i'|^{-1},\ \  \Arg(\tilde z_i'') = \Arg(z_i'), \ \ \Arg(\tilde z_i') = \Arg(z_i'').$$
Note that under the substitution (\ref{subs}), the equations $F_{e_1}= 2\pi \sqrt{-1}$ and $F_{e_2} = 2\pi \sqrt{-1} $ become
\begin{align}\label{f2eq}
r_1r_2r_3r_4 = 1 \quad \text{and} \quad \phi_1+\phi_2+\phi_3+\phi_4 = 2\pi.
\end{align}
By (\ref{cong1}), the equation $F_{m_1} = 2\theta_1 \sqrt{-1}$ becomes a linear equation
\begin{align}\label{f3eq1}
\phi_4 - \Arg z_1' - \Arg \tilde z_3' = \theta_1.
\end{align}
Since $\Arg(\tilde z_3') = \Arg(z_3'') = \pi - \phi_3 - \Arg(z_3')$, we have
\begin{align}\label{f3eq2}
\phi_4 - \Arg z_1' + \phi_3 + \Arg z_3' =  \pi + \theta_1.
\end{align}
By using a similar argument, one can verify that under the substitution (\ref{subs}), the equation 
$$\mathcal{G}_{\mathcal{D}}\Big( \big(2\theta_1\sqrt{-1},\dots, 2\theta_6\sqrt{-1}\big), \mathbf{z} \Big) = (0,\dots, 0)$$ becomes 7 linear equations 
\begin{empheq}[left = \empheqlbrace]{equation} \label{linver}
\begin{split}
\phi_1+\phi_2+\phi_3+\phi_4 &= 2\pi\\
\phi_4 - \Arg z_1' + \phi_3 + \Arg z_3' &= \pi + \theta_1 \\
\Arg z_4' - \Arg z_1' &= \theta_2 \\
\phi_2 - \phi_1 + \Arg z_2' - \Arg z_1' &= \theta_3 \\
\phi_3 - \Arg z_2' + \phi_4 + \Arg z_4' &= \pi + \theta_4 \\
\Arg z_2' - \Arg z_3' &= \theta_5 \\
\phi_3 - \phi_4 + \Arg z_3' - \Arg z_4' &= \theta_6 .
\end{split}
\end{empheq}
and 1 non-linear equation 
\begin{align}\label{nonlin}
r_1r_2r_3r_4 &= 1.
\end{align}
One can verify that by using $\Arg z_4'$ as the independent variable,
\begin{empheq}[left = \empheqlbrace]{equation} \label{linsoln}
\begin{split}
\phi_1 &= \frac{\pi}{2} + \frac{\theta_2 - \theta_3 - \theta_4}{2}, \\
\phi_2 &= \frac{\pi}{2} + \frac{-\theta_1 + \theta_3 - \theta_5}{2}, \\
\phi_3 &= \frac{\pi}{2} + \frac{\theta_4 + \theta_5 + \theta_6}{2}, \\
\phi_4 &= \frac{\pi}{2} + \frac{\theta_1 - \theta_2 - \theta_6}{2},\\
\Arg z_1' &= \Arg z_4' - \theta_2, \\
\Arg z_2' &= \Arg z_4' + \frac{\theta_1 - \theta_2 - \theta_4 + \theta_5}{2},  \\
\Arg z_3' &= \Arg z_4' + \frac{\theta_1 - \theta_2 - \theta_4 - \theta_5}{2} 
\end{split}
\end{empheq}
solve the system of linear equations (\ref{linver}). Under these conditions, by applying the Euclidean sine law on the triangle with angles $(\Arg z_i, \Arg z_i', \Arg z_i'') = (\phi_i, \Arg z_i', \pi - \phi_i - \Arg z_i')$ for $i=1,2,3,4$, $r_1,r_2,r_3$ and $r_4$ can be written as
\begin{empheq}[left = \empheqlbrace]{equation}\label{writer}
\begin{split}
r_1 &= \frac{\sin(\Arg z_4' - \theta_2)}{\cos(\Arg z_4' - \frac{\theta_2 + \theta_3 + \theta_4}{2})}, \\
r_2 &= \frac{\sin(\Arg z_4' + \frac{\theta_1 - \theta_2 - \theta_4 + \theta_5}{2})}{\cos(\Arg z_4' - \frac{\theta_2 - \theta_3 + \theta_4}{2})}, \\
r_3 &= \frac{\sin(\Arg z_4' + \frac{\theta_1 - \theta_2 - \theta_4 - \theta_5}{2})}{\cos(\Arg z_4' + \frac{\theta_1 - \theta_2 + \theta_6}{2})}, \\
r_4 &= \frac{\sin(\Arg z_4')}{\cos(\Arg z_4' + \frac{\theta_1 - \theta_2 - \theta_6}{2})} .
\end{split}
\end{empheq}
Let $z = e^{2\sqrt{-1} \Arg z_4'}$ and $u_l = e^{\sqrt{-1}\theta_l}$ for $l=1,\dots,6$. By using (\ref{writer}), the non-linear equation (\ref{nonlin}) can be reduced into the following quadratic equation
\begin{align*}
A z^2 + Bz + C = 0,
\end{align*}
where
\begin{equation*}
\begin{split}
A&= - \frac{u_1}{u_4} - \frac{u_1 u_3}{u_2} - \frac{u_1}{u_2u_3} - \frac{u_1}{u_2^2 u_4} - \frac{u_5}{u_2} - \frac{u_6}{u_2u_4} - \frac{1}{u_2u_4u_6} - \frac{1}{u_2u_5},\\
B&= - u_1u_4 + \frac{u_1}{u_4} + \frac{u_4}{u_1} - \frac{1}{u_1u_4}
 + u_2u_5 + \frac{u_2}{u_5} + \frac{u_5}{u_2} + \frac{1}{u_2u_5}
 - u_3 u_6 - \frac{u_3}{u_6} - \frac{u_6}{u_3} - \frac{1}{u_3u_6}  ,\\
C&= - \frac{u_4}{u_1} - \frac{u_2}{u_1 u_3} - \frac{u_2u_3}{u_1} - \frac{u_2^2 u_4}{u_1} - \frac{u_2}{u_5} - \frac{u_2u_4}{u_6} - u_2u_4u_6 - u_2u_5.
\end{split}
\end{equation*}
In particular, we let 
\begin{align}\label{z^*def}
z^* = \frac{-B - \sqrt{B^2 - 4AC}}{2A} .
\end{align}
Then (\ref{subs}), (\ref{linsoln}), (\ref{writer}) and  (\ref{z^*def}) together imply that (\ref{solnu}) solves the equation. Moreover, by direct computation, when $\mathrm{H}(m_1)=\dots=\mathrm{H}(m_6) = 0$, from (\ref{solnu}) we have 
$$(z_1,\dots, z_4, \tilde z_1, \dots, \tilde z_4) = \big(\sqrt{-1},\dots, \sqrt{-1}\big).$$ 
By Proposition \ref{solnexist}, since $\mathbf{z^*(H(m))}$ is the unique solution to the equation $\mathcal{G}_{\mathcal{D}}(\mathbf{H(m)}, \mathbf{z}(\mathbf{H}(m)) ) = 0$ with $\mathbf{z}^*(0,\dots, 0) = \big(\sqrt{-1},\dots, \sqrt{-1}\big)$, we have $\mathbf{z^*(H(m))}= (z_1^*,\dots, z_4^*, \tilde z_1^*, \dots, \tilde z_4^*)$. \\

\noindent{\bf Case 2:} For the general case, note that both $\mathbf{z^*(H(m))}$ and the solution $(z_1^*,\dots, z_4^*, \tilde z_1^*, \dots, \tilde z_4^*)$ in (\ref{solnu}) are holomorphic functions in $\mathrm{H}(m_1),\dots,\mathrm{H}(m_6)$. Moreover, by case 1, these holomorphic functions agree on the imaginary axis in an open neighborhood of the origin. The result follows from Lemma \ref{MCV} below.
\end{proof}

The following result is from \cite{WY1} and we include the proof here for reader's convenience.
\begin{lemma}\label{MCV}\cite[Lemma 4.2]{WY1} Suppose $D$ is a domain of $\mathbb C^n$ and $F_1$ and $F_2$ are two holomorphic functions on $D.$  If $F_1$ and $F_2$ coincide on $D\cap (\sqrt{-1}\mathbb R)^n,$ then $F_1$ and $F_2$ coincide on $D.$
\end{lemma}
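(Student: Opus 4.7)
The plan is to set $G := F_1 - F_2$, a holomorphic function on $D$ that vanishes on $D \cap (\sqrt{-1}\mathbb{R})^n$, and show $G \equiv 0$. The key insight is that $(\sqrt{-1}\mathbb{R})^n$ is a totally real submanifold of $\mathbb{C}^n$ of maximal real dimension $n$, so a holomorphic function vanishing on it is forced to vanish on a full Euclidean neighborhood; once we have that, the multivariable identity theorem finishes the job on the connected domain $D$.

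Concretely, I would pick any point $p = (\sqrt{-1}a_1,\dots,\sqrt{-1}a_n) \in D \cap (\sqrt{-1}\mathbb{R})^n$ (which is nonempty, since the hypothesis implicitly requires the coincidence set to make sense) and expand $G$ in a convergent multivariable Taylor series $G(z) = \sum_{\alpha \in \mathbb{Z}_{\geq 0}^n} c_\alpha (z-p)^\alpha$ on a polydisc $P \subset D$ centered at $p$. For every sufficiently small $\mathbf{t} = (t_1,\dots,t_n) \in \mathbb{R}^n$, the point $p + \sqrt{-1}\mathbf{t}$ still lies in $D \cap (\sqrt{-1}\mathbb{R})^n$, so the hypothesis yields $\sum_{\alpha} c_\alpha (\sqrt{-1})^{|\alpha|}\, \mathbf{t}^\alpha = 0$ for all $\mathbf{t}$ in a real neighborhood of the origin in $\mathbb{R}^n$. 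Differentiating in $\mathbf{t}$ and evaluating at the origin (i.e.\ uniqueness of real Taylor expansions) forces $c_\alpha (\sqrt{-1})^{|\alpha|} = 0$ for every multi-index $\alpha$, hence $c_\alpha = 0$ and $G \equiv 0$ on $P$.

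Finally, I would invoke the multivariable identity theorem: a holomorphic function on a connected open subset of $\mathbb{C}^n$ vanishing on a nonempty open subset vanishes identically, so $G \equiv 0$ on all of $D$, giving $F_1 = F_2$ throughout $D$. There is no substantive obstacle here; the only minor points to check are that $P$ can be chosen inside $D$ (guaranteed by openness) and that $D$ is connected (built into the notion of a domain). An alternative route by one-variable induction, applying the classical identity theorem successively in one coordinate at a time while the others are held purely imaginary, also works but is notationally clumsier than the direct Taylor-coefficient argument above.
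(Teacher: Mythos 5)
Your proof is correct, but it takes a genuinely different route from the paper's. The paper proves the lemma by induction on $n$: the base case $n=1$ is the classical one-variable identity theorem, and the inductive step fixes all but one coordinate at purely imaginary values, applies the one-variable result to extend from a purely imaginary slice to a full complex slice, then re-slices and applies the inductive hypothesis. Your argument is instead a single self-contained computation: expand $G = F_1 - F_2$ in a convergent power series around a base point $p$ in the coincidence set, restrict to the totally real slice $p + \sqrt{-1}\mathbf{t}$, and read off $c_\alpha(\sqrt{-1})^{|\alpha|} = 0$ from uniqueness of real-analytic Taylor coefficients, finishing with the multivariable identity theorem. Your approach has a couple of advantages. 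First, it makes explicit the underlying principle at work (vanishing on a maximal totally real submanifold forces local vanishing), which the slicing induction obscures. Second, it is cleaner on the technical side: the paper's inductive step silently assumes that the one-variable and $(k{-}1)$-variable slices of $D$ are themselves domains meeting the relevant imaginary subspace, which is not literally true for an arbitrary domain $D$ and really requires restricting to a polydisc first; your argument works on a polydisc from the outset and hence avoids that gap. Both approaches correctly require, as you note, that $D \cap (\sqrt{-1}\mathbb{R})^n$ be nonempty (which is implicit in the way the lemma is used in the paper) and both ultimately rest on the multivariable identity theorem to propagate the local vanishing to all of $D$.
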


\begin{proof}  We use induction on $n.$ If $n=1,$ then the result follows from the Identity Theorem of a single variable analytic function. Now suppose the result is true for $n\leqslant k.$ For each fixed $(z_2,\dots,z_k)\in (\sqrt{-1}\mathbb R)^{k-1},$ by the assumption of the lemma, we have 
$F_1(z_1,z_2,\dots,z_k)=F_2(z_1,z_2,\dots,z_k)$ for any purely imaginary $z_1.$ Then by the single variable case $F_1(z_1,z_2,\dots,z_k)=F_2(z_1,z_2,\dots,z_k)$ for any complex $z_1.$ This equality can also be understood as for any fixed complex $z_1,$ $F_1(z_1,z_2,\dots,z_k)=F_2(z_1,z_2,\dots,z_k)$  for all purely imaginary $(z_2,\dots,z_k).$ Then by the induction hypothesis, we have $F_1(z_1,z_2,\dots,z_k)=F_2(z_1,z_2,\dots,z_k)$ for all $(z_2,\dots,z_k).$
\end{proof}

\begin{remark}\label{gvDblock}
If we interpret the variety 
$$ \mathcal{Z}(\mathcal{D}) = \{ \mathbf{z}=(z_1,\dots,z_4,\tilde z_1, \dots, \tilde z_4) \in \CC^8 \mid z_1z_2z_3z_4 = \tilde z_1 \tilde z_2\tilde z_3\tilde z_4 = 1 \}$$
as the ``gluing variety" of the $D$-block $\mathcal{D}$, then Proposition \ref{explictsolnDB} shows that the map from $ \mathcal{Z}(\mathcal{D})$ to the $\mathrm{PSL}(2;\CC)$-character variety of $D$-block is surjective onto a neighborhood of the ``complete hyperbolic structure" of $\mathcal{D}$. 
\end{remark}

\subsection{Gluing equations of fundamental shadow link complements}\label{GEDB2}
Given a fundamental shadow link $L_{\text{FSL}} \subset M_c$ with $k$ components constructed by gluing $c$ copies of $D$-blocks $\{ \mathcal{D}_1, \dots , \mathcal{D}_c\}$, we let $\mathcal{D}_{i} = \mathcal{O}_i \cup \tilde{\mathcal{O}_i}$ be the decomposition of the $i$-th $D$-block into two ideal octahedra. We further triangulate the $D$-blocks into totally $8c$ ideal tetrahedra and assign shape parameters $\mathbf{z_i} = (z_{i_1},\dots, z_{i,4}, \tilde{z_{i,1}},\dots, \tilde{z_{i,4}})$ to the $i$-th $D$-block according to the decomposition discussed in Section \ref{GEDB}. Let $\mathbf{H(m)}=(\mathrm{H}(m_1), \dots, \mathrm{H}(m_k)) \in \CC^k$ be an assignment of $k$ complex numbers on the components of $L_{\text{FSL}}$. For each $D$-block $\mathcal{D}_i$, let $\mathcal{G}_{\mathcal{D}_i} : (\CC\setminus\{0,1\})^8 \to \CC^8$ be the map $F$ defined in (\ref{defFDB}) with respect to the holonomy inherited from the holonomy of the components of $L_{\text{FSL}}$. Define $\mathcal{G}_0: \CC^k \times (\CC\setminus\{0,1\})^{8c} \to \CC^{8c}$ by 
\begin{align}\label{defF_0}
\mathcal{G}_0(\mathbf{H(m)}, \mathbf{z_1},\dots, \mathbf{z_c}) = \big(\mathcal{G}_{\mathcal{D}_1}(\mathbf{H_{\mathcal{D}_1}(m)}, \mathbf{z_1}), \dots, \mathcal{G}_{\mathcal{D}_c}(\mathbf{H_{\mathcal{D}_c}(m)}, \mathbf{z_c})\big),
\end{align}
where for $i=1,\dots,c$, $\mathcal{G}_{\mathcal{D}_i}$ is the function $\mathcal{G}$ defined in (\ref{defFDB}) with respect to the $i$-th $D$-block and $\mathbf{H_{\mathcal{D}_i}}(\mathbf{m})=(\mathrm{H}_{\mathcal{D}_i}(m_1),\dots,\mathrm{H}_{\mathcal{D}_i}(m_6))$ are the holonomies of the ideal vertices of $\mathcal{D}_i$ inherited from the holonomies of the components of the fundamental shadow link. Besides, with respect to the triangulation, we have the gluing equation $\mathcal{G}(\mathbf{H(m)}, \mathbf{z}) = 
\mathbf{0}$ defined in (\ref{gluingeq}).

\begin{proposition}\label{solnexistFSL} There exists a neighborhood $U \subset \CC^{k}$ containing the origin such that for any \linebreak $\mathbf{H(m)} \in U$, there exists a unique $\mathbf{z^*(H(m))}$ with $\mathbf{z}^*(0,\dots, 0) = \big(\sqrt{-1},\dots, \sqrt{-1}\big)$ such that $\mathbf{z^*(H(m))}$ solve the gluing equation 
$\mathcal{G}(\mathbf{H(m)}, \mathbf{z}) 
= \mathbf{0}$
and the equation
$
\mathcal{G}_0(\mathbf{H(m)}, \mathbf{z_1},\dots, \mathbf{z_c}) = 0.
$
Furthermore, on each $D$-block $\mathcal D_i$, the solution $\mathbf{z_i}^*$ coincides with the solution in Proposition \ref{solnexist} with respect to the holonomy inherited from the holonomy of the components of $L_{\text{FSL}}$. 
\end{proposition}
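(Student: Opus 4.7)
The plan is to construct the solution $\mathbf{z^*(H(m))}$ by assembling the per-$D$-block solutions from Proposition \ref{explictsolnDB}, verify it satisfies both the $D$-block system $\mathcal{G}_0 = 0$ and the full gluing system $\mathcal{G} = 0$, and then deduce uniqueness via the implicit function theorem at the complete structure.

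First I would distribute $\mathbf{H(m)}$ among the $D$-blocks. For each $\mathcal{D}_i$ and each of its six ideal vertices, if the $j$-th ideal vertex of $\mathcal{D}_i$ sits inside the cylinder that glues to form the link component $L_l$, set $\mathrm{H}_{\mathcal{D}_i}(m_j) = \mathrm{H}(m_l)$. By Proposition \ref{solnexist}, for $\mathbf{H(m)}$ in a sufficiently small neighborhood of the origin in $\mathbb{C}^k$, each $D$-block admits a unique solution $\mathbf{z_i^*} = \mathbf{z_i^*}(\mathbf{H_{\mathcal{D}_i}(m)})$ of $\mathcal{G}_{\mathcal{D}_i} = 0$ with $\mathbf{z_i^*}(0,\dots,0) = (\sqrt{-1},\dots,\sqrt{-1})$, and Proposition \ref{explictsolnDB} gives the explicit formula. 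Setting $\mathbf{z^*(H(m))} = (\mathbf{z_1^*},\dots,\mathbf{z_c^*})$, the identity $\mathcal{G}_0(\mathbf{H(m)}, \mathbf{z^*}) = 0$ is then immediate from the construction.

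Next I would verify $\mathcal{G}(\mathbf{H(m)}, \mathbf{z^*}) = 0$. The full gluing equations split into edge equations (one per edge of $\mathcal{T}$) and $k$ holonomy equations (one per link meridian). The two interior edge equations of each $D$-block are already contained in $\mathcal{G}_0$, so these are automatic. For the holonomy equations, the meridian of $L_l$, when restricted to any contributing $D$-block, is homotopic in the cusp torus to the small meridian around the corresponding ideal-vertex cylinder; since holonomies of homotopic loops on a torus agree, this equation follows from the six ideal-vertex holonomy equations built into each $\mathcal{G}_{\mathcal{D}_i}$. The main obstacle, and the core technical step, is the verification of the \emph{boundary} edge equations: those edges that lie on the glued $3$-punctured sphere faces, which receive contributions from tetrahedra on both sides of the gluing. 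My preferred approach is geometric: the shape parameters $\mathbf{z_i^*}$ endow $\mathcal{D}_i$ with a hyperbolic cone structure whose cone angles along the six removed edges are precisely $\mathrm{H}_{\mathcal{D}_i}(m_j)$, and whose $3$-punctured sphere boundaries carry compatible hyperbolic metrics (as reviewed in Section \ref{DhypD}). Gluing by orientation-reversing isometries of these $3$-punctured spheres produces a genuine hyperbolic cone structure on $M_c \setminus L_{\text{FSL}}$, and the boundary edge equations encode exactly the holonomy vanishing condition around each such edge, which is automatic for an isometric gluing. An equivalent analytic route is to observe that the boundary edge equations hold identically at $\mathbf{H(m)} = 0$ (since the regular-octahedral decomposition is geometric) and then to extend by Lemma \ref{MCV} once one has verified them on the purely imaginary locus using the explicit formulas (\ref{solnu}).

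Finally, for uniqueness, I would apply the implicit function theorem to $\mathcal{G}$ at $(\mathbf{0},(\sqrt{-1},\dots,\sqrt{-1}))$. Non-degeneracy of $D_{\mathbf{z}} \mathcal{G}$ at the complete structure can be extracted either from Mostow--Weil local rigidity together with the Neumann--Zagier framework, or intrinsically from the block-diagonal contribution of the $D$-block Jacobians (whose determinants are nonzero by (\ref{detcomplete})) combined with the observation that passing from $\mathcal{G}_0$ to $\mathcal{G}$ trades the $6c-k$ redundant ideal-vertex holonomy equations for $6c-k$ boundary edge equations through a linear change of coordinates that is invertible at the complete structure. This yields a unique local solution $\mathbf{z^*(H(m))}$; restricting this solution to each $D$-block gives a solution of $\mathcal{G}_{\mathcal{D}_i} = 0$ near the regular octahedral shape, so by the uniqueness part of Proposition \ref{solnexist} it must coincide with the explicit solution there, establishing the final ``Furthermore'' clause.
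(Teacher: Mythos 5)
Your proposal takes the opposite route from the paper. You assemble the explicit $D$-block solutions first (so $\mathcal{G}_0 = 0$ is immediate) and then try to push upward to $\mathcal{G} = 0$, whereas the paper goes downward: it invokes \cite[Corollary 15.2.17]{BM} to get existence and uniqueness of a solution to the \emph{full} gluing system $\mathcal{G}(\mathbf{H(m)},\mathbf{z}) = \mathbf{0}$ near the geometric point $(\sqrt{-1},\dots,\sqrt{-1})$ (which is where the geometricity of $\mathcal{T}$ is actually used), observes that the restriction of this solution to each $\mathcal{D}_i$ satisfies the hypotheses of Proposition \ref{solnexist}, and then uses the \emph{uniqueness} clause of Proposition \ref{solnexist} to conclude the restriction agrees with the explicit per-block solution, so $\mathcal{G}_0 = 0$ comes for free. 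This one-paragraph argument entirely avoids the step you correctly flagged as the ``core technical step,'' namely the verification of the edge equations along the identified faces.

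That step is where your proposal has a real gap. Your geometric route asserts that ``gluing by orientation-reversing isometries of these $3$-punctured spheres produces a genuine hyperbolic cone structure \dots and the boundary edge equations \dots are automatic for an isometric gluing,'' but two things are unjustified as stated. First, the claim that the shape parameters $\mathbf{z_i^*}$ endow $\mathcal{D}_i$ with a hyperbolic cone structure is itself an application of the uniqueness in Proposition \ref{solnexist} (one must identify $\mathbf{z_i^*}$ with the geometric shape parameters of the hyperbolic $D$-block of Section \ref{DhypD}); you assert it but do not derive it. Second, and more seriously, ``automatic for an isometric gluing'' is true only because the $3$-punctured sphere boundaries of the hyperbolic $D$-blocks are \emph{totally geodesic} (being doubles of geodesic hexagonal faces): it is precisely this that forces the dihedral contribution around a boundary edge to be $\pi$ from each side and the moduli to match, without any angle condition to check. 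You never invoke this fact, and without it the inference from ``isometric gluing of pieces'' to ``all edge equations hold'' is not automatic — for general polyhedral gluings one does need angle/modulus compatibility around internal edges. The alternative analytic route you mention (verify on the purely imaginary locus via (\ref{solnu}) and extend by Lemma \ref{MCV}) is viable in principle but is not carried out, and by your own description depends on first establishing the boundary edge equations on that locus, which circles back to the geometric argument. Your uniqueness step (implicit function theorem, with nondegeneracy from the block Jacobians (\ref{detcomplete}) via the $\det\mathcal{G}_0$ to $\det\mathcal{G}$ comparison of Lemma \ref{detL}) is fine, but you should note Lemma \ref{detL} appears later in the paper, so citing \cite[Corollary 15.2.17]{BM} as the paper does is logically cleaner.
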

\begin{proof}
We first consider the gluing equation $\mathcal{G}(\mathbf{H(m)}, \mathbf{z}) 
= \mathbf{0}$. Note that when $\mathbf{H(m)} = (0,\dots,0)$, the point $\mathbf{z}^*(\mathbf{H(m)} ) = \big(\sqrt{-1},\dots, \sqrt{-1}\big)$ solve the gluing equations. Especially, the triangulation is geometric in the sense that the imaginary parts of all shape parameters are positive. The existence and uniqueness of the solution $\mathbf{z^*(H(m))}$ around $\mathbf{H(m)}=(0,\dots,0)$ then follows from \cite[Corollary 15.2.17]{BM}. Thus, we have $\mathcal{G}(\mathbf{H(m)}, \mathbf{z}^*(\mathbf{H(m)})) 
= \mathbf{0}$. Moreover, the restriction of the solution $\mathbf{z^*(H(m))}$ on each $D$-block satisfies the condition in Proposition \ref{solnexist} with respect to the holonomy inherited from the holonomy of the components of $L_{\text{FSL}}$. By the uniqueness part of Proposition \ref{solnexist}, we have the third claim of the proposition. In particular, by Proposition \ref{solnexist}, we have $
\mathcal{G}_0(\mathbf{H(m)}, \mathbf{z_1},\dots, \mathbf{z_c}) = 0.
$
\end{proof}

\subsection{Proof of the volume formulas}\label{pfvolhypideal}
Motivated by Proposition \ref{explictsolnDB}, given $\big(\mathrm{H}(m_1),\dots, \mathrm{H}(m_6)\big)\in \CC^6$, we  define the volume of a $D$-block $\mathcal{D}$ with logarithmic holonomy $(\mathrm{H}(m_1),\dots,\mathrm{H}(m_6))$ by
\begin{align}\label{defvolDB}
\mathrm{Vol}_{\mathcal{D}} (\mathrm{H}(m_1),\dots,\mathrm{H}(m_6))
= \sum_{k=1}^4 \Big(  D(z_k^*) + D(\tilde{z}_k^*) \Big),
\end{align}
where $z_k^*$ and $\tilde{z}_k^*$ are the solution given in Proposition \ref{explictsolnDB}.
Let $(\theta_1,\dots,\theta_6) \in (0,\pi)^6$ be the dihedral angles of a hyperideal tetrahedron as shown in Figure \ref{hyperideal}. 
\begin{figure}[h]
\centering
\includegraphics[scale=0.2]{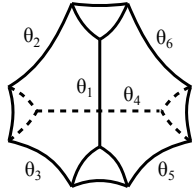}\qquad
\caption{A hyperideal tetrahedron.}
\end{figure}

\begin{proof}[Proof of Theorem \ref{volhypideal}]
Given a hyperideal tetrahedron $\Delta_{(\theta_1,\dots,\theta_6)}$ with dihedral angles $(\theta_1,\dots, \theta_6)$, we first take double of $\Delta_{(\theta_1,\dots,\theta_6)}$ along the triangles of truncation and then take double of the resulting manifold along the remaining boundary. Topologically, this construction gives a fundamental shadow link complement consisting of two $D$-blocks and 6 boundary components. Geometrically, the hyperbolic structure on the hyperideal tetrahedron induces a hyperbolic cone structure on $M_2= \#^3 (\SS^2 \times \SS^1)$ with singular locus the fundamental shadow link and cone angles $(2\theta_1,\dots, 2\theta_6)$. Let $L_{\text{FSL}}$ be the fundamental shadow link and let $M_2$ be the hyperbolic 3-manifold equipped with this cone structure along the singular locus $L_{\text{FSL}}$. On one side, the hyperbolic volume of $M_2$ is equal to 4 times the volume of $\Delta_{(\theta_1,\dots,\theta_6)}$. On the other side, we can compute the volume of $M_2$ by using the ideal triangulation discussed in Section \ref{GEDB2} and summing up the volume of the ideal tetrahedra in the triangulation. More precisely, if we have $\mathrm{H}(m_k)= 2\theta_k\sqrt{-1} $ for $k=1,\dots,6$ on one $D$-block, then $\mathrm{H}(m_k)= -2\theta_k\sqrt{-1} $ for $k=1,\dots,6$ on another $D$-block. Thus, 
\begin{align*}
\Vol(M_2) &= 4 \Vol(\Delta_{(\theta_1,\dots,\theta_6)}) \\
&= \mathrm{Vol}_{\mathcal{D}} \Big(2\theta_1\sqrt{-1},\dots,2\theta_6\sqrt{-1}\Big)+ \mathrm{Vol}_{\mathcal{D}} \Big(-2\theta_1\sqrt{-1},\dots,-2\theta_6\sqrt{-1}\Big) ,
\end{align*}
and the result follows.
\end{proof}

\begin{proof}[Proof of Theorem \ref{volmfd}]
The volume of $M_{\boldsymbol{\mu}}$ is given by the sum of the ideal tetrahedra of the triangulation $\mathcal{T}$ of the fundamental shadow link complement, which is exactly the sum of the volume of the $D$-blocks defined in (\ref{defvolDB}). 
\end{proof}

\section{1-loop conjecture for fundamental shadow link complements}\label{1loopinv}
\subsection{Generalized strong combinatorial flattening}\label{defGSCF}
Given a fundamental shadow link obtained by gluing $c$ copies of $D$-blocks, for each $D$-block $\mathcal{D}_i$ with shape parameters $(z_{i,1}, z_{i,2}, z_{i,3}, z_{i,4}, \tilde z_{i,1}, \tilde z_{i,2}, \tilde z_{i,3}, \tilde z_{i,4})$, we let $\mathbf{f_{i}} = (\frac{1}{2},\dots, \frac{1}{2})$, $\mathbf{f_{i}'} = (0,\dots,0)$ and $\mathbf{f_{i}''} = (\frac{1}{2},\dots, \frac{1}{2})$. Let $\boldsymbol{\mathcal{F}} = (\mathbf{f,f',f''}) $ be defined by
\begin{align}\label{deff}
\mathbf{f} = (\mathbf{f_1,\dots,f_c}), \quad \mathbf{f' = (f_1',\dots,f_c')} \quad\text{ and }\quad \mathbf{f'' = (f_1'',\dots,f_c'')}.
\end{align} 
\begin{lemma}\label{fGSCF}
The triple $\boldsymbol{\mathcal{F}} =\mathbf{(f,f',f'')}$ is a generalized strong combinatorial flattening of the ideal triangulation $\mathcal{T}$.
\end{lemma}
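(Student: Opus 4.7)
The first defining condition is immediate from $\tfrac{1}{2}+0+\tfrac{1}{2}=1$, so the plan focuses on the second condition. Since $\mathbf{f}'=\mathbf{0}$, the relevant matrix expression simplifies to $\tfrac{1}{2}(G+G'')\mathbf{1}^T$, and I will verify the required values (namely $2$ on every edge row and $0$ on every holonomy row) by first carrying out a local computation on each individual $D$-block and then globalizing by linearity.

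The local step amounts to a direct substitution into the eight local relations $F_{e_1}, F_{e_2}, F_{m_1},\ldots, F_{m_6}$ of \eqref{defFDB}: I plug in $\log z_j\mapsto \tfrac{1}{2}$, $\log z_j'\mapsto 0$, $\log z_j''\mapsto \tfrac{1}{2}$. The two edge relations $F_{e_1}, F_{e_2}$ are each a sum of four $\log z$-terms and so yield $4\cdot\tfrac{1}{2}=2$. Each meridian relation $F_{m_l}$ yields $0$: in $F_{m_1}$ and $F_{m_4}$ the $z$-type coefficients sum to $+2$ and the $z''$-type coefficients sum to $-2$, cancelling under the substitution, while in $F_{m_2}, F_{m_3}, F_{m_5}, F_{m_6}$ both the $z$- and the $z''$-type coefficient sums vanish individually. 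This establishes the flattening identities on the eight gluing-type relations intrinsic to each $D$-block.

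To globalize, I plan to observe that every row of $G\mathbf{f}^T+G'\mathbf{f'}^T+G''\mathbf{f''}^T$ for the full triangulation $\mathcal{T}$ --- whether it comes from an edge of $\mathcal{T}$ or from a simple closed curve on some boundary torus --- can be written as a $\mathbb{Z}$-linear combination of the eight local relations on each incident $D$-block, with coefficients read off from the incidence data in the cusp triangulations of Figures \ref{trib14}--\ref{trib36}. For a central edge of some octahedron the row is exactly one of the $F_{e_j}$'s and already gives $2$; for an edge lying on a $3$-punctured sphere shared between two $D$-blocks, the two \emph{partial-edge} contributions from the two adjacent $D$-blocks combine by linearity to $2$. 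For holonomy rows, the assignment $\alpha\mapsto\sum_j\bigl(C_{ij}f_j+C_{ij}'f_j'+C_{ij}''f_j''\bigr)$ is $\mathbb{Z}$-linear in $\alpha\in H_1(\partial M;\mathbb{Z})$, so it suffices to check the condition on a basis consisting of one meridian and one longitude on each boundary torus. Each meridian decomposes as a sum of local $F_{m_l}$'s (each of which we have already shown to give $0$), and each longitude similarly decomposes into local pieces whose cancellation I will derive from the symmetry $\mathbf{f}=\mathbf{f}''$, which reflects the involution $\mathcal{O}\leftrightarrow\tilde{\mathcal{O}}$ that swaps $z_j\leftrightarrow\tilde z_j$ inside each $D$-block.

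The hard part will be this globalization step for boundary edges and for the longitudes: I will need to identify the precise $\mathbb{Z}$-combinations of the local $F_{e_j}$'s and $F_{m_l}$'s that realize these global rows, which requires a careful bookkeeping of how the cusp triangulations in Figures \ref{trib14}--\ref{trib36} fit together across adjacent $D$-blocks. Once this incidence combinatorics is made explicit, the required global values $2$ and $0$ transfer directly from the local values by $\mathbb{Z}$-linearity, proving that $\boldsymbol{\mathcal{F}}$ is a generalized strong combinatorial flattening of $\mathcal{T}$.
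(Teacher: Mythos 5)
Your local computation is correct: the first condition is immediate from $\tfrac12 + 0 + \tfrac12 = 1$, and plugging $f_j = \tfrac12$, $f_j' = 0$, $f_j'' = \tfrac12$ into the eight $D$-block relations $F_{e_1}, F_{e_2}, F_{m_1},\dots, F_{m_6}$ of \eqref{defFDB} indeed gives $2, 2, 0,\dots,0$.

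The gap is in your globalization. You assert that every row of $G\cdot\mathbf{f}^T + G'\cdot\mathbf{f'}^T + G''\cdot\mathbf{f''}^T$ for the full triangulation $\mathcal{T}$ is a $\ZZ$-linear combination of the eight local relations of the incident $D$-blocks, and you defer the verification of this as ``the hard part.'' But this is exactly where the argument needs something new, because the rows corresponding to edges lying on the $3$-punctured spheres and the rows corresponding to longitudes involve functionals beyond the eight $F$'s: each $D$-block carries the twelve partial-boundary-edge contributions and the vertical-arc contributions that assemble into longitudes, whereas the eight $F$'s span only an eight-dimensional subspace of the sixteen-dimensional space of linear functionals on that $D$-block's shape data. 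You would also need to know that each partial-boundary-edge contribution separately equals $1$ before ``combining by linearity to $2$'' across the two adjacent $D$-blocks, and nothing you have computed gives this.

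The paper does not attempt any such algebraic reduction. Its proof reads the required contributions directly off the cusp triangulation (Figures \ref{trib14}--\ref{trib36} together with the contribution diagram \ref{combf1}): each interior (central-edge) vertex of a cusp rectangle meets four corners of flattening value $\tfrac12$ each, summing to $2$; each boundary vertex meets two corners giving $1$, so a glued pair gives $2$; each meridian arc has telescoping contribution $-\tfrac12 + \tfrac12 - 0 - \tfrac12 + \tfrac12 - 0 = 0$; and each vertical line segment contributes $0 - \tfrac12 + \tfrac12 = 0$, so each longitude (a concatenation of such segments across $D$-blocks) does too. Your instinct that the symmetry $\mathbf{f}=\mathbf{f}''$ (reflecting the $\mathcal{O}\leftrightarrow\tilde{\mathcal{O}}$ involution) is what drives these cancellations is correct, but you must actually carry out the corner count at boundary vertices and along the vertical arcs as the paper does, rather than rely on an unproved $\ZZ$-decomposition into the local $F$'s.
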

\begin{proof}
It is clear that the first condition in Definition \ref{GSCF} is satisfied. For the second condition, note that for each quadrilateral at an ideal vertex of a $D$-block, the combinatorial flattening contributes 2 and 1 at any interior vertex and at any boundary vertex respectively (Figure \ref{combf1}). Besides, the combinatorial flattening contributes 0 along each meridian (Figure \ref{combf1}). Moreover, along each vertical line segment, the combinatorial flattening also contributes 0. This implies that the combinatorial flattening contributes 0 along each vertical peripheral curve (Figure \ref{combf1}). 
\end{proof}

\begin{figure}[h]
\centering
\includegraphics[scale=0.12]{boundary1.pdf}\qquad
\includegraphics[scale=0.12]{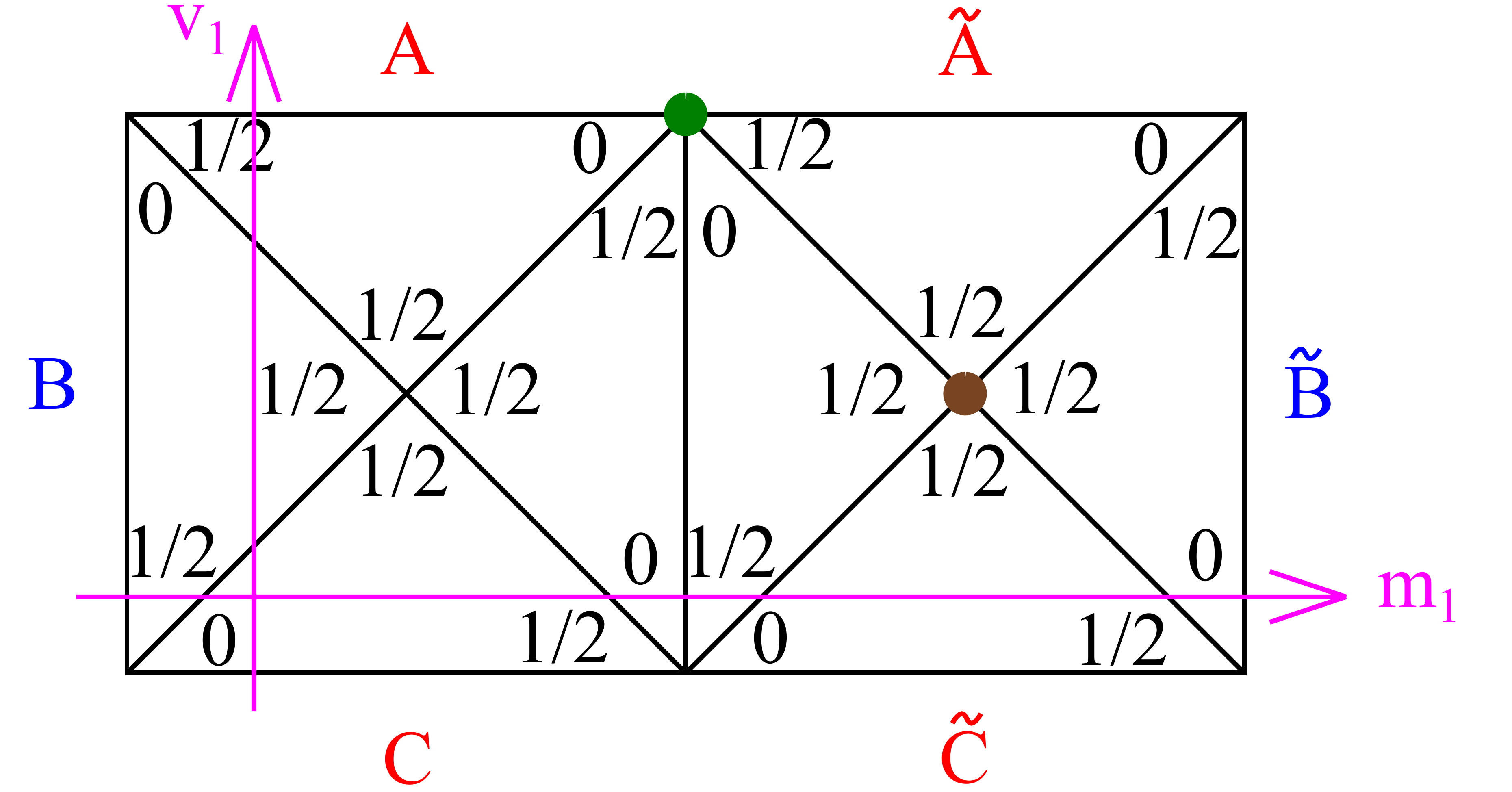}
\caption{The figures on the left and on the right show the triangulation of the truncated rectangle around vertex (1) and the contribution of the combinatorial flattening respectively. Note that at each interior vertex, such as the brown dot, the contribution is given by $1/2 \times 4 = 2$. At each boundary vertex, such as the green dot, the contribution is given by $1/2 \times 2 = 1$. Since every boundary vertex will be glued to another boundary vertex of some rectangle, around each vertex the contribution of the combinatorial flattening is $2$. Besides, along the curve $m_1$, the contribution from left to right is given by $-1/2 + 1/2 - 0 - 1/2 + 1/2 - 0 = 0$. Similarly, along a vertical line segment, such as $v_1$, the contribution is given by $0-1/2+1/2 = 0$.}\label{combf1}
\end{figure}

\begin{lemma}\label{diffg}
Let $\tilde{\boldsymbol{\mathcal{F}}}=\mathbf{(\tilde{f}, \tilde{f'}, \tilde{f''})}\in (\ZZ^{8c})^3$ be a strong combinatorial flattening. Let $\tau(M,\boldsymbol\alpha, \mathbf{z}, \mathcal{T})$ and $\tau(M,\boldsymbol\alpha, \mathbf{z}, \mathcal{T}, \boldsymbol{\mathcal{F}} )$ be the 1-loop invariants defined with respect to the flattenings $\tilde{\boldsymbol{\mathcal{F}} }$ and $\boldsymbol{\mathcal{F}} $ respectively. Then
$$
\tau(M,\boldsymbol\alpha, \mathbf{z}, \mathcal{T})
= \pm (\sqrt{-1})^g \tau(M,\boldsymbol\alpha, \mathbf{z}, \mathcal{T},\mathcal{F})
$$
for some constant $g\in \{0,1\}$.
\end{lemma}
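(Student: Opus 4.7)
By Proposition~\ref{1loopgluingjacobian}, both $\tau(M,\boldsymbol\alpha,\mathbf z,\mathcal T)$ and $\tau(M,\boldsymbol\alpha,\mathbf z,\mathcal T,\boldsymbol{\mathcal F})$ are equal to the same Jacobian determinant $\tfrac12\det(D_{\mathbf z}F(\mathbf z))$ divided by the corresponding flattening prefactor $\prod_i\xi_i^{f_i}(\xi_i')^{f_i'}(\xi_i'')^{f_i''}$, and the determinant itself does not depend on the choice of flattening. Therefore
$$\frac{\tau(M,\boldsymbol\alpha,\mathbf z,\mathcal T)}{\tau(M,\boldsymbol\alpha,\mathbf z,\mathcal T,\boldsymbol{\mathcal F})}=\pm R,\qquad R:=\prod_{i=1}^{8c}\xi_i^{f_i-\tilde f_i}(\xi_i')^{f_i'-\tilde f_i'}(\xi_i'')^{f_i''-\tilde f_i''}.$$
Because $(f_i,f_i',f_i'')=(\tfrac12,0,\tfrac12)$ while $\tilde{\boldsymbol{\mathcal F}}$ is integer-valued, the exponents of $\xi_i$ and $\xi_i''$ in $R$ are half-integers and that of $\xi_i'$ is an integer. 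Thus $R$ itself is defined only up to sign (from the choice of branches of the square roots), while $R^{2}$ is an unambiguous rational function of $\mathbf z$. It suffices to prove $R^{2}=\pm 1$, since then $R\in\{\pm 1,\pm\sqrt{-1}\}$, giving $R=\pm(\sqrt{-1})^{g}$ with $g\in\{0,1\}$.

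The key algebraic input is that $2(\tilde{\boldsymbol{\mathcal F}}-\boldsymbol{\mathcal F})$ is the difference of two \emph{integer} strong combinatorial flattenings. Put $\Delta:=\tilde{\boldsymbol{\mathcal F}}-\boldsymbol{\mathcal F}=(\Delta\mathbf f,\Delta\mathbf f',\Delta\mathbf f'')$. Since $\boldsymbol{\mathcal F}$ and $\tilde{\boldsymbol{\mathcal F}}$ are respectively a generalized and an ordinary strong combinatorial flattening, both satisfy condition (2) of Definitions~\ref{SCF} and \ref{GSCF} with the \emph{same} right-hand side (the vector with first $n-k$ entries equal to $2$ and the remaining $k$ entries equal to $0$, and this for every system of simple closed curves). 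Hence $\Delta$ satisfies the homogeneous relations
$$\Delta_i+\Delta_i'+\Delta_i''=0\ \ (i=1,\dots,8c),\qquad G\,\Delta\mathbf f^T+G'\,\Delta\mathbf f'^T+G''\,\Delta\mathbf f''^T=\mathbf 0.$$
Doubling preserves both conditions and makes the entries integral (since $2\boldsymbol{\mathcal F}\in\ZZ^{3\cdot 8c}$). In particular, $\tilde{\boldsymbol{\mathcal F}}_{*}:=\tilde{\boldsymbol{\mathcal F}}+2\Delta=3\tilde{\boldsymbol{\mathcal F}}-2\boldsymbol{\mathcal F}$ is a second integer strong combinatorial flattening of $\mathcal T$.

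Finally, I would invoke the Dimofte--Garoufalidis invariance of the 1-loop invariant under change of integer strong combinatorial flattening \cite[Theorem~1.4]{DG}. The proof of that result is algebraic, so the invariance holds as an identity of rational functions on $\mathcal V_{0}(\mathcal T)$ and yields $\tau(M,\boldsymbol\alpha,\mathbf z,\mathcal T,\tilde{\boldsymbol{\mathcal F}}_{*})=\pm\,\tau(M,\boldsymbol\alpha,\mathbf z,\mathcal T,\tilde{\boldsymbol{\mathcal F}})$. Applying Proposition~\ref{1loopgluingjacobian} to compute the ratio on the left, and using $\tilde f_{*,i}-\tilde f_i=2(\tilde f_i-f_i)$, gives
$$\pm 1=\frac{\tau(M,\boldsymbol\alpha,\mathbf z,\mathcal T,\tilde{\boldsymbol{\mathcal F}}_{*})}{\tau(M,\boldsymbol\alpha,\mathbf z,\mathcal T,\tilde{\boldsymbol{\mathcal F}})}=\prod_{i=1}^{8c}\xi_i^{2(f_i-\tilde f_i)}(\xi_i')^{2(f_i'-\tilde f_i')}(\xi_i'')^{2(f_i''-\tilde f_i'')}=R^{2},$$
which completes the argument.

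The principal obstacle is the branch bookkeeping for the half-integer powers appearing in $R$: one must check that $R^{2}$ is genuinely well-defined and records precisely the ratio between the 1-loop invariants of the two integer flattenings $\tilde{\boldsymbol{\mathcal F}}$ and $\tilde{\boldsymbol{\mathcal F}}_{*}$. This is where the homogeneous conditions on $\Delta$ are used, and where one needs to be confident that the invariance of \cite[Theorem~1.4]{DG} is valid as a rational-function identity on all of $\mathcal V_{0}(\mathcal T)$ rather than only at the discrete faithful character.
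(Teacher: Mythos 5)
Your proof is correct in its main line of reasoning, but it takes a genuinely different route from the paper. The paper's own proof is more direct: it first changes quad type if necessary (via \cite[Lemmas A.2, A.3]{DG}, contributing a factor $\pm(\sqrt{-1})^{g_1}$) so that $(G''-G')$ is invertible, then quotes the explicit computation from \cite[Section 3.5]{DG} that identifies the ratio of the two flattening prefactors as $e^{(\mathbf f''\cdot\tilde{\mathbf f}-\mathbf f\cdot\tilde{\mathbf f}'')\pi\sqrt{-1}}$. Since $\mathbf f, \mathbf f''$ are half-integer vectors and $\tilde{\mathbf f}, \tilde{\mathbf f}''$ are integer vectors, the exponent is a half-integer and the ratio is $\pm(\sqrt{-1})^{g_2}$; the lemma follows with $g=g_1+g_2 \pmod 2$. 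You instead use a clever doubling trick: the difference $\Delta=\tilde{\boldsymbol{\mathcal F}}-\boldsymbol{\mathcal F}$ satisfies the homogeneous flattening conditions, so $\tilde{\boldsymbol{\mathcal F}}_*=\tilde{\boldsymbol{\mathcal F}}+2\Delta$ is an \emph{integer} strong combinatorial flattening, and comparing $\tau(\tilde{\boldsymbol{\mathcal F}}_*)$ with $\tau(\tilde{\boldsymbol{\mathcal F}})$ detects precisely $R^2$. This reduces the half-integer phenomenon to the integer case handled by Dimofte--Garoufalidis, and the verification that $\tilde{\boldsymbol{\mathcal F}}_*$ is a legitimate strong flattening is correct. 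What your route buys is that you never need to apply \cite[Section 3.5]{DG} to half-integer exponent vectors directly; what it costs is the extra bookkeeping of the auxiliary flattening and the slightly indirect passage through $R^2$.

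The caveat you raise at the end — whether the DG flattening-invariance should be invoked as a rational-function identity on $\mathcal V_0(\mathcal T)$ rather than merely at $\mathbf z_0$ — is the right thing to worry about, and the reason the paper cites \cite[Eq.\ (3-9), Section 3.5]{DG} rather than the headline \cite[Theorem 1.4]{DG}: the cited passages establish the ratio of prefactors to be the constant $e^{(\mathbf f''\cdot\tilde{\mathbf f}-\mathbf f\cdot\tilde{\mathbf f}'')\pi\sqrt{-1}}$ as an algebraic identity, which is exactly what you need. If you replace the appeal to \cite[Theorem 1.4]{DG} with a direct appeal to \cite[Section 3.5]{DG} applied to the two integer flattenings $\tilde{\boldsymbol{\mathcal F}}$ and $\tilde{\boldsymbol{\mathcal F}}_*$, your argument closes cleanly. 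As it stands, the argument is essentially sound; the gap you flag is a citation precision issue rather than a missing mathematical idea.
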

\begin{proof}
From \cite[Equation (3-9)]{DG}, when we change the quad type, the 1-loop invariant defined with respect to the generalized strong combinatorial flattening $\boldsymbol{\mathcal{F}} =(\mathbf f,\mathbf f',\mathbf f'')$ changes by $\pm (\sqrt{-1})^{g_1}$ for some $g_1 \in \{0,1\}$. By changing the quad type if necessary, from \cite[Lemmas A.2 and A.3]{DG}, without loss of generality we can assume that $(G''-G')$ is invertible. By the argument in \cite[Section 3.5]{DG}, we have
$$
\frac{\prod_{i=1}^n \Big(z_i^{\tilde{f_i}''} z_i''^{-\tilde{f_i}}\Big)}{\prod_{i=1}^n \Big(z_i^{f_i''} z_i''^{-f_i}\Big)}
= e^{(\mathbf f'' \cdot \tilde{\mathbf f} - \mathbf f \cdot \tilde{\mathbf f}'')\pi\sqrt{-1}}
= \pm (\sqrt{-1})^{g_2}
$$ 
for some $g_2 \in\{0,1\}$, where $\mathbf v\cdot \mathbf w$ denotes the dot product of the two vectors $\mathbf v, \mathbf w \in \ZZ^{8c}$. The result follows by taking $g = g_1 + g_2 \pmod{2}$.
\end{proof}

For computation purposes, we introduce the following usual combinatorial flattening in the sense of Definition \ref{CF}, which will be used in the proof of Theorem \ref{mainthm} in Section \ref{pfmainthm}. 
For each $D$-block with shape parameters $(z_{i,1}, z_{i,2}, z_{i,3}, z_{i,4}, \tilde z_{i,1}, \tilde z_{i,2}, \tilde z_{i,3}, \tilde z_{i,4})$, let 
\begin{align*}
\mathbf{\hat{f}_{i}} = (1,1,1,-1,0,0,0,2),\quad  \mathbf{\hat{f}_{i}'} = (0,0,0,-1,1,1,1,-2)\quad \text{ and } \quad\mathbf{\hat{f}_{i}''} = (0,0,0,3,0,0,0,1).
\end{align*}
Let $\hat{\boldsymbol{\mathcal{F}} } = (\mathbf{\hat{f}, \hat{f}', \hat{f}''})$ be defined by 
\begin{align}\label{defhatf}
\mathbf{\hat{f} = (\hat{f}_1,\dots,\hat{f}_c), \quad \hat{f}' = (\hat{f}_1',\dots,\hat{f}_c') \quad\text{ and }\quad\hat{f}'' = (\hat{f}_1'',\dots,\hat{f}_c'')}.
\end{align} 
\begin{lemma}\label{hatfCF}
The triple $\hat{\boldsymbol{\mathcal{F}} } = \mathbf{(\hat{f}, \hat{f}', \hat{f}'')}$ is a combinatorial flattening of $\mathcal{T}$ for the system of meridians.
\end{lemma}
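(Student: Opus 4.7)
The plan is to verify the two conditions of Definition \ref{CF} for the system of meridians by a direct calculation that mirrors the proof of Lemma \ref{fGSCF}. First I will confirm the normalization: summing the three vectors componentwise on each $D$-block gives $(1,1,1,-1,0,0,0,2)+(0,0,0,-1,1,1,1,-2)+(0,0,0,3,0,0,0,1)=(1,1,1,1,1,1,1,1)$, so $\hat f_j+\hat f_j'+\hat f_j''=1$ for every shape parameter, giving the first bullet of Definition \ref{CF}.

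Next I will check that the edge rows of $G\hat{\mathbf f}^T+G'\hat{\mathbf f}'^T+G''\hat{\mathbf f}''^T$ are equal to $2$. Each $D$-block $\mathcal D_i$ contributes only to the two edge equations $F_{e_1}$ and $F_{e_2}$ given in (\ref{defFDB}), which are respectively the sum of $\log z_{i,1},\ldots,\log z_{i,4}$ and the sum of $\log \tilde z_{i,1},\ldots,\log \tilde z_{i,4}$, all with coefficient $1$. Hence the $e_1$-row equals the sum of the first four entries of $\hat{\mathbf f}_i$, namely $1+1+1+(-1)=2$, and the $e_2$-row equals the sum of the last four entries, namely $0+0+0+2=2$, both matching the required value.

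Finally I will verify that every meridian row vanishes. The meridian of each link component of $L_{\text{FSL}}$ is a concatenation of the local loops $m_1,\ldots,m_6$ at the six ideal vertices of the $D$-blocks that component meets, and the contribution to the flattening sum is additive along such a concatenation; hence it suffices to show that the contribution of $\hat{\boldsymbol{\mathcal F}}$ to each of the six local loops $F_{m_k}$ of a single $D$-block vanishes. Using the explicit expressions in (\ref{defFDB}) together with the chosen values of $\hat{\mathbf f}_i,\hat{\mathbf f}_i',\hat{\mathbf f}_i''$, this reduces to six short arithmetic checks. For example, $F_{m_1}$ gives $1\cdot(-1)+1\cdot 2+(-1)\cdot 0+(-1)\cdot 0+(-1)\cdot 0+(-1)\cdot 1=0$, $F_{m_4}$ gives $1\cdot 1+1\cdot 0+(-1)\cdot 3+(-1)\cdot 0+(-1)\cdot 0+(-1)\cdot(-2)=0$, and $F_{m_6}$ gives $1\cdot 3+1\cdot(-2)+(-1)\cdot 0+(-1)\cdot 1=0$; the loops $F_{m_2}, F_{m_3}, F_{m_5}$ are dispatched by analogous one-line computations.

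The only real obstacle is bookkeeping: one must carefully pair each summand of $F_{m_k}$ with the correct coordinate among $\hat{\mathbf f}_i,\hat{\mathbf f}_i',\hat{\mathbf f}_i''$, keeping track of the distinction between the primed, double-primed, and unprimed shape parameters and between the $z$ and $\tilde z$ families. No new ideas beyond those already used in the verification of Lemma \ref{fGSCF} are required, and combining the edge and meridian computations will complete the proof.
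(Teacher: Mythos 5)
Your verification of the normalization, the two central edges, and the six meridian loops is correct, and the arithmetic you carried out matches what the paper's Figure~\ref{combf2} encodes for the interior vertex and the meridian curve. However, there is a genuine gap at the point where you assert that ``each $D$-block contributes only to the two edge equations $F_{e_1}$ and $F_{e_2}$.'' That is false: $\mathcal{T}$ has $8c$ edges, of which only the $2c$ central ones are captured by $F_{e_1}$ and $F_{e_2}$. The remaining $6c$ are the \emph{cycle edges} --- the edges of the red triangular faces along which the $D$-blocks are glued (cf. the terminology in the proof of Lemma~\ref{detL}) --- and the tetrahedra inside each $D$-block are certainly incident to those edges, so the flattening values $\hat{\mathbf{f}}_i, \hat{\mathbf{f}}_i', \hat{\mathbf{f}}_i''$ do contribute to those edge rows. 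Your proof never checks that these rows evaluate to $2$, which is the bulk of what needs to be verified.

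The reason this cannot be done purely from the local functions $\mathcal{G}_{\mathcal{D}}$ is that the cycle edge equations are not among the eight components of $\mathcal{G}_{\mathcal{D}}$; they only arise once two $D$-blocks are glued along a triangular face. The paper's argument instead reads the contributions off the cusp triangulation (Figure~\ref{combf2}): at each \emph{boundary} vertex of the quadrilateral --- i.e. each cycle edge --- the chosen flattening contributes $1$ per $D$-block, and because each cycle edge is shared by exactly two glued $D$-block faces the total is $1+1=2$. To repair your argument you would either need to reproduce that geometric count, or explicitly write out, from the identifications in Figures~\ref{idealocta}--\ref{idealoctatri}, which logarithmic shape parameters appear around each cycle edge and check that the corresponding linear combination of $\hat f$, $\hat f'$, $\hat f''$ gives $1$ on each side.
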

\begin{proof}
The proof is the same as the proof of Lemma \ref{fGSCF}. 
It is clear that the first condition in Definition \ref{GSCF} is satisfied. For the second condition, note that for each quadrilateral at an ideal vertex of a $D$-block, the combinatorial flattening contributes 2 and 1 at the interior vertex and at the boundary vertex respectively (Figure \ref{combf2}). Besides, the combinatorial flattening contributes 0 along each meridian (Figure \ref{combf2}). This completes the proof.
\end{proof}
\begin{figure}[h]
\centering
\includegraphics[scale=0.12]{boundary1.pdf}\qquad
\includegraphics[scale=0.12]{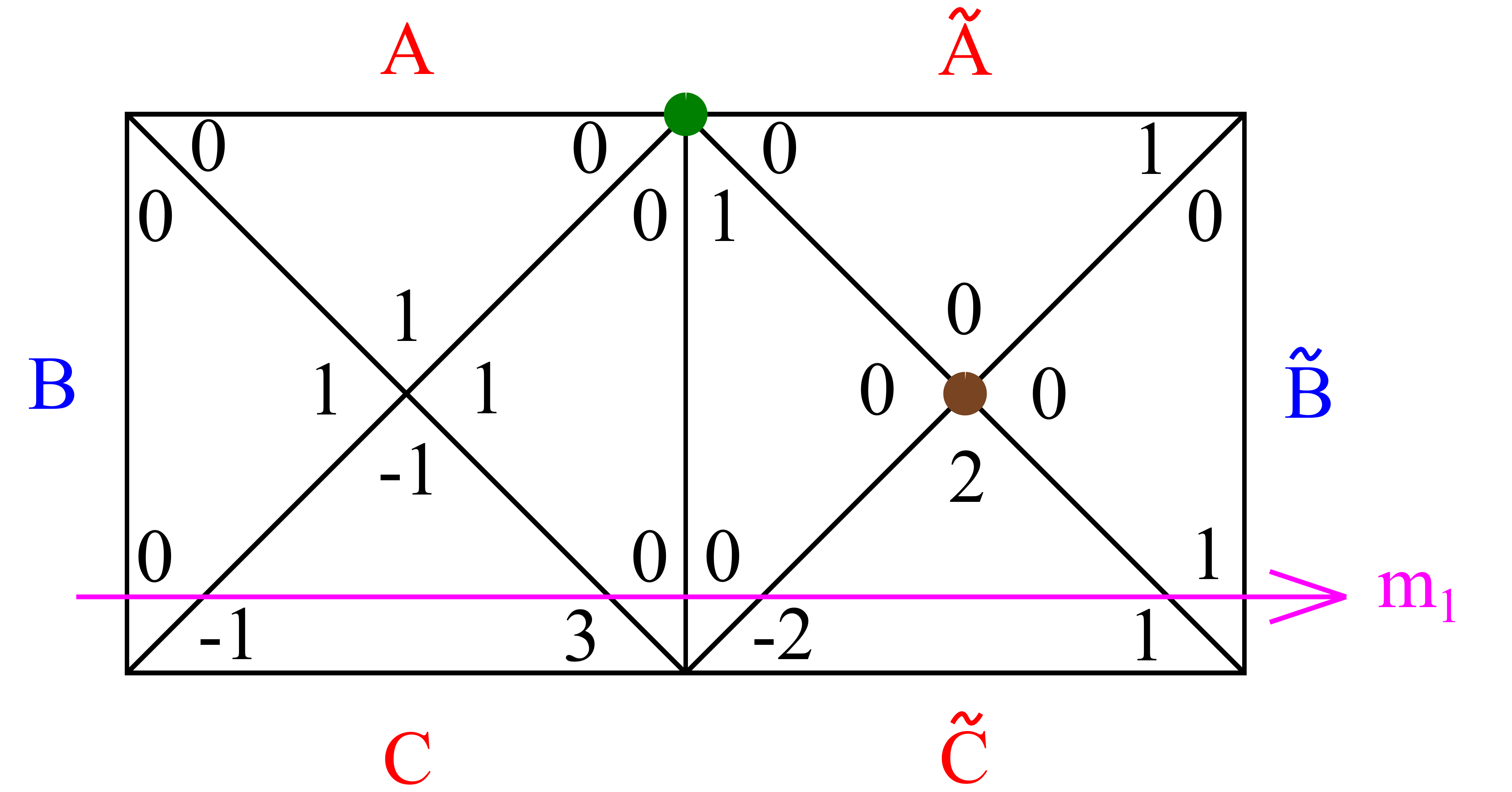}
\caption{The figures on the left and on the right show the triangulation of the truncated rectangle around vertex (1) and the contribution of the combinatorial flattening respectively. Note that at each interior vertex, such as the brown dot, the contribution is given by $2$. At each boundary vertex, such as the green dot, the contribution is given by $1$. Since the boundary vertex will be glued to another boundary vertex of some rectangle, around each vertex the contribution of the combinatorial flattening is $2$. Besides, along the curve $m_1$, the contribution from left to right is given by $-0 + (-1) -0 - 0 + 2 - 1 = 0$. }\label{combf2}
\end{figure}

\subsection{Computation on each building block}\label{compEB}
Recall that in the proof of Proposition \ref{explictsolnDB}, when $\mathrm{H}(m_l)=2\sqrt{-1}\theta_l$ for $l=1,\dots,6$, we define a change of variable (\ref{subs}) to simplifies the problem of solving the gluing equations on a $D$-block. In this section, we define a change of variable (\ref{defpsi}) that simplifies the computation of the determinant of the Jacobian matrix of the gluing map on each $D$-block. The main goal of this section is to prove Proposition \ref{1loopDB}, which relates the determinant of the Jacobian matrix with the determinant of the associated Gram matrix of the $D$-block defined in Section \ref{TFSL}.

Let $\psi:\CC^8 \to (\CC\cup\{\infty\})^8$ be the meomorphic function defined by
\begin{align}\label{defpsi}
\psi(\phi_1,\dots,\phi_4, \phi_1',\dots,\phi_4')
= (z_1,\dots, z_4, \tilde z_1,\dots, \tilde z_4),
\end{align}
where
$z_k = \frac{\sin \phi_k'}{\sin (\phi_k+\phi_k')}e^{\sqrt{-1} \phi_k}$ and 
$\tilde z_k = \frac{\sin (\phi_k+\phi_k')}{\sin \phi_k'}e^{\sqrt{-1} \phi_k}$ for $k=1,\dots,4$. For simplicity we write $\boldsymbol{\phi} = (\phi_1,\dots,\phi_4, \phi_1',\dots,\phi_4')$ and $\mathbf{z}=(z_1,\dots,z_4, \tilde{z_1},\dots,\tilde{z_4})$ such that $\psi(\boldsymbol{\phi}) = \mathbf{z}$. Then for the function $\mathcal{G}_{\mathcal{D}}$ defined in (\ref{defFDB}), we have
\begin{align}
D_{\boldsymbol{\phi}}(\mathcal{G}_{\mathcal{D}}\circ \psi) = D_{\mathbf{z}}\mathcal{G}_{\mathcal{D}} \circ D_{\boldsymbol{\phi}} \psi
\end{align}
and
\begin{align}\label{chainrule}
\det(D_{\boldsymbol{\phi}}(\mathcal{G}_{\mathcal{D}} \circ \psi)) = \det(D_{\mathbf{z}}\mathcal{G}_{\mathcal{D}} ) \cdot \det(D_{\boldsymbol{\phi}} \psi).
\end{align}
Note that for $\phi_k,\phi_k' \in (0,\pi)$ satisfing $\phi_k + \phi_k' < \pi$, $z_k$ and $\tilde{z}_k$ can be regarded as the shape parameters of ideal tetrahedra as shown in Figure \ref{Eucli}.
\begin{figure}[h]
\centering
\includegraphics[scale=0.12]{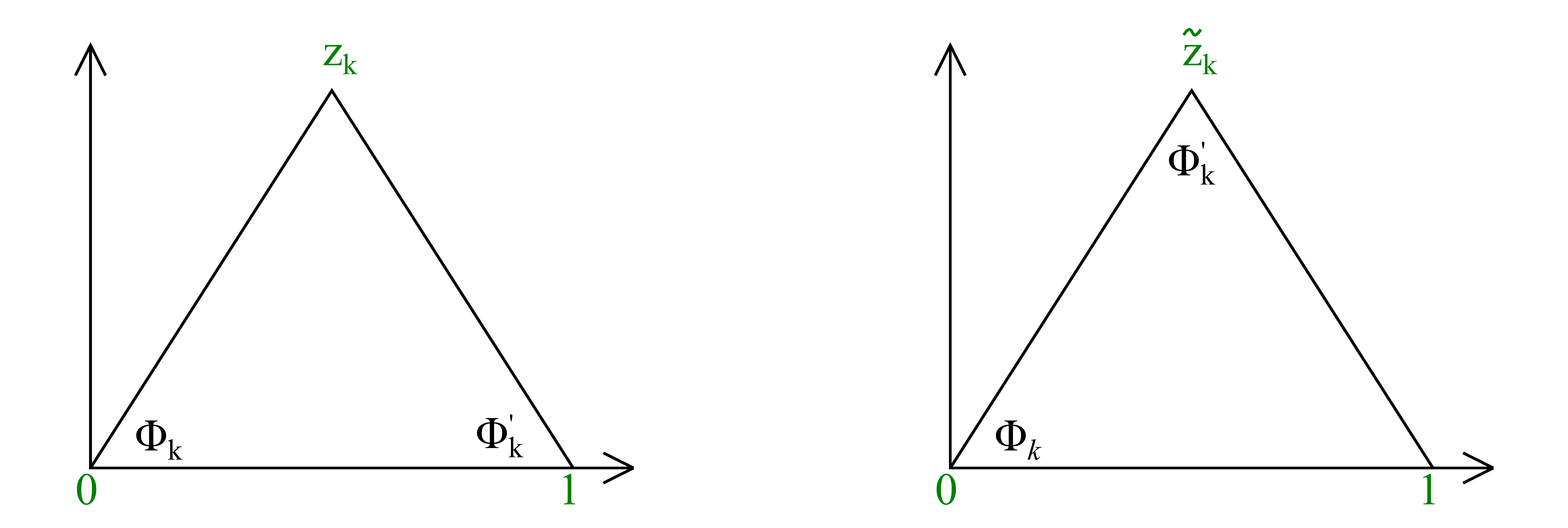}
\caption{When $\phi_k, \phi_k' \in (0,\pi)$ satisfy $\phi_k + \phi_k' < \pi$, one can construct a Euclidean triangle with angles $\phi_k, \phi_k'$ and $\pi - \phi_k - \phi_k'$. In particular, the norms $|z_k|, |1-z_k|, |\tilde{z}_k|$ and $|1-\tilde{z}_k|$ can be computed in terms of $\phi_k$ and $\phi_k'$ by using the Euclidean sine law on the triangles.}\label{Eucli}
\end{figure}
\begin{lemma}\label{DBcomp1}
When $\phi_1+\phi_2 + \phi_3 + \phi_4 = 2\pi$,
\begin{align*}
\det(D_{\mathbf{z}}\mathcal{G}_{\mathcal{D}} ) 
= -2^6 \sqrt{-1}  \sum_{i=1}^4 \frac{\sin \phi_i}{\sin \phi_i' \sin (\phi_i + \phi_i')} \Bigg(\prod_{i=1}^4 \frac{\sin\phi_i}{\sin\phi_i' \sin(\phi_i + \phi_i')}  \Bigg)^{-1}.
\end{align*}
\end{lemma}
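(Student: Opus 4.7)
The plan is to combine the chain rule (\ref{chainrule}) with the structured form of $\mathcal{G}_{\mathcal{D}}\circ\psi$ and compute the two Jacobian determinants separately. The key observation is that under $\psi$ the eight gluing equations become very structured: each of the six holonomy equations $F_{m_l}\circ\psi$ becomes $2\sqrt{-1}$ times an affine linear combination of $(\phi_1,\dots,\phi_4,\phi_1',\dots,\phi_4')$ with integer coefficients---exactly the linear system (\ref{linver})---and the edge equations satisfy $F_{e_1}\circ\psi + F_{e_2}\circ\psi = 2\sqrt{-1}\sum_{k=1}^4 \phi_k$ and $F_{e_1}\circ\psi - F_{e_2}\circ\psi = 2\sum_{k=1}^4\log(\sin\phi_k'/\sin(\phi_k+\phi_k'))$. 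After the change of basis replacing $(F_{e_1},F_{e_2})$ by $(F_{e_1}+F_{e_2},F_{e_1}-F_{e_2})$, which multiplies the determinant by $-2$, exactly one of the eight rows of the Jacobian is nonlinear in $\boldsymbol\phi$.

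Because $\psi$ mixes only $(\phi_k,\phi_k')$ with $(z_k,\tilde z_k)$ for each $k$ independently, $D_{\boldsymbol\phi}\psi$ is block diagonal with four $2\times 2$ blocks. A direct calculation of each block, together with $\sum_k\phi_k = 2\pi$ (which kills the factor $e^{2\sqrt{-1}\sum_k\phi_k}$), yields
\[
\det D_{\boldsymbol\phi}\psi = 16\prod_{k=1}^4 \frac{\sin\phi_k}{\sin\phi_k'\sin(\phi_k+\phi_k')}.
\]
For the numerator, after factoring $2\sqrt{-1}$ from each of the seven linear rows and $2$ from the nonlinear one, I reduce to computing $\det M'$, where the top seven rows of $M'$ form a $7\times 8$ integer matrix $A$ (encoding the linear system) and the remaining row $B$ has entries $-\cot(\phi_k+\phi_k')$ in the $\phi_k$-columns and $\cot\phi_k' - \cot(\phi_k+\phi_k') = \sin\phi_k/(\sin\phi_k'\sin(\phi_k+\phi_k'))$ (by the sine subtraction formula) in the $\phi_k'$-columns. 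Expanding along $B$ gives $\det M' = \sum_{j=1}^8 (-1)^{j+2}B_j\det A^{[j]}$, where $A^{[j]}$ denotes $A$ with its $j$-th column removed.

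The crux is evaluating the eight integer cofactors $\det A^{[j]}$ simultaneously. Because the simultaneous translation $(\phi_1',\phi_2',\phi_3',\phi_4')\mapsto (\phi_1'+t,\phi_2'+t,\phi_3'+t,\phi_4'+t)$ preserves every one of the seven linear combinations, $\ker A$ is one-dimensional and is spanned by $(0,0,0,0,1,1,1,1)$. By the Laplace identity that the vector $((-1)^{j+1}\det A^{[j]})_{j=1}^8$ lies in $\ker A$ (proved by expanding a vanishing $8\times 8$ determinant with a duplicated row), we conclude $\det A^{[j]}=0$ for $j=1,2,3,4$ and $\det A^{[j]}=(-1)^{j+1}c$ for $j=5,6,7,8$ with a common integer constant $c$. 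Evaluating any one of them directly (for instance $\det A^{[5]}$ by iterated cofactor expansion along the sparse rows coming from $F_{m_2}$ and $F_{m_5}$) gives $c=8$. Substituting back, the $\phi_k$-column contributions vanish and the $\phi_k'$-column contributions assemble into $\det M' = -8\sum_{k=1}^4 \sin\phi_k/(\sin\phi_k'\sin(\phi_k+\phi_k'))$. Dividing the numerator by $\det D_{\boldsymbol\phi}\psi$ and tracking the constants yields the claimed identity. The main obstacle is the cofactor sign-chasing; the kernel observation reduces eight $7\times 7$ determinants to one concrete calculation, and the rest is bookkeeping.
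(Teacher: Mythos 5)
Your proposal is correct and tracks the paper's own proof almost step for step: both you and the paper invoke the chain rule to split $\det D_{\mathbf z}\mathcal G_{\mathcal D}$ into $\det D_{\boldsymbol\phi}(\mathcal G_{\mathcal D}\circ\psi)/\det D_{\boldsymbol\phi}\psi$, both compute $\det D_{\boldsymbol\phi}\psi = 16\prod_k\sin\phi_k/(\sin\phi_k'\sin(\phi_k+\phi_k'))$ from the block-diagonal structure, and both replace $(F_{e_1},F_{e_2})$ by $(F_{e_1}+F_{e_2},F_{e_1}-F_{e_2})$ to reduce the numerator to $(2\sqrt{-1})^7$ times (up to sign) a determinant with seven integer rows and one nonlinear row — the paper's matrix $K$, your $M'$.

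The one place you genuinely go further than the paper is in evaluating that remaining determinant. The paper simply asserts $\det K = 8\sum_{i}\partial\log r_i/\partial\phi_i'$ in equation (\ref{detK}) without comment, presumably via brute-force expansion. Your kernel observation — that $(0,0,0,0,1,1,1,1)\in\ker A$, that the signed-cofactor vector of the $7\times 8$ matrix $A$ also lies in $\ker A$, hence the $\phi_k$-cofactors vanish and the $\phi_k'$-cofactors all share one magnitude, fixed by a single explicit computation — is a cleaner way to obtain the same conclusion and nicely explains \emph{why} the $\phi_k$-derivatives of $\log r_k$ drop out. One small looseness: you assert $\ker A$ is one-dimensional from the translation symmetry alone, which only gives $\dim\ker A\ge 1$. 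The conclusion still holds because the nonvanishing of the single directly computed cofactor ($c=8$) forces $\operatorname{rank} A = 7$, but it would be cleaner to state it that way rather than asserting one-dimensionality up front. With that caveat, your constant-bookkeeping matches the claimed $-2^6\sqrt{-1}$, and the argument is sound.
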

\begin{proof}
Note that
\begin{align}
D_{\boldsymbol{\phi}} \psi
=
\begin{pmatrix}
\frac{\partial z_1}{\partial \phi_1} & 0 & 0 & 0 & \frac{\partial  z_1}{\partial \phi_1'} & 0 & 0 & 0 \\
0 & \frac{\partial z_2}{\partial \phi_2} & 0 & 0 & 0 & \frac{\partial  z_2}{\partial \phi_2'} & 0 & 0  \\
0 & 0 & \frac{\partial z_3}{\partial \phi_3} & 0 & 0 & 0 & \frac{\partial  z_3}{\partial \phi_3'} & 0  \\
0 & 0 & 0 & \frac{\partial z_4}{\partial \phi_4} & 0 & 0 & 0 & \frac{\partial  z_4}{\partial \phi_4'} \\
\frac{\partial \tilde z_1}{\partial \phi_1} & 0 & 0 & 0 & \frac{\partial  \tilde z_1}{\partial \phi_1'} & 0 & 0 & 0 \\
0 & \frac{\partial \tilde z_2}{\partial \phi_2} & 0 & 0 & 0 & \frac{\partial  \tilde z_2}{\partial \phi_2'} & 0 & 0  \\
0 & 0 & \frac{\partial \tilde z_3}{\partial \phi_3} & 0 & 0 & 0 & \frac{\partial \tilde z_3}{\partial \phi_3'} & 0  \\
0 & 0 & 0 & \frac{\partial \tilde z_4}{\partial \phi_4} & 0 & 0 & 0 & \frac{\partial \tilde z_4}{\partial \phi_4'}   
\end{pmatrix}
\end{align}
with determinant
\begin{align}
\det(D_{\boldsymbol{\phi}} \psi)
= \prod_{i=1}^4\Big(\frac{\partial z_i}{\partial \phi_i}\frac{\partial \tilde z_i}{\partial \phi_i'} - \frac{\partial z_i}{\partial \phi_i'}\frac{\partial \tilde z_i}{\partial \phi_i}\Big).
\end{align}
By a direct computation, for $i=1,\dots,4$,
\begin{empheq}[left = \empheqlbrace]{align*}
\frac{\partial z_i}{\partial \phi_i}
&= - \frac{e^{\sqrt{-1}\phi_i - \sqrt{-1} (\phi_i+\phi_i')}\sin\phi_i'}{\sin^2(\phi_i+\phi_i')}, \\
\frac{\partial z_i}{\partial \phi_i'}
&= \frac{e^{\sqrt{-1}\phi_i}\sin\phi_i}{\sin^2(\phi_i+\phi_i')}, \\
\frac{\partial \tilde z_i}{\partial \phi_i}
&= \frac{e^{\sqrt{-1}\phi_i + \sqrt{-1} (\phi_i+\phi_i')}}{\sin(\phi_i')}, \\
\frac{\partial z_i}{\partial \phi_i'}
&= -\frac{e^{\sqrt{-1}\phi_i}\sin\phi_i}{\sin^2(\phi_i')}.
\end{empheq}
Thus,
\begin{align}
\det(D_{\boldsymbol{\phi}} \psi)
=  16 \prod_{i=1}^4 \frac{e^{2\sqrt{-1}\phi_i}\sin\phi_i}{\sin\phi_i' \sin(\phi_i + \phi_i')}.
\end{align}
Especially, when $\phi_1+\dots+\phi_4= 2\pi$, we have
\begin{align}\label{dpsi}
\det(D_{\boldsymbol{\phi}} \psi)
=  16 \prod_{i=1}^4 \frac{\sin\phi_i}{\sin\phi_i' \sin(\phi_i + \phi_i')}.
\end{align}

Note that when $(\phi_1,\dots,\phi_4, \phi_1', \dots, \phi_4')\in\RR^8$, 
\begin{empheq}[left = \empheqlbrace]{align*}
F_{e_1}\circ \psi &= \log r_1 + \log r_2 + \log r_3 + \log r_4 + \sqrt{-1}( \phi_1 +  \phi_2 + \phi_3 + \phi_4) \\
F_{e_2}\circ \psi &= -\log r_1 - \log r_2 - \log r_3 - \log r_4 + \sqrt{-1}( \phi_1 +  \phi_2 +  \phi_3 +  \phi_4) \\
F_{m_1}\circ \psi &= 2\sqrt{-1}(\phi_4 - \phi_1' + \phi_3 + \phi_3')\\
F_{m_2}\circ \psi &= 2\sqrt{-1}(\phi_4' - \phi_1')\\
F_{m_3}\circ \psi &= 2\sqrt{-1}(\phi_2 - \phi_1 + \phi_2' - \phi_1') \\
F_{m_4}\circ \psi &= 2\sqrt{-1}(\phi_3 - \phi_2' + \phi_4 + \phi_4')\\
F_{m_5}\circ \psi &= 2\sqrt{-1}(\phi_2' - \phi_3')\\
F_{m_6}\circ \psi &= 2\sqrt{-1}(\phi_3 - \phi_4 + \phi_3' - \phi_4'),
\end{empheq}
where $r_i = \frac{\sin \phi_k'}{\sin (\phi_k+\phi_k')}$. 
This implies that 
\begin{align}\label{dK1}
\det D_{\boldsymbol\phi} (\mathcal{G}_{\mathcal{D}} \circ \psi) = (2\sqrt{-1})^7 \det K, 
\end{align}
where 
\begin{align}
K
=
\begin{pmatrix}
\frac{\partial \log r_1}{\partial \phi_1} & \frac{\partial \log r_2}{\partial \phi_2} & \frac{\partial \log r_3}{\partial \phi_3} & \frac{\partial \log r_4}{\partial \phi_4} &\frac{\partial \log r_1}{\partial \phi_1'} & \frac{\partial \log r_2}{\partial \phi_2'} & \frac{\partial \log r_3}{\partial \phi_3'} & \frac{\partial \log r_4}{\partial \phi_4'}  \\
1 & 1 & 1 & 1 & 0 & 0 & 0 & 0 \\
0 & 0 & 1 & 1 & -1 & 0 & 1 & 0 \\
0 & 0 & 0 & 0 & -1 & 0 & 0 & 1 \\
-1 & 1 & 0 & 0 & -1 & 1 & 0 & 0 \\
0 & 0 & 1 & 1 & 0 & -1 & 0 & 1 \\
0 & 0 & 0 & 0 & 0 & 1 & -1 & 0 \\
0 & 0 & 1 & -1 & 0 & 0 & 1 & -1 
\end{pmatrix}
\end{align}
with 
\begin{align}\label{detK}
\det K = 8 \sum_{i=1}^4 \frac{\partial \log r_i}{\partial \phi_i'} = 8 \sum_{i=1}^4 \frac{\sin \phi_i}{\sin \phi_i' \sin (\phi_i + \phi_i')}.
\end{align}
The result then follows from (\ref{chainrule}), (\ref{dpsi}), (\ref{dK1}) and (\ref{detK}).
\end{proof}

Let $\boldsymbol{\mathcal{F}}  = (\mathbf f,\mathbf f',\mathbf f'')$ be the generalized combinatorial flattening defined in (\ref{deff}).
\begin{lemma}\label{DBcomp2} When $\mathrm{H}(m_l) = 2\theta_l\sqrt{-1}$ for $l=1,\dots, 6$, at the solution $\mathbf{z}^*(\mathbf{H(m)})$ in Proposition \ref{explictsolnDB}, we have
$$\frac{1}{\prod_{i=1}^8 \xi_i^{f_i} \xi_i'^{f_i'} \xi_i''^{f_i''}}
= \pm \frac{\prod_{i=1}^4 \sin \phi_i}{\prod_{i=1}^4 \sin \phi_i'}
= \pm \frac{\prod_{i=1}^4 \sin \phi_i}{\prod_{i=1}^4 \sin (\phi_i + \phi_i')}.
$$
\end{lemma}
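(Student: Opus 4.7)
The plan is to compute the left-hand side directly by combining the pairing of shape parameters $(z_k,\tilde z_k)$ within each of the four $k=1,\dots,4$ coordinates with the change of variables $\psi$ from (\ref{defpsi}). Since the generalized strong flattening $\boldsymbol{\mathcal{F}}$ has $f_i=f_i''=\tfrac12$ and $f_i'=0$ for every $i$, each factor reduces to $\xi_i^{1/2}\xi_i''^{1/2}$. Using $\xi_i=1/z_i$ and $\xi_i''=-1/(z_i(1-z_i))$ one gets $\xi_i^{1/2}\xi_i''^{1/2}=\pm\sqrt{-1}/(z_i\sqrt{1-z_i})$, so
\[
\frac{1}{\prod_{i=1}^{8}\xi_i^{f_i}\xi_i'^{f_i'}\xi_i''^{f_i''}}
=\pm\prod_{k=1}^{4}\Bigl(z_k\tilde z_k\Bigr)\sqrt{(1-z_k)(1-\tilde z_k)}.
\]

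First I would substitute $\psi$. A key cancellation occurs immediately: since $z_k=\tfrac{\sin\phi_k'}{\sin(\phi_k+\phi_k')}e^{\sqrt{-1}\phi_k}$ and $\tilde z_k=\tfrac{\sin(\phi_k+\phi_k')}{\sin\phi_k'}e^{\sqrt{-1}\phi_k}$, the moduli cancel and $z_k\tilde z_k=e^{2\sqrt{-1}\phi_k}$. Then I would compute $1-z_k$ and $1-\tilde z_k$ using the Euclidean sine law applied to the shape-parameter triangles with vertices $(0,1,z_k)$ and $(0,1,\tilde z_k)$ (the triangles whose existence is recorded in Figure \ref{Eucli}); their angles at vertex $1$ are $\phi_k'$ and $\pi-\phi_k-\phi_k'$ respectively, which yields $1-z_k=\tfrac{\sin\phi_k}{\sin(\phi_k+\phi_k')}e^{-\sqrt{-1}\phi_k'}$ and $1-\tilde z_k=-\tfrac{\sin\phi_k}{\sin\phi_k'}e^{\sqrt{-1}(\phi_k+\phi_k')}$. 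Multiplying,
\[
(1-z_k)(1-\tilde z_k)=-\frac{\sin^{2}\phi_k}{\sin\phi_k'\sin(\phi_k+\phi_k')}\,e^{\sqrt{-1}\phi_k}.
\]

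Plugging this back in, each paired factor contributes $\pm\sqrt{-1}\cdot\tfrac{\sin\phi_k}{\sqrt{\sin\phi_k'\sin(\phi_k+\phi_k')}}\cdot e^{5\sqrt{-1}\phi_k/2}$. The product over $k=1,\dots,4$ produces an overall $\sqrt{-1}^{4}=1$ and a phase $e^{5\sqrt{-1}(\phi_1+\phi_2+\phi_3+\phi_4)/2}$, which by the first edge equation (the imaginary part of $F_{e_1}=2\pi\sqrt{-1}$) equals $e^{5\pi\sqrt{-1}}=-1$, so the phase collapses into the ambient sign and
\[
\frac{1}{\prod_{i=1}^{8}\xi_i^{f_i}\xi_i'^{f_i'}\xi_i''^{f_i''}}
=\pm\prod_{k=1}^{4}\frac{\sin\phi_k}{\sqrt{\sin\phi_k'\sin(\phi_k+\phi_k')}}.
\]

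Finally, the second edge equation $F_{e_2}=2\pi\sqrt{-1}$, together with $F_{e_1}=2\pi\sqrt{-1}$, is equivalent to the non-linear constraint $r_1r_2r_3r_4=1$ from the proof of Proposition \ref{explictsolnDB}, i.e.\ $\prod_{k=1}^{4}\sin\phi_k'=\prod_{k=1}^{4}\sin(\phi_k+\phi_k')$. This forces $\prod_k\sqrt{\sin\phi_k'\sin(\phi_k+\phi_k')}$ to equal both $\prod_k\sin\phi_k'$ and $\prod_k\sin(\phi_k+\phi_k')$, which gives the two claimed forms of the right-hand side simultaneously. The main obstacle to watch is sign and branch bookkeeping for the square roots: working entirely in moduli and arguments of the shape parameters (as read off the Euclidean triangles of Figure \ref{Eucli}) keeps this tractable, and the edge equation $\sum\phi_k=2\pi$ is exactly what is needed to force the residual phase to be a real sign.
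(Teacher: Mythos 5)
Your proof is correct, and it follows essentially the same strategy as the paper's: both compute the denominator at the solution $\mathbf{z}^*$ in polar form via the substitution $\psi$ and the Euclidean geometry of the shape-parameter triangles (Figure~\ref{Eucli} and the relations (\ref{cong1})--(\ref{cong2})), then invoke the edge equations. The only difference is bookkeeping: the paper first uses $z_1^*\cdots z_4^*=\tilde z_1^*\cdots\tilde z_4^*=1$ to reduce the left-hand side to $(z_1^{*'}\cdots\tilde z_4^{*'})^{-1/2}$ and then computes each $z_i^{*'},\tilde z_i^{*'}$ individually in modulus-argument form, whereas you keep the factor $\prod_k z_k\tilde z_k$ and the factor $\prod_k\sqrt{(1-z_k)(1-\tilde z_k)}$ separate, absorb the resulting phase $e^{5\sqrt{-1}\sum\phi_k/2}$ using $\sum\phi_k=2\pi$, and only invoke $r_1r_2r_3r_4=1$ at the very end. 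Both are the same computation organized differently; your pairing of $(z_k,\tilde z_k)$ makes the cancellation of moduli transparent, while the paper's early simplification shortens the algebra.
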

\begin{proof}
Note that 
$$\frac{1}{\prod_{i=1}^8 \xi_i^{f_i} \xi_i'^{f_i'} \xi_i''^{f_i''}}
= \frac{1}{ \Big(\frac{1}{z_1^*\dots z_4^* \tilde z_1^* \dots \tilde z_4^*}\Big)(z_1^{*'} \dots z_4^{*'} \tilde z_1^{*'} \dots \tilde z_4^{*'})^{\frac{1}{2}} }
= \frac{1}{(z_1^{*'} \dots z_4^{*'} \tilde z_1^{*'} \dots \tilde z_4^{*'})^{\frac{1}{2}}},
$$
where the last equality follows from $F_{e_1}(\mathbf{z}^*(\mathbf{H(m)}))= F_{e_2}(\mathbf{z}^*(\mathbf{H(m)})) = 0$. 

For $i=1,\dots, 4$, by the Euclidean sine law (Figure \ref{Eucli}), we have
$ |1-z_i^*| = \frac{\sin \phi_i}{\sin (\phi_i + \phi_i')}$, which implies 
$$ z_i^{*'} = \frac{1}{1-z_i^*} =  \frac{\sin (\phi_i + \phi_i')}{\sin \phi_i} e^{\sqrt{-1} \phi_i'}.$$
Recall from (\ref{cong2}) that $\Arg \tilde z_i^{*'} = \Arg z_i^{*''} = \pi - \phi_i - \phi_i'$. Moreover, by (\ref{cong2}),
$$
|\tilde z_i^{*'}| 
= \frac{1}{|z_i^{*''}|}
= \frac{|z_i^*|}{|1-z_i^*|}
= \frac{\sin \phi_i'}{\sin \phi_i}
$$
and
$$
\tilde z_i^{*'}
= \frac{\sin \phi_i'}{\sin \phi_i} e^{\sqrt{-1}(\pi - \phi_i - \phi_i')}.$$ 
Altogether,
$$
(z_1^{*'} \dots z_4^{*'} \tilde z_1^{*'} \dots \tilde z_4^{*'})^{\frac{1}{2}}
= \Bigg(\frac{\prod_{i=1}^4 \sin \phi_i' \sin (\phi_i+\phi_i')}{\prod_{i=1}^4 \sin^2 \phi_i}e^{\sqrt{-1}(4\pi - (\phi_1+\dots + \phi_4))}
 \Bigg)^{\frac{1}{2}} .$$
Recall that at the solution $\mathbf{z}^*(\mathbf{H(m)})$, we have
$$\phi_1+\dots + \phi_4 = 2\pi$$ and 
$$r_1 r_2 r_3 r_4 = \frac{\prod_{i=1}^4 \sin \phi_i'}{\prod_{i=1}^4  \sin (\phi_i + \phi_i')}=1.$$ Thus,
$$
\frac{1}{\prod_{i=1}^8 \xi_i^{f_i} \xi_i'^{f_i'} \xi_i''^{f_i''}}
= \frac{1}{(z_1^{*'} \dots z_4^{*'} \tilde z_1^{*'} \dots \tilde z_4^{*'})^{\frac{1}{2}}}
= \pm \frac{\prod_{i=1}^4 \sin \phi_i}{\prod_{i=1}^4 \sin \phi_i'}
= \pm \frac{\prod_{i=1}^4 \sin \phi_i}{\prod_{i=1}^4 \sin (\phi_i + \phi_i')}.
$$
\end{proof}

\begin{proposition}\label{1loopDB} When $\mathrm{H}(m_l) = 2\theta_l\sqrt{-1}$ for $l=1,\dots, 6$, at the solution $\mathbf{z}^*(\mathbf{H(m)})$ in Proposition \ref{explictsolnDB}, we have
\begin{align*}
\frac{\det(D_{\mathbf{z}}\mathcal{G}_{\mathcal{D}} (\mathbf{z}^* )) }{\prod_{i=1}^8 \xi_i^{f_i} \xi_i'^{f_i'} \xi_i''^{f_i''}}
= \pm 2^5 \sqrt{\det \mathbb{G}}
\end{align*}
where $\mathrm{H}(m_l)$ is the holonomy of the curve $m_l$ around the $l$-th ideal vertex and $\mathbb{G}$ is the associated Gram matrix defined in Section \ref{TFSL}.
\end{proposition}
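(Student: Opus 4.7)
The plan is to combine the two preceding lemmas, identify the resulting trigonometric sum with a logarithmic derivative, and then evaluate it using the quadratic equation $Az^2 + Bz + C = 0$ satisfied by the master variable $z^*$ of Proposition~\ref{explictsolnDB}.

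First I would multiply Lemma~\ref{DBcomp1} by Lemma~\ref{DBcomp2}. Writing $s_i = \sin\phi_i$, $s_i' = \sin\phi_i'$, $S_i = \sin(\phi_i + \phi_i')$ and using the equality $\prod_j s_j' = \prod_j S_j$ (which is the equation $r_1 r_2 r_3 r_4 = 1$ holding at $\mathbf{z}^*$), the two products telescope to
\[
\frac{\det D_{\mathbf z}\mathcal{G}_{\mathcal D}(\mathbf z^*)}{\prod_{i=1}^{8} \xi_i^{f_i}\xi_i'^{f_i'}\xi_i''^{f_i''}} \;=\; \pm 2^{6}\sqrt{-1}\,T, \qquad T \;:=\; \Big(\prod_j S_j\Big)\sum_{i=1}^{4}\frac{s_i}{s_i' S_i}.
\]
Since $B^2 - 4AC = 16\det\mathbb G$ by \eqref{disgram}, it then suffices to prove $T = -\sqrt{-1}\sqrt{B^2-4AC}/8$.

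Second, I would convert the inner sum into a logarithmic derivative. By $\cot a - \cot b = \sin(b-a)/(\sin a\sin b)$ the summand $s_i/(s_i' S_i)$ equals $\cot\phi_i' - \cot(\phi_i + \phi_i')$, and the linear relations \eqref{linsoln} give $\phi_i' = \phi_4' + c_i$ with $\phi_i$ and $c_i$ independent of $\phi_4'$; hence $s_i/(s_i' S_i) = d(\log r_i)/d\phi_4'$. Because $\prod_i r_i = 1$ at $\mathbf z^*$, summing yields $\sum_i s_i/(s_i' S_i) = (d/d\phi_4')\prod_i r_i$ at the solution. In the variable $z = e^{2\sqrt{-1}\phi_4'}$ introduced in the proof of Proposition~\ref{explictsolnDB}, one has $\prod_i r_i = P(z)/Q(z)$ with $P(z) = \prod_i(e^{\sqrt{-1}c_i}z - e^{-\sqrt{-1}c_i})$ and $Q(z) = \prod_i(e^{\sqrt{-1}(\phi_i+c_i)}z - e^{-\sqrt{-1}(\phi_i+c_i)})$. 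The identity $\sum_i \phi_i = 2\pi$ forces the $z^4$ and $z^0$ coefficients of $P$ and $Q$ to coincide, so $P(z) - Q(z) = z(A'z^2 + B'z + C')$ for some constants $A', B', C'$. Matching the $z^3$ coefficient term by term (for instance, the $i=1$ factor alone contributes the monomials $-u_1/u_4$ and $-u_1 u_3/u_2$ appearing in \eqref{defABC}) verifies that $(A', B', C') = (A, B, C)$ with proportionality constant exactly $1$.

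Finally, I would differentiate and collect. Since $P(z^*) = Q(z^*)$, the chain rule gives $(d/d\phi_4')\prod_i r_i|_{z^*} = 2\sqrt{-1}z^*(P'(z^*) - Q'(z^*))/Q(z^*)$, and differentiating $P(z) - Q(z) = z(Az^2 + Bz + C)$ together with $2Az^* + B = -\sqrt{B^2-4AC}$ (from the quadratic formula and the choice of the minus-root for $z^*$) yields $P'(z^*) - Q'(z^*) = -z^* \sqrt{B^2 - 4AC}$. Expanding each $S_j$ in terms of $z^*$ and $w^* = e^{\sqrt{-1}\phi_4^*}$ gives the clean identity $\prod_j S_j = Q(z^*)/(16(z^*)^2)$, so $Q(z^*)$ and $(z^*)^2$ cancel in $T$, leaving $T = -\sqrt{-1}\sqrt{B^2 - 4AC}/8$. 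Substituting into the first display and using $\sqrt{B^2 - 4AC} = \pm 4\sqrt{\det\mathbb G}$ completes the proof. The main obstacle will be pinning down the proportionality in $P - Q = z(Az^2+Bz+C)$: since any phase in this constant would propagate into the final answer, one cannot afford mere proportionality and must verify equality, which reduces to a single explicit coefficient comparison against \eqref{defABC}.
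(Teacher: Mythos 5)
Your argument is correct and is essentially the paper's own proof. Both start by multiplying Lemma~\ref{DBcomp1} with Lemma~\ref{DBcomp2}, interpret $\sum_i \sin\phi_i/(\sin\phi_i'\sin(\phi_i+\phi_i'))$ as $\partial_{\phi_4'}\log(r_1r_2r_3r_4)$, change variables to $z=e^{2\sqrt{-1}\phi_4'}$ so that $f-g$ (your $P-Q$ up to the $1/(16z^2)$ prefactor) becomes the explicit quadratic $Az^2+Bz+C$ of \eqref{defABC}, and evaluate the derivative at the root $z^{*}=z_{-}$ using the quadratic formula and \eqref{disgram}; your packaging via the degree-$4$ polynomials $P,Q$ and the identity $P'(z^*)-Q'(z^*)=z^*(2Az^*+B)$ is term-for-term the same as the paper's $A(z_--z_+)$, and your closing caveat about needing the coefficient of $P-Q$ to equal $1$ exactly (not merely up to a phase) is precisely the content of the paper's assertion \eqref{fgz}.
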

\begin{proof}
From Lemmas \ref{DBcomp1} and \ref{DBcomp2},
\begin{align}\label{DB12}
\frac{\det(D_{\mathbf{z}}\mathcal{G}_{\mathcal{D}} (\mathbf{z}^* )) }{\prod_{i=1}^8 \xi_i^{f_i} \xi_i'^{f_i'} \xi_i''^{f_i''}}
=
\pm 2^6 \sqrt{-1}  \prod_{i=1}^4 \sin(\phi_i + \phi_i') \Bigg(\sum_{i=1}^4 \frac{\sin \phi_i}{\sin (\phi_i') \sin (\phi_i + \phi_i')} \Bigg).
\end{align}
Let $f(\phi_4') = \prod_{i=1}^4 \sin \phi_i'$ and $g(\phi_4') = \prod_{i=1}^4 \sin (\phi_i + \phi_i')$. Let $u_l = e^{\sqrt{-1} \theta_l}$ for $l=1,\dots, 6$ and $z=e^{2\sqrt{-1} \phi_4'}$. Then
\begin{align}\label{fgz}
f(\phi_4') - g(\phi_4') 
= \frac{1}{16}\Big(Az + B + \frac{C}{z}\Big),
\end{align}
where $A,B$ and $C$ are defined in (\ref{defABC}). Besides, by (\ref{disgram}),
\begin{align}\label{disdetG2}
B^2 - 4AC = 16 \det \mathbb{G}
\end{align}
where $\det \mathbb{G}$ is the determinant of the associated Gram matrix defined in Section \ref{TFSL}.
Let $z_\pm = \frac{-B\pm\sqrt{B^2 - 4AC}}{2A}$ and let $\phi_\pm$ such that $z_\pm = e^{2\sqrt{-1}\phi_\pm}$. From (\ref{fgz}), we have
$$
f(\phi_-) - g(\phi_-) = f(\phi_+) - g(\phi_+) = 0. 
$$
From (\ref{detK}) and (\ref{linsoln}),
\begin{align}
\sum_{i=1}^4 \frac{\sin \phi_i}{\sin(\phi_i') \sin (\phi_i + \phi_i')}
&=\sum_{i=1}^4 \frac{\partial \log r_i}{\partial \phi_i'} 
=\sum_{i=1}^4 \frac{\partial \log r_i}{\partial \phi_4'},
\end{align}
which implies that
\begin{align}\label{fgexp}
\sum_{i=1}^4 \frac{\sin \phi_i}{\sin(\phi_i') \sin (\phi_i + \phi_i')} 
= \frac{\partial}{\partial \phi_4'}( \log (r_1r_2r_3r_4))
= \frac{\partial}{\partial \phi_4'} \log \Bigg(\frac{f(\phi_4')}{g(\phi_4')}\Bigg)
= \frac{f'(\phi_4') - g'(\phi_4')}{g(\phi_4')}
\end{align}
for $\phi_4'$ satisfying $f(\phi_4') = g(\phi_4')$. Thus, from (\ref{DB12}) and (\ref{fgexp}),
\begin{align}
\frac{\det(D_{\mathbf{z}}\mathcal{G}_{\mathcal{D}} (\mathbf{z}^* )) }{\prod_{i=1}^8 \xi_i^{f_i} \xi_i'^{f_i'} \xi_i''^{f_i''}}
=
\pm 2^6 \sqrt{-1} (f'(\phi_4') - g'(\phi_4'))
= \pm 2^6 \sqrt{-1} \frac{\partial (f(z)-g(z))}{\partial z}\Big\vert_{z = z_-} \frac{\partial z}{\partial \phi_4'}\Big\vert_{\phi_4' = \phi_-}.
\end{align}
From (\ref{fgz}), we have
\begin{align*}
\frac{\det(D_{\mathbf{z}}\mathcal{G}_{\mathcal{D}} (\mathbf{z}^* )) }{\prod_{i=1}^8 \xi_i^{f_i} \xi_i'^{f_i'} \xi_i''^{f_i''}}
=& \pm 2^3 \Big(Az_- - \frac{C}{z_-}\Big) \\
=& \pm 2^3 A \Big(z_- - \frac{C}{Az_-}\Big) \\
=& \pm 2^3 A (z_- - z_+) \\
=& \pm 2^3 \sqrt{B^2-4AC} \\
=& \pm 2^5 \sqrt{\det \mathbb{G}},
\end{align*}
where the third equality follows from the product of root formula $z_+ z_- = C/A$ and  the last equality follows from (\ref{disdetG2}).
\end{proof}

\subsection{Proof of Theorem \ref{mainthm}}\label{pfmainthm}
We claim that when $\boldsymbol\alpha$ is the system of meridians, the 1-loop invariant $\tau(M,\boldsymbol\alpha, \mathbf{z^*}, \mathcal{T},\boldsymbol{\mathcal{F}})$ with respect to the generalized strong combinatorial flattening $\boldsymbol{\mathcal{F}}  = (\mathbf f,\mathbf f',\mathbf f'')$ defined in (\ref{deff}) is given by
\begin{align}\label{1loopmer}
\tau(M,\alpha, \mathbf{z^*}, \mathcal{T},\mathcal{F}) 
= \pm 2^{3c} \prod_{i=1}^c \sqrt{\det \mathbb{G}_i},
\end{align}
where $ \mathbb{G}_i$ is the associated Gram matrix of the $i$-th $D$-block defined in Section \ref{TFSL}. 

We first consider the case where $\mathrm{H}(m_l) = 2 \theta_l \sqrt{-1}$ for some $\theta_l \in \RR$ and $l=1,\dots, k$. 
By Proposition \ref{1loopDB} and Lemma \ref{detL}, 
\begin{align*}
\tau(M,\alpha, \mathbf{z^*}, \mathcal{T},\boldsymbol{\mathcal{F}}  )
=& \frac{1}{2}\frac{\mathrm{det}(D_{\mathbf z}\mathcal{G}(\mathbf{H(m)},\mathbf z^*))}{\prod_{i=1}^{8c} \xi_i^{f_i} \xi_i'^{f_i'} \xi_i''^{f_i''}}\\
=& 2^{-2c}\frac{\mathrm{det}(D_{\mathbf z}\mathcal{G}_0(\mathbf{H(m)},\mathbf z^*))}{\prod_{i=1}^{8c} \xi_i^{f_i} \xi_i'^{f_i'} \xi_i''^{f_i''}}\\
=&
  2^{-2c}\Bigg(\frac{\prod_{i=1}^c \mathrm{det}(D_{\mathbf z} \mathcal{G}_{\mathcal{D}_i}(\mathbf{H(m)},\mathbf z^*))}{ \prod_{i=1}^{8c} \xi_i^{f_i} \xi_i'^{f_i'} \xi_i''^{f_i''}}\Bigg)\\
=& \pm 2^{3c} \prod_{s=1}^c \sqrt{\det \mathbb{G}_s}.
\end{align*}
By Lemma \ref{diffg}, we have
\begin{align*}
\tau(M,\boldsymbol\alpha, \mathbf{z^*}, \mathcal{T}) 
= \pm 2^{3c} (\sqrt{-1})^g \prod_{s=1}^c \sqrt{\det \mathbb{G}_s}
\end{align*}
for some $g\in \{0,1\}$. Finally, to compute $g$, we compute $\tau(M,\boldsymbol\alpha, \mathbf{z}^*, \mathcal{T}) $ with respect to the combinatorial flattening $\hat{\boldsymbol{\mathcal{F}}  }$ defined in (\ref{defhatf}) at the holonomy representation of the complete hyperbolic structure, where the shape parameters are given by $\mathbf{z}^*(0,\dots,0) = (\sqrt{-1},\dots,\sqrt{-1})$. Note that in this case,
\begin{align}\label{com1}
\prod_{i=1}^{8c} \xi_i^{f_i} \xi_i'^{f_i'} \xi_i''^{f_i''}
= \Bigg(\Bigg(\frac{1}{\sqrt{-1}}\Bigg)^4 \Bigg(\frac{1}{\sqrt{-1}\big(\sqrt{-1}-1\big)}\Bigg)^4 \Bigg)^c
= 2^{-2c}.
\end{align}
Besides, 
\begin{align}\label{com2}
\mathrm{det}(D_{\mathbf z}\mathcal{G}(\mathbf{H(m)},\mathbf z^*))
= \pm 2^{1-2c} \prod_{i=1}^c  \mathrm{det}(D_{\mathbf z} \mathcal{G}_{\mathcal{D}_i}(\mathbf{H(m)},\mathbf z^*))
= \pm 2^{1-2c} \big(32\sqrt{-1}\big)^c ,
\end{align}
where the last equality follows from the (\ref{detcomplete}).
From (\ref{com1}) and (\ref{com2}), 
\begin{align}
\tau(M,\boldsymbol\alpha, \mathbf{z}^*, \mathcal{T}) 
= \pm  2^{3c} \prod_{s=1}^c \sqrt{\det \mathbb{G}_s}.
\end{align}
Thus, we have $g=0$ and 
\begin{align}\label{com3}
\tau(M,\boldsymbol\alpha, \mathbf{z}^*, \mathcal{T}) 
= \pm 2^{3c} \prod_{s=1}^c \sqrt{\det \mathbb{G}_s}
= \pm \mathbb T_{(M,\alpha)}([\rho_0]),
\end{align}
where $\rho_0$ is the discrete faithful representation of $M$ and the last equality follows from Theorem \ref{main1}(1).
This completes the proof in the case that $\mathrm{H}(m_l) = 2 \theta_l \sqrt{-1}$ for some $\theta_l \in \RR$ and $l=1,\dots, k$. Since both the 1-loop invariant and the torsion are locally analytic functions on $(\mathrm{H}(m_1),\dots, \mathrm{H}(m_k))$, by Lemma \ref{MCV}, there exists an open subset $U \subset \CC^k$ containing $(0,\dots,0)$ such that for any character $[\rho]$ sufficiently close to the discreta faithful character with $(\mathrm{H}(m_1),\dots, \mathrm{H}(m_k))\in U$, we have
\begin{align}
\tau(M,\boldsymbol\alpha, \mathbf{z}, \mathcal{T}) 
= \pm \mathbb T_{(M,\alpha)}(\rho_{\mathbf{z}}).
\end{align}
More generally, by Proposition \ref{toranaglu}, both the 1-loop invariant and the torsion are analytic functions on the gluing variety $\mathcal{V}_{\mathcal{T}}$. As a result, by analyticity, they agree on $\mathcal{V}_{\mathcal{T}}$.

\subsection{Combinatorics of the gluing equations}
In this section, we prove the technical lemmas about the combinatorics of the gluing equations. 
Let $\mathcal{G}: \CC^k \times (\CC\setminus\{0,1\})^{8c} \to  \CC^{8c}$ be the function defined in (\ref{gluingeq}) with respect to the triangulation $\mathcal{T}$. Note that the definition of the map $\mathcal{G}$ depends on a choice of a set of linearly independent edges. Recall that the function $\mathcal{G}_0: \CC^k \times (\CC\setminus\{0,1\})^{8c}  \to \CC^{8c}$ in (\ref{defF_0}) is defined by
\begin{align}
\mathcal{G}_0(\mathbf{H(m)}, \mathbf{z_1},\dots, \mathbf{z_c}) = \big(\mathcal{G}_{\mathcal{D}_1}((\mathbf{H_{\mathcal{D}_1}(m)}, \mathbf{z_1}), \dots, \mathcal{G}_{\mathcal{D}_c}((\mathbf{H_{\mathcal{D}_c}(m)}, \mathbf{z_c})\big),
\end{align}
where $\mathbf{z_1},\dots, \mathbf{z_c} \in \CC^8\setminus\{0,1\}$, $\mathcal{G}_{\mathcal{D}_i}: (\CC\setminus\{0,1\})^8 \to \CC^8$ is the function $\mathcal{G}_{\mathcal{D}}$ defined in (\ref{defFDB}) with respect to the $i$-th $D$-block $\mathcal{D}_i$. 

The main goal of the section is to prove Lemma \ref{detL}, which relates the determinants of the Jacobian matrices of $\mathcal{G}_0$ and $\mathcal{G}$.
The proof follows from elementary computation and careful checking of all possible cases. 
\begin{lemma}\label{detL}
There exists a set of linearly independent edges such that for the map $\mathcal{G}$ defined with respect to this set of edges, we can find an invertible matrix $L$ with $\det L =\pm 2^{2c-1}$ such that 
$$ 
\mathrm{det}(D_{\mathbf z}\mathcal{G}_0(\mathbf{H(m)},\mathbf z^*))
= \det (L)\mathrm{det}(D_{\mathbf z}\mathcal{G}(\mathbf{H(m)},\mathbf z^*))
= \pm 2^{2c-1}\mathrm{det}(D_{\mathbf z}\mathcal{G}(\mathbf{H(m)},\mathbf z^*)).
$$
\end{lemma}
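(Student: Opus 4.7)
My plan is to construct an explicit invertible $8c \times 8c$ integer matrix $L$ such that each component of $\mathcal{G}_0$ is an integer linear combination (read off from the corresponding row of $L$) of the components of $\mathcal{G}$, up to additive constants coming from $2\pi\sqrt{-1}$ and the meridian holonomies. This will give $D_{\mathbf{z}}\mathcal{G}_0 = L \cdot D_{\mathbf{z}}\mathcal{G}$ and reduce the lemma to showing $\det L = \pm 2^{2c-1}$.

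First, I would classify the $8c$ edges of $\mathcal{T}$ into $2c$ \emph{axis edges} (one per octahedron, responsible for the edge equations $F_{e_1}, F_{e_2}$ of each $\mathcal{G}_{\mathcal{D}_i}$) and $6c$ \emph{non-axis edges} lying on the boundary hexagonal faces of the truncated tetrahedra. I would choose the $8c - k$ linearly independent edges defining $\mathcal{G}$ to consist of all $2c$ axis edges together with $6c - k$ non-axis edges, reserving one ``reference'' non-axis edge per cusp, whose dependent edge equation is replaced by the corresponding cusp meridian equation. The axis components of $\mathcal{G}_0$ coincide with those of $\mathcal{G}$, contributing an identity block of size $2c$ to $L$. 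For each of the $6c$ vertex holonomy components $F_{m_l^{(i)}}$ of $\mathcal{G}_0$, the induced cusp triangulation lets me write
\[
F_{m_l^{(i)}} - \mathrm{H}(m_l^{(i)}) = \bigl(F_{\alpha_{j(i,l)}} - \mathrm{H}(\alpha_{j(i,l)})\bigr) + \sum_{e} \epsilon_e^{(i,l)} F_e,
\]
where $F_{\alpha_j}$ is the global cusp meridian holonomy, $j(i,l)$ is the cusp containing the vertex $(i,l)$, and the sum runs over non-axis edges with integer coefficients $\epsilon_e^{(i,l)} \in \{-1,0,1\}$ determined by the combinatorial path in the cusp triangulation from $(i,l)$ to the reference vertex on cusp $j(i,l)$.

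After ordering components appropriately (axis equations first, then non-reference vertex holonomies, then reference vertex holonomies; axis edges first, then non-axis edges, then cusp meridians), $L$ takes the block upper triangular form
\[
L = \begin{pmatrix} I_{2c} & 0 & 0 \\ 0 & A & B \\ 0 & 0 & I_k \end{pmatrix},
\]
where $A$ is an integer $(6c-k) \times (6c-k)$ matrix and $B$ is an integer $(6c-k) \times k$ matrix, so $\det L = \det A$. Since each non-axis edge meets exactly two ideal vertices, each column of $A$ contains at most two nonzero entries of $\pm 1$.

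The main obstacle is to verify $\det A = \pm 2^{2c-1}$. My approach is to iteratively cofactor-expand $A$ along the columns indexed by non-axis edges incident to a reference vertex: each such column has a unique nonzero entry of $\pm 1$, and each expansion removes one row and one column without changing the absolute value of the determinant. After exhausting these simplifications, the residual submatrix localizes to each $D$-block, and a direct computation based on the explicit cusp triangulations in Figures \ref{trib14}, \ref{trib25} and \ref{trib36} shows that each $D$-block contributes a determinantal factor of $\pm 2^2$. Multiplying the contributions across the $c$ $D$-blocks and accounting for a global factor of $\pm 2^{-1}$ arising from a residual edge relation around one distinguished cusp yields $|\det A| = 4^c / 2 = 2^{2c-1}$, as required.
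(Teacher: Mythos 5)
Your general strategy -- expressing the $6c$ vertex-holonomy components of $\mathcal{G}_0$ as integer linear combinations of the cusp meridian holonomies and the edge equations, extracting an $8c\times 8c$ transition matrix $L$, reducing $\det L$ to $\det A$ via a block-triangular form, and then computing $\det A$ -- is the same overall framework the paper uses. The gap is in the computation of $\det A$, which is the entire content of the lemma.

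Several of your structural claims about $A$ do not hold as stated. First, you claim the $(2,1)$ block of $L$ vanishes, but as the paper's Figure illustrating the cusp fundamental domain shows, the difference of two parallel $D$-block meridians generically involves the \emph{central} (axis) edge equations $f^l_1, f^l_2$ in addition to non-central ones; the paper deals with this by working modulo central edges (the relation $\sim$), which corresponds to clearing that block by row operations. More seriously, your claim that ``each column of $A$ contains at most two nonzero entries of $\pm 1$'' is not correct under your own parametrization: since $\epsilon_e^{(i,l)}$ records whether the \emph{full path} from the reference cylinder to the $(i,l)$-cylinder crosses $e$, an edge $e$ deep in a long chain of cylinders on one cusp is crossed by every vertex on the far side of it, and $e$ moreover sits on two cusps. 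So the column can have many nonzero entries. Without that sparsity, the cofactor-expansion scheme you describe has no starting point.

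The final two steps -- that ``the residual submatrix localizes to each $D$-block, and \dots\ each $D$-block contributes a determinantal factor of $\pm 2^2$,'' with a ``global factor of $\pm 2^{-1}$'' -- are asserted, not argued, and the localization does not happen along $D$-blocks in the paper: the correct decomposition is along the $2c$ \emph{cycles} (triangle faces along which pairs of $D$-blocks are glued), not along the $c$ $D$-blocks. Because each cycle is shared by two $D$-blocks, there is no clean per-block factorization, and $4^c/2$ matching the answer $2^{2c-1}$ numerically is a coincidence of arithmetic, not evidence of a per-block block structure. What the paper actually proves is that, after building a graph whose vertices are the cusps and whose edges are the cycles, choosing a maximal tree with a distinguished root, and selecting the $k$ dependent edges cycle-by-cycle according to how each cycle sits in the tree, one can row-reduce $L$ into a block-upper-triangular form where $2c-1$ cycles each contribute a $2\times 2$ (or $1\times 1$ or $3\times 3$) block of determinant $\pm 2$, while the single distinguished cycle contributes determinant $\pm 1$. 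That careful spanning-tree construction and the ensuing case analysis (cases 1)I)--1)V), 2), and 3) in the paper, for the various positions of a cycle relative to the tree) are exactly the ingredients your proposal skips, and they cannot be replaced by an unjustified claim that the computation localizes to $D$-blocks. Your choice of ``one reference non-axis edge per cusp'' is also too unconstrained: the paper's choice of dependent edges must respect the tree structure so that the resulting block structure (and its invertibility) actually emerges, and an arbitrary choice need not do so.
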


\begin{proof}
Suppose the fundamental shadow link complement has $k$ boundary $T_1\coprod \dots \coprod T_k$. 
Recall that a fundamental shadow link complement is obtained by gluing the $D$-blocks together along the red faces in Figure \ref{idealocta}. We call the three edges of a red face a \emph{cycle}. Note that in the construction of a fundamental shadow link complement, cycles from different $D$-blocks are identified with each other. Besides, since each $D$-block has $4$ cycles and each cycle is shared by two $D$-blocks, there are altogether $2c$ cycles.

In the definition of $\mathcal{G}_0$, there are $2c$ edge equations $\{f^l_{1},f^l_{2}\}_{l=1}^{c}$ corresponding to the central edges and $6c$ holonomy equations. In the definition of the gluing map $F$, there are $8c-k$ edge equations and $k$ holonomy equations. In the following discussion, we will always choose the $2c$ central edges $\{f^l_{1},f^l_{2}\}_{l=1}^{c}$ and study the transformation from $6c$ holonomy equations to $k$ holonomy equations and $6c-k$ edges equations. The main goal is to show that the the transformation is given by a invertible matrix $L$ with $\det L =\pm 2^{2c-1}$. We will show that by choosing the dependent edges appropriately and applying row operations that do not change the determinant of $L$, we can transform $L$ into an upper triangular block matrix which contains $2c-1$ blocks with determinant $\pm 2$ and $1$ block with determinant $\pm1$.

First, given a fundamental shadow link complement, we construct a graph as follows. The vertices of the graph are the boundaries of the link complement. An edge between two vertices $T_p$ and $T_q$, where $p,q=1,\dots,k$, is a cycle $\mathbf{c}=(e_1,e_2,e_3)$ such that at least one pair of edges from that cycle $\mathbf{c}$ belong to the boundary $T_p$ and at least one pair of edges from that cycle $\mathbf{c}$ belong to the boundary $T_q$. For a pair of edges $(e_a,e_b)$ and for a boundary torus $T_p$, where $a,b\in \{1,2,3\}$ and $p=1,\dots,k$, we write $(e_a,e_b) \in T_p$ if the pair of edges $(e_a,e_b)$ lies on the boundary $T_p$.

Since the fundamental shadow link complement is connected, we can always find a maximal tree that contains all the vertices of the graph. In particular, the graph contains $k$ vertices and $k-1$ edges. Each edge of the maximal tree corresponds to a cycle. Observe that since each cycle $\mathbf{c}=(e_1,e_2,e_3)$ contains three pairs of edges $(e_1,e_2), (e_1,e_3)$ and $(e_2,e_3)$, each cycle corresponds as most two edges of the tree. We choose a dendrogram representation of the graph with a top degree one vertex and directed edges pointing from the top to the bottom. We call that vertex the \emph{distinguished vertex}. Since the distinguished vertex is degree one, there exists a unique edge that has the top vertex as one of its endpoints. We call the corresponding cycle the \emph{distinguished cycle} and denote it by $\mathbf{c}^1=(e_1^1,e_2^1,e_3^1)$. For each pair of vertices $T_p,T_q$ in the graph, where $p,q =1,\dots,k$, there exist a unique path of edges connecting that $T_p$ and $T_q$ without backtracking. We denote the number of edges in that path by $d(p,q)$. For adjacent vertices $T_p,T_q$ of the graph, we denote the edge connecting $T_p$ and $T_q$ by $\mathbf{c(p,q)}$. From each edge of the dendrogram labelled by some cycle $\mathbf{c}^i$, we pick a dependent edge from the bottom vertex of the edge as follows.

\begin{enumerate}
\item Suppose the cycle $\mathbf{c}^i=(e^i_1,e^i_2,e^i_3)$ corresponds to exactly one edge of the tree. We have the following possible positions of the three pairs of edges.
\begin{enumerate}[I)]
\item There is a branch 
$$\dots \to T_a  \xrightarrow{\mathbf{c^i}} T_b \to \dots,$$ 
where $a,b=1,\dots,k$, $(e^i_1,e^i_2) \in T_b$ is on the top layer of $T_b$,  $(e^i_1,e^i_3) \in T_a$ is not on the top layer of $T_a$ and $(e^i_2,e^i_3) \in T_a$ is not on the top layer of $T_a$.
\item There is a branch 
$$\dots \to T_a  \xrightarrow{\mathbf{c^i}} T_b \to \dots,$$ 
where $a,b=1,\dots,k$, $(e^i_1,e^i_2) \in T_b$ is on the top layer of $T_b$, $(e^i_1,e^i_3) \in T_a$ is not on the top layer of $T_a$ and $(e^i_2,e^i_3) \in T_b$ is not on the top layer of $T_b$. 
\item There is a branch 
$$\dots \to T_a \to \dots \to T_b \xrightarrow{\mathbf{c^i}} T_c \to \dots,$$ 
where $a,b,c =1,\dots,k$, $(e^i_1,e^i_2) \in T_c$ is on the top layer of $T_c$, $(e^i_1,e^i_3) \in T_b$ is not on the top layer of $T_b$ and $(e^i_2,e^i_3) \in T_a$ is not on the top layer of $T_a$. 
\item There is a branch 
$$\dots \to T_a \xrightarrow{\mathbf{c^i}} T_b  \to \dots \to  T_c \to \dots,$$ 
where $a,b,c =1,\dots,k$, $(e^i_1,e^i_2) \in T_b$ is on the top layer of $T_b$, $(e^i_1,e^i_3) \in T_a$ is not on the top layer of $T_a$ and $(e^i_2,e^i_3) \in T_c$ is not on the top layer of $T_c$. 
\item $T_a,T_b$ and $T_c$ are not in the same branch, $(e^i_1,e^i_2) \in T_b$ is on the top layer of $T_b$, $(e^i_1,e^i_3) \in T_a$ is not on the top layer of $T_a$ and $(e^i_2,e^i_3) \in T_c$ is not on the top layer of $T_c$. 
\end{enumerate}
We choose $e^i_1$ to be a dependent edge in all these cases.
\item Suppose the cycle $\mathbf{c}^i=(e^i_1,e^i_2,e^i_3)$ corresponds to two edges of the tree. Then there is a branch
$$ \dots \to T_a \to T_b \to T_c \to \dots,$$
where $(e_1^i,e_2^i)\in T_c$ is on the top layer of $T_c$, $(e_1^i,e_3^i)\in T_b$ is on the top layer of $T_b$ and $(e_2^i,e_3^i)\in T_a$ is not on the top layer of $T_a$. We choose $e^i_2$ and $e^i_3$ to be dependent edges.
\end{enumerate}

This gives us altogether $k-1$ dependent edges. Next, we choose one more edge from the distinguished cycle $\mathbf{c}^1=(e_1^1,e_2^1,e_3^1)$ to be a dependent edge as follows. 

\begin{enumerate}
\item Suppose two edges of $\mathbf{c^1}$ have been chosen to be dependent edges in the previous step. Then we choose the remaining edge to be a dependent edge.

\item Up to renaming $\{T_i\}_{i=1}^k$ if necessary, suppose we have a branch of the form 
$$T_1 \xrightarrow{\mathbf{c^1}} T_2 \xrightarrow{\mathbf{c(2,3)}} T_3 \xrightarrow{\mathbf{c(3,4)}} \dots,$$ 
where $\mathbf{c^1} = (e_1^1,e_2^1,e_3^1)$, $(e_1^1, e_3^1)\in T_1$ is on the top layer of $T_1$, $(e_1^1, e_2^1) \in T_2$ is on the top layer of $T_2$ and $(e_2^1, e_3^1) \in T_2$ is not on the top layer. Then we choose $e_1^1$ and $e_2^1$ to be dependent edges.

\item Up to renaming $\{T_i\}_{i=1}^k$ if necessary, suppose we have a branch of the form 
$$T_1 \xrightarrow{\mathbf{c^1}} T_2 \xrightarrow{\mathbf{c(2,3)}} T_3 \xrightarrow{\mathbf{c(3,4)}} \dots,$$ 
where $\mathbf{c^1} = (e_1^1,e_2^1,e_3^1)$, $(e_1^1, e_3^1)\in T_1$ is on the top layer of $T_1$, $(e_2^1, e_3^1) \in T_1$ is not on the top layer and $(e_1^1, e_2^1) \in T_2$ is on the top layer of $T_2$. Then we choose $e_1^1$ and $e_3^1$ to be dependent edges.

\item Up to renaming $\{T_i\}_{i=1}^k$ if necessary, suppose for some $l\geq 2$, we have a branch of the form 
$$T_1 \xrightarrow{\mathbf{c(1,2)}} T_2 \xrightarrow{\mathbf{c(2,3)}} \dots \xrightarrow{\mathbf{c(l,l+1)}} T_{l+1} \to\dots, $$
where $ (e_1^1, e_3^1)\in T_1$ is on the top layer of $T_1$, $(e_1^1, e_2^1) \in T_2$ is on the top layer of $T_2$ and $(e_2^1, e_3^1) \in T_{l+1}$ is not on the top layer of $T_{l+1}$. We choose $e^1_1$ and $e^1_2$ to be dependent edges if $l$ is odd and choose $e^1_1$ and $e^1_3$ to be dependent edges if $l$ is even. 
\end{enumerate}

We give an order $\{\mathbf{c}^1,\mathbf{c}^2,\dots, \mathbf{c}^{2c}\}$ to the set of cycles, where $\mathbf{c}^1$ is the distinguished cycle. As shown in Figure (\ref{ndetl3}), for each boundary torus of the fundamental shadow link component, we find a fundamental domain of the torus such that the dependent edge we picked from that boundary component is at the top layer of the fundamental domain. We will keep the holonomy equation in the top rectangle. By applying row operations that does not affect the determinant of a matrix, we can transform the $6c$ holonomy equations into and $n$ holonomy equations and $6c-k$ equations of the form $e^i_{a}+e^i_{b}$, where $\{a,b\}\in \{ \{1,2\}, \{1,3\}, \{2,3\}\}$, $i\in \{1,\dots,2c\}$ and $\mathbf{c}^i= (e^i_1,e^i_2,e^i_3)$ is a cycle.  Note that the top layer will not appear in the matrix $L$. By Remark \ref{edgeid}, it suffices to consider the edge equations modulo $\pi\sqrt{-1}$. For simplicity, for two linear combinations of edge equations $s_1$ and $s_2$, we write $s_1 \sim s_2$ if $s_1-s_2$ is a linear combination of the central edges $\{f^l_{1},f^l_{2}\}_{l=1}^{c}$ modulo $\pi \sqrt{-1}$. \\

\begin{figure}[h]
\centering
\includegraphics[scale=0.1]{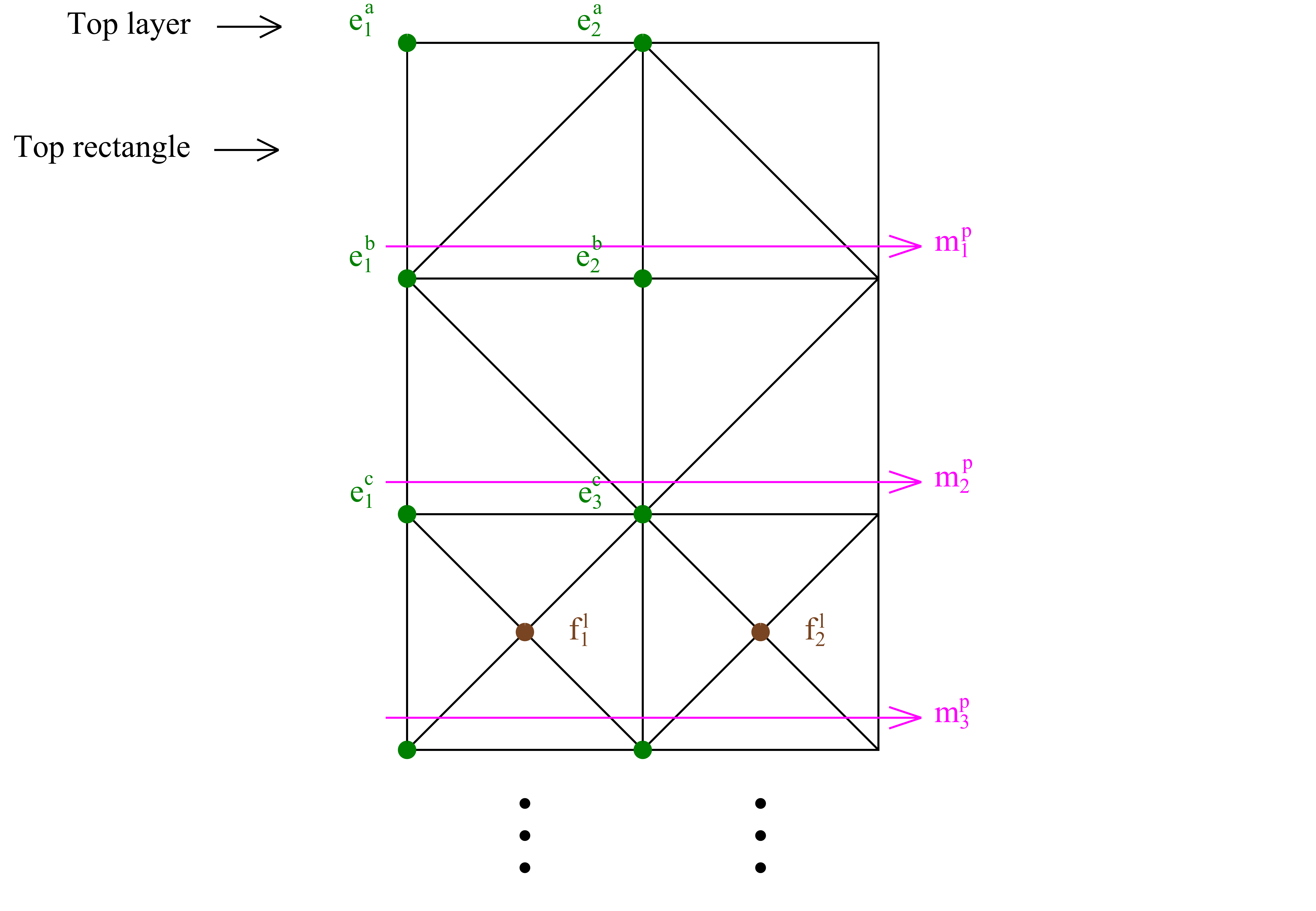}
\caption{The figure shows a possible configuration of the fundamental domain of a boundary torus $T_p$. The brown dots represent the central edges and the green dots represent the non-central edges. $m^p_1,m^p_2$ and $m^p_3$ represent three different meridian paths. In this figure, we keep the holonomy equation of $m^p_1$ in the top rectangle. Note that $m^p_{1}-m^p_{2} \equiv e^b_{1}+e^b_{2} \pmod{\pi\sqrt{-1}}$ and $m^p_{2}-m^p_{3} \equiv e^c_{1}+e^c_{2} + f^l_{1} + f^l_{2}\pmod{\pi\sqrt{-1}}$. Besides, the top layer $e^a_1+e^a_2$ will not appear in the matrix $L$.}\label{ndetl3}
\end{figure}

\noindent{\underline{\bf{Contribution from the cycle $\mathbf{c}^1$:}}}

\begin{enumerate}
\item Suppose all three edges from the cycle $\mathbf{c}^1$ are dependent edges. Then $\mathbf{c}^1$ does not contribute to the determinant of $L$.

\item Up to renaming $\{T_i\}_{i=1}^k$ if necessary, suppose we have a branch of the form 
$$T_1 \xrightarrow{\mathbf{c^1}} T_2 \xrightarrow{\mathbf{c(2,3)}} T_3 \xrightarrow{\mathbf{c(3,4)}} \dots,$$ 
where $\mathbf{c^1} = (e_1^1,e_2^1,e_3^1)$, $(e_1^1, e_3^1)\in T_1$ is on the top layer of $T_1$, $(e_1^1, e_2^1) \in T_2$ is on the top layer of $T_2$ and $(e_2^1, e_3^1) \in T_2$ is not on the top layer of $T_2$. By summing up all the edges in $T_1$, we have the equation
\begin{align}\label{layer1c2}
e_1^1 + e_3^1 + R_1 \sim 0,
\end{align}
where $R_1$ is the sum of all the edges in $T_1$ except $e_1^1$ and $e_3^1$. Similarly, by summing up all the edges in $T_2$, we have the equation
\begin{align}\label{layer2c2}
e_1^1 + 2e_2^1 + e_3^1 + R_2 \sim 0,
\end{align}
where $R_2$ is the sum of all the edges in $T_2$ except $e_1^1,e_2^1$ and $e_3^1$. Note that $R_1$ and $R_2$ are independent of $e_1^1,e_2^1$ and $e_3^1$. From (\ref{layer1c2}) and (\ref{layer2c2}), we can see that
\begin{align*}
e_2^1  \sim \frac{1}{2}(R_1-R_2)
\end{align*}
is independent of $e_1^1$ and $e_3^1$. As a result, the row corresponding to $e^1_{2}+e^1_{3}$ is of the form 
\begin{align*}
\begin{pNiceMatrix}[first-row, first-col]
 &  e^1_{3} & * & \dots & *\\
 e^1_{2}+e^1_{3} & 1& * & \dots & *
\end{pNiceMatrix}.
\end{align*}

\item
Up to renaming $\{T_i\}_{i=1}^k$ if necessary, suppose we have the configuration 
$$T_1 \xrightarrow{\mathbf{c^1}} T_2 \xrightarrow{\mathbf{c(2,3)}} T_3 \xrightarrow{\mathbf{c(3,4)}} \dots,$$ 
where $\mathbf{c^1} = (e_1^1,e_2^1,e_3^1)$, $(e_1^1, e_3^1)\in T_1$ is on the top layer of $T_1$, $(e_2^1, e_3^1) \in T_1$ is not on the top layer of $T_1$ and $(e_1^1, e_2^1) \in T_2$ is on the top layer of $T_2$.  By summing up all the edges in $T_1$, we have the equation
\begin{align}\label{layer1c3}
e_1^1 + e_2^1 + 2e_3^1 + R_1 \sim 0,
\end{align}
where $R_1$ is the sum of all the edges in $T_1$ except $e_1^1,e_2^1$ and $e_3^1$. Similarly, by summing up all the edges in $T_2$, we have the equation
\begin{align}\label{layer2c3}
e_1^1 + e_2^1 + R_2 \sim 0,
\end{align}
where $R_2$ is the sum of all the edges in $T_2$ except $e_1^1$ and $e_2^1$. Note that $R_1$ and $R_2$ are independent of $e_1^1,e_2^1$ and $e_3^1$. From (\ref{layer1c3}) and (\ref{layer2c3}), we can see that
\begin{align*}
e_3^1  \sim \frac{1}{2}(-R_0+R_1)
\end{align*}
is independent of $e_1^1$ and $e_2^1$. As a result, the row corresponding to $e^1_{2}+e^1_{3}$ is of the form 
\begin{align*}
\begin{pNiceMatrix}[first-row, first-col]
 &  e^1_{2} & * & \dots & *\\
 e^1_{2}+e^1_{3} & 1& * & \dots & *
\end{pNiceMatrix}.
\end{align*}

\item Up to renaming $\{T_i\}_{i=1}^k$ if necessary, suppose for some level $l\geq 2$, we have the configuration 
$$T_1 \xrightarrow{\mathbf{c(1,2)}} T_2 \xrightarrow{\mathbf{c(2,3)}} \dots \xrightarrow{\mathbf{c(l,l+1)}} T_{l+1}, $$
where $ (e_1^1, e_3^1)\in T_1$ is on the top layer of $T_1$, $(e_1^1, e_2^1) \in T_2$ is on the top layer of $T_2$ and $(e_2^1, e_3^1) \in T_{l+1}$ is not on the top layer of $T_{l+1}$. 
 For $j=2,\dots,k$, we let $p_j,q_j,r_j \in \{1,2,3\}$ such that for the cycle $\mathbf{c(j,j+1)}=(e_1^j,e_2^j,e_3^j)$, the edges pair $(e_{p_j}^j,e_{q_j}^j) \in T_{j+1}$ is on the top layer of $T_{j+1}$ and the edges pair $(e_{p_j}^j,e_{r_j}^j)\in T_{j}$ is not on the top layer of $T_{j}$. 

First, by summing up all the edges in $T_1$, we have the equation
\begin{align}\label{layer1}
e_1^1 + e_3^1 + R_1 \sim 0,
\end{align}
where $R_1$ is the sum of all the edges in $T_1$ except $e_1^1$ and $e_3^1$. Similarly, by summing up all the edges in $T_2$, we have the equation
\begin{align}\label{layer2} 
e_1^1 + e_2^1 + e_{p_2}^2 + e_{r_2}^2 +  R_2 \sim 0,
\end{align}
where $R_2$ is the sum of all the edges in $T_2$ except $e_1^1, e_2^1, e_{p_2}^2$ and $e_{r_2}^2$. By the same reason, for $j=3,\dots,l$, we have the equation
\begin{align}\label{layerj}
e_{p_{j-1}}^{j-1} + e_{q_{j-1}}^{j-1} + e_{p_{j}}^{j} + e_{r_{j}}^{j} + R_j = 0 ,\end{align}
where $R_j$ is the sum of all the other edges in $T_j$ except $e_{p_{j-1}}^{j-1} , e_{q_{j-1}}^{j-1} , e_{p_{j}}^{j}$ and $e_{r_{j}}^{j}$. Lastly, we have
\begin{align}\label{layerk}
e_{p_{l}}^{l} + e_{q_{l}}^{l} + e_{2}^{1} + e_{3}^1 + R_{l+1} = 0 ,
\end{align}
where $R_l$ is the sum of all the other edges in $T_{l+1}$ except $e_{p_{l}}^{l}, e_{q_{l}}^{l} , e_{2}^{1}$ and $e_{3}^1$.  
By successive substitutions using (\ref{layerj}) and (\ref{layerk}), we have
$$
e^2_{p_2} \sim - \sum_{j=2}^{l-1} (-1)^j (e_{p_j}^j + e_{r_{j+1}}^{j+1} + R_j) + (-1)^{l+1}(e_{q_l}^l + e_2^1 + e_3^1 + R_{l+1}).
$$
When $l$ is odd, from (\ref{layer2}), we have
\begin{align}\label{layertogetodd}
e_1^1 + 2e_2^1 + e_3^1
\sim
- e_{r_2}^2 - R_1 - e_{q_l}^l - R_l + \sum_{j=2}^{l-1} (-1)^j (e_{p_j}^j + e_{r_{j+1}}^{j+1} + R_j).
\end{align}
When $l$ is even,  from (\ref{layer2}), we have
\begin{align}\label{layertogeteven}
e_1^1 - e_3^1
\sim - e_{r_2}^2 - R_1 + e_{q_l}^l + R_l + \sum_{j=2}^{l-1} (-1)^j (e_{p_j}^j + e_{r_{j+1}}^{j+1} + R_j).
\end{align}
As a result, when $l$ is odd, from (\ref{layer1}) and (\ref{layertogetodd}), we have
\begin{align}\label{layercombodd}
e_2^1 \sim \frac{1}{2}\Big(R_0 - e_{r_2}^2 - R_1 - e_{q_l}^l - R_l + \sum_{j=2}^{l-1} (-1)^j (e_{p_j}^j + e_{r_{j+1}}^{j+1} + R_j) \Big).
\end{align}
When $l$ is even, from (\ref{layer1}) and (\ref{layertogeteven}), we have
\begin{align}\label{layercombeven1}
e_1^1 \sim \frac{1}{2}\Big(-R_0 - e_{r_2}^2 - R_1 - e_{q_l}^l - R_l + \sum_{j=2}^{l-1} (-1)^j (e_{p_j}^j + e_{r_{j+1}}^{j+1} + R_j) \Big), 
\end{align}
which by (\ref{layer1}) implies that
\begin{align}\label{layercombeven2}
e_3^1 \sim \frac{1}{2}\Big(-R_0 - e_{r_2}^2 - R_1 - e_{q_l}^l - R_l + \sum_{j=2}^{l-1} (-1)^j (e_{p_j}^j + e_{r_{j+1}}^{j+1} + R_j) \Big), 
\end{align}
Note that the expressions on the right hand sides of (\ref{layercombodd}) and (\ref{layercombeven2}) are independent of $e_1^1,e_2^1,e_3^1$. As a result, when $l$ is odd, the row corresponding to $e^1_{2}+e^1_{3}$ is of the form 
\begin{align}\label{B1a}
\begin{pNiceMatrix}[first-row, first-col]
 &  e^1_{3} & * & \dots & *\\
e^1_{2}+e^1_{3} & 1 & * & \dots & *
\end{pNiceMatrix}.
\end{align}
Similarly, when $l$ is even, the row corresponding to $e^1_{2}+e^1_{3}$ is of the form 
\begin{align}\label{B1a}
\begin{pNiceMatrix}[first-row, first-col]
 &  e^1_{2} & * & \dots & *\\
 e^1_{2}+e^1_{3} & 1& * & \dots & *
\end{pNiceMatrix}.
\end{align}
\end{enumerate}

\noindent{\underline{\bf{Contribution of the cycles $\mathbf{c}^2,\dots,\mathbf{c}^n$:}}}\\

Next, we claim that for each cycle $\mathbf{c}^i=(e_1^i,e_2^i,e_3^i)$, each of them contributes a factor of $\pm2$ to the determinant. Observe that each of them falls into one of the following possibilities:

\begin{enumerate}
\item Suppose the cycle $\mathbf{c}^i=(e^i_1,e^i_2,e^i_3)$ corresponds to exactly one edge of the tree. 
\begin{enumerate}[I)]
\item
For configuration in 1)I), by summing up all the edges in $T_b$, we have
\begin{align}
e^i_1+e^i_2  + R_b \sim 0 
\end{align}
where $R_b$ is the sum of edges in $T_b$ except $e^i_1,e^i_2$. This implies that
\begin{align}
e^i_1 + R_b \sim -e^i_2 
\end{align}
Note that $R_b$ is independent of $e^i_1,e^i_2,e^i_3$. Locally we get a block
\begin{align}
\begin{pNiceMatrix}[first-row, first-col]
 &  e^i_{2} & e^i_{3}\\
e^i_{1}+e^i_{3} & -1 & 1\\
e^i_{2}+e^i_{3} & 1 & 1
\end{pNiceMatrix}
\end{align}
with determinant $-2$. 

\item 
For configuration 1)II), by summing up all the edges in $T_b$, we have
\begin{align}
e^i_1+2e^i_2  + e^i_3 + R_b \sim 0 
\end{align}
where $R_b$ is the sum of edges in $T_b$ except $e^i_1,e^i_2,e^i_3$. This implies that
\begin{align}
e^i_1 + e^i_3 + R_b \sim -2e^i_2.
\end{align}
Note that $R_b$ is independent of $e^i_1,e^i_2,e^i_3$. Locally we get a block
\begin{align}
\begin{pNiceMatrix}[first-row, first-col]
 &  e^i_{2} & e^i_{3}\\
e^i_{1}+e^i_{3} & -2 & 0\\
e^i_{2}+e^i_{3} & 1 & 1
\end{pNiceMatrix}
\end{align}
with determinant $-2$. 

\item For configuration 1)III), by summing up all the edges in $T_c$, we have
$$
e^i_1+e^i_2+R_c \sim 0,
$$
where $R_c$ is the sum of edges in $T_c$ except $e^i_1$ and $e^i_2$. This implies
$$
e^i_1+R_c \sim -e^i_2,
$$
Note that $R_c$ does not depend on $e^i_1,e^i_2$ and $e^i_3$. Locally we get a block
\begin{align}
\begin{pNiceMatrix}[first-row, first-col]
 &  e^i_{2} & e^i_{3}\\
e^i_{1}+e^i_{3} & -1 & 1\\
e^i_{2}+e^i_{3} & 1 & 1
\end{pNiceMatrix}
\end{align}
with determinant $-2$.

\item 
For configuration 1)IV), by a similar computation as case 4, we have
$$
e^i_1+e^i_2 \sim (-1)^{d(b,c)}(e^i_2+e^i_3).
$$
When $d(b,c)$ is even, we have
$$ e^i_1 \sim e^i_3.$$
Locally we get a block
\begin{align}
\begin{pNiceMatrix}[first-row, first-col]
 &  e^i_{2} & e^i_{3}\\
e^i_{1}+e^i_{3} & 0 & 2\\
e^i_{2}+e^i_{3} & 1 & 1
\end{pNiceMatrix}
\end{align}
with determinant $-2$. 
When $d(b,c)$ is odd, we have
$$ e^i_1 + e^i_3 \sim -2e^i_2.$$
Locally we get a block
\begin{align}
\begin{pNiceMatrix}[first-row, first-col]
 &  e^i_{2} & e^i_{3}\\
e^i_{1}+e^i_{3} & -2 & 0\\
e^i_{2}+e^i_{3} & 1 & 1
\end{pNiceMatrix}
\end{align}
with determinant $-2$. 

\item For configuration 1)V), by summing up all the edges in $T_c$, we have
$$
e^i_1+e^i_2+R_c \sim 0,
$$
where $R_c$ is the sum of edges in $T_c$ except $e^i_1$ and $e^i_2$. This implies
$$
e^i_1+R_c \sim -e^i_2,
$$
Note that $R_c$ does not depend on $e^i_1,e^i_2$ and $e^i_3$. Locally we get a block
\begin{align}
\begin{pNiceMatrix}[first-row, first-col]
 &  e^i_{2} & e^i_{3}\\
e^i_{1}+e^i_{3} & -1 & 1\\
e^i_{2}+e^i_{3} & 1 & 1
\end{pNiceMatrix}
\end{align}
with determinant $-2$. 
\end{enumerate}
\item By summing up all the edges in $T_c$, 
we have
$$
e^i_1+e^i_2+R_c \sim 0,
$$
where $R_c$ is the sum of edges in $T_c$ except $e^i_1$ and $e^i_2$. 
 By summing up all the edges in $T_b$, 
$$
e^i_1+e^i_3+R_b \sim 0,
$$
where $R_b$ is the sum of edges in $T_b$ except $e^i_1$ and $e^i_3$. As a result,
$$ e^i_2 + e^i_3 \sim -2e^i_1 -R_b-R_c.$$
Note that $R_b$ and $R_c$ do not depend on $e^i_1,e^i_2$ and $e^i_3$. Locally we get a block
\begin{align}
\begin{pNiceMatrix}[first-row, first-col]
 &  e^i_{1} \\
e^i_{2}+e^i_{3} & -2
\end{pNiceMatrix}
\end{align}
with determinant $-2$. 
\item Suppose no edge from the cycle is a dependent edge. Locally, we get a block 
\begin{align*}
\begin{pNiceMatrix}[first-row, first-col]
 & e^i_{1} & e^i_{2} & e^i_{3} \\
e^i_{1}+e^i_{2} & 1 & 1 & 0\\
e^i_{1}+e^i_{3} & 1 & 0 & 1\\
e^i_{2}+e^i_{3} & 0 & 1 & 1
\end{pNiceMatrix}
\end{align*}
with determinant $-2$.
\end{enumerate}
Altogether, by applying row operations that do not change the determinant of $L$, we transform $L$ into an upper triangular block matrix. Moreover, $2c-1$ cycles give blocks with determinant $\pm 2$ and 1 cycle gives a block with determinant $\pm 1$. As a result, we have $\det L = \pm 2^{2c-1}$.
\end{proof}

\subsection{Proof of Theorem \ref{COC1loop}}
Assume that $\tau(M,\boldsymbol\alpha, \mathbf{z}, \mathcal{T})\neq 0$. By the definition of the 1-loop invariant, we know that 
\begin{align*}
\frac{\tau(M,\boldsymbol\alpha', \mathbf{z}, \mathcal{T})}{\tau(M,\boldsymbol\alpha, \mathbf{z}, \mathcal{T})}
=  \frac{\mathrm{det}(D_{\mathbf z}\mathcal{G}_{\boldsymbol\alpha'}(\mathbf z))}{\mathrm{det}(D_{\mathbf z}\mathcal{G}_{\boldsymbol\alpha}(\mathbf z))},
\end{align*}
where $\mathcal{G}_{\boldsymbol\alpha}$ and $\mathcal{G}_{\boldsymbol\alpha'}$ are the function $\mathcal{G}$ defined in (\ref{defG1loop}) with respect to the system of simple closed curves $\boldsymbol\alpha$ and $\boldsymbol\alpha'$ respectively. By the inverse function theorem, since $\mathrm{det}(D_{\mathbf z}\mathcal{G}_{\boldsymbol\alpha}(\mathbf z))\neq 0$ by assumption, locally $\mathcal{G}_{\boldsymbol\alpha}$ defines a biholomorphism around $\mathbf{z}$. Recall that the first $n-k$ entries of $\mathcal{G}_{\boldsymbol\alpha}$ and $\mathcal{G}_{\boldsymbol\alpha'}$ correspond to the $n-k$ edge equations of the ideal triangulation. Besides, the last $k$ entries of $\mathcal{G}_{\boldsymbol\alpha}$ and $\mathcal{G}_{\boldsymbol\alpha'}$ correspond to the $k$ (logarithmic) holonomy equations along the system of simple closed curves $\boldsymbol\alpha$ and $\boldsymbol\alpha'$ respectively. In particular, the Jacobian of the holomorphic function $\mathcal{G}_{\boldsymbol\alpha'} \circ \mathcal{G}_{\boldsymbol \alpha}^{-1}$ is of the form
\begin{align*}
D\Big(\mathcal{G}_{\boldsymbol\alpha'} \circ \mathcal{G}_{\boldsymbol\alpha}^{-1} \Big) 
=
\begin{pmatrix}
\text{I}_{n-k} & 0 \\
* & \bigg( \frac{\partial \mathrm H(\alpha'_i)}{\partial \mathrm H(\alpha_j)}\bigg)_{ij}
\end{pmatrix}
\end{align*}
with 
$$\det \Big(D\Big(\mathcal{G}_{\alpha'} \circ \mathcal{G}_{\boldsymbol\alpha}^{-1} \Big) \Big) = \det \bigg( \frac{\partial \mathrm H(\alpha'_i)}{\partial \mathrm H(\alpha_j)}\bigg)_{ij}
,$$
where $\text{I}_{n-k}$ is the $(n-k)\times (n-k)$ identity matrix. Moreover, by chain rule, we have 
$$\det \Big(D\Big(\mathcal{G}_{\boldsymbol\alpha'} \circ \mathcal{G}_{\boldsymbol\alpha}^{-1} \Big) (\mathcal{G}_{\boldsymbol\alpha}(\mathbf{z}) ) \Big) =\frac{\mathrm{det}(D_{\mathbf z}\mathcal{G}_{\boldsymbol\alpha'}(\mathbf z))}{\mathrm{det}(D_{\mathbf z}\mathcal{G}_{\boldsymbol\alpha}(\mathbf z))}.$$
Altogether, we have
$$
\frac{\tau(M,\boldsymbol\alpha', \mathbf{z}, \mathcal{T})}{\tau(M,\boldsymbol\alpha, \mathbf{z}, \mathcal{T})}
=  \det \bigg( \frac{\partial \mathrm H(\alpha'_i)}{\partial \mathrm H(\alpha_j)}\bigg)_{ij}.
$$

\section{Topological invariance of the 1-loop invariant}\label{Sinv02}
\subsection{Invariance under 0-2 move}
The main goal of this section is to prove the following proposition.
\begin{proposition}\label{inv02move}
Let $\mathcal{T}$ and $\mathcal{T}'$ be two $\rho$-regular triangulations that are related by a 0-2 Pachner move. 
Suppose there exists $\mathbf{z}\in \mathcal{V}_{\mathcal{T}}$ and $\mathbf{z'}\in \mathcal{V}_{\mathcal{T}'}$ such that $\mathcal{P}_{\mathcal{T}} (\mathbf{z}) = \mathcal{P}_{\mathcal{T}'} (\mathbf{z}')$ and $\mathbf{z}$ is obtained from $\mathbf{z'}$ by removing the shape parameters corresponding to the two tetrahedra in $\mathcal{T}' \setminus \mathcal{T}$.
Then 
$$\tau(M,\boldsymbol\alpha, \mathbf{z}, \mathcal{T}) = \tau(M,\boldsymbol\alpha, \mathbf{z'}, \mathcal{T}').$$
\end{proposition}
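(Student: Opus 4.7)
The plan is to use Proposition \ref{1loopgluingjacobian} to express the 1-loop invariant as the ratio
\begin{equation*}
\tau(M,\boldsymbol\alpha,\mathbf{z},\mathcal{T}) \;=\; \pm\frac{1}{2}\,\frac{\det\bigl(D_{\mathbf{z}}F(\mathbf{z})\bigr)}{\prod_{i} \xi_i^{f_i}\xi_i'^{f_i'}\xi_i''^{f_i''}},
\end{equation*}
and to verify directly that the right hand side is preserved under the 0-2 move. The 0-2 move introduces two new ideal tetrahedra $\Delta_{n+1},\Delta_{n+2}$, with shape parameters $(w_1,w_2)$, together with two new edges $e_{n+1}',e_{n+2}'$; by construction these two new edges are incident only to $\Delta_{n+1}$ and $\Delta_{n+2}$, and do not meet any tetrahedron of $\mathcal{T}$. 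Moreover the boundary-equivalence hypothesis ${\mathcal{P}}_{\mathcal{T}'}(\mathbf{z}')={\mathcal{P}}_{\mathcal{T}}(\mathbf{z})$ together with the two new edge equations forces $(w_1,w_2)$ to satisfy the algebraic relation encoding a degenerate pillow (a relation such as $w_1w_2=1$ or $w_1=w_2''$, depending on the precise combinatorial realization of the move).

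The first step is to choose a generalized strong combinatorial flattening $\boldsymbol{\mathcal{F}}'$ for $\mathcal{T}'$ which extends a fixed flattening $\boldsymbol{\mathcal{F}}$ for $\mathcal{T}$ on the old tetrahedra. Since the two new edges of $\mathcal{T}'$ are local to the pillow and the two new peripheral curves are unaffected by the move, the only constraint on the new six entries is the existence part of Definition \ref{GSCF}, which is easy to solve explicitly by symmetry of the pillow. The second step is to order the shape variables so that $(w_1,w_2)$ are last and the two new edge equations are last; then the Jacobian of the new gluing map becomes block upper triangular
\begin{equation*}
D_{\mathbf{z}'}F'(\mathbf{z}') \;=\; \begin{pmatrix} D_{\mathbf{z}}F(\mathbf{z}) & * \\ 0 & Q \end{pmatrix},
\end{equation*}
where $Q$ is the $2\times 2$ matrix of derivatives of the two new edge equations with respect to $(w_1,w_2)$. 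The vanishing of the lower-left block is exactly the statement that the new edges are internal to the pillow, so that
\begin{equation*}
\det\bigl(D_{\mathbf{z}'}F'(\mathbf{z}')\bigr) \;=\; \det\bigl(D_{\mathbf{z}}F(\mathbf{z})\bigr)\cdot \det Q.
\end{equation*}

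The main obstacle is the third and final step: verifying that
\begin{equation*}
\frac{\det Q}{\prod_{j=n+1}^{n+2}\xi_j^{f_j}\xi_j'^{f_j'}\xi_j''^{f_j''}} \;=\; \pm 1.
\end{equation*}
This is a direct local computation in the pillow. I would write $Q$ explicitly using the two new edge equations, plug in the shape-parameter relation forced by the boundary-equivalence, and then use the identities $\xi=1/z$, $\xi'=1/(1-z)$, $\xi''=1/(z(z-1))$, together with $f_j+f_j'+f_j''=1$ and the logarithmic edge identity $\log z+\log z'+\log z''\equiv 0$ of Remark \ref{edgeid}, to collapse the ratio to $\pm 1$. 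Since this verification is entirely internal to the pillow, it can be carried out once, independent of the ambient triangulation, and then applied to any 0-2 move. Combining the three steps yields
\begin{equation*}
\tau(M,\boldsymbol\alpha,\mathbf{z}',\mathcal{T}') \;=\; \pm \tau(M,\boldsymbol\alpha,\mathbf{z},\mathcal{T}),
\end{equation*}
with the sign ambiguity absorbed into the $\pm$ appearing in the definition of the 1-loop invariant, as required. The 2-0 case follows by reading the argument in reverse.
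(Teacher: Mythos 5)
Your overall strategy matches the paper's: rewrite the 1-loop invariant via Proposition \ref{1loopgluingjacobian} as a Jacobian determinant divided by a flattening monomial, exploit the pillow relation $z_{n+1}z_{n+2}=1$, and check that the local correction factors cancel. However, there is a genuine gap in Step 2, and it stems from a misreading of the combinatorics of the 0-2 move.

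You assert that the two new edges are ``incident only to $\Delta_{n+1}$ and $\Delta_{n+2}$, and do not meet any tetrahedron of $\mathcal{T}$,'' and that the new peripheral curves are unaffected, so that the Jacobian automatically becomes block upper triangular with vanishing lower-left block. This is false. A 0-2 move on an ideal triangulation creates exactly one edge internal to the pillow (the one whose equation is $\log z_{n+1}+\log z_{n+2}=2\pi\sqrt{-1}$, giving $z_{n+1}z_{n+2}=1$); the second extra edge arises because an existing edge $e_{n-k}$ of $\mathcal{T}$ is \emph{split} into two edges $e_{n-k}^1$, $e_{n-k}^2$ whose gluing equations involve shape parameters of the old tetrahedra as well as $z_{n+1},z_{n+2}$. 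Moreover, as illustrated in Figures \ref{02move2} and \ref{02move1}, the remaining edge equations and the meridian holonomy equations \emph{do} pick up corrections in $z_{n+1},z_{n+2}$ of the form $\pm(\log z_{n+1}'+\log z_{n+2}'')$ or $\pm(\log z_{n+1}''+\log z_{n+2}')$. Hence neither the lower-left nor the upper-right block of $D_{\mathbf z'}F'$ vanishes on the nose, and the claimed factorization $\det D_{\mathbf z'}F' = \det D_{\mathbf z}F\cdot\det Q$ does not follow from ``locality of the new edges.''

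The missing ingredient is the cancellation that makes elimination possible: on the locus $z_{n+1}z_{n+2}=1$ one has
$$\det\begin{pmatrix}\frac{1}{z_{n+1}} & \frac{1}{z_{n+2}} \\ \frac{1}{1-z_{n+1}} & \frac{1}{z_{n+2}(z_{n+2}-1)}\end{pmatrix}=\det\begin{pmatrix}\frac{1}{z_{n+1}} & \frac{1}{z_{n+2}} \\ \frac{1}{z_{n+1}(z_{n+1}-1)} & \frac{1}{1-z_{n+2}}\end{pmatrix}=0,$$
which says that the $(z_{n+1},z_{n+2})$-entries of every corrected old row are \emph{proportional} to those of the internal edge row. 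This identity is what allows one to clear the new columns from the old rows by a single row operation, after which one adds $e_{n-k}^2$ to $e_{n-k}^1$ to recover the old edge $e_{n-k}$, and the determinant factors as the old $n\times n$ determinant times a residual $2\times2$ block equal to $\pm\frac{1}{z_{n+1}z_{n+2}}=\pm 1$. Your Step 3 (the flattening ratio equals $\pm 1$ using $f_1+f_1'+f_1''=1$, $z_{n+1}z_{n+2}=1$, and the condition $S_1+S_2=2$ from Definition \ref{GSCF}) is essentially correct, but without the cancellation identity above the argument does not close. You should also note that the relation $z_{n+1}z_{n+2}=1$ is forced by the single internal edge equation alone, not by ``the two new edge equations'' as you wrote.
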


\begin{proof}
\begin{figure}[h]
\centering
\includegraphics[scale=0.16]{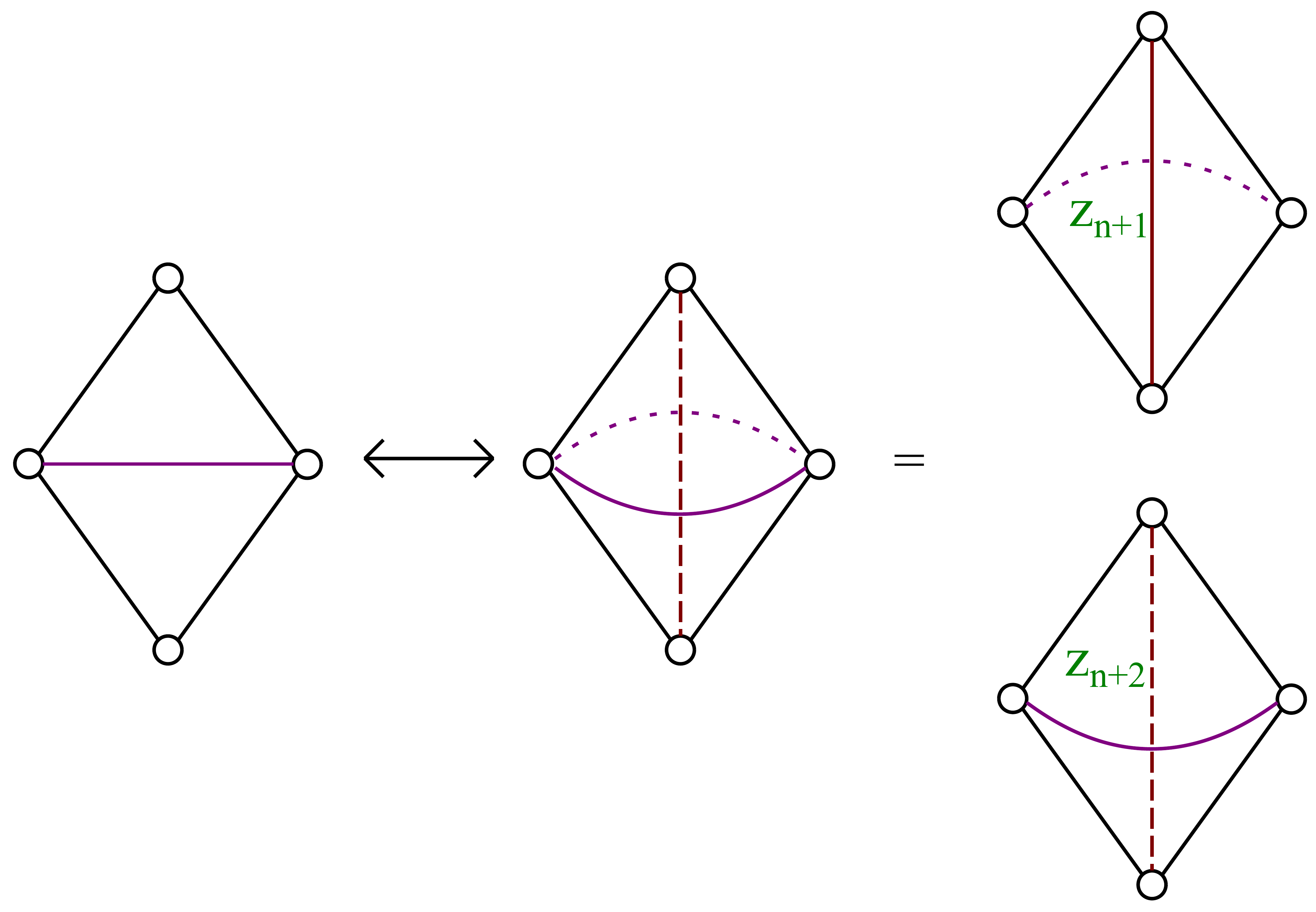}
\caption{The figure shows a 0-2 move that changes the triangulation $\mathcal{T}$ to $\mathcal{T}'$. We assign shape parameters $z_{n+1}$ and $z_{n+2}$ to the new edge as shown on the right.}\label{02move3}
\end{figure}

\begin{figure}[h]
\centering
\includegraphics[scale=0.16]{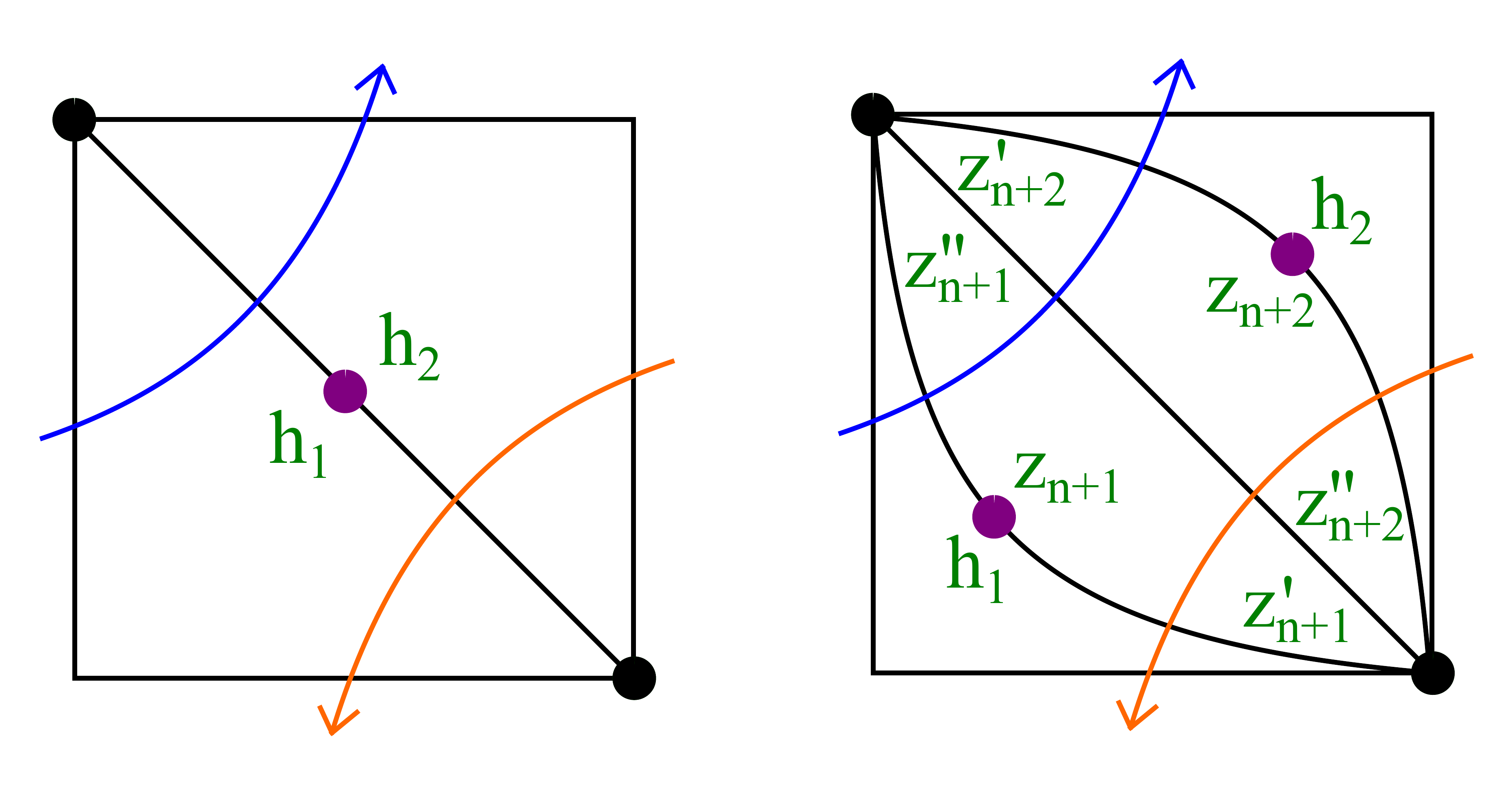}
\caption{This figure illustrates how the triangulations of the boundary torus corresponding to the left and right ideal vertices in Figure \ref{02move3} change under a 0-2 move. The holonomies of the blue and orange curves change by a multiples of $\log z_{n+1}' + \log z_{n+2}''$ and $\log z_{n+1}'' + \log z_{n+2}'$ respectively, where the multiples depend on the number of times the curves crosses the diagonal.}\label{02move2}
\end{figure}

\begin{figure}[h]
\centering
\includegraphics[scale=0.16]{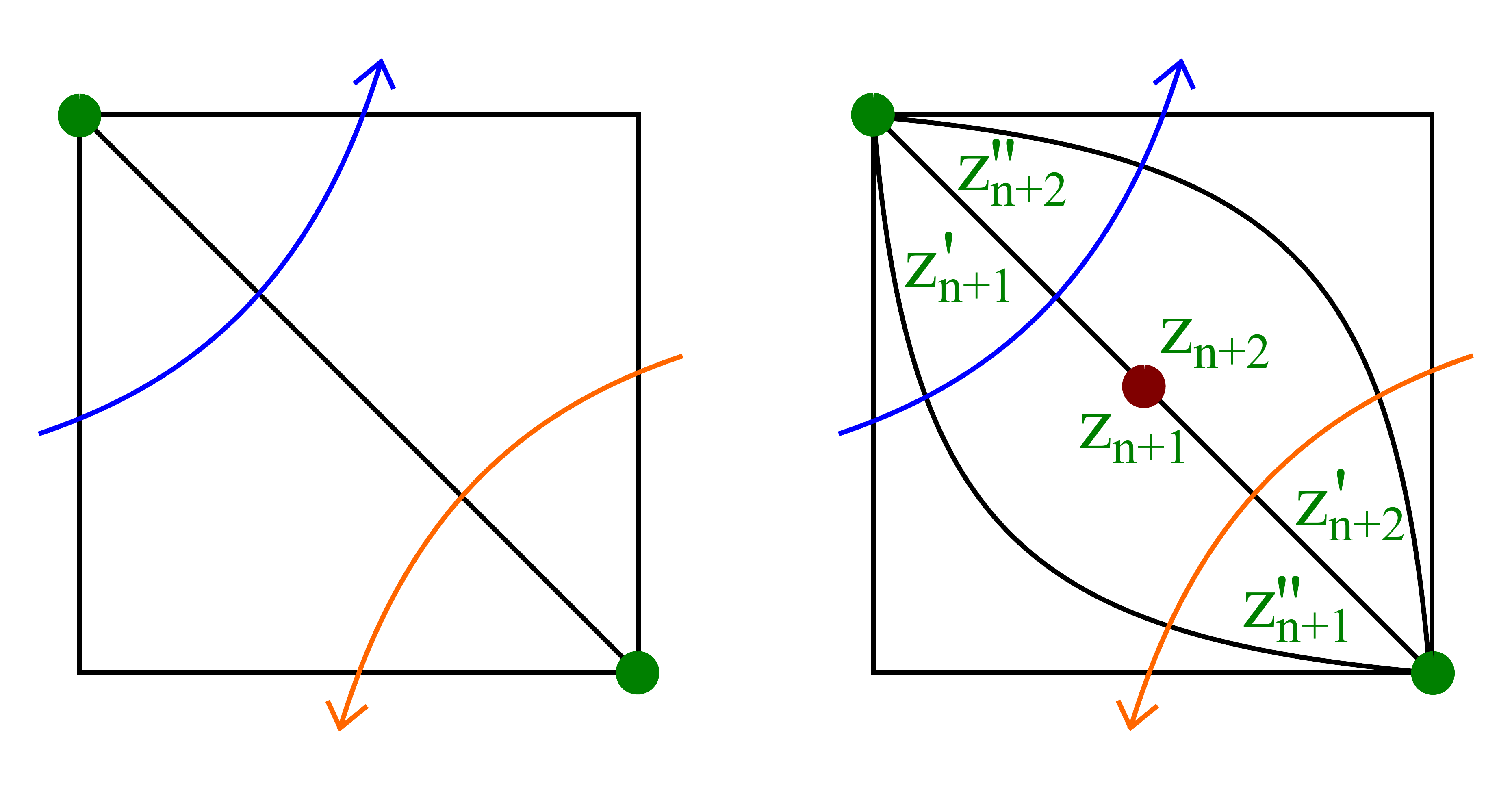}
\caption{This figure illustrates how the triangulations of the boundary torus corresponding to the top and bottom ideal vertices in Figure \ref{02move3} change under a 0-2 move. The holonomies of the blue and orange curves change by a multiples of $\log z_{n+1}' + \log z_{n+2}''$ and $\log z_{n+1}'' + \log z_{n+2}'$ respectively, where the multiples depend on the number of times the curves crosses the diagonal.}\label{02move1}
\end{figure}

Let $M$ be a 3-manifold with $\partial M = T^1 \coprod \dots \coprod T^k$ and let $\rho: \pi_1(M) \to \mathrm{PSL}(2;\CC)$. 
Let $\mathcal{T}=\{\Delta_i\}_{i=1}^n$ and $\mathcal{T}'=\{\Delta_i\}_{i=1}^n \cup \{\Delta_{n+1}, \Delta_{n+2}\}$ be two $\rho$-regular triangulations that are related by a 0-2 move as shown in Figure \ref{02move3}. Let $z_i$ be the shape parameter assigned to $\Delta_i$ for $i=1,2,\dots,n$. Denote the new edge in $\mathcal{T}'$ by $e_{n-k+2}$. Assign shape parameters $z_{n+1}$ and $z_{n+2}$ as shown in Figure \ref{02move3}. Note that the edge equation around $e_{n-k+2}$ is given by 
$$e_{n-k+2}: \qquad \log z_{n+1} + \log z_{n+2} = 2\pi\sqrt{-1},$$
which corresponds to a row
\begin{align}\label{e_{n-k+2}}
\begin{pNiceMatrix}[first-row, first-col]
 &  z_1& \dots &z_n  & z_{n+1} & z_{n+2}  \\
 e_{n-k+2} &0  &\dots & 0 & \frac{1}{z_{n+1}} & \frac{1}{z_{n+2}} \\
 \end{pNiceMatrix}
\end{align}
in the computation of the 1-loop invariant. 
In particular, we have
\begin{align}\label{zprodeq1}
z_{n+1}z_{n+2} = 1.
\end{align}
Besides, consider the triangulations of the boundary torus corresponding to the left and right ideal vertices in Figure \ref{02move3}. As shown in Figure \ref{02move2}, the edge equation around the purple edge, denoted by $e_{n-k}$, is changed from
$$e_{n-k}:  h_1 + h_2 = 2\pi\sqrt{-1} $$
into two edge equations $e_{n-k}^1$ and $e_{n-k}^2$ given by
$$e_{n-k}^1: \qquad h_1 + \log z_{n+1} = 2\pi\sqrt{-1}$$
and 
$$e_{n-k}^2: \qquad h_2 + \log z_{n+2} = 2\pi\sqrt{-1},$$
where $h_1$ and $h_2$ are the sums of the logarithms of the shape parameters on the left and right hand sides of the diagonal respectively.
Note that $e_{n-k}^1$ and $e_{n-k}^2$ corresponds to the row
\begin{align}
 \begin{pNiceMatrix}[first-row, first-col]
 &  z_1& \dots &z_n  & z_{n+1} & z_{n+2}  \\
 e_{n-k}^1 &*  &\dots & * & \frac{1}{z_{n+1}} & 0 \\
 \end{pNiceMatrix}
\end{align}
and
\begin{align}
\begin{pNiceMatrix}[first-row, first-col]
 &  z_1& \dots &z_n  & z_{n+1} & z_{n+2}  \\
 e_{n-k}^2 &*  &\dots & * & 0 & \frac{1}{z_{n+2}} \\
 \end{pNiceMatrix}
\end{align}
respectively. Moreover, as shown in Figures \ref{02move2} and \ref{02move1}, 
each of the remaining edge equation $e_i$ is either unchanged, or is changed by adding a multiple of $\pm ( \log z_1' + \log z_2'')$ or $\pm( \log z_1'' + \log z_2') $. In particular, each of them correspond to a row of the form
\begin{align}
\begin{pNiceMatrix}[first-row, first-col]
 &  z_1& \dots &z_n  & z_{n+1} & z_{n+2}  \\
 e_{i} &*  &\dots & * & 0 & 0 \\
 \end{pNiceMatrix},
\end{align}
\begin{align}\label{e_i1}
 \begin{pNiceMatrix}[first-row, first-col]
 &  z_1& \dots &z_n  & z_{n+1} & z_{n+2}  \\
 e_{i} &*  &\dots & * & \frac{k}{1-z_1} &  \frac{k}{z_2(z_2+1)} \\
 \end{pNiceMatrix},
\end{align}
or
\begin{align}\label{e_i2}
\begin{pNiceMatrix}[first-row, first-col]
 &  z_1& \dots &z_n  & z_{n+1} & z_{n+2}  \\
 e_{i} &*  &\dots & * &  \frac{k}{z_1(z_1+1)} &  \frac{k}{1-z_2} \\
 \end{pNiceMatrix}.
\end{align}
for some $k\in \ZZ$.
Similarly, each meridian equation $\alpha_i$ correspond to a row of the form
\begin{align}
\begin{pNiceMatrix}[first-row, first-col]
 &  z_1& \dots &z_n  & z_{n+1} & z_{n+2}  \\
 \alpha_{i} &*  &\dots & * & 0 & 0 \\
 \end{pNiceMatrix},
\end{align}
\begin{align}\label{alpha_i1}
 \begin{pNiceMatrix}[first-row, first-col]
 &  z_1& \dots &z_n  & z_{n+1} & z_{n+2}  \\
 \alpha_{i} &*  &\dots & * &  \frac{l}{1-z_1} &  \frac{l}{z_2(z_2+1)} \\
 \end{pNiceMatrix},
\end{align}
or
\begin{align}\label{alpha_i2}
\begin{pNiceMatrix}[first-row, first-col]
 &  z_1& \dots &z_n  & z_{n+1} & z_{n+2}  \\
 \alpha_{i} &*  &\dots & * &  \frac{l}{z_1(z_1+1)} &  \frac{l}{1-z_2} \\
 \end{pNiceMatrix}
\end{align}
for some $l\in\ZZ$.
Note that by (\ref{zprodeq1}), we have
$$ \det \begin{pmatrix} \frac{1}{z_{n+1}} & \frac{1}{z_{n+2}} \\ \frac{1}{1-z_{n+1}} & \frac{1}{z_{n+2}(z_{n+2}-1)} \end{pmatrix}
= \det \begin{pmatrix} \frac{1}{z_{n+1}} & \frac{1}{z_{n+2}} \\ \frac{1}{z_{n+1}(z_{n+1}-1)} & \frac{1}{1-z_{n+2}} \end{pmatrix} 
= 0.$$
In particular, by using row operations, we can use (\ref{e_{n-k+2}}) to eliminate the $z_{n+1}$ and $z_{n+2}$ coordinates of (\ref{e_i1}), (\ref{e_i2}), (\ref{alpha_i1}) and (\ref{alpha_i2}). 

As a result, for the triangulation $\mathcal{T}'$, if we let $\{e_1,e_2,\dots, e_{n-k-1}, e_{n-k}^1, e_{n-k}^2, e_{n-k+2}\}$ be a set of independent edges and $\{\alpha_1,\dots,\alpha_k\}$ be a system of boundary curves, then $\tau(M,\boldsymbol \alpha,\rho, \mathcal{T}')$ is given by
\begin{align}\label{02eq1}
\pm \frac{\det \left(
\begin{pNiceMatrix}[first-row, first-col]
 &  z_1& \dots &z_n  & z_{n+1} & z_{n+2}  \\
\alpha_1 &*  &\dots & * & 0 & 0 \\
\vdots  & \vdots  &\dots & \vdots & \vdots & \vdots  \\
\alpha_{k} &*  &\dots & * & 0 & 0 \\
e_1 &*  &\dots & * & 0 & 0 \\
\vdots  & \vdots  &\dots & \vdots & \vdots & \vdots  \\
e_{n-k-1} &*  &\dots & * & 0 & 0 \\
e_{n-k}^1 & * & \dots & * & \frac{1}{z_{n+1}} & 0\\
e_{n-k}^2 & * & \dots & * & 0 & \frac{1}{z_{n+2}} \\
e_{n-k+2} & 0 & \dots & 0 & \frac{1}{z_{n+1}} & \frac{1}{z_{n+2}}
\end{pNiceMatrix}
\right)}{\Big(\prod_{i=1}^{n} \xi_i^{f_i} \xi_i'^{f_i'} \xi_i''^{f_i''}
\Big)
\Big(\prod_{i=n+1}^{n+2} \xi_i^{f_i} \xi_i'^{f_i'} \xi_i''^{f_i''}
\Big)}
=&\ \ \pm \frac{\det \left(
\begin{pNiceMatrix}[first-row, first-col]
 &  z_1& \dots &z_n  & z_{n+1} & z_{n+2}  \\
 \alpha_1 &*  &\dots & * & 0 & 0 \\
\vdots  & \vdots  &\dots & \vdots & \vdots & \vdots  \\
\alpha_{k} &*  &\dots & * & 0 & 0 \\
e_1 &*  &\dots & * & 0 & 0 \\
\vdots  & \vdots  &\dots & \vdots & \vdots & \vdots  \\
e_{n-k-1} &*  &\dots & * & 0 & 0 \\
e_{n-k} & * & \dots & * & 0 & 0\\
e_{n-k}^2 & * & \dots & * & 0 & \frac{1}{z_{n+2}} \\
e_{n-k+2} & 0 & \dots & 0 & \frac{1}{z_{n+1}} & \frac{1}{z_{n+2}}
\end{pNiceMatrix}
\right)}{\Big(\prod_{i=1}^{n} \xi_i^{f_i} \xi_i'^{f_i'} \xi_i''^{f_i''}
\Big)
\Big(\prod_{i=n+1}^{n+2} \xi_i^{f_i} \xi_i'^{f_i'} \xi_i''^{f_i''}
\Big)},
\end{align}
where the equality follows by adding $e_{n-k-1}^2$ to $e_{n-k-1}^1$ and removing the $z_{n+1},z_{n+2}$ entries by using $e_{n-k+2}$. 
\begin{figure}[h]
\centering
\includegraphics[scale=0.16]{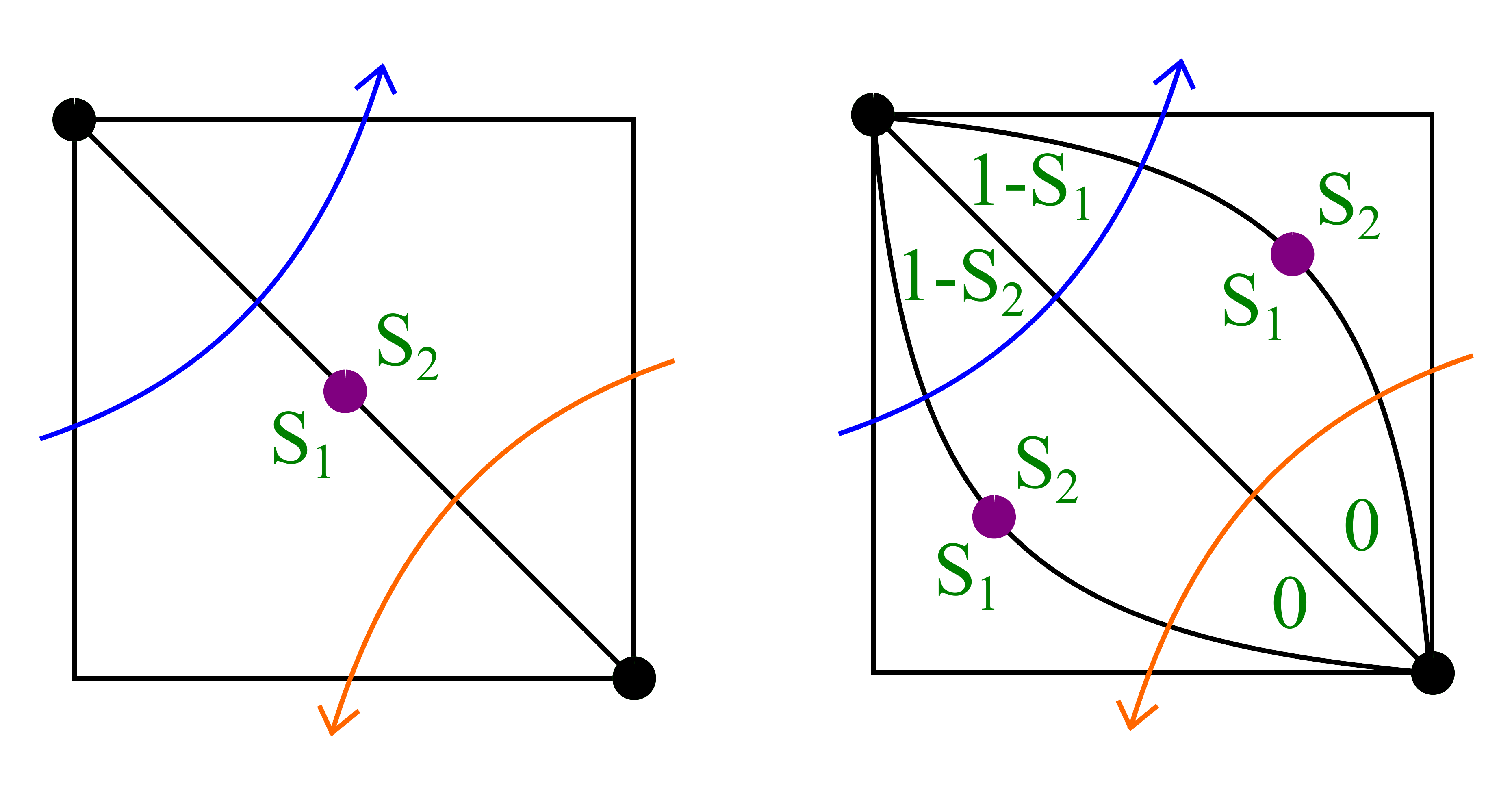}
\includegraphics[scale=0.16]{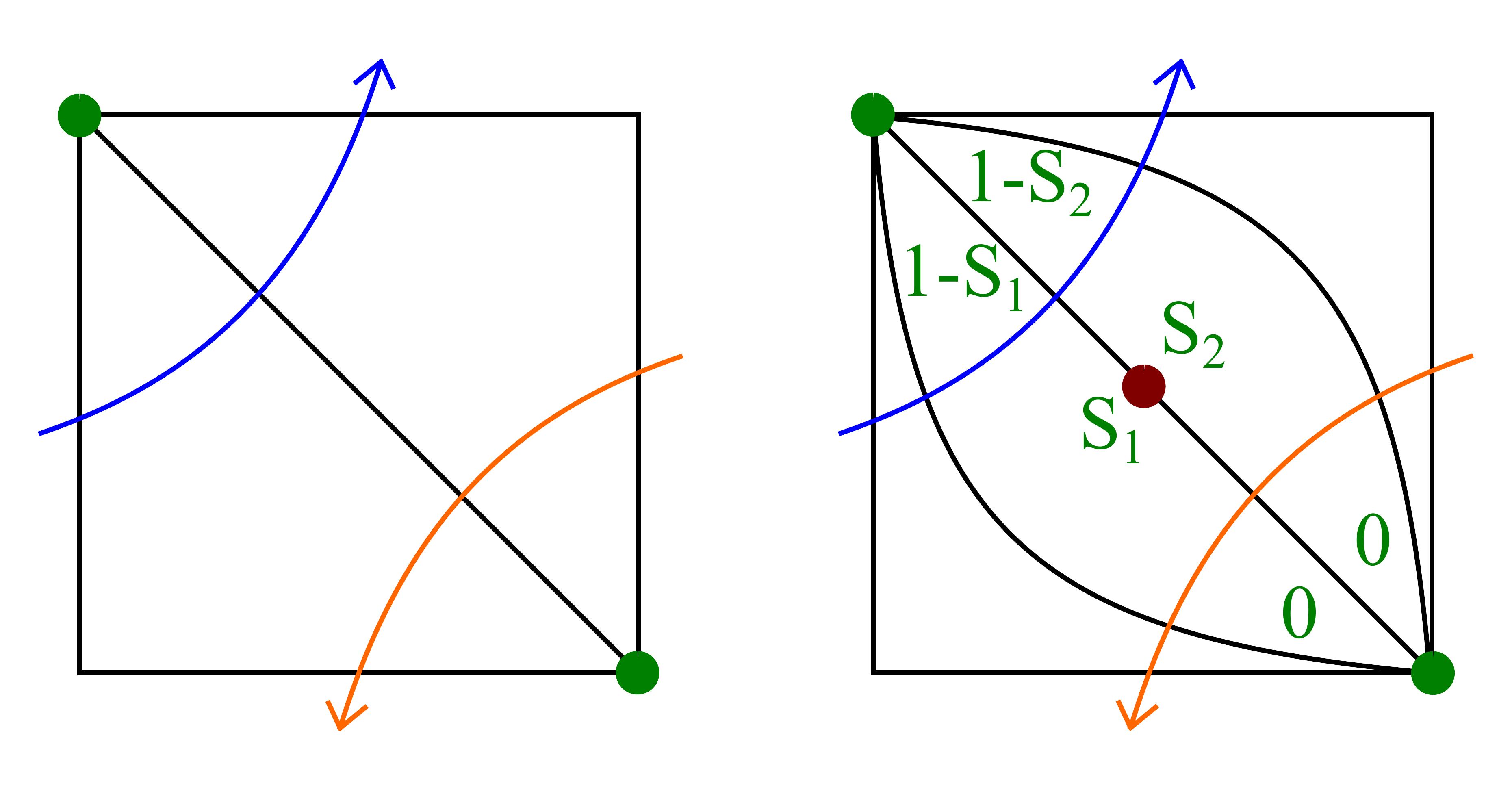}
\caption{This figure shows how the combinatorial flattening ${\boldsymbol{\mathcal{F}}}$ of $\mathcal{T}$ is extended to a combinatorial flattening ${\boldsymbol{\mathcal{F}}}' $ of $\mathcal{T}'$. Note that $S_1+S_2=2$ and $(1-S_1)+(1-S_2) = 2 - S_1 - S_2 = 0$.}\label{02move2combflat}
\end{figure}
Besides, let ${\boldsymbol{\mathcal{F}}} = (\mathbf{f}, \mathbf{f}', \mathbf{f''})$ be a strong combinatorial flattening of $\mathcal{T}$. As shown in Figure \ref{02move2combflat}, we can let $S_1, S_2$ be the sum of the combinatorial flattenings on each side of the diagonal, and extend ${\boldsymbol{\mathcal{F}}} $ to a strong combinatorial flattening  ${\boldsymbol{\mathcal{F}}}' $ of $\mathcal{T}'$ by defining $(f_{n+1},f_{n+1}', f_{n+1}'') = (S_2, 0, 1-S_2)$ and $(f_{n+2},f_{n+2}', f_{n+2}'') = (S_1, 1-S_1, 0)$. In particular, by using $z_{n+1}z_{n+2}=1$ and $S_1+S_2=2$,
\begin{align*}
\prod_{i=n+1}^{n+2} \xi_i^{f_i} \xi_i'^{f_i'} \xi_i''^{f_i''}
&= \bigg(\frac{1}{z_{n+1}}\bigg)^{S_2} \bigg(\frac{1}{z_{n+1}(z_{n+1}-1)} \bigg)^{1-S_2} \bigg(\frac{1}{z_{n+2}}\bigg)^{S_1} \bigg(\frac{1}{1-z_{n+2}} \bigg)^{1-S_1} \notag\\
&= \bigg(\frac{1}{z_{n+1}-1} \bigg)^{1-S_2}\bigg(\frac{z_{n+2}}{z_{n+2}-1} \bigg)^{1-S_1} \notag \\
&= \bigg(\frac{1}{z_{n+1}-1} \bigg)^{1-S_2} \bigg(\frac{\frac{1}{z_{n+1}}}{1-\frac{1}{z_{n+1}}} \bigg)^{1-S_1}\notag\\
&= \bigg(\frac{1}{z_{n+2}-1} \bigg)^{2-S_1-S_2} \notag\\
&=1,
\end{align*}
which implies that
\begin{align}\label{combin02}
\prod_{i=1}^{n+2} \xi_i^{f_i} \xi_i'^{f_i'} \xi_i''^{f_i''}
=
\prod_{i=1}^{n} \xi_i^{f_i} \xi_i'^{f_i'} \xi_i''^{f_i''}.
\end{align}
Altogether, by using (\ref{zprodeq1}), (\ref{02eq1}) and (\ref{combin02}), we have
\begin{align*}
\tau(M,\boldsymbol \alpha,\mathbf{z}, \mathcal{T}')
=&\ \ \pm \frac{\det \left(
\begin{pNiceMatrix}[first-row, first-col]
 &  z_1& \dots &z_n  & z_{n+1} & z_{n+2}  \\
 \alpha_1 &*  &\dots & * & 0 & 0 \\
\vdots  & \vdots  &\dots & \vdots & \vdots & \vdots  \\
\alpha_{k} &*  &\dots & * & 0 & 0 \\
e_1 &*  &\dots & * & 0 & 0 \\
\vdots  & \vdots  &\dots & \vdots & \vdots & \vdots  \\
e_{n-k-1} &*  &\dots & * & 0 & 0 \\
e_{n-k} & * & \dots & * & 0 & 0\\
e_{n-k-1}^2 & * & \dots & * & 0 & \frac{1}{z_{n+2}} \\
e_{n-k+1} & 0 & \dots & 0 & \frac{1}{z_{n+1}} & \frac{1}{z_{n+2}}
\end{pNiceMatrix}
\right)}{\Big(\prod_{i=1}^{n} \xi_i^{f_i} \xi_i'^{f_i'} \xi_i''^{f_i''}
\Big)
\Big(\prod_{i=n+1}^{n+2} \xi_i^{f_i} \xi_i'^{f_i'} \xi_i''^{f_i''}
\Big)}\\
=&\ \ \pm \frac{\det \left(
\begin{pNiceMatrix}[first-row, first-col]
 &  z_1& \dots &z_n   \\
 \alpha_1 &*  &\dots & *  \\
\vdots  & \vdots  &\dots & \vdots   \\
\alpha_{k} &*  &\dots & *   \\
e_1 &*  &\dots & *  \\
\vdots  & \vdots  &\dots & \vdots   \\
e_{n-k-1} &*  &\dots & *   \\
e_{n-k} & * & \dots & *  
\end{pNiceMatrix}
\right)}{\prod_{i=1}^{n} \xi_i^{f_i} \xi_i'^{f_i'} \xi_i''^{f_i''}
}\\
=& \tau(M,\boldsymbol \alpha,\mathbf{z}, \mathcal{T}).
\end{align*}
This completes the proof.
\end{proof}

\subsection{Proof of Theorem \ref{1loopreallyinv}}\label{p1loopreallyinv}
By Proposition \ref{biratiso}, we can regard $\mathbf{z}_2$ as a rational map defined on a Zariski open subset $W \subset \mathcal{V}_{\mathcal{T}_1}$. 
By \cite[Theorem 1.4, 4.1]{DG} and Proposition \ref{inv02move}, for any $\mathbf{z_1} \in W$, we have
$$\tau(M,\boldsymbol \alpha, \mathbf{z_1}, \mathcal{T}_1) = \tau(M,\boldsymbol \alpha, \mathbf{z_2}(\mathbf{z_1}), \mathcal{T}_2).$$
By continuity of both sides, we have the desired result.

\subsection{Proof of Corollary \ref{1loopreallyinvcor}}
Note that Conjecture \ref{1loopconjstatementori} follows immediately from Theorem \ref{mainthm} and \ref{1loopreallyinv}. 
Suppose Conjecture \ref{1loopconjstatement} is true for some $\rho_0$-regular ideal triangulation $\mathcal{T}_1$. Let $\mathcal{T}_2$ be another $\rho_0$-regular ideal triangulation. By Proposition \ref{biratiso}, we can regard $\mathbf{z}_1$ as a rational map defined on a Zariski open subset $W \subset \mathcal{V}_0(\mathcal{T}_2)$. 
By assumption and Theorem \ref{1loopreallyinv}, for any $\mathbf{z_2} \in W$ and $\rho_{\mathbf{z}_2} = \mathcal{P}_{\mathcal{T}_2}(\mathbf{z}_2)$, we have
$$\tau(M,\boldsymbol \alpha, \mathbf{z_2}, \mathcal{T}_2) = \tau(M,\boldsymbol \alpha, \mathbf{z_1}(\mathbf{z_2}), \mathcal{T}_1)
= \pm \mathbb T_{(M,\boldsymbol\alpha)}([\rho_{\mathbf{z}}]).$$
By continuity of both sides, we have the desired result.

\section{Surgery formula with respect to nice triangulations}\label{surgforS}
\subsection{Proof of Theorem \ref{1loopsurg}} 
We first consider the case where $l=1$, i.e. we only fill the first torus boundary $T_1$. The general case follows from the same argument by doing the Dehn-fillings one by one. In the context of Theorem \ref{1loopsurg}, for all the ideal triangulations in the rest of this section, we always assume that solutions of gluing equations exist.

Let $\widehat{\mathcal{T}'}= \{\Delta_i\}_{i=1}^n$ and $\widehat{\mathcal{T}}= \widehat{\mathcal{T}} \cup \{\Delta_1^1, \Delta_2^1\}$ be respectively the triangulations of $M'$ and $M$  described in Proposition \ref{niceidealintro}, where $\Delta_1^1, \Delta_2^1$ are the only two ideal tetrahedra intersecting $T_1$. Let $z_1,\dots,z_n$ be an assignment of shape parameters to $\Delta_1,\dots,\Delta_n$. Let $f,g,h$ be the edges of the base triangles and assign shape parameters $z_{n+1},z_{n+2}$ to $\Delta_1^1, \Delta_2^1$ as shown in Figure \ref{niceideal2}. We first compare the determinants of the Jacobian of the gluing maps before and after the Dehn-filling. For the representation $\rho = \rho'|_{\pi_1(M)}$, since the two bottom faces of $\Delta_1^1, \Delta_2^1$ are glued together in $M'$, by considering the product of the shape parameters above those two faces, we have 
\begin{align}
z_{n+1}z_{n+2}=1.
\end{align}

\begin{figure}[h]
\centering
\includegraphics[scale=0.2]{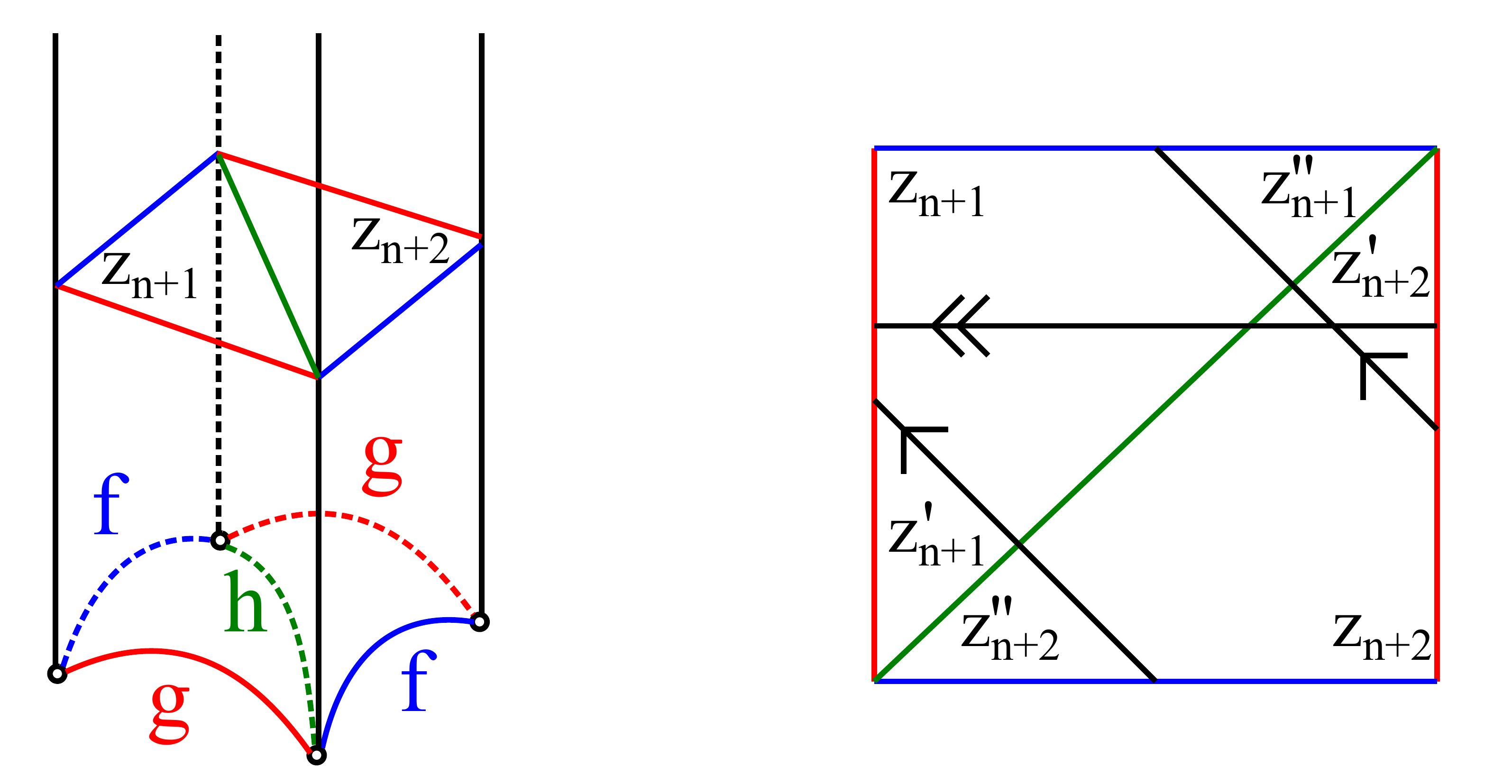}
\caption{We assign shape parameters $z_{n+1},z_{n+2}$ to $\Delta_1^1$, $\Delta_2^1$ as shown on the left of this figure. On the right, the curve with a single arrow represents the curves $\alpha=\alpha_1$. The curve $\gamma=\gamma_1$ with a double arrow represents the core curve of the Dehn-filled solid torus after doing the Dehn-filling to homotopically ``kill" $\alpha$. Note that the intersection number of $\alpha$ and $\gamma$ is $1$.}\label{niceideal2}
\end{figure}

By Remark \ref{edgeid}, we ignore all possible multiples of $\pi \sqrt{-1}$ in the following computation.
Consider the edge equations around edges $f$ and $g$. Note that they can be written in the form
\begin{align}
f &= \log z_{n+1}' + \log z_{n+2}' + \text{sum of logarithms of shape parameters of tetrahedra in $\{\Delta_i\}_{i=1}^n$},\label{compufandg1}\\
g &= \log z_{n+1}'' + \log z_{n+2}'' + \text{sum of logarithms of shape parameters of tetrahedra in $\{\Delta_i\}_{i=1}^n$}.\label{compufandg2}
\end{align}
Note that in Figure \ref{niceideal2}, the curve with a single arrow represents $\alpha=\alpha_1$ and the curve with a double arrow represents the core curve $\gamma=\gamma_1$. Moreover, the logarithmic holonomies of $\alpha$ and $\gamma$ are given by
\begin{align}
\alpha &= \log z_{n+1}' + \log z_{n+2}'' - \log z_{n+1}'' - \log z_{n+2}', \label{compualpha}\\
\gamma &= \log z_{n+1}' - \log z_{n+2}' \label{compugamma}.
\end{align}
Locally, by (\ref{compufandg1}), (\ref{compufandg2}) and (\ref{compualpha}), the Jacobian of the gluing map is given by
\begin{align}
\begin{pNiceMatrix}[first-row, first-col]
 & z_{n+1} & z_{n+2}  \\
f & \frac{1}{1-z_{n+1}} & \frac{1}{1-z_{n+2}} \\
g & \frac{1}{z_{n+1}(z_{n+1}-1)} & \frac{1}{z_{n+2}(z_{n+2}-1)} \\
\alpha & \frac{1}{1-z_{n+1}} - \frac{1}{z_{n+1}(z_{n+1}-1)} & \frac{1}{z_{n+2}(z_{n+2}-1)}  - \frac{1}{1-z_{n+2}}  
\end{pNiceMatrix}.
\end{align}
By adding the second row to the first row, we have
\begin{align}\label{comf+g}
\begin{pNiceMatrix}[first-row, first-col]
 & z_{n+1} & z_{n+2}  \\
f+g & -\frac{1}{z_{n+1}} & -\frac{1}{z_{n+2}} \\
g & \frac{1}{z_{n+1}(z_{n+1}-1)} & \frac{1}{z_{n+2}(z_{n+2}-1)} \\
\alpha & \frac{1}{1-z_{n+1}} - \frac{1}{z_{n+1}(z_{n+1}-1)} & \frac{1}{z_{n+2}(z_{n+2}-1)}  - \frac{1}{1-z_{n+2}}  
\end{pNiceMatrix}
\end{align}

The following elementary computations will be used to study the change of the determinants of the Jacobians of the gluing maps.
\begin{lemma}\label{1loopsurglem}
When $z_{n+1}z_{n+2}=1$, there exists some constant $C\in \CC$ such that 
$$
\Big(\frac{1}{z_{n+1}},\frac{1}{z_{n+2}}\Big)
= C\Big(\frac{1}{1-z_{n+1}} - \frac{1}{z_{n+1}(z_{n+1}-1)}, \frac{1}{z_{n+2}(z_{n+2}-1)}  - \frac{1}{1-z_{n+2}} \Big) \in \CC^2.
$$
Besides, we have
\begin{align*}
\det
\begin{pmatrix}
\frac{1}{z_{n+1}(z_{n+1}-1)} & \frac{1}{z_{n+2}(z_{n+2}-1)} \\
\frac{1}{1-z_{n+1}} - \frac{1}{z_{n+1}(z_{n+1}-1)} & \frac{1}{z_{n+2}(z_{n+2}-1)}  - \frac{1}{1-z_{n+2}}  
\end{pmatrix}
= \frac{z_{n+1}+z_{n+2}+2}{(z_{n+1}-1)(z_{n+2}-1)}
\end{align*}
and
\begin{align*}
4\sinh^2\frac{\mathrm{H}(\gamma)}{2} 
= -(z_{n+1}+z_{n+2}+2).
\end{align*}
\end{lemma}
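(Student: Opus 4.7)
\textbf{Proof proposal for Lemma \ref{1loopsurglem}.} The lemma is a direct algebraic consequence of the constraint $z_{n+1}z_{n+2}=1$ together with the explicit formula (\ref{compugamma}) for the holonomy of the core curve. The plan is to first simplify the entries of the $\alpha$-row by combining fractions, and then check each of the three claims by a short computation.

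The first step is the simplification
\[
\frac{1}{1-z_{n+1}} - \frac{1}{z_{n+1}(z_{n+1}-1)} = -\frac{z_{n+1}+1}{z_{n+1}(z_{n+1}-1)}, \qquad \frac{1}{z_{n+2}(z_{n+2}-1)} - \frac{1}{1-z_{n+2}} = \frac{z_{n+2}+1}{z_{n+2}(z_{n+2}-1)}.
\]
For statement (1), the ratio of the two coordinates of the right-hand vector is
\[
\frac{-(z_{n+1}+1)\,z_{n+2}(z_{n+2}-1)}{(z_{n+2}+1)\,z_{n+1}(z_{n+1}-1)},
\]
and expanding $-(z_{n+1}+1)(z_{n+2}-1) = -z_{n+1}z_{n+2}+z_{n+1}-z_{n+2}+1$ and $(z_{n+2}+1)(z_{n+1}-1) = z_{n+1}z_{n+2}+z_{n+1}-z_{n+2}-1$ and substituting $z_{n+1}z_{n+2}=1$ shows both equal $z_{n+1}-z_{n+2}$. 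Hence the ratio equals $z_{n+2}/z_{n+1}$, which matches the ratio of coordinates of $(1/z_{n+1},1/z_{n+2})$, giving proportionality with an explicit constant $C$.

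For statement (2), a direct cofactor expansion combined with the simplification above yields
\[
\det = \frac{(z_{n+2}+1)+(z_{n+1}+1)}{z_{n+1}z_{n+2}(z_{n+1}-1)(z_{n+2}-1)} = \frac{z_{n+1}+z_{n+2}+2}{(z_{n+1}-1)(z_{n+2}-1)},
\]
where the last step again uses $z_{n+1}z_{n+2}=1$. For statement (3), formula (\ref{compugamma}) gives $e^{\mathrm{H}(\gamma)} = z_{n+1}'/z_{n+2}' = (1-z_{n+2})/(1-z_{n+1})$, so
\[
4\sinh^2\frac{\mathrm{H}(\gamma)}{2} = e^{\mathrm{H}(\gamma)} + e^{-\mathrm{H}(\gamma)} - 2 = \frac{(z_{n+1}-z_{n+2})^2}{(1-z_{n+1})(1-z_{n+2})}.
\]
Expanding the denominator as $2-(z_{n+1}+z_{n+2})$ using $z_{n+1}z_{n+2}=1$, and noting the numerator factors as $(z_{n+1}+z_{n+2})^2-4 = (z_{n+1}+z_{n+2}-2)(z_{n+1}+z_{n+2}+2)$, the $(z_{n+1}+z_{n+2}-2)$ factor cancels against the denominator (up to sign) and one is left with $-(z_{n+1}+z_{n+2}+2)$, as claimed.

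No step poses a genuine obstacle; the lemma is essentially a bookkeeping computation, and the main thing to be careful about is keeping track of signs when manipulating the relation $z_{n+1}z_{n+2}=1$ and extracting square roots of exponentials. The payoff is that comparing statements (2) and (3) produces the factor $-1/(4\sinh^2(\mathrm{H}(\gamma)/2))$ (up to the denominator $(z_{n+1}-1)(z_{n+2}-1)$, which will combine with the combinatorial flattening contribution from the removed tetrahedra), which is exactly the surgery factor appearing in Theorem \ref{1loopsurg}.
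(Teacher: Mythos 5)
Your proof is correct and takes essentially the same route as the paper's: all three claims are verified by direct algebraic manipulation using $z_{n+1}z_{n+2}=1$, and your single preliminary simplification of the $\alpha$-row entries is a clean way to organize the computation. The only cosmetic difference is that for the first claim the paper shows a $2\times 2$ determinant vanishes while you compare ratios, and for the last claim the paper substitutes $z_{n+2}=1/z_{n+1}$ directly into each fraction (yielding $-z_{n+1}-z_{n+2}-2$ immediately) whereas you combine over a common denominator and factor — both are equally valid.
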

\begin{proof}
Note that
\begin{align*}
&\det
\begin{pmatrix}
\frac{1}{z_{n+1}} & \frac{1}{z_{n+2}} \\
\frac{1}{1-z_{n+1}} - \frac{1}{z_{n+1}(z_{n+1}-1)} & \frac{1}{z_{n+2}(z_{n+2}-1)}  - \frac{1}{1-z_{n+2}}  
\end{pmatrix}\\
=&\ \
\frac{1}{z_{n+1}z_{n+2}(z_{n+2}-1)}-\frac{1}{z_{n+1}(1-z_{n+2})}+
\frac{1}{z_{n+1}z_{n+2}(z_{n+1}-1)}-\frac{1}{z_{n+2}(1-z_{n+1})}\\
=& \ \
\frac{1}{\frac{1}{z_{n+1}}-1} - \frac{1}{z_{n+1}(1-\frac{1}{z_{n+1}})}
+ \frac{1}{z_{n+1}-1} - \frac{1}{\frac{1}{z_{n+1}}(1-z_{n+1})}\\
=&\ \ 0,
\end{align*}
where the second last equality follows from $z_{n+1}z_{n+2}=1$. This proves the first equality.
The second equality follows from direct computation using the equation $z_{n+1}z_{n+2}=1$. For the last equality, by (\ref{compugamma}),
\begin{align*}
4\sinh^2\frac{\mathrm{H}(\gamma)}{2} 
=e^{\mathrm{H}(\gamma)}+e^{-\mathrm{H}(\gamma)}-2
=\frac{z_{n+1}'}{z_{n+2}'}+\frac{z_{n+2}'}{z_{n+1}'} - 2
=\frac{1-z_{n+2}}{1-z_{n+1}}+\frac{1-z_{n+1}}{1-z_{n+2}}-2.
\end{align*}
Since $z_{n+1}z_{n+2}=1$,
\begin{align*}
4\sinh^2\frac{\mathrm{H}(\gamma)}{2} 
=\frac{1-z_{n+2}}{1-\frac{1}{z_{n+2}}}+\frac{1-z_{n+1}}{1-\frac{1}{z_{n+1}}}-2
=-(z_{n+1}+z_{n+2}+2).
\end{align*}
\end{proof}

Next, we compare the contribution of the correction terms coming from the combinatorial flattenings. Figure \ref{niceideal3} shows how the tetrahedra $\Delta_1^1, \Delta_2^1$ meets the cusp $T_1$. Note that by the second condition of Definition \ref{SCF} and \ref{GSCF}, a (generalized) strong combinatorial flattening must satisfy 
\begin{align}\label{combinflat}
f_1=f_2, \quad f_1'=f_2' \quad\text{ and }\quad f_1''=f_2''.
\end{align} 
Given a combinatorial flattening $\widehat{\boldsymbol{\mathcal{F'}}}=(\mathbf{f},\mathbf{f'},\mathbf{f''})$ of $\widehat{\mathcal{T}}'$, we extend $\widehat{\boldsymbol{\mathcal{F'}}}$ to a generalized combinatorial flattening $\widehat{\boldsymbol{\mathcal{F}}  }=(\widehat{\mathbf{f}},\widehat{\mathbf{f}}',\widehat{\mathbf{f}}'')$ of $\widehat{\mathcal{T}}$ as shown in Figure \ref{niceideal4}, where $b$ is unique number so that the second condition of Definition \ref{SCF} for the edges $f$ and $g$ are satisfied. Note that $b$ is either an integer or a half integer.

\begin{figure}[h]
\centering
\includegraphics[scale=0.16]{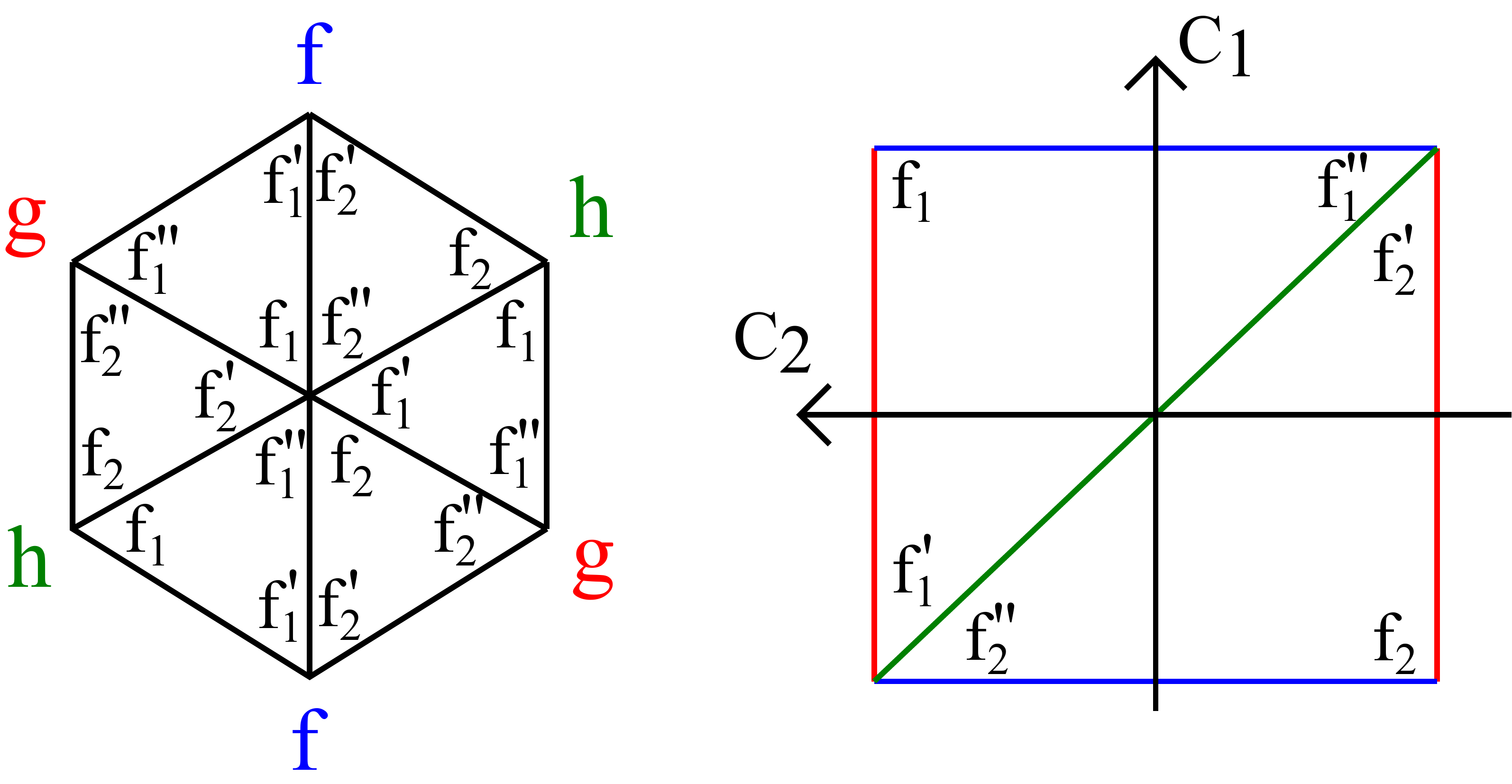}
\caption{A combinatorial flattening restricted on $T_1$. The figure on the left shows the triangles around the edge that goes into $T_1$. From the figure on the right, by considering the second condition of Definition \ref{SCF} and \ref{GSCF} for the simple closed curves $C_1, C_2$, we have $f_1=f_2, f_1'=f_2'$ and $f_1''=f_2''$.}\label{niceideal3}
\end{figure}

\begin{figure}[h]
\centering
\includegraphics[scale=0.16]{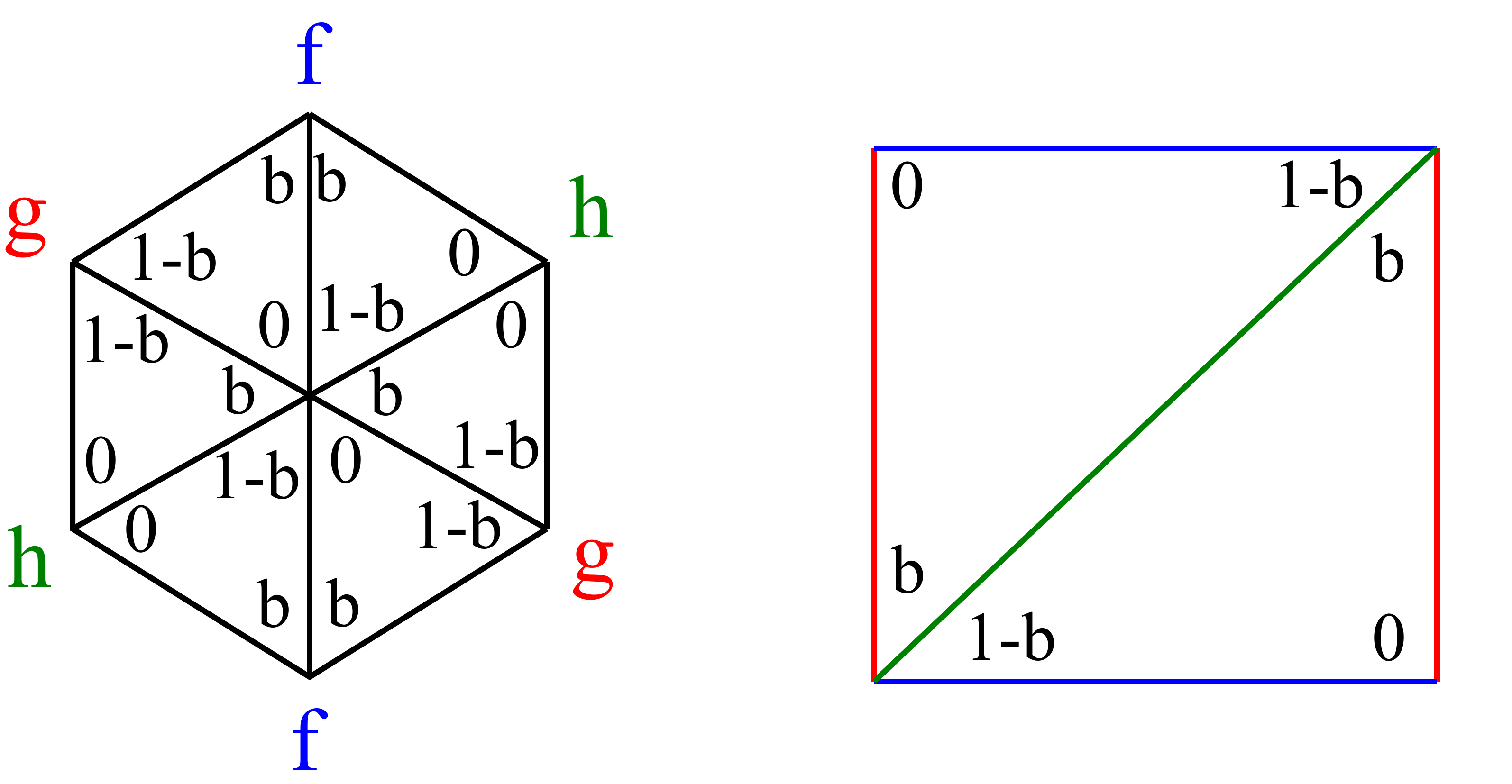}
\caption{A generalized strong combinatorial flattening restricted on $T_1$.}\label{niceideal4}
\end{figure}

\begin{lemma}\label{changegencom}
Let $\tau(M,\boldsymbol \alpha,\rho,\widehat{\mathcal{T}},\widehat{\boldsymbol{\mathcal{F}}  })$ be the 1-loop invariants defined with respect to the generalized strong combinatorial flattenings $\widehat{\boldsymbol{\mathcal{F}}  }=(\widehat{\mathbf{f}},\widehat{\mathbf{f}}',\widehat{\mathbf{f}}'')$. Then we have
$$\tau(M,\boldsymbol \alpha,\rho,\widehat{\mathcal{T}},\widehat{\boldsymbol{\mathcal{F}}  })
=
\tau(M,\boldsymbol \alpha,\rho,\widehat{\mathcal{T}}).$$
Furthermore, with respect to $\widehat{\boldsymbol{\mathcal{F}}  }=(\widehat{\mathbf{f}},\widehat{\mathbf{f}}',\widehat{\mathbf{f}}'')$,
\begin{align*}
\prod_{i=n+1}^{n+2} \xi_i^{f_i} \xi_i'^{f_i'} \xi_i''^{f_i''}
= \pm \frac{1}{(1-z_{n+1})(1-z_{n+2})}.
\end{align*}
\end{lemma}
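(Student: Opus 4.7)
The lemma has two parts. Both are verifications at the level of the prefactor $\prod_{i=1}^{n+2}\xi_i^{f_i}(\xi_i')^{f_i'}(\xi_i'')^{f_i''}$ appearing in the symmetric formula of Proposition~\ref{1loopgluingjacobian}, since the Jacobian determinant $\det D_{\mathbf z}F$ in that formula is independent of the choice of flattening.

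For the first equality, the approach mirrors Lemma~\ref{diffg}. Let $\tilde{\boldsymbol{\mathcal F}}$ be any strong combinatorial flattening of $\widehat{\mathcal T}$ used to compute $\tau(M,\boldsymbol\alpha,\mathbf z,\widehat{\mathcal T})$. Setting $(\delta\mathbf f,\delta\mathbf f',\delta\mathbf f'')=\widehat{\boldsymbol{\mathcal F}}-\tilde{\boldsymbol{\mathcal F}}$, one has $\delta\mathbf f+\delta\mathbf f'+\delta\mathbf f''=\mathbf 0$ and $G\cdot\delta\mathbf f^T+G'\cdot\delta\mathbf f'^T+G''\cdot\delta\mathbf f''^T=\mathbf 0$. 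Applying the identities $\log z_i+\log z_i'+\log z_i''=\pi\sqrt{-1}$ together with the edge and cusp equations, the ratio of the two prefactors reduces to $e^{(\mathbf f''\cdot\delta\mathbf f-\mathbf f\cdot\delta\mathbf f'')\pi\sqrt{-1}}=\pm(\sqrt{-1})^g$ for some $g\in\mathbb Z$, exactly as in \cite[Section 3.5]{DG}. Because the entries of $\widehat{\boldsymbol{\mathcal F}}$ and $\tilde{\boldsymbol{\mathcal F}}$ differ only on the two tetrahedra $\Delta_1^1,\Delta_2^1$, with the additional symmetry $f_{n+1}=f_{n+2}$, $f_{n+1}'=f_{n+2}'$, $f_{n+1}''=f_{n+2}''$ from (\ref{combinflat}), the integer $g$ is even; this forces the ratio to be $\pm 1$ and yields the first equality up to sign.

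For the second equality, the computation is direct. Reading the flattening on $\Delta_1^1$ and $\Delta_2^1$ from Figure~\ref{niceideal4}, one has $(\widehat f_{n+1},\widehat f_{n+1}',\widehat f_{n+1}'')=(\widehat f_{n+2},\widehat f_{n+2}',\widehat f_{n+2}'')$ with common values depending on the half-integer $b$ uniquely determined by the edge-$f$ and edge-$g$ constraints. Using the identity $\xi_i^{f_i}(\xi_i')^{f_i'}(\xi_i'')^{f_i''}=\pm z_i^{-f_i-f_i''}(1-z_i)^{f_i-1}$ (via the normalization $f_i+f_i'+f_i''=1$ and $\xi_i''=1/(z_i(z_i-1))$), the product over $i=n+1,n+2$ becomes
\[
\pm(z_{n+1}z_{n+2})^{-(f_{n+1}+f_{n+1}'')}(1-z_{n+1})^{f_{n+1}-1}(1-z_{n+2})^{f_{n+2}-1}.
\]
Substituting $z_{n+1}z_{n+2}=1$ makes the $z$-factor disappear, so all dependence on the half-integer $b$ cancels, and the symmetry forces the remaining exponents to produce $\pm(1-z_{n+1})^{-1}(1-z_{n+2})^{-1}$, as claimed.

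\textbf{Main obstacle.} The delicate step is controlling the sign in the first part: determining that the exponent $g$ in $\pm(\sqrt{-1})^g$ is even requires a careful bookkeeping of the combinatorial data in Figure~\ref{niceideal4}, together with consistent branch choices when translating between multiplicative and additive formulations of the edge and cusp equations. The second part, while conceptually a direct calculation, depends on this same bookkeeping to fix the overall sign.
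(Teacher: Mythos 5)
Your overall strategy for the first equality mirrors the paper's (invoking the formula from Dimofte--Garoufalidis for the ratio of prefactors and then showing the resulting exponent gives $\pm 1$), but there are three gaps.

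First, the hypothesis needed for the Dimofte--Garoufalidis ratio formula $\prod_i z_i^{\hat f_i''}z_i''^{-\hat f_i}/\prod_i z_i^{f_i''}z_i''^{-f_i}=e^{(\mathbf f''\cdot\hat{\mathbf f}-\mathbf f\cdot\hat{\mathbf f}'')\pi\sqrt{-1}}$ is that the relevant Neumann--Zagier matrix $B_{\widehat{\mathcal T}}=G''_{\widehat{\mathcal T}}-G'_{\widehat{\mathcal T}}$ is invertible. The paper spends roughly half the proof choosing a quad type and a set of independent edges (namely $\{e_1,\dots,e_{n-k-1},f,g\}$, extending the quad type making $B_{\widehat{\mathcal T}'}$ invertible) and then verifying $\det B_{\widehat{\mathcal T}}=\pm 4\det B_{\widehat{\mathcal T}'}\neq 0$. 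You invoke the formula without any such setup, so this step is missing from your argument.

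Second, your justification that the exponent is an integer rests on the assertion that $\widehat{\boldsymbol{\mathcal F}}$ and $\tilde{\boldsymbol{\mathcal F}}$ "differ only on the two tetrahedra $\Delta_1^1,\Delta_2^1$." This is not true in general: $\tilde{\boldsymbol{\mathcal F}}$ is an arbitrary strong combinatorial flattening of $\widehat{\mathcal T}$ and need not agree with the integer part of $\widehat{\boldsymbol{\mathcal F}}$ on $\Delta_1,\dots,\Delta_n$. The paper argues differently: on the tetrahedra away from the cusp $T_1$ both flattenings are integer-valued, so those terms contribute an integer, while on $\Delta_1^1,\Delta_2^1$ the symmetry (\ref{combinflat}) $f_{n+1}=f_{n+2}$, $f_{n+1}''=f_{n+2}''$, $\hat f_{n+1}=\hat f_{n+2}$, $\hat f_{n+1}''=\hat f_{n+2}''$ makes the contribution $2f_{n+1}''\hat f_{n+1}-2f_{n+1}\hat f_{n+1}''\in\mathbb Z$. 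Your conclusion is correct but the reasoning supporting it is wrong.

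Third, for the second equality your reduction produces $\pm\bigl((1-z_{n+1})(1-z_{n+2})\bigr)^{\widehat f_{n+1}-1}$ after using $z_{n+1}z_{n+2}=1$ and the symmetry $\widehat f_{n+1}=\widehat f_{n+2}$. To obtain $\pm 1/((1-z_{n+1})(1-z_{n+2}))$ you must additionally know that $\widehat f_{n+1}=0$, i.e.\ that the specific flattening placed on $\Delta_1^1,\Delta_2^1$ is $(\widehat f_{n+1},\widehat f_{n+1}',\widehat f_{n+1}'')=(0,\tfrac12,\tfrac12)$ — this is exactly what the paper reads off from Figure \ref{niceideal4}. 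Your argument that "symmetry forces the remaining exponents" does not supply the value $\widehat f_{n+1}=0$; the exponent is a genuine unknown in your calculation and nothing you've written pins it down.
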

\begin{proof}
We first specify the quad type and the set of edge equations we used for the computation of the 1-loop invariants. First, we let $\hat{E}'=\{e_1,\dots,e_{n-k-1},f+g\}$ be a set of linearly independent edges of the ideal triangulation $\widehat{\mathcal{T}}'$ of $M'$ and let $\alpha_2,\dots,\alpha_k$ be a system of simple closed curves on $T_2\coprod \dots \coprod T_k$. By \cite[Lemma A.3]{DG}, we may choose a quad type for $\widehat{\mathcal{T}}'$ so that the corresponding matrix $B_{\widehat{\mathcal{T}}'} = G_{\widehat{\mathcal{T}}'}'' - G_{\widehat{\mathcal{T}}'}'$ is invertible, where $G_{\widehat{\mathcal{T}}'}''$ and $G_{\widehat{\mathcal{T}}'}'$ are the Neumann-Zagier matrices defined in Section \ref{NZD} with respect to $\widehat{\mathcal{T}}'$. By abuse of notations, $\{e_1,\dots,e_{n-k-1}\}$ and $\{\alpha_2,\dots,\alpha_k\}$ are also edges and boundary curves of $\widehat{\mathcal{T}}$. Let $\hat{E}=\{e_1,\dots,e_{n-k-1},f,g\}$. Extend the quad type of $\widehat{\mathcal{T}}'$ to a quad type of $\widehat{\mathcal{T}}$ by assigning shape parameters to $\Delta_1^1, \Delta_2^1$ as shown in Figure \ref{niceideal2}. Let $B_{\widehat{\mathcal{T}}} =  G_{\widehat{\mathcal{T}}}'' - G_{\widehat{\mathcal{T}}}'$. Note that by (\ref{compufandg1}), (\ref{compufandg2}) and (\ref{compualpha}), by switching the rows of $B_{\widehat{\mathcal{T}}}$, the matrix $B_{\widehat{\mathcal{T}}}$ is of the form
\begin{align*}
\begin{pNiceMatrix}[first-row, first-col]
 &  z_1& \dots &z_n  & z_{n+1} & z_{n+2}  \\
\alpha_2 &*  &\dots & * & 0 & 0 \\
\vdots  & \vdots  &\dots & \vdots & \vdots & \vdots  \\
\alpha_{k} &*  &\dots & * & 0 & 0 \\
e_1 &*  &\dots & * & 0 & 0 \\
\vdots  & \vdots  &\dots & \vdots & \vdots & \vdots  \\
e_{n-k-1} &*  &\dots & * & 0 & 0 \\
f & * & \dots & * & -1 & -1 \\
g & * & \dots & * & 1 & 1 \\
\alpha & 0 & \dots & 0 & -2 & 2 
\end{pNiceMatrix},
\end{align*}
which has the same determinant as the matrix
\begin{align*}
\begin{pNiceMatrix}[first-row, first-col]
 &  z_1& \dots &z_n  & z_{n+1} & z_{n+2}  \\
\alpha_2 &*  &\dots & * & 0 & 0 \\
\vdots  & \vdots  &\dots & \vdots & \vdots & \vdots  \\
\alpha_{k} &*  &\dots & * & 0 & 0 \\
e_1 &*  &\dots & * & 0 & 0 \\
\vdots  & \vdots  &\dots & \vdots & \vdots & \vdots  \\
e_{n-k-1} &*  &\dots & * & 0 & 0 \\
f+g & * & \dots & * & 0 & 0 \\
g & * & \dots & * & 1 & 1 \\
\alpha & 0 & \dots & 0 & -2 & 2 
\end{pNiceMatrix}.
\end{align*}
By Proposition \ref{niceidealintro} (3), we may assume that the Neumann-Zagier datum for $\alpha_2,\dots,\alpha_k$ remains unchanged under the Dehn-filling. This implies that $\det B_{\widehat{\mathcal{T}}} = \pm 4 \det B_{\widehat{\mathcal{T}}}' \neq 0$. As a result, $\hat{E}$ is a set of linearly independent edges of $\widehat{\mathcal{T}}$ and $B_{\widehat{\mathcal{T}}}$ is invertible. 

Now we can prove the lemma as follows. Let ${\boldsymbol{\mathcal{F}}  }=(\widehat{\mathbf{f}},\widehat{\mathbf{f}}',\widehat{\mathbf{f}}'')$ be a strong combinatorial flattening of $\widehat{\mathcal{T}}$. 
Similar to the proof of Lemma \ref{diffg}, since $\det B_{\widehat{\mathcal{T}}}\neq 0$, by the argument in \cite[Section 4.5]{DG}, we have
$$
\frac{\prod_{i=1}^{n+2} \Big(z_i^{\hat{f_i}''} z_i''^{-\hat{f_i}}\Big)}{\prod_{i=1}^{n+2} \Big(z_i^{f_i''} z_i''^{-f_i}\Big)}
= e^{(\mathbf f'' \cdot \hat{\mathbf f} - \mathbf f \cdot \hat{\mathbf f}'')\pi\sqrt{-1}}.
$$ 
We consider $(\mathbf f'' \cdot \hat{\mathbf f} - \mathbf f \cdot \hat{\mathbf f}'')$ inside and outside hexagon shown in Figure \ref{niceideal4}. 
Note that outside the hexagon shown on the left of Figure \ref{niceideal4}, since $f,f'',\hat{f},\hat{f''}\in \ZZ$, we have $(f'' \cdot \hat{f} - f \cdot \hat{f}'')\in \ZZ$. Inside the hexagon, by (\ref{combinflat}), we also have $(f'' \cdot \hat{f} - f \cdot \hat{f}'') \in \ZZ$. Altogether, we have
$$
\frac{\prod_{i=1}^{n+2} \Big(z_i^{\hat{f_i}''} z_i''^{-\hat{f_i}}\Big)}{\prod_{i=1}^{n+2} \Big(z_i^{f_i''} z_i''^{-f_i}\Big)}
= e^{(\mathbf f'' \cdot \hat{\mathbf f} - \mathbf f \cdot \hat{\mathbf f}'')\pi\sqrt{-1}} = \pm 1.
$$ 
This proves the first claim. For the second claim, note that 
\begin{align*}
\prod_{i=n+1}^{n+2} \xi_i^{f_i} \xi_i'^{f_i'} \xi_i''^{f_i''}
= \prod_{i=n+1}^{n+2} \left(\frac{1}{z_i}\right)^0 \left(\frac{1}{z_i-1}\right)^\frac{1}{2} \left(\frac{1}{z_i(z_i-1)}\right)^\frac{1}{2}
=  \pm\frac{1}{(1-z_{n+1})(1-z_{n+2})},
\end{align*}
where the last equality follows from $z_{n+1}z_{n+2}=1$.
\end{proof}

\begin{proof}[Proof of Theorem \ref{1loopsurg}]
By Lemma \ref{changegencom}, we can compute the 1-loop invariant by using the generalized combinatorial flattening $\widehat{\boldsymbol{\mathcal{F}}  }=(\widehat{\mathbf{f}},\widehat{\mathbf{f}}',\widehat{\mathbf{f}}'')$. Using the same notations in the proof of Lemma \ref{changegencom}, by (\ref{comf+g}) and Lemma \ref{1loopsurglem}, the 1-loop invariant $\tau(M,\boldsymbol \alpha,\rho,\widehat{\mathcal{T}})$ of $M$ is up to a sign equal to 
\begin{align*}
&\frac{\det \left(
\begin{pNiceMatrix}[first-row, first-col]
 &  z_1& \dots &z_n  & z_{n+1} & z_{n+2}  \\
 \alpha_2 &*  &\dots & * & 0 & 0 \\
\vdots  & \vdots  &\dots & \vdots & \vdots & \vdots  \\
\alpha_{k} &*  &\dots & * & 0 & 0 \\
e_1 &*  &\dots & * & 0 & 0 \\
\vdots  & \vdots  &\dots & \vdots & \vdots & \vdots  \\
e_{n-k-1} &*  &\dots & * & 0 & 0 \\
f & * & \dots & * & \frac{1}{1-z_{n+1}} & \frac{1}{1-z_{n+2}} \\
g & * & \dots & * & \frac{1}{z_{n+1}(z_{n+1}-1)} & \frac{1}{z_{n+2}(z_{n+2}-1)} \\
\alpha & 0 & \dots & 0 & \frac{1}{1-z_{n+1}} - \frac{1}{z_{n+1}(z_{n+1}-1)} & \frac{1}{z_{n+2}(z_{n+2}-1)}  - \frac{1}{1-z_{n+2}}  
\end{pNiceMatrix}
\right)}{\Big(\prod_{i=1}^{n} \xi_i^{f_i} \xi_i'^{f_i'} \xi_i''^{f_i''}
\Big)
\Big(\prod_{i=n+1}^{n+2} \xi_i^{f_i} \xi_i'^{f_i'} \xi_i''^{f_i''}
\Big)}\\
=&\ \ \frac{\det \left(
\begin{pNiceMatrix}[first-row, first-col]
 &  z_1& \dots &z_n  & z_{n+1} & z_{n+2}  \\
 \alpha_2 &*  &\dots & * & 0 & 0 \\
\vdots  & \vdots  &\dots & \vdots & \vdots & \vdots  \\
\alpha_{k} &*  &\dots & * & 0 & 0 \\
e_1 &*  &\dots & * & 0 & 0 \\
\vdots  & \vdots  &\dots & \vdots & \vdots & \vdots  \\
e_{n-k-1} &*  &\dots & * & 0 & 0 \\
f+g+C\alpha & * & \dots & * & 0 & 0 \\
g & * & \dots & * & \frac{1}{z_{n+1}(z_{n+1}-1)} & \frac{1}{z_{n+2}(z_{n+2}-1)} \\
\alpha & 0 & \dots & 0 & \frac{1}{1-z_{n+1}} - \frac{1}{z_{n+1}(z_{n+1}-1)} & \frac{1}{z_{n+2}(z_{n+2}-1)}  - \frac{1}{1-z_{n+2}}  
\end{pNiceMatrix}
\right)}{\Big(\prod_{i=1}^{n} \xi_i^{f_i} \xi_i'^{f_i'} \xi_i''^{f_i''}
\Big)
\Big(\prod_{i=n+1}^{n+2} \xi_i^{f_i} \xi_i'^{f_i'} \xi_i''^{f_i''}
\Big)},
\end{align*}
where $C$ is the constant in Lemma \ref{1loopsurglem}.
Note that the $(z_1,\dots,z_n)$ components of $f+g+C\mu$ and $f'+g'$ are the same. By Lemmas \ref{1loopsurglem} and \ref{changegencom}, we have 
\begin{align*}
\tau(M,\boldsymbol \alpha,\rho,\widehat{\mathcal{T}})
=&\pm\frac{\det \left(
\begin{pNiceMatrix}[first-row, first-col]
 & z_1 & \dots & z_n    \\
 \alpha_2 &*  &\dots & *  \\
\vdots  & \vdots  &\dots & \vdots   \\
\alpha_{k} &*  &\dots & *  \\
e_1 &*  &\dots & *  \\
\vdots  & \vdots  &\dots & \vdots \\
e_{n-k-1} &*  &\dots & * \\
f'+g' & * & \dots & * 
\end{pNiceMatrix}
\right)}{
\prod_{i=1}^{n} \xi_i^{f_i} \xi_i'^{f_i'} \xi_i''^{f_i''}}
\times (z_1+z_2+2)\\
=& \pm \tau(M',\boldsymbol \alpha',\rho',\widehat{\mathcal{T}}')
\Bigg(4\sinh^2 \frac{\mathrm{H}(\gamma)}{2}\Bigg).
\end{align*}
\end{proof}

\subsection{Proof of Corollary \ref{corsufflong}}
Let $M'$ be a hyperbolic manifold obtained by doing sufficiently long Dehn-fillings on the boundary components of a fundamental shadow link complement. Let $\rho': \pi_1(M') \to \mathrm{PSL}(2;\CC)$ be a representation that is sufficiently close to the discrete faithful representation of $M'$ such that $\rho= \rho'|_{\pi_1(M)} \in U_M$. Let $\widehat{\mathcal{T}}$ be the ideal triangulation of $M$ in Theorem \ref{1loopsurg}. We want to apply Theorem \ref{mainthm} and \ref{1loopreallyinv}. However, it is not clear whether the triangulation $\mathcal{T}$ in Theorem \ref{idealtri} is $\rho$-regular. Nevertheless, since $\widehat{\mathcal{T}}$ is $\rho$-regular and $[\rho]\in \mathcal{Z}_{\boldsymbol \alpha}$, it is also $\tilde\rho$-regular for $\tilde\rho$ sufficiently close to $\rho$. Moreover, generically the triangulation $\mathcal{T}$ in Theorem \ref{idealtri} is $\tilde\rho$-regular. By Theorem \ref{mainthm}, \cite[Theorem 1.4, 4.1]{DG}, Proposition \ref{inv02move} and \cite[Theorem A.1]{KSS}, for a generic $\tilde\rho$ sufficiently close to $\rho$ with $\tilde\rho = \mathcal{P}_{\widehat{\mathcal{T}}}(\mathbf{\tilde z})$ for some $\mathbf{\tilde z} \in \mathcal{V}_0(\widehat{\mathcal{T}})$, we have
$$
\tau(M,\boldsymbol\alpha, \mathbf{\tilde z}, \widehat{\mathcal{T}}) 
=  \pm \mathbb T_{(M,\boldsymbol\alpha)}([\tilde\rho]).
$$
By continuity if necessary, we have
$$
\tau(M,\boldsymbol\alpha, \mathbf{\widehat{z}}, \widehat{\mathcal{T}}) 
=  \pm \mathbb T_{(M,\boldsymbol\alpha)}([\rho]).
$$
By Theorem \ref{1loopsurg}, Theorem \ref{funT} (iii), Remark \ref{rmksurg}, we have
\begin{align*}\label{pfsufflonglast}
\tau(M',\boldsymbol\alpha', \mathbf{\widehat z'}, \widehat{\mathcal{T}}') 
= \pm \mathbb T_{(M',\boldsymbol\alpha')}([\rho']).
\end{align*}
By Proposition \ref{toranaglu}, both the 1-loop invariant and the torsion are analytic functions on the gluing variety $\mathcal{V}_0(\mathcal{T})$. As a result, by analyticity, they agree on $\mathcal{V}_0(\mathcal{T'})$. By Corollary \ref{1loopreallyinvcor}, we have the desired result.

\noindent
Tushar Pandey\\
Department of Mathematics\\  Texas A\&M University\\
College Station, TX 77843, USA\\
(tusharp@tamu.edu)\\

\noindent
Ka Ho Wong\\
Department of Mathematics\\  Yale University\\
 New Haven, CT 06511, USA\\
(kaho.wong@yale.edu)
\\

\end{document}